\title{Eine Spektralsequenz in ungerader Khovanov-Homologie}
\author{Masterarbeit von Simon Beier\\ Betreuer: Prof. Dr. Thomas Schick\\ Zweitgutachter: Prof. Dr. Ralf Meyer}
\date{September 2011}
\newcommand{\eps}{\varepsilon}
\newcommand{\p}{\piup}
\newcommand{\g}{\textup{G}}
\newcommand{\V}{\textup{V}}
\newcommand{\gl}{\g(C,\eps)}
\newcommand{\gle}{\g(C,\etaup)}
\newcommand{\defeq}{\mathrel{\vcentcolon=}}
\theoremstyle{plain}
\newtheorem{satz}[equation]{Satz}
\newtheorem{prop}[equation]{Proposition}
\newtheorem{kor}[equation]{Korollar}
\newtheorem{lemma}[equation]{Lemma}
\theoremstyle{definition}
\newtheorem{defi}[equation]{Definition}
\newtheorem*{dank}{Danksagung}
\theoremstyle{remark}
\newtheorem{bemmn}[equation]{Bemerkung}
\begin{document}

\maketitle

\thispagestyle{empty}

\newpage

Hiermit versichere ich, dass ich diese Arbeit selbständig verfasst und nur die angegebenen Quellen und Hilfsmittel benutzt habe.\\\\
Göttingen, im September 2011

\thispagestyle{empty}

\newpage

\tableofcontents

\thispagestyle{empty}

\setcounter{page}{0}

\newpage

\section{Einleitung}
In den späten 1990ern entwickelte Khovanov eine Verschlingungsinvariante, die Khovanov-Homologie (\cite{kh}, \cite{barnat}). Sie ist eine bigraduierte abelsche Gruppe, die als Homologie des Khovanov-Kettenkomplexes, der einem Verschlingungsdiagramm zugeordnet wird, entsteht. Die Eulercharakteristik der Khovanov-Homologie ist das Jones-Polynom.\\
Ozsvath, Rasmussen und Szabo konstruierten 2007 eine weitere Verschlingungsinvariante, die ungerade Khovanov-Homologie (\cite{ors}). Auch hierbei handelt es sich um eine bigraduierte abelsche Gruppe, die als Homologie eines Kettenkomplexes gewonnen wird. Die Konstruktion ist ähnlich zu der von Khovanov und tatsächlich stimmen die beiden Invarianten über $\mathbb{Z}/2\mathbb{Z}$ überein. Ozsvath, Rasmussen und Szabo ordnen einem Verschlingungsdiagramm zwei Isomorphieklassen von Kettenkomplexen zu, die ungeraden Khovanov-Kettenkomplexe vom Typ X und vom Typ Y. Sie behaupten die beiden Isomorphieklassen wären gleich, ohne allerdings einen Isomorphismus anzugeben. Die Homologiegruppen vom Typ X und vom Typ Y sind Verschlingungsinvarianten. Wir werden zeigen, dass sie isomorph sind.\\
2010 konstruierte Szabo eine Spektralsequenz über $\mathbb{Z}/2\mathbb{Z}$, die ab dem $E^2$-Term eine Verschlingungsinvariante ist (\cite{szabo}). Der $E^2$-Term ist die Khovanov-Homologie mit $\mathbb{Z}/2\mathbb{Z}$-Koeffizienten. Die Spektralsequenz konvergiert gegen eine weitere Verschlingungshomologie. Szabo geht dabei wie folgt vor: Khovanovs Differential erhöht im Khovanov-Kettenkomplex den $h$-Grad um 1. Szabo konstruiert nun für alle natürlichen Zahlen $n$ einen Homomorphismus $d_n$ auf dem Khovanov-Kettenkomplex, der den $h$-Grad um $n$ erhöht. Sein $d_1$ entspricht dem Khovanov-Differential. Szabo definiert dann $d\defeq\sum_{n\ge 1}d_n$ und zeigt, dass $d\circ d=0$. Aus dem resultierenden filtrierten Kettenkomplex erhält Szabo dann seine Spektralsequenz, die gegen die Homologie des Komplexes konvergiert. Szabo gibt eine zweite Version seiner Spektralsequenz an (\cite{szabo}, Anfang von Kapitel 8). Wir zeigen, dass die beiden Spektralsequenzen ab dem $E^2$-Term, sowie die beiden Verschlingungshomologien, gegen die sie konvergieren, isomorph sind.\\
Das Hauptanliegen dieser Arbeit ist einen Lift von Szabos Komplex in die ganzen Zahlen zu konstruieren, so dass unser $d_1$ dem ungeraden Khovanov-Differential entspricht. Wir erhalten damit eine Spektralsequenz über $\mathbb{Z}$, die ab dem $E^2$-Term eine Verschlingungsinvariante ist und deren $E^2$-Term die ungerade Khovanov-Homologie ist. Die Spektralsequenz konvergiert dann gegen eine Verschlingungshomologie über $\mathbb{Z}$, aus der man mit dem universellen Koeffizienten-Theorem die Verschlingungshomologie von Szabo berechnen kann.
\begin{dank}
Ich möchte Prof. Dr. Thomas Schick für das interessante Thema und die gute Betreuung danken und dafür, dass ich seit meinem ersten Semester in seinen Vorlesungen und Seminaren sehr viel lernen durfte.\\
Ich möchte meinen Eltern danken, die mich immer unterstützt haben.
\end{dank}

\section{Geometrische Grundlagen}
Betrachte die $S^2$ als orientierte glatte Untermannigfaltigkeit des $\mathbb{R}^3$.
\begin{defi}
Ein Kreis sei eine glatte Untermannigfaltigkeit der $S^2$, die diffeomorph zur $S^1$ ist. Ein Bogen sei eine glatte Untermannigfaltigkeit mit Rand der $S^2$, die diffeomorph zum Intervall $[0,1]$ ist.
\end{defi}
\begin{defi}
Für $k\in\mathbb{N}_0$ sei ein $k$-dimensionaler Konfigurationsvertreter ein Tripel $(T,\sigma,\tau)$, dabei sei
\begin{description}
\item[-]
$T$ eine Teilmenge der $S^2$, so dass es endlich viele Kreise und $k$ Bögen gibt, so dass Folgendes gilt:
\begin{enumerate}
\item 
$T$ ist die Vereinigung der Kreise und Bögen.
\item 
Die Bögen sind paarweise disjunkt.
\item 
Die Kreise sind paarweise disjunkt.
\item 
Das Innere jedes Bogens ist disjunkt zu allen Kreisen.
\item 
Die Randpunkte der Bögen liegen auf den Kreisen.
\item 
Ein Bogen und ein Kreis verlaufen an einem gemeinsamen Punkt nicht tangential.
\end{enumerate}
Wir können also aus $T$ die Kreise und Bögen zurückgewinnen.
\item[-]
$\sigma$ eine Ordnung auf den Bögen von $T$.
\item[-]
$\tau$ eine Ordnung auf den Kanten von $T$. Dabei sind die Kanten von $T$ die Zusammenhangskomponenten von $T\setminus\bigcup_{i=1}^k\gamma_i$, wobei $\gamma_1,\dots,\gamma_k$ die Bögen von $T$ seien.
\end{description}
\end{defi}
Beachte, dass $\tau$ eine Ordnung auf den Kreisen von $T$ induziert. Ordne nämlich jedem Kreis $K$ die kleinste Kante (im Sinne von $\tau$) zu, die Teilmenge von $K$ ist. $\tau$ induziert nun eine Ordnung auf den Kanten, die wir den Kreisen zugeordnet haben und somit auch eine Ordnung auf den Kreisen.
\begin{defi}
Ein $k$-dimensionaler orientierter Konfigurationsvertreter ist ein $k$-dimensionaler Konfigurationsvertreter zusammen mit einer Orientierung auf den Bögen.
\end{defi}
\begin{defi}
Zwei $k$-dimensionale Konfigurationsvertreter $(T,\sigma,\tau)$ und $(\hat{T},\hat{\sigma},\hat{\tau})$ nennen wir äquivalent, wenn es einen orientierungserhaltenden Diffeomorphismus $f:S^2\rightarrow S^2$ gibt, so dass $f(T)=\hat{T}$. Dabei soll $f$ verträglich mit den beiden Ordnungen, die zu einem Konfigurationsvertreter gehören, sein. Für die Äquivalenz von orientierten Konfigurationsvertretern verlangen wir zusätzlich, dass $f$ die Orientierung auf den Bögen erhält. Die Äquivalenzklassen nennen wir $k$-dimensionale (orientierte) Konfigurationen.
\end{defi}
\begin{bemmn}
Unsere Definitionen erlauben es uns, von Bögen, Kanten und Kreisen von Konfigurationen zu sprechen. Zum Beispiel sei der $i$-te Bogen einer Konfiguration die Abbildung, die jedem Vertreter der Konfiguration den Bogen zuordnet, der in der Ordnung der Bögen an $i$-ter Stelle steht.
\end{bemmn}
\begin{defi}
Sei $C$ eine orientierte Konfiguration. Dann sei 
\begin{description}
\item[-]
$\overline{C}$ die (nicht orientierte) Konfiguration, die man erhält, wenn man die Orientierung der Bögen vergisst.
\item[-]
$C^*$ die orientierte Konfiguration, die man erhält, wenn man $C$ in kleinen Umgebungen der Bögen wie in Abbildung \ref{stern} verändert. Die Bögen werden also um 90° gegen den Uhrzeigersinn gedreht.
\item[-]
$r(C)$ die orientierte Konfiguration, die man erhält, wenn man die Orientierung aller Bögen umkehrt. Es gilt $(C^*)^*=r(C)$.
\item[-]
$m(C)$ die orientierte Konfiguration, die man erhält, wenn man die Orientierung der $S^2$ umkehrt. Es gilt $(m(C^*))^*=m(C)$.
\end{description}
$C^*$ und $m(C)$ seien für (nicht orientierte) Konfigurationen $C$ analog zu obiger Definition definiert.
\end{defi}
\begin{figure}[htbp]
\centering
\includegraphics{graph.1}
\caption{$C\rightarrow C^*$}
\label{stern}
\end{figure}
\begin{defi}
Sei $C$ eine (orientierte) Konfiguration. Dann nennen wir
\begin{description}
\item[-]
die Kreise von $C$ auch die Startkreise von $C$.
\item[-]
die Kreise von $C^*$ auch die Endkreise von $C$.
\item[-]
Die Kreise von $C$, die disjunkt zu allen Bögen sind, die passiven Kreise von $C$. Die passiven Kreise von $C$ bilden die nulldimensionale Konfiguration $\textup{pass}(C)$.
\item[-]
alle anderen Kreise von $C$ die aktiven Kreise von $C$.
\end{description}
Wir erhalten den aktiven Teil $\textup{akt}(C)$ von $C$, wenn wir alle passiven Kreise weglassen. Wir nennen $C$ aktiv, wenn es gleich seinem aktiven Teil ist, also wenn es keine passiven Kreise hat. Die passiven Kreise von $C$ sind gleich den passiven Kreisen von $C^*$.
\end{defi}
\begin{defi}
Für $n\le k\in\mathbb{N}_0$ sei $\sum(k,n)$ die Teilmenge von $\{0,1,*\}^k$, die alle Elemente enthält, bei denen genau $n$ Komponenten ein $*$ sind. $\sum(k,0)$, $\sum(k,1)$ und $\sum(k,n)$ sind die Ecken, Kanten und $n$-dimensionalen Seiten des Hyperwürfels $[0,1]^k$. Für $\eps\in\sum(k,n)$ entstehen $\eps^0,\eps^1\in\sum(k,0)$ aus $\eps$, indem wir alle $*$ durch $0$ beziehungsweise $1$ ersetzen. Wir nennen $\eps^0$/$\eps^1$ den Anfangs-/Endpunkt von $\eps$.\\
Für eine $k$-dimensionale (orientierte) Konfiguration $C$ und $\eps\in\sum(k,n)$ sei $\g(C,\eps)$ die $n$-dimensionale (orientierte) Konfiguration, die man wie folgt aus $C$ erhält: Für alle $i\in\{1,\dots k\}$, für die $\eps_i=1$ ist, verändere $C$ in einer kleinen Umgebung des $i$-ten Bogens wie in Abbildung \ref{stern}. Anschließend entferne alle Bögen, für die $\eps_i\ne*$ ist.\\
Wie bekommen wir eine Ordnung auf den Kanten von $\g(C,\eps)$? Es kann sein, dass mehrere Kanten von $C$ zu einer Kante in $\g(C,\eps)$ zusammenfallen. Ordne einer Kante in $\g(C,\eps)$ die kleinste der Kanten aus $C$, die zu unserer Kante zusammenfallen, zu. Dadurch erhalten wir eine Ordnung der Kanten von $\g(C,\eps)$.
\end{defi}

\section{Konstruktion der ungeraden Khovanov-Homologie}
\begin{defi}
Sei $C$ eine (orientierte) Konfiguration. Dann sei $\V(C)$ die freie abelsche Gruppe, die von den Kreisen von $C$ erzeugt wird.
\end{defi}
\begin{defi}
Wir ordnen einer $k$-dimensionalen (orientierten) Konfiguration $C$ die abelsche Gruppe
\[\Gamma(C)\defeq\bigoplus_{\eps\in\sum(k,0)}\Lambda\V(\g(C,\eps))\]
zu. Dabei steht $\Lambda$ für die äußere Algebra.
\end{defi}
Für eine aktive orientierte eindimensionale Konfiguration $C$ möchten wir einen $\mathbb{Z}$-Modul-Homomorphismus
\[\partial_C:\Lambda\V(C)\rightarrow\Lambda\V(C^*)\]
definieren. Für $C$ gibt es die zwei Möglichkeiten aus Abbildung \ref{eindim} (bis auf die Ordnung auf den Kanten). $C^*$ ist dann jeweils der andere Fall. Hier seien $x_1$, $x_2$, $y$ Bezeichnungen für die Kreise, die unabhängig von der Ordnung auf den Kreisen vergeben wurden.\\
\begin{figure}[htbp]
\centering
\includegraphics{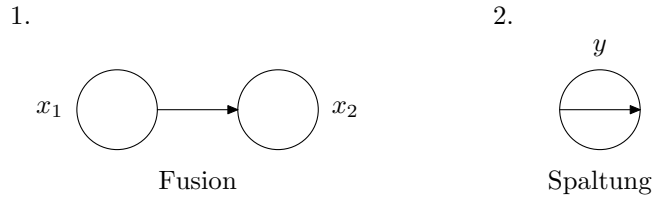}
\caption{aktive orientierte eindimensionale Konfigurationen}
\label{eindim}
\end{figure}
Im 1. Fall sei
\[\partial_C(1)=1,\qquad\partial_C(x_1)=y,\qquad
\partial_C(x_2)=y,\qquad\partial_C(x_1\wedge x_2)=0.\]
Im 2. Fall sei
\[\partial_C(1)=x_1-x_2,\qquad\partial_C(y)=x_1\wedge x_2.\]
Als Nächstes möchten wir $\partial_C$ auch für den Fall definieren, dass $C$ nicht unbedingt aktiv ist. Für ein beliebiges $k\in\mathbb{N}_0$ seien $z_1,\dots,z_k$ beliebige passive Kreise von $C$ und $\omega\defeq z_1\wedge\cdots\wedge z_k$. Für den aktiven Teil von $C$ gibt es die beschriebenen zwei Möglichkeiten. Im 1. Fall sei
\[\partial_C(\omega)=\omega,\qquad\partial_C(x_1\wedge\omega)=
y\wedge\omega,\qquad\partial_C(x_2\wedge\omega)=y\wedge\omega,
\qquad\partial_C(x_1\wedge x_2\wedge\omega)=0.\]
Im 2. Fall sei
\[\partial_C(\omega)=(x_1-x_2)\wedge\omega,\qquad
\partial_C(y\wedge\omega)=x_1\wedge x_2\wedge\omega.\]
$\partial_C$ lässt sich auch noch auf andere Weise beschreiben. Im 1. Fall sei
\[f_C:\{\textup{Kreise von $C$}\}\to\{\textup{Kreise von $C^*$}\}\]
eine Abbildung, die die passiven Kreise identisch abbildet. Weiterhin sei $f_C(x_1)=y=f_C(x_2)$. Für ein beliebiges $k\in\mathbb{N}_0$ seien $z_1,\dots,z_k$ beliebige Kreise von $C$. Dann ist
\[\partial_C(z_1\wedge\cdots\wedge z_k)=f_C(z_1)\wedge\cdots\wedge f_C(z_k).\]
Im 2. Fall seien
\[f_C^i:\{\textup{Kreise von $C$}\}\to\{\textup{Kreise von $C^*$}\}\]
für $i=1,2$ Abbildungen, die die passiven Kreise identisch abbilden. Weiterhin sei $f_C^i(y)=x_i$. Für ein beliebiges $k\in\mathbb{N}_0$ seien $z_1,\dots,z_k$ beliebige Kreise von $C$. Dann ist
\[\partial_C(z_1\wedge\cdots\wedge z_k)=(x_1-x_2)\wedge f_C^i(z_1)\wedge\cdots\wedge f_C^i(z_k)\]
für $i=1,2$.\\
Sei $C$ eine $k$-dimensionale orientierte Konfiguration und $\eps\in\sum(k,1)$. Dann erhalten wir den $\mathbb{Z}$-Modul-Homomorphismus
\[\partial_{\g(C,\eps)}:\Lambda\underbrace{\V(\gl)}_{=\V(\g(C,\eps^0))}\to
\Lambda\underbrace{\V((\gl)^*)}_{=\V(\g(C,\eps^1))}.\]
Aus diesen Abbildungen möchten wir ein Differential auf $\Gamma(C)$ erhalten. Dazu müssen wir folgendes untersuchen: Sei $C$ eine aktive orientierte zweidimensionale Konfiguration. Seien
\[a\defeq(*,0),\qquad b\defeq(1,*),\qquad c\defeq(0,*),\qquad d\defeq(*,1).\]
Wie verhalten sich dann $\partial_{\g(C,b)}\circ\partial_{\g(C,a)}$ und $\partial_{\g(C,d)}\circ\partial_{\g(C,c)}$ zueinander? Bis auf die Ordnung auf den Bögen und Kanten gibt es für $C$ die Möglichkeiten aus den Abbildungen \ref{zweidim1} und \ref{zweidim2}.\\
\begin{figure}[htbp]
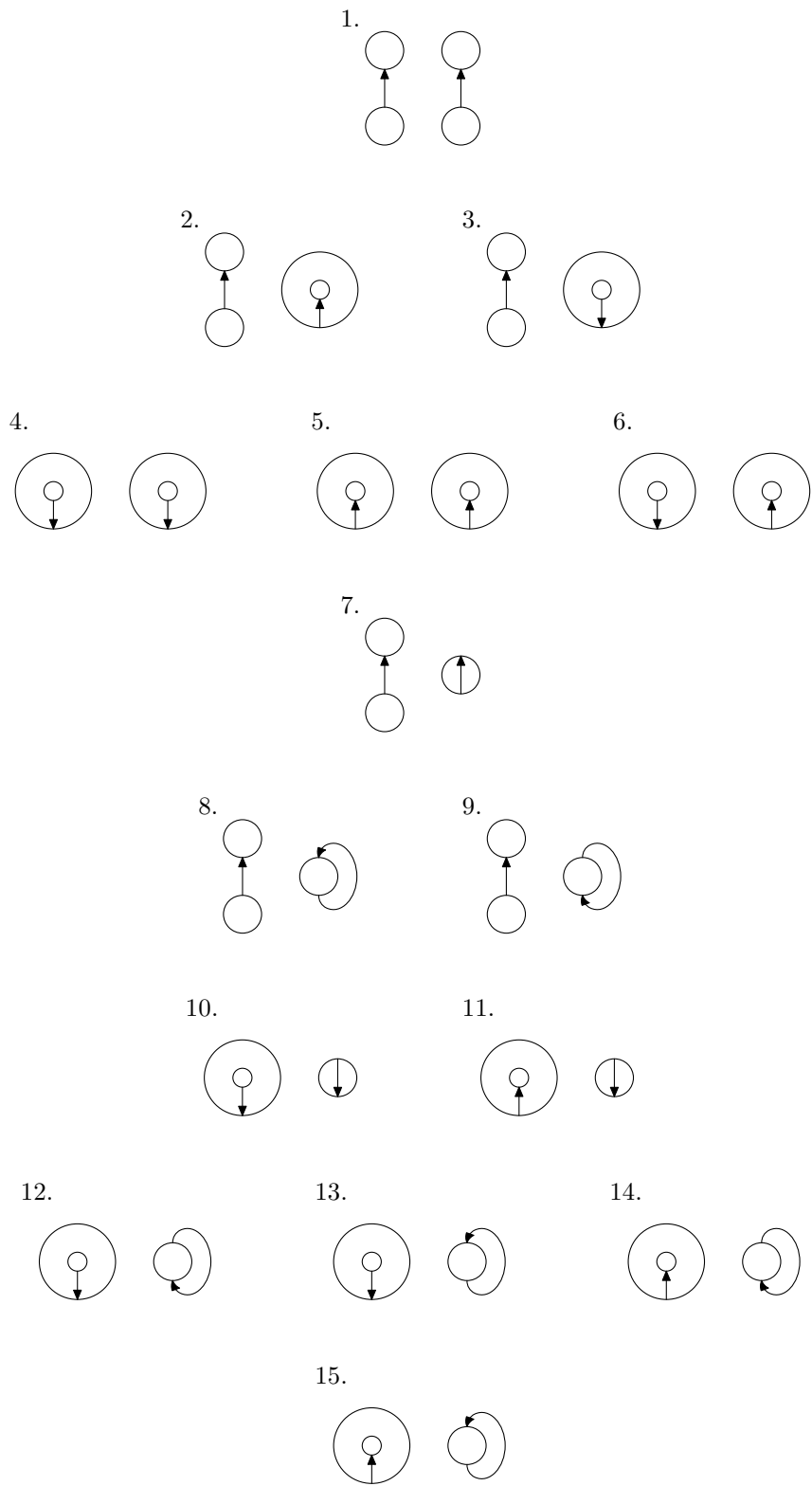

\centering
\includegraphics{bild.1}\\\ \\\ \\
\includegraphics{bild.2}\qquad\qquad
\includegraphics{bild.3}\\\ \\\ \\
\includegraphics{bild.4}\qquad\qquad
\includegraphics{bild.5}\qquad\qquad
\includegraphics{bild.6}\\\ \\\ \\
\includegraphics{bild.7}\\\ \\\ \\
\includegraphics{bild.8}\qquad\qquad
\includegraphics{bild.9}\\\ \\\ \\
\includegraphics{bild.10}\qquad\qquad
\includegraphics{bild.11}\\\ \\\ \\
\includegraphics{bild.12}\qquad\qquad
\includegraphics{bild.13}\qquad\qquad
\includegraphics{bild.14}\\\ \\\ \\
\includegraphics{bild.15}
\caption{aktive orientierte zweidimensionale Konfigurationen}
\label{zweidim1}
\end{figure}
\begin{figure}[htbp]
\centering
\includegraphics{bild.16}\qquad
\includegraphics{bild.17}\qquad
\includegraphics{bild.18}\\\ \\\ \\
\includegraphics{bild.19}\qquad\qquad
\includegraphics{bild.20}\qquad\qquad
\includegraphics{bild.21}\\\ \\\ \\
\includegraphics{bild.22}\\\ \\\ \\
\includegraphics{bild.23}\qquad\qquad
\includegraphics{bild.24}\\\ \\\ \\
\includegraphics{bild.25}\qquad\qquad
\includegraphics{bild.26}\qquad\qquad
\includegraphics{bild.27}\\\ \\\ \\
\includegraphics{bild.28}\qquad\qquad
\includegraphics{bild.29}\\\ \\\ \\
\includegraphics{bild.30}\qquad
\includegraphics{bild.31}\qquad
\includegraphics{bild.32}\qquad
\includegraphics{bild.33}\\\ \\\ \\
\includegraphics{bild.34}\qquad
\includegraphics{bild.35}\qquad
\includegraphics{bild.36}\qquad
\includegraphics{bild.37}\\\ \\\ \\
\includegraphics{bild.38}\qquad\qquad
\includegraphics{bild.39}\qquad\qquad
\includegraphics{bild.40}\\\ \\\ \\
\includegraphics{bild.41}\qquad\qquad
\includegraphics{bild.42}\qquad\qquad
\includegraphics{bild.43}\\\ \\\ \\
\includegraphics{bild.44}\qquad\qquad
\includegraphics{bild.45}
\caption{aktive orientierte zweidimensionale Konfigurationen}
\label{zweidim2}
\end{figure}
Man rechnet leicht nach, dass $\partial_{\g(C,b)}\circ\partial_{\g(C,a)}
=\partial_{\g(C,d)}\circ\partial_{\g(C,c)}$ in den Fällen 1-21 und 29-37. Wir nennen $C$ dann vom Typ K. Weiterhin gilt $\partial_{\g(C,b)}\circ\partial_{\g(C,a)}
=-\partial_{\g(C,d)}\circ\partial_{\g(C,c)}$ in den Fällen 22-28 und 38-43. Wir nennen $C$ dann vom Typ A. In den Fällen 44/45 nennen wir $C$ vom Typ X/Y. Dann gilt $\partial_{\g(C,b)}\circ\partial_{\g(C,a)}=0
=\partial_{\g(C,d)}\circ\partial_{\g(C,c)}$. Wir nennen eine orientierte zweidimensionale Konfiguration vom Typ K,A,X,Y, falls der aktive Teil vom Typ K,A,X,Y ist. Auf diese Weise lässt sich auch einer der Fälle 1-45 zuordnen.
\begin{defi}
Sei $k\in\mathbb{N}_0$. Eine Abbildung $s:\sum(k,1)\to\{-1,1\}$ nennen wir eine Kantenzuordnung. Für $\eps\in\sum(k,2)$ seien $a,b,c,d\in\sum(k,1)$ die an $\eps$ angrenzenden Kanten. Dann sei $p(\eps,s)\defeq s(a)s(b)s(c)s(d)$. Sei $C$ eine $k$-dimensionale orientierte Konfiguration. Dann nennen wir eine Kantenzuordnung $s$ bezüglich $C$ vom Typ X, falls für alle $\eps\in\sum(k,2)$ gilt
\[p(\eps,s)=\begin{cases}
1\qquad&\textup{falls $\gl$ vom Typ A oder X ist,}\\
-1\qquad&\textup{falls $\gl$ vom Typ K oder Y ist.}
\end{cases}\]
Wir nennen $s$ bezüglich $C$ vom Typ Y, falls für alle $\eps\in\sum(k,2)$ gilt
\[p(\eps,s)=\begin{cases}
1\qquad&\textup{falls $\gl$ vom Typ A oder Y ist,}\\
-1\qquad&\textup{falls $\gl$ vom Typ K oder X ist.}
\end{cases}\]
\end{defi}
\begin{satz}\label{exi}
Für jede orientierte Konfiguration $C$ gibt es eine Kantenzuordnung vom Typ X und eine Kantenzuordnung vom Typ Y.
\end{satz}
Diesen Satz werden wir später beweisen. Für $s$ vom Typ X,Y erhalten wir den Kettenkomplex
\[\Gamma(C,s)\defeq\left(\Gamma(C)=\bigoplus_{\eps\in\sum(k,0)}
\Lambda\V(\g(C,\eps)),\ \partial(C,s)\defeq\bigoplus_{\etaup\in\sum(k,1)}
s(\etaup)\partial_{\gle}\right).\]
Wir erhalten eine Bigraduierung auf $\Gamma(C)$ folgendermaßen: Für $\eps\in\sum(k,0)$ sei $|\eps|\defeq\sum_{i=1}^k\eps_i$. Dann sei
\[\Gamma(C)^h\defeq\bigoplus_{\eps\in\sum(k,0),|\eps|=h}\Lambda\V(\g(C,\eps)).\]
Für eine (orientierte) Konfiguration $C$ sei $|C|$ die Anzahl der Kreise von $C$. Es sei
\[\Gamma(C)_\delta\defeq\bigoplus_{m\in\mathbb{N}_0,\eps\in\sum(k,0):\ |\gl|-2m-|\eps|=\delta}\Lambda^m\V(\g(C,\eps)).\]
Weiterhin sei $\Gamma(C)_{h,\delta}\defeq\Gamma(C)^h\cap\Gamma(C)_\delta$. Es gilt $\Gamma(C)=\bigoplus_{h,\delta\in\mathbb{Z}}\Gamma(C)_{h,\delta}$. Das Differential $\partial(C,s)$ erhöht den $h$-Grad um 1 und senkt den $\delta$-Grad um 2.
\begin{satz}\label{inv}
Seien $C$ und $D$ orientierte Konfigurationen, so dass $\overline{C}=\overline{D}$. Seien $s$ und $t$ Kantenzuordnungen, so dass $s$ bezüglich $C$ den gleichen Typ (X oder Y) hat, wie t bezüglich $D$. Dann sind $\Gamma(C,s)$ und $\Gamma(D,t)$ isomorph als bigraduierte Kettenkomplexe.
\end{satz}
Diesen Satz werden wir später beweisen.\\
Sei $D$ nun ein Verschlingungsdiagramm.
\begin{defi}
Es seien $n_+(D)$ und $n_-(D)$ die Anzahlen der Plus- und Minus-Kreuzungen von $D$, siehe Abbildung \ref{pm}.
\end{defi}
\begin{figure}[htbp]
\centering
\includegraphics{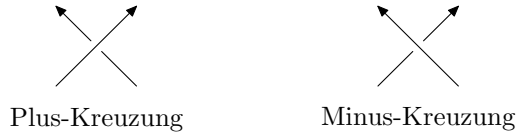}
\caption{Kreuzungen im Verschlingungsdiagramm}
\label{pm}
\end{figure}
\begin{defi}
Man kann aus $D$ eine Konfiguration $C$ erhalten, indem man die Orientierung auf $D$ vergisst und dann $D$ in kleinen Umgebungen der Kreuzungen wie in Abbildung \ref{nullgl} verändert. $C$ ist bis auf die Ordnungen auf den Bögen und Kanten eindeutig festgelegt. Wir nennen $C$ eine Nullglättung von $D$.
\end{defi}
\begin{figure}[htbp]
\centering
\includegraphics{1bild.2}
\caption{Nullglättung}
\label{nullgl}
\end{figure}
Sei $C$ eine orientierte Konfiguration, so dass $\overline{C}$ eine Nullglättung von $D$ ist. Es sei $\Gamma(D,C)^h\defeq\Gamma(C)^{h+n_-(D)}$ und $\Gamma(D,C)_\delta\defeq\Gamma(C)_{\delta-n_+(D)}$. Weiterhin sei $s$ eine Kantenzuordnung vom Typ X oder vom Typ Y bezüglich C. Dann sei $\Gamma(D,C,s)$ der bigraduierte Kettenkomplex, der sich nur wie beschrieben in der Bigraduierung von $\Gamma(C,s)$ unterscheidet. Wir nennen ihn den ungeraden Khovanov-Kettenkomplex zu $(D,C,s)$.
\begin{satz}\label{rminv}
Seien $D_1$ und $D_2$ Verschlingungsdiagramme, die durch endlich viele Reidemeister-Bewegungen auseinander hervorgehen. Dann gibt es orientierte Konfigurationen $C_1$, $C_2$, so dass $\overline{C_1}$, $\overline{C_2}$ Nullglättungen von $D_1$, $D_2$ sind und Kantenzuordnungen $s_1$, $s_2$, so dass $s_1$ bezüglich $C_1$ und $s_2$ bezüglich $C_2$ Typ X hat und so dass die Homologiegruppen $H_{h,\delta}(\Gamma(D_1,C_1,s_1))$ und $H_{h,\delta}(\Gamma(D_2,C_2,s_2))$ isomorph sind. Der Satz gilt analog für Typ Y.
\end{satz}
Wir werden den Satz später beweisen.\\
Wir ordnen also einer Verschlingung zwei Isomorphieklassen von bigraduierten abelschen Gruppen zu, die ungerade Khovanov-Homologie vom Typ X und die ungerade Khovanov-Homologie vom Typ Y. Das nächste Korollar zeigt, dass beide gleich sind.
\begin{kor}\label{xy}
Sei $C$ orientierte Konfiguration. Sei $s$ Kantenzuordnung vom Typ X und $t$ Kantenzuordnung vom Typ Y bezüglich $C$. Dann sind die Homologiegruppen $H_{h,\delta}(\Gamma(C,s))$ und $H_{h,\delta}(\Gamma(C,t))$ isomorph.
\end{kor}
\begin{proof}
Sei $k$ die Dimension von $C$. Dann gilt für alle $\eps\in\sum(k,2)$: Ist $\gl$ vom Typ A,K,X,Y, so ist $\g(r(m(C)),\eps)$ vom Typ A,K,Y,X. Also ist $s$ vom Typ Y bezüglich $r(m(C))$. Man sieht leicht: Der kanonische Isomorphismus der bigraduierten abelschen Gruppen $\Gamma(C)$ und $\Gamma(r(m(C)))$ liefert auch einen Isomorphismus der Kettenkomplexe $\Gamma(C,s)$ und $\Gamma(r(m(C)),s)$. Sei $D_1$ ein Verschlingungsdiagramm, so dass $\overline{C}$ Nullglättung von $D_1$ ist. Sei $D_2$ das Verschlingungsdiagramm, das man aus $D_1$ erhält, wenn man die Orientierung der $S^2$ umkehrt und bei allen Kreuzungen oben und unten vertauscht. Sei $L$ eine Verschlingung, die durch Parallelprojektion auf $D_1$ abgebildet wird. Kehrt man die Projektionsrichtung um, so wird $L$ auf $D_2$ abgebildet. Also geht $D_2$ durch endlich viele Reidemeister-Bewegungen aus $D_1$ hervor. Weiterhin ist $\overline{m(C)}$ Nullglättung von $D_2$. Es folgt
\begin{multline*}
H_{h,\delta}(\Gamma(C,s))\cong H_{h,\delta}(\Gamma(r(m(C)),s))\cong H_{h-n_-(D_2),\delta+n_+(D_2)}(\Gamma(D_2,r(m(C)),s))\\
\cong H_{h-n_-(D_1),\delta+n_+(D_1)}(\Gamma(D_1,C,t))\cong H_{h,\delta}(\Gamma(C,t)).
\end{multline*}
\end{proof}
Zur besseren Unterscheidung bezeichnen wir im Folgenden den Original-Khovanov-Kettenkomplex als geraden Khovanov-Kettenkomplex. Man sieht leicht:
\begin{satz}
Sei $D$ ein Verschlingungsdiagramm, $C$ eine orientierte Konfiguration, so dass $\overline{C}$ eine Nullglättung von $D$ ist und $s$ eine Kantenzuordnung vom Typ X oder vom Typ Y bezüglich $C$. Dann ist der bigraduierte Kettenkomplex $\Gamma(D,C,s)\otimes_\mathbb{Z}\mathbb{Z}/2\mathbb{Z}$ isomorph zum geraden Khovanov-Kettenkomplex von $D$ mit $\mathbb{Z}/2\mathbb{Z}$-Koeffizienten mit $(h,\delta)$-Graduierung.
\end{satz}
Durch $q=\delta+2h$ erhalten wir auf dem geraden wie auf dem ungeraden Khovanov-Kettenkomplex eine weitere Graduierung. Das Differential lässt den $q$-Grad unverändert. Bilden wir bezüglich der $h$-Graduierung die Euler-Charakteristik der geraden Khovanov-Homologie einer Verschlingung, so erhalten wir durch die $q$-Graduierung ein Laurent-Polynom. Dies ist das Jones-Polynom der Verschlingung. Aus vorherigem Satz folgt, dass dies auch für ungerade Khovanov-Homologie gilt.

\section{Existenz und Invarianz der ungeraden Khovanov-Homologie}
Sei $C$ eine orientierte dreidimensionale Konfiguration. Dann sei
\[a_C\defeq\#\left\{\eps\in\sum(3,2)\ \ |\ \ \textup{$\gl$ ist vom Typ A.}\right\}.\]
Entsprechend seien $k_C$, $x_C$ und $y_C$ für die Typen K, X und Y definiert. Es sei $\textup{Typ}_C\defeq(a_C,k_C,x_C,y_C)$.
\begin{lemma}\label{gersum}
Für jede orientierte dreidimensionale Konfiguration $C$ gilt: Die Summen $a_C+x_C$ und $a_C+y_C$ sind gerade Zahlen.
\end{lemma}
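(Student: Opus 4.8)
If we may use Satz~\ref{exi}, the lemma is one line: take an edge assignment $s$ of type X with respect to $C$; since in the $3$-cube each edge borders exactly two $2$-faces, $\prod_{\eps\in\sum(3,2)}p(\eps,s)=\prod_{\eta\in\sum(3,1)}s(\eta)^2=1$, while by the definition of a type-X edge assignment the same product equals $(-1)^{k_C+y_C}$, so $k_C+y_C$, hence $a_C+x_C=6-k_C-y_C$, is even; a type-Y edge assignment gives the statement for $a_C+y_C$. But the natural proof of Satz~\ref{exi} runs through this very lemma — it is exactly the local cocycle condition on $3$-faces on which that theorem relies — so I would instead argue directly, as follows.

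Set $q(\eps)\defeq 1$ if $\g(C,\eps)$ is of type A or X and $q(\eps)\defeq -1$ if it is of type K or Y; then the claim ``$a_C+x_C$ even'' is just $\prod_{\eps}q(\eps)=1$, and ``$a_C+y_C$ even'' follows from it applied to $r(m(C))$, which by the proof of Korollar~\ref{xy} is of type A, K, X, Y at a face exactly where $C$ is of type A, K, Y, X. The main step is that $\prod_{\eps}q(\eps)$ depends only on $\overline C$: reversing the orientation of the $i$-th arc does not change $\partial_{\g(C,\eta)}$ along an edge $\eta$ where that arc is passive, and along each of the four edges $\eta$ where it is active it multiplies $\partial_{\g(C,\eta)}$ by $(-1)^{\mu(\eta)}$, where $\mu(\eta)=0$ if the $*$-surgery there merges two circles (the ``1.~Fall'', in which $\partial$ does not see the arc orientation) and $\mu(\eta)=1$ if it splits a circle (the ``2.~Fall'', in which reversing the arc interchanges $x_1$ with $x_2$ and so negates $\partial(1)=x_1-x_2$ and $\partial(y)=x_1\wedge x_2$). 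On a $2$-face $\eps$ where the $i$-th arc is active, its two edges $\eta_0,\eta_1$ carrying that arc lie one on each path around $\eps$, so the reversal multiplies one side of $\partial_{\g(C,b)}\circ\partial_{\g(C,a)}=\kappa\cdot\partial_{\g(C,d)}\circ\partial_{\g(C,c)}$ by $(-1)^{\mu(\eta_0)}$ and the other by $(-1)^{\mu(\eta_1)}$, hence multiplies $q(\eps)$ by $(-1)^{\mu(\eta_0)+\mu(\eta_1)}$; the faces where that arc is passive are untouched. Since each of the four edges carrying the $i$-th arc lies in exactly two of these four faces, the exponents add up to an even number, so $\prod_{\eps}q(\eps)$ is unchanged. (For the type-X and type-Y faces, cases 44 and 45, both composites already vanish; that the same rule holds there is one of the points to check against the figures.)

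It remains to verify $\prod_{\eps}q(\eps)=1$ for every unoriented configuration, and passive circles do not influence any face type, so $\overline C$ may be assumed active — then there are only finitely many of these, since the three arcs meet at most six circles. If the active part of $\overline C$ is disconnected, write $\overline C$ as a disjoint union on $S^2$ of a part with one arc and a part with two; then the two $2$-faces that resolve the single arc agree up to passive circles, hence have the same type, and each of the four ``mixed'' $2$-faces has a type that is a function of the merge/split pattern of the two one-arc slices composing it — so the product of $q$ over the mixed faces is $g(0)^{a}g(1)^{b}$ with $a+b=4$, where $a$ and $b$ count the merging and splitting edges of the two-arc sub-cube, and both $a,b$ are even because the circle counts at opposite corners of a square of resolutions have equal parity; thus $\prod_{\eps}q(\eps)=1$. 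This reduces the problem to the configurations whose active part is connected — a short list — and for those one reads the six face types off cases 1--45 and checks the product.

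I expect the main obstacle to be this residual figure-work: confirming the arc-reversal rule for the type-X and type-Y faces against cases 44 and 45, and going through the connected active three-dimensional configurations — finite, but the only genuinely laborious part. The reformulation, the orientation-independence computation, and the disjoint-union reduction should all be routine once the convention fixing $x_1$ and $x_2$ in the ``2.~Fall'' has been made explicit.
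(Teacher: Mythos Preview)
Your proposal is correct and lands on the same endgame as the paper: reduce to connected active three-dimensional configurations and verify those against the list of cases. The organization differs in one respect worth noting. The paper reduces by \emph{rotation}: it shows that two unoriented configurations related by the moves in Abbildung~\ref{rot} admit matching orientations with the same $\textup{Typ}_C$, lists the connected unoriented $3$-configurations modulo rotation in Abbildung~\ref{dreidim}, and then records all possible $\textup{Typ}_C$ for each. You instead reduce by proving \emph{orientation-independence} of $\prod_\eps q(\eps)$ directly; your sign analysis under reversal of one arc is exactly the computation behind Lemma~\ref{inv2} (the cases $28\leftrightarrow 29$ and $44\leftrightarrow 45$ swap, each with one merge and one split among the two affected edges, while cases $1$--$27$ and $30$--$43$ are fixed with zero or two splits), so your acknowledged ``check against the figures'' is short. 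Your disconnected-case argument matches the paper's, and your derivation of ``$a_C+y_C$ even'' from the first claim via $r(m(C))$ is a clean alternative to the paper's simultaneous tabulation of both parities. Neither route avoids the residual casework on the connected list; the paper's rotation reduction merely shortens that list, while your reduction lets you check only one orientation per entry.
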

\begin{defi}
Für zwei nichtorientierte Konfigurationen $C_1$, $C_2$ sagen wir, diese gehen durch Rotation auseinander hervor, falls wir $C_2$ durch endlich viele lokale Veränderungen wie in Abbildung \ref{rot} aus $C_1$ erhalten können.
\end{defi}
\begin{figure}[htbp]
\centering
\includegraphics{1bild.3}\\\ \\\ \\
\includegraphics{1bild.4}
\caption{Rotation}
\label{rot}
\end{figure}
\begin{proof}[Beweis von Lemma \ref{gersum}]
Man sieht leicht: Sind $C_1$, $C_2$ zwei nichtorientierte dreidimensionale Konfigurationen, die durch Rotation auseinander hervorgehen, dann gilt: Für jede Orientierung auf $C_1$ gibt es eine Orientierung auf $C_2$, so dass $\textup{Typ}_{C_1}=\textup{Typ}_{C_2}$.\\
Für eine orientierte dreidimensionale Konfiguration $C$ hängt $\textup{Typ}_C$ nur vom aktiven Teil von $C$ ab.\\
In Abbildung \ref{dreidim} ist aufgelistet, welche Möglichkeiten es für eine zusammenhängende, nichtorientierte, dreidimensionale Konfiguration modulo Rotation gibt. Für jede Möglichkeit ist angegeben, welche Möglichkeiten es für $\textup{Typ}_C$ gibt, wenn wir eine Orientierung wählen. Dies lässt sich leicht überprüfen.\\
\begin{figure}[htbp]
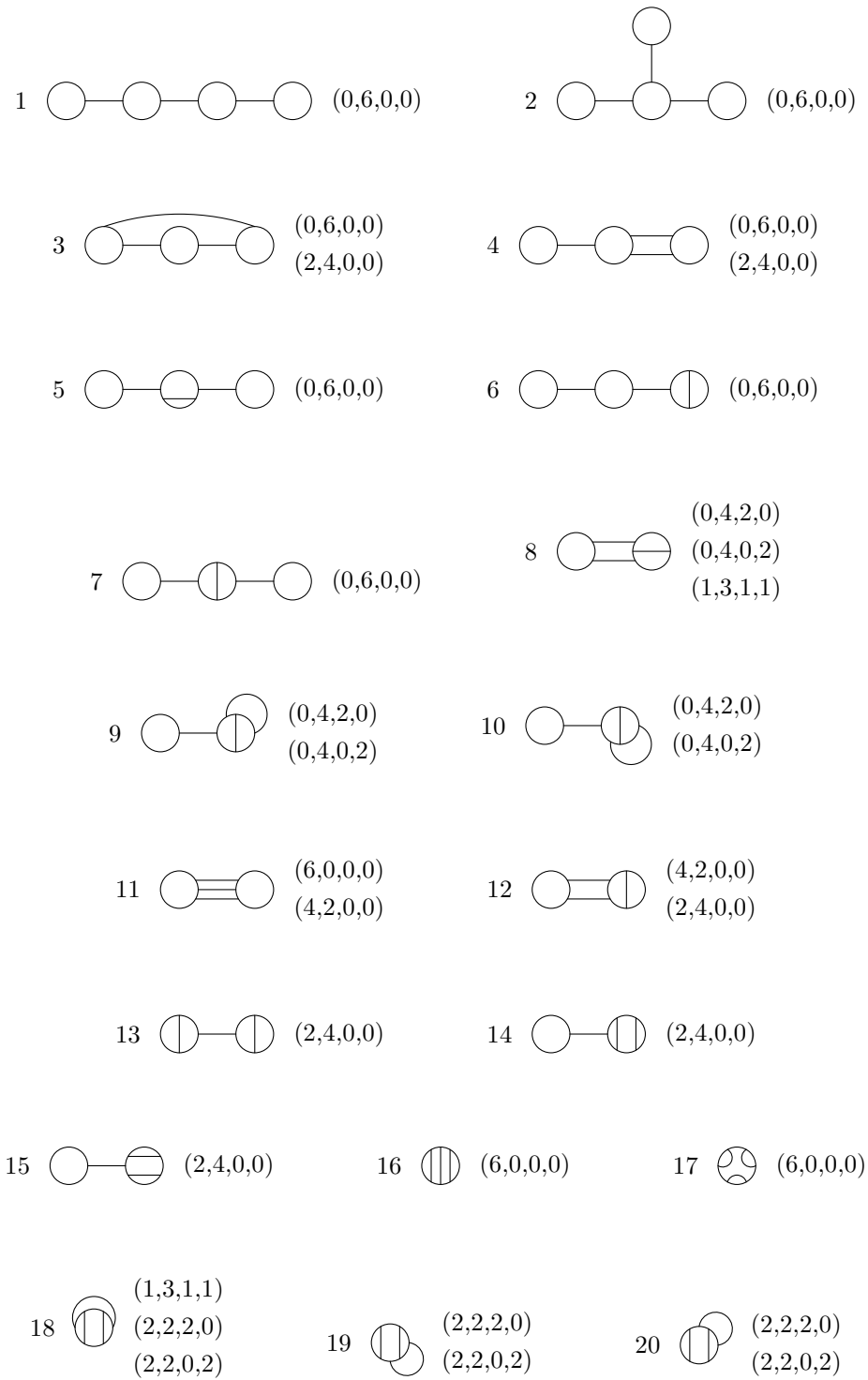

\centering
\includegraphics{2bild.1}\qquad\qquad
\includegraphics{2bild.2}\\\ \\\ \\\ \\
\includegraphics{2bild.3}\qquad\qquad
\includegraphics{2bild.4}\\\ \\\ \\\ \\
\includegraphics{2bild.5}\qquad\qquad
\includegraphics{2bild.6}\\\ \\\ \\\ \\
\includegraphics{2bild.7}\qquad\qquad
\includegraphics{2bild.8}\\\ \\\ \\\ \\
\includegraphics{2bild.9}\qquad\qquad
\includegraphics{2bild.10}\\\ \\\ \\\ \\
\includegraphics{2bild.11}\qquad\qquad
\includegraphics{2bild.12}\\\ \\\ \\\ \\
\includegraphics{2bild.13}\qquad\qquad
\includegraphics{2bild.14}\\\ \\\ \\\ \\
\includegraphics{2bild.15}\qquad\qquad
\includegraphics{2bild.16}\qquad\qquad
\includegraphics{2bild.17}\\\ \\\ \\\ \\
\includegraphics{2bild.18}\qquad\qquad
\includegraphics{2bild.19}\qquad\qquad
\includegraphics{2bild.20}
\caption{zusammenhängende dreidimensionale Konfigurationen modulo Rotation mit Möglichkeiten für $\textup{Typ}_C$}
\label{dreidim}
\end{figure}
Bleibt noch der Fall zu betrachten, dass $C$ eine aktive, nicht zusammenhängende, orientierte, dreidimensionale Konfiguration ist. Dann gibt es eine Komponente, die genau einen Bogen enthält. Ohne Einschränkung sei dies der erste Bogen in der Ordnung der Bögen. Dann ist $\g(C,(0,*,*))$ vom gleichen Typ wie $\g(C,(1,*,*))$.\\
Ist $\g(C,(*,0,0))$ eine Fusion, so sind $\g(C,(*,*,0))$, $\g(C,(*,*,1))$, $\g(C,(*,0,*))$ und $\g(C,(*,1,*))$ alle vom Typ K.\\
Sei nun $\g(C,(*,0,0))$ eine Spaltung. Dann ist $\g(C,(*,*,0))$ vom Typ K, falls $\g(C,(0,*,0))$ eine Fusion ist, sonst vom Typ A. Analog für $(*,*,1)$ und $(0,*,1)$, für $(*,0,*)$ und $(0,0,*)$, für $(*,1,*)$ und $(0,1,*)$. Von $\g(C,(0,*,0))$, $\g(C,(0,*,1))$, $\g(C,(0,0,*))$ und $\g(C,(0,1,*))$ sind aber die Anzahlen der Fusionen und Spaltungen jeweils gerade.
\end{proof}
\begin{proof}[Beweis von Satz \ref{exi}]
Wir zeigen den Satz für Typ X, für Typ Y funktioniert es analog. Sei $F$ die multiplikative Gruppe mit Elementen $-1$, $1$. Sei $C$ orientierte $k$-dimensionale Konfiguration. Wir erhalten eine CW-Struktur auf dem Hyperwürfel $Q\defeq[0,1]^k$ durch die $n$-dimensionalen Seiten $\sum(k,n)$ für $n\le k$. Wir definieren eine zelluläre 2-Kokette $\phi\in C_\textup{cell}^2(Q;F)$, die jedem $\eps\in\sum(k,2)$ das Element $1\in F$ zuordnet, falls $\gl$ vom Typ A oder X ist und das Element $-1\in F$, falls $\gl$ vom Typ K oder Y ist. Wegen Lemma \ref{gersum} ist $\phi$ ein Kozykel. Weil $Q$ zusammenziehbar ist, ist $\phi$ Korand. Also gibt es eine Kantenzuordnung vom Typ X.
\end{proof}
\begin{lemma}\label{inv1}
Sei $C$ orientierte Konfiguration. Seien $s$ und $t$ Kantenzuordnungen vom gleichen Typ (X oder Y) bezüglich $C$. Dann sind $\Gamma(C,s)$ und $\Gamma(C,t)$ isomorph als bigraduierte Kettenkomplexe.
\end{lemma}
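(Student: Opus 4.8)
The plan is to run the standard ``change of sign assignment'' argument: since $s$ and $t$ have the same type, they differ by a coboundary on the hypercube, and that coboundary induces the desired chain isomorphism. This parallels the proof of Satz \ref{exi}, except that there the contractibility of the cube was used in degree $1$ (every cocycle is a coboundary), whereas here it is used in degree $0$, i.e.\ the cellular coboundary $\delta\colon C^0\to Z^1$ is surjective.

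Concretely, let $k$ be the dimension of $C$, let $Q\defeq[0,1]^k$ carry the CW structure whose cells are the faces $\sum(k,n)$ for $n\le k$, and let $F$ be the multiplicative group with elements $-1,1$. First I would regard $s$ and $t$ as cellular $1$-cochains in $C^1_{\textup{cell}}(Q;F)$. For $\eps\in\sum(k,2)$ with bounding edges $a,b,c,d$ one has $(\delta s)(\eps)=s(a)s(b)s(c)s(d)=p(\eps,s)$, and likewise for $t$; since $s$ and $t$ have the same type with respect to $C$, these values agree for every such $\eps$. Hence $\delta s=\delta t$, so the pointwise product $st\in C^1_{\textup{cell}}(Q;F)$ --- which, as $F$ has exponent $2$, equals $st^{-1}$ --- is a cocycle.

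Because $Q$ is contractible, $st$ is a coboundary, so there is a $0$-cochain, i.e.\ a map $u\colon\sum(k,0)\to F$, with $\delta u=st$; explicitly, for every edge $\etaup\in\sum(k,1)$ with initial point $\etaup^0$ and terminal point $\etaup^1$,
\[u(\etaup^0)\,u(\etaup^1)=s(\etaup)\,t(\etaup).\]
Next I would define $\Phi\colon\Gamma(C,s)\to\Gamma(C,t)$ to be the homomorphism of the common underlying group $\Gamma(C)$ that on the summand $\Lambda\V(\g(C,\eps))$ (for $\eps\in\sum(k,0)$) is multiplication by the scalar $u(\eps)\in\{-1,1\}$. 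Since each $u(\eps)$ is invertible, $\Phi$ is an isomorphism of abelian groups whose inverse has the same form, and since $\Phi$ acts as a scalar on each $\eps$-block it preserves $\Gamma(C)^h=\bigoplus_{|\eps|=h}\Lambda\V(\g(C,\eps))$ as well as each piece $\Lambda^m\V(\g(C,\eps))$ of $\Gamma(C)_\delta$; hence $\Phi$ is bigraded.

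It remains to check that $\Phi$ is a chain map. For $x\in\Lambda\V(\g(C,\eps))$ with $\eps=\etaup^0$, the $\etaup$-component of $\partial(C,s)(x)$ is $s(\etaup)\,\partial_{\gle}(x)$ in the summand at $\etaup^1$, so its image under $\Phi$ is $u(\etaup^1)\,s(\etaup)\,\partial_{\gle}(x)$; on the other hand the $\etaup$-component of $\partial(C,t)(\Phi(x))$ is $u(\etaup^0)\,t(\etaup)\,\partial_{\gle}(x)$. The coboundary identity $u(\etaup^0)u(\etaup^1)=s(\etaup)t(\etaup)$, together with the commutativity of $F$, gives $u(\etaup^1)s(\etaup)=u(\etaup^0)t(\etaup)$, so the two expressions coincide; since this holds for every edge $\etaup$, we obtain $\Phi\circ\partial(C,s)=\partial(C,t)\circ\Phi$. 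The only point that requires any care is the sign bookkeeping in this last step; I do not expect a genuine obstacle, since the whole statement is just the degree-$0$ analogue of the cocycle-versus-coboundary reasoning already used to prove Satz \ref{exi}.
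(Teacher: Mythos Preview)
Your proof is correct and follows essentially the same route as the paper: the paper also observes that $st\in C^1_{\textup{cell}}(Q;F)$ is a cocycle (hence a coboundary by contractibility of $Q$), obtains a $0$-cochain $\etaup$ with $\etaup(\eps^0)\etaup(\eps^1)=s(\eps)t(\eps)$, and defines the isomorphism by multiplying the summand at $\theta\in\sum(k,0)$ by $\etaup(\theta)$. Your write-up simply spells out the chain-map verification and the bigrading compatibility that the paper leaves implicit.
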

\begin{proof}
Seien die Bezeichnungen wie im Beweis von Satz \ref{exi}. Dann ist $st\in C_\textup{cell}^1(Q;F)$ ein Kozykel und somit auch Korand. Es gibt also $\etaup:\sum(k,0)\to\{-1,1\}$, so dass für alle $\eps\in\sum(k,1)$ gilt $\etaup(\eps^0)\etaup(\eps^1)=s(\eps)t(\eps)$. Wir erhalten den gesuchten Isomorphismus, indem wir für alle $\theta\in\sum(k,0)$ auf $\Lambda\V(\g(C,\theta))$ mit $\etaup(\theta)$ multiplizieren.
\end{proof}
\begin{lemma}\label{inv2}
Seien $C$ und $D$ orientierte Konfigurationen, so dass $\overline{C}=\overline{D}$ und $s$ eine Kantenzuordnung vom Typ X oder Y bezüglich $C$. Dann gibt es eine Kantenzuordnung $t$, so dass $t$ bezüglich $D$ den gleichen Typ hat, wie $s$ bezüglich $C$ und so dass die bigraduierten Kettenkomlexe $\Gamma(C,s)$ und $\Gamma(D,t)$ gleich sind.
\end{lemma}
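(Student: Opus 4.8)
The plan is to reduce everything to the case in which $D$ is obtained from $C$ by reversing the orientation of a single arc, and then to write down $t$ by hand. First note that $\Gamma(C)=\Gamma(D)$ already as bigraded abelian groups: for a vertex $\theta\in\sum(k,0)$ the configuration $\g(C,\theta)$ has no arcs left, so it — and in particular its ordered set of circles, hence $\V(\g(C,\theta))$ — depends only on $\overline C$ and $\theta$, and the $h$- and $\delta$-gradings only involve $|\theta|$ and the numbers of circles. So it is enough to produce a $t$ of the correct type (X or Y) with $\partial(D,t)=\partial(C,s)$. Moreover, reversing the orientation of the arcs in a subset $S$ is a composition of $|S|$ single-arc reversals, and the conclusion of the lemma composes, since the intermediate chain complexes are literally equal; so I may assume $D$ arises from $C$ by reversing the $j$-th arc.

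Next I would compare $\partial_{\g(C,\etaup)}$ with $\partial_{\g(D,\etaup)}$ for each edge $\etaup\in\sum(k,1)$. If the unique $*$ of $\etaup$ is not at position $j$, then $\g(D,\etaup)=\g(C,\etaup)$, because the $0$- and $1$-smoothing of arc $j$ do not see its orientation; so $\partial_{\g(D,\etaup)}=\partial_{\g(C,\etaup)}$. If the $*$ of $\etaup$ is at position $j$ and $\g(C,\etaup)$ is of the first type of Figure \ref{eindim}, then the formulas for $\partial_{\g(C,\etaup)}$ (with passive circles $\omega$: $\partial(\omega)=\omega$, $\partial(x_1\wedge\omega)=\partial(x_2\wedge\omega)=y\wedge\omega$, $\partial(x_1\wedge x_2\wedge\omega)=0$) are symmetric in $x_1,x_2$ and involve $y$ only as a target, so reversing the arc changes nothing: $\partial_{\g(D,\etaup)}=\partial_{\g(C,\etaup)}$. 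If the $*$ is at position $j$ and $\g(C,\etaup)$ is of the second type, then reversing the arc interchanges the two circles $x_1,x_2$ of $\g(C,\etaup)^*$, so in $\partial(\omega)=(x_1-x_2)\wedge\omega$ and $\partial(y\wedge\omega)=x_1\wedge x_2\wedge\omega$ every value changes sign: $\partial_{\g(D,\etaup)}=-\partial_{\g(C,\etaup)}$. Accordingly I set $t(\etaup)=-s(\etaup)$ in this last case and $t(\etaup)=s(\etaup)$ otherwise. Since every $\partial_{\g(C,\etaup)}$ is nonzero, we get $t(\etaup)\partial_{\g(D,\etaup)}=s(\etaup)\partial_{\g(C,\etaup)}$ for all edges, hence $\partial(D,t)=\partial(C,s)$ and therefore $\Gamma(D,t)=\Gamma(C,s)$ as bigraded chain complexes.

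It remains to show that $t$ is of the same type as $s$, relative to $D$. Fix $\eps\in\sum(k,2)$. From the definition of $t$, $p(\eps,t)=\lambda\,p(\eps,s)$ with $\lambda=1$, except when $j$ is one of the two arcs of $\eps$ and exactly one of the two edges of $\eps$ along which arc $j$ survives is of the second (splitting) type of Figure \ref{eindim}, in which case $\lambda=-1$. I would then establish: if $\lambda=1$ then $\g(D,\eps)$ has the same type as $\g(C,\eps)$, while if $\lambda=-1$ then reversing arc $j$ sends types A, K, X, Y to K, A, Y, X respectively; comparing with the defining conditions of a type-X (resp.\ type-Y) edge assignment then gives that $t$ has the correct type. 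The main obstacle is precisely this last claim, i.e.\ understanding how the type of a two-dimensional configuration behaves under reorientation of one of its arcs. For types A and K this can be read off from the relation between the composites $\partial_{\g(\cdot,b)}\partial_{\g(\cdot,a)}$ and $\partial_{\g(\cdot,d)}\partial_{\g(\cdot,c)}$: reversing arc $j$ negates each composite exactly when the factor through which arc $j$ survives comes from a splitting-type edge, so $\lambda=-1$ forces an interchange of A and K and $\lambda=1$ preserves the type. For types X and Y, where both composites vanish, the composites give no information; here one has to inspect configurations 44 and 45 of Figure \ref{zweidim2} directly and check that reversing either arc interchanges them (and that for an X/Y-face with $j$ among its arcs one is always in the case $\lambda=-1$). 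Reducing, as in the proof of Lemma \ref{gersum}, to connected configurations modulo rotation keeps this verification finite.
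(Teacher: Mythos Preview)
Your proposal is correct and follows essentially the same route as the paper: reduce to a single arc reversal, set $t=\alpha s$ with $\alpha(\etaup)=-1$ exactly when the $*$ of $\etaup$ is at the reversed arc and $\g(C,\etaup)$ is a split, observe that this makes $\partial(D,t)=\partial(C,s)$, and then check that $t$ has the correct type via the identity $p(\eps,t)=d^1(\alpha)(\eps)\,p(\eps,s)$.

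The only real difference is in how the type check is carried out. The paper simply runs through the 45 active two\-dimensional configurations: for cases 1--27 and 30--43 the type is preserved and $\alpha$ is $-1$ on zero or two of the four incident edges, while cases 28, 29, 44, 45 are sent to 29, 28, 45, 44 with $\alpha=-1$ on exactly one edge. Your argument via the relative sign of the two composites $\partial_b\partial_a$ and $\partial_d\partial_c$ is a nice shortcut for the A/K part, but it implicitly uses that these composites are nonzero for all A/K faces (otherwise ``agree'' and ``are negatives'' are not mutually exclusive, and the resulting case could in principle be 44 or 45); this is true and standard but should be said. Once you grant that, your approach and the paper's direct case check are equivalent, and your handling of the X/Y faces (direct inspection of 44 and 45, plus $\lambda=-1$ there) matches the paper exactly.
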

\begin{proof}
Wir zeigen den Satz für Typ X, für Typ Y funktioniert es analog. Es reicht, den Fall zu betrachten, wenn sich $C$ und $D$ nur durch die Orientierung eines Bogens unterscheiden. Ohne Einschränkungen sei dies der erste Bogen in der Ordnung der Bögen. Sei $\alpha:\sum(k,1)\to\{-1,1\}$ definiert durch
\[\alpha(\eps)=\begin{cases}
-1\qquad&\textup{falls $\eps_1=*$ und $\gl$ eine Spaltung ist,}\\
1\qquad&\textup{sonst.}
\end{cases}\]
Für alle $\eps\in\sum(k,1)$ gilt $\partial_{\gl}=\alpha(\eps)\partial_{\g(D,\eps)}$. Sei $\phi:\sum(k,2)\to\{-1,1\}$ definiert durch
\[\phi(\eps)=\begin{cases}
1\qquad&\textup{falls $\gl$ vom Typ A oder X ist,}\\
-1\qquad&\textup{falls $\gl$ vom Typ K oder Y ist.}
\end{cases}\]
Analog sei $\psi:\sum(k,2)\to\{-1,1\}$ für $D$ definiert. Mit den Bezeichnungen aus dem Beweis von Satz \ref{exi} sind $\phi,\psi\in C_\textup{cell}^2(Q;F)$ und $\alpha\in C_\textup{cell}^1(Q;F)$. Es gilt $\psi=\phi d^1(\alpha)$. Für $\eps\in\sum(k,2)$, für die die erste Komponente ungleich $*$ ist, ist dies klar. Sei nun die erste Komponente von $\eps$ gleich $*$. Ist $\gl$ einer der Fälle 1-27 oder 30-43 aus den Abbildungen \ref{zweidim1} und \ref{zweidim2}, so haben $\gl$ und $\g(D,\eps)$ den gleichen Typ und $\alpha$ ist auf genau null oder genau zwei der vier an $\eps$ angrenzenden Kanten $-1$. Ist $\gl$ Fall 28, 29, 44, 45, so ist $\g(D,\eps)$ Fall 29, 28, 45, 44 und $\alpha$ ist auf genau einer an $\eps$ angrenzenden Kante $-1$.\\
Es sei $t\defeq\alpha s$. Aus $d^1(s)=\phi$ folgt $d^1(t)=\psi$, somit ist $t$ eine Kantenzuordnung vom Typ X bezüglich $D$, so dass $\partial(C,s)=\partial(D,t)$.
\end{proof}
Mit Lemma \ref{inv1} und Lemma \ref{inv2} ist Satz \ref{inv} bewiesen. Als Nächstes möchten wir Satz \ref{rminv} beweisen.
\begin{prop}[RM1-Invarinanz]\label{p1}
Seien $D_1$ und $D_2$ Verschlingungsdiagramme, die sich wie in Abbildung \ref{rm1} lokal unterscheiden. Dann gilt die Aussage aus Satz \ref{rminv}.
\end{prop}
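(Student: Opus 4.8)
The plan is to realise $\Gamma(C_2,s_2)$ as a mapping cone along the coordinate of the new crossing and to collapse it by \emph{delooping} followed by Gaussian elimination, just as one proves Reidemeister-I invariance of ordinary Khovanov homology. Let $C_1$ be an oriented configuration with $\overline{C_1}$ a zero-smoothing of $D_1$ and let $s_1$ be of type X with respect to $C_1$ (Satz~\ref{exi}). The move replaces the new crossing by one extra arc; choose $C_2$ with $\overline{C_2}$ a zero-smoothing of $D_2$ that agrees with $C_1$ (including the arc orientations) outside the kink, with the new arc last in the order $\sigma$, and choose $s_2$ of type X with respect to $C_2$ (Satz~\ref{exi}). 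Put $k\defeq\dim C_1$, so $\dim C_2=k+1$, and split off the last cube coordinate: $\sum(k+1,0)=\sum(k,0)\times\{0,1\}$, and set $\Gamma_\iota\defeq\bigoplus_{\theta\in\sum(k,0)}\Lambda\V(\g(C_2,(\theta,\iota)))$ for $\iota\in\{0,1\}$. Because $\partial(C_2,s_2)$ raises the $h$-grade by $1$, the edge maps indexed by $\sum(k,0)\times\{*\}$ run $\Gamma_0\to\Gamma_1$ while those indexed by $\sum(k,1)\times\{\iota\}$ stay inside $\Gamma_\iota$; hence $\Gamma_1$ is a subcomplex, $\Gamma_0$ the quotient, and $\Gamma(C_2,s_2)$ is the mapping cone of $f\defeq\bigoplus_{\theta\in\sum(k,0)}s_2(\theta,*)\,\partial_{\g(C_2,(\theta,*))}\colon\Gamma_0\to\Gamma_1$.

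Two facts do the work. First, for $\zeta\in\sum(k,2)$ the four edges of $(\zeta,\iota)$ are $(e,\iota)$ for the four edges $e$ of $\zeta$, and $\g(C_2,(\zeta,\iota))$ has the same type (A, K, X or Y) as $\g(C_1,\zeta)$ — the two active arcs and the circles meeting them are the same, the kink arc being deleted. Hence the restriction $s_2(\cdot,\iota)$ is an edge assignment of type X with respect to $C_1$, so by Lemma~\ref{inv1} the complex $\Gamma(C_1,s_2(\cdot,\iota))$ is isomorphic to $\Gamma(C_1,s_1)$ as a bigraded complex. Second, the local picture of the kink (Figure~\ref{rm1}) shows that one of the two resolutions of the new crossing reproduces the configuration of $C_1$ while the other reproduces it together with one disjoint (hence passive) circle $U$; accordingly one of $\Gamma_0,\Gamma_1$ equals $\Gamma(C_1,s')\otimes\Lambda\V(U)$ and the other equals $\Gamma(C_1,s'')$ for the two type-X restrictions $s',s''$ of $s_2$, and $\partial_{\g(C_2,(\theta,*))}$ is, for every $\theta$, the odd-Khovanov saddle map of the corresponding one-dimensional configuration (a merge if $U$ is destroyed, a split if $U$ is created), tensored with the identity on the passive circles.

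Now deloop the $U$-side, $\Lambda\V(X\sqcup U)=\Lambda\V(X)\oplus U\wedge\Lambda\V(X)$, and inspect the merge/split formulas: the component of $f$ relating $\Gamma_0$ with one of the two delooped summands of the $U$-side is an isomorphism of bigraded groups, and — because $U$ is passive — all internal differentials respect the $U$-grading, so Gaussian elimination of that component leaves no correction term. What survives is the other delooped summand with its internal differential, i.e.\ $\Gamma(C_1,s_1)$ up to an explicit shift in $(h,\delta)$ whose value is read off from the factor $\Lambda\V(U)$ and from the exponents $|(\theta,\iota)|$. Consequently $H_{h,\delta}(\Gamma(C_2,s_2))\cong H_{h+a,\delta+b}(\Gamma(C_1,s_1))$ with $a,b$ determined solely by the sign of the kink. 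Finally the kink is a positive or a negative crossing, so exactly one of $n_+,n_-$ increases by $1$ from $D_1$ to $D_2$, and one checks that $(a,b)$ is precisely the shift absorbed by the normalisations $\Gamma(D_i,C_i,s_i)^h=\Gamma(C_i)^{h+n_-(D_i)}$ and $\Gamma(D_i,C_i,s_i)_\delta=\Gamma(C_i)_{\delta-n_+(D_i)}$; hence $H_{h,\delta}(\Gamma(D_2,C_2,s_2))\cong H_{h,\delta}(\Gamma(D_1,C_1,s_1))$. Nothing in the argument used the distinction between types X and Y, so the type-Y statement follows by the same reasoning.

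The delicate point is the last one: one must verify from the explicit merge/split formulas that $f$ really contains an isomorphism component (the topological input of Figure~\ref{rm1}), that after Gaussian elimination the surviving differential is again $\bigoplus_\zeta s_2(\zeta,\iota)\,\partial_{\g(C_1,\zeta)}$ up to one global sign — so that Lemma~\ref{inv1} applies — and above all that the resulting $(h,\delta)$-shift equals $(n_-(D_2)-n_-(D_1),\,-(n_+(D_2)-n_+(D_1)))$, so that it cancels against the link-diagram normalisation. Everything else is formal: mapping cones, the long exact sequence of a short exact sequence of complexes, and a direct reading of the RM1 tangle.
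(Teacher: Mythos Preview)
Your approach is correct and essentially the same as the paper's, with two cosmetic differences. First, the paper constructs $s_2$ explicitly from $s_1$ by setting $s_2(a,0)\defeq s_1(a)$, $s_2(a,1)\defeq -s_1(a)$ for $a\in\sum(k,1)$ and $s_2(\theta,*)\defeq 1$ for $\theta\in\sum(k,0)$, so that the restriction to the $\iota=0$ face is literally $s_1$ and no appeal to Lemma~\ref{inv1} is needed at the end; you instead invoke Satz~\ref{exi} for $s_2$ and then Lemma~\ref{inv1}, which is equally valid but slightly less direct. Second, where you phrase the collapse as delooping followed by Gaussian elimination, the paper simply exhibits the acyclic subcomplex $A\oplus B$ directly --- in your notation this is the summand of $\Gamma_0$ not divisible by $U$, together with all of $\Gamma_1$ --- and passes to the quotient; these are the same manoeuvre in different language. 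The paper treats only the specific (positive) kink drawn in Figure~\ref{rm1} and records that $D_2$ has one more positive crossing, whereas you argue for both signs generically; either is fine for the purposes of Satz~\ref{rminv}.
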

\begin{figure}[htbp]
\centering
\includegraphics{graph.3}
\caption{Reidemeister-Bewegung 1}
\label{rm1}
\end{figure}
\begin{proof}
Seien $C_1$, $s_1$ beliebig. Eine Nullglättung von $D_2$ sieht wie in Abbildung \ref{ngd2} aus.
\begin{figure}[htbp]
\centering
\includegraphics{1bild.5}
\caption{Nullglättung von $D_2$}
\label{ngd2}
\end{figure}
Der im Bild vorkommende Kreis sei mit $z$ der Bogen mit $\gamma$ bezeichnet. $C_2$ sei so orientiert, dass alle Bögen außer $\gamma$ die gleiche Orientierung haben wie in $C_1$. Die Orientierung von $\gamma$ sei beliebig. Sei $\gamma$ der letzte Bogen in der Ordnung der Bögen, alle anderen Bögen seien geordnet wie in $C_1$. Sei $k$ die Dimension von $C_1$. Für $\eps\in\sum(k,0)$ ist $\g(C_2,(\eps,1))=\g(C_1,\eps)$ und $\g(C_2,(\eps,0))$ ist wie $\g(C_1,\eps)$ mit der lokalen Änderung wie in Abbildung \ref{gc2}.
\begin{figure}[htbp]
\centering
\includegraphics{graph.4}
\caption{}
\label{gc2}
\end{figure}
Für $a\in\sum(k,1)$ sei $s_2(a,0)\defeq s_1(a)$ und $s_2(a,1)\defeq-s_1(a)$. Für $b\in\sum(k+1,1)$ mit $b_{k+1}=*$ sei $s_2(b)\defeq 1$. Man prüft leicht nach, dass $s_2$ bezüglich $C_2$ den gleichen Typ hat wie $s_1$ bezüglich $C_1$. Für $\eps\in\sum(k+1,0)$ mit $\eps_{k+1}=0$ sei $\V_z(\g(C_2,\eps))$ die freie abelsche Gruppe, die von allen Kreisen von $\g(C_2,\eps)$ außer $z$ erzeugt wird. Es sei
\[A\defeq\bigoplus_{\eps\in\sum(k+1,0)\textup{ mit }\eps_{k+1}=0}\Lambda\V_z(\g(C_2,\eps))\]
und $\partial_A$ die Einschränkung von
\[\bigoplus_{\etaup\in\sum(k+1,1)\textup{ mit }\etaup_{k+1}=0}s_2(\etaup)\partial_{\g(C_2,\etaup)}\]
auf $A$. Weiter sei
\[B\defeq\bigoplus_{\eps\in\sum(k+1,0)\textup{ mit }\eps_{k+1}=1}\Lambda\V(\g(C_2,\eps))\]
und
\[\partial_B\defeq\bigoplus_{\etaup\in\sum(k+1,1)\textup{ mit }\etaup_{k+1}=1}s_2(\etaup)\partial_{\g(C_2,\etaup)}\]
und $\partial_{A\to B}$ die Einschränkung von
\[\bigoplus_{\etaup\in\sum(k+1,1)\textup{ mit }\etaup_{k+1}=*}s_2(\etaup)\partial_{\g(C_2,\etaup)}\]
auf $A$. $\partial_{A\to B}$ ist ein Isomorphismus der abelschen Gruppen $A$ und $B$ und es gilt $\partial_{A\to B}\circ\partial_A=-\partial_B\circ\partial_{A\to B}$. Somit ist der Unterkomplex $(A\oplus B, \partial_A\oplus\partial_B\oplus\partial_{A\to B})$ von $\Gamma(D_2,C_2,s_2)$ azyklisch. Der Quotietenkomplex ist isomorph als bigraduierter Kettenkomplex zu $\Gamma(D_1,C_1,s_1)$, da $D_2$ eine Plus-Kreuzung mehr hat als $D_1$.
\end{proof}
\begin{prop}[RM2-Invarinanz]\label{p2}
Seien $D_1$ und $D_2$ Verschlingungsdiagramme, die sich wie in Abbildung \ref{rm2} lokal unterscheiden. Dann gilt die Aussage aus Satz \ref{rminv}.
\end{prop}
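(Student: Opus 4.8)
Der Plan ist, analog zum Beweis von Proposition \ref{p1} vorzugehen, wobei nun zwei neue Kreuzungen statt einer hinzukommen. Man w\"ahlt $C_1$ und $s_1$ beliebig (also $\overline{C_1}$ eine Nullgl\"attung von $D_1$ und $s_1$ eine Kantenzuordnung vom Typ X oder Y bez\"uglich $C_1$); sei $k$ die Dimension von $C_1$. Da $D_2$ bei einer Reidemeister-2-Bewegung gegen\"uber $D_1$ eine Plus- und eine Minus-Kreuzung mehr hat, ist $n_+(D_2)=n_+(D_1)+1$ und $n_-(D_2)=n_-(D_1)+1$. Eine Nullgl\"attung von $D_2$ hat zwei neue B\"ogen $\gamma_1,\gamma_2$ (zu den beiden neuen Kreuzungen). $C_2$ sei so gew\"ahlt, dass $\gamma_1,\gamma_2$ die letzten beiden B\"ogen in der Ordnung der B\"ogen sind, alle \"ubrigen B\"ogen wie in $C_1$ geordnet und orientiert sind und $\gamma_1,\gamma_2$ beliebig orientiert sind. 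Wie im Beweis von Proposition \ref{p1} gibt man dann eine Kantenzuordnung $s_2$ an, die sich von der kanonischen Fortsetzung von $s_1$ nur durch explizit angebbare Vorzeichen auf den Kanten $\etaup\in\sum(k+2,1)$ mit $\etaup_{k+1}=*$ oder $\etaup_{k+2}=*$ unterscheidet, und pr\"uft durch eine endliche Fallunterscheidung (wie in Lemma \ref{inv2}), dass $s_2$ bez\"uglich $C_2$ denselben Typ hat wie $s_1$ bez\"uglich $C_1$.

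Danach w\"urde ich $\Gamma(D_2,C_2,s_2)$ anhand der Werte $(\eps_{k+1},\eps_{k+2})\in\{0,1\}^2$ der beiden letzten Koordinaten in vier Schichten zerlegen; jede Schicht hat die Form $\bigoplus_{\eps\in\sum(k,0)}\Lambda\V(\g(C_2,(\eps,i,j)))$ mit festen $i,j\in\{0,1\}$, und die Schichtdifferentiale sowie die verbindenden Abbildungen zwischen den Schichten sind die entsprechenden (vorzeichenbehafteten) Komponenten von $\partial(C_2,s_2)$ l\"angs der Kanten des $2$-dimensionalen Teilw\"urfels. Die geometrische Kernbeobachtung ist das Verhalten der vier Gl\"attungen des RM2-Bereichs: Genau eine davon --- die \emph{orientierte} --- liefert f\"ur jedes $\eps$ gerade $\g(C_1,\eps)$, so dass die zugeh\"orige Schicht $V_{\textup{or}}$ als bigraduierte Gruppe isomorph zu $\Gamma(C_1)$ ist (bis auf die von $n_\pm(D_2)$ herr\"uhrende Verschiebung der Bigraduierung). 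Eine zweite Gl\"attung liefert $\g(C_1,\eps)$ zusammen mit einem zus\"atzlichen kleinen Kreis $z$, und eine der an diese Schicht angrenzenden verbindenden Abbildungen ist --- nach geeigneter Einschr\"ankung auf einen direkten Summanden --- ein Isomorphismus abelscher Gruppen, n\"amlich eine Verschmelzung bzw.\ Spaltung entlang $z$, v\"ollig analog zur Abbildung $\partial_{A\to B}$ im Beweis von Proposition \ref{p1}. Die beiden restlichen Schichten geh\"oren zu \emph{gemischten} Gl\"attungen.

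Damit w\"urde ich die Homologie-Isomorphie in zwei Reduktionsschritten herleiten: Zuerst spalte ich mit dem eben genannten Isomorphismus einen azyklischen Unter- bzw.\ Quotientenkomplex ab (analog zum Komplex $A\oplus B$ im Beweis von Proposition \ref{p1}); in dem dabei entstehenden, um einen Zickzack-Korrekturterm modifizierten Komplex ist dann wieder eine der verbindenden Abbildungen ein Isomorphismus abelscher Gruppen, so dass sich ein zweites Mal reduzieren l\"asst. \"Ubrig bleibt ein Komplex auf $V_{\textup{or}}$. Mit den Gradverschiebungen $\Gamma(D,C)^h=\Gamma(C)^{h+n_-(D)}$ und $\Gamma(D,C)_\delta=\Gamma(C)_{\delta-n_+(D)}$ sowie $n_\pm(D_2)=n_\pm(D_1)+1$ rechnet man nach, dass dieser Komplex als bigraduierter Kettenkomplex zu $\Gamma(D_1,C_1,s_1)$ isomorph ist; die $h$- und $\delta$-Verschiebungen heben sich gerade auf, weil sich $n_+$ und $n_-$ beide um $1$ erh\"ohen. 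Es folgt $H_{h,\delta}(\Gamma(D_2,C_2,s_2))\cong H_{h,\delta}(\Gamma(D_1,C_1,s_1))$, also die Aussage aus Satz \ref{rminv} f\"ur Typ X wie f\"ur Typ Y.

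Der Hauptaufwand --- und aus meiner Sicht der eigentliche Knackpunkt --- liegt, wie schon bei RM1, in den Vorzeichen. Erstens ist --- durch eine Fallunterscheidung \"uber die m\"oglichen Typen der $\g(C_2,(\eps,i,j))$ im Stil von Lemma \ref{inv2} --- zu zeigen, dass das angegebene $s_2$ wirklich vom richtigen Typ ist. Zweitens, und deutlich delikater als bei RM1, ist nachzuweisen, dass das Zusammenspiel der Vorzeichen $s_2(\etaup)$ mit den RM2-Gl\"attungen die beiden abzuspaltenden Komplexe tats\"achlich azyklisch macht und dass die bei den beiden Reduktionen auftretenden Zickzack-Korrekturterme den Endkomplex nicht verf\"alschen, dieser also wirklich (und nicht nur bis auf Homotopie\"aquivalenz) gleich $\Gamma(D_1,C_1,s_1)$ ist. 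Besonders zu beachten ist dabei die Orientierungsabh\"angigkeit von $\partial_C$ bei Spaltungen --- der Term $x_1-x_2$ ---, da die beiden neuen B\"ogen $\gamma_1,\gamma_2$ benachbart liegen und sich ihre Gl\"attungen gegenseitig beeinflussen; genau darin liegt der Unterschied zur RM1-Situation, in der nur ein einziger neuer Bogen auftritt und die Zerlegung des Komplexes ohne Korrekturterme auskommt.
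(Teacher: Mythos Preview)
Your overall strategy---decompose $\Gamma(D_2,C_2,s_2)$ as a $2\times 2$-cube over the two new crossings, identify one of the four resolutions with $\Gamma(D_1,C_1,s_1)$, and reduce twice via acyclic sub- or quotient complexes---is correct and matches the paper. The paper, however, executes it so as to bypass exactly the two difficulties you flag as the ``Hauptaufwand''.

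First, the paper reverses your direction: it chooses $(C_2,s_2)$ \emph{first} (arbitrarily, subject only to the two new arcs $\alpha,\beta$ being first and second in the arc ordering) and then \emph{defines} $C_1\defeq\g(C_2,(0,1,*,\dots,*))$ and $s_1(a)\defeq s_2(0,1,a)$. Since $s_2$ of the desired type exists by Satz~\ref{exi}, there is no sign assignment to construct and no Fallunterscheidung in the style of Lemma~\ref{inv2} to carry out; the restriction $s_1$ is automatically of the right type. Because Satz~\ref{rminv} is an existence statement, this direction suffices.

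Second, the paper orders the two reductions so that no zigzag correction terms arise. The $(1,0)$-resolution carries the extra small circle; one defines $A\subset\Gamma(\g(C_2,(1,0,*,\dots,*)))$ exactly as in the proof of Proposition~\ref{p1}. Then $A\oplus\Gamma(\g(C_2,(1,1,*,\dots,*)))$ is an acyclic \emph{sub}complex, and in the quotient $B$ the $(0,1)$-layer $C\defeq\Gamma(\g(C_2,(0,1,*,\dots,*)),s_2|)$ is a genuine \emph{sub}complex of $B$ with its original, unmodified differential (its only outgoing edge lands in the $(1,1)$-layer, which is already zero in $B$); the quotient $B/C$ is acyclic. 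Thus $C$ is literally $\Gamma(D_1,C_1,s_1)$ after the bigrading shift coming from $n_\pm(D_2)=n_\pm(D_1)+1$. Your Gaussian-elimination variant with correction terms would also go through, but the paper's choice of subcomplex-then-quotient order makes the argument noticeably shorter.
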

\begin{figure}[htbp]
\centering
\includegraphics{graph.5}
\caption{Reidemeister-Bewegung 2}
\label{rm2}
\end{figure}
\begin{proof}
Eine Nullglättung von $D_2$ sieht wie in Abbildung \ref{ngd22} aus.
\begin{figure}[htbp]
\centering
\includegraphics{graph.6}
\caption{Nullglättung von $D_2$}
\label{ngd22}
\end{figure}
Seien $C_2$, $s_2$ beliebig, so dass $\alpha$ der erste und $\beta$ der zweite Bogen in der Ordnung der Bögen seien. Sei $k+2$ die Dimension von $C_2$. Dann gilt:
\begin{multline*}
\Gamma(D_2,C_2,s_2)\\
=\xymatrix{ & {\Gamma(\g(C_2,(0,1,*,\dots,*)),s_2|)} \ar[dr] \\ {\Gamma(\g(C_2,(0,0,*,\dots,*)),s_2|)} \ar[ur] \ar[dr] && {\Gamma(\g(C_2,(1,1,*,\dots,*)),s_2|)} \\ & {\Gamma(\g(C_2,(1,0,*,\dots,*)),s_2|)} \ar[ur] }\\
=\xymatrix{ & {\includegraphics{1bild.6}} \ar[dr] \\ {\includegraphics{1bild.7}} \ar[ur] \ar[dr] && {\includegraphics{1bild.8}} \\ & {\includegraphics{1bild.9}} \ar[ur] }
\end{multline*}
Es sei $C_1\defeq\g(C_2,(0,1,*,\dots,*))$ und für alle $a\in\sum(k,1)$ sei $s_1(a)\defeq s_2(0,1,a)$. Im Beweis der RM1-Invarianz haben wir eine abelsche Gruppe $A$ definiert. Analog sei nun $A$ als Untergruppe von $\Gamma(\g(C_2,(1,0,*,\dots,*)))$ definiert. Dann ist $A\oplus\Gamma(\g(C_2,(1,1,*,\dots,*)))$ azyklischer Unterkomplex von $\Gamma(D_2,C_2,s_2)$. Den Quotientenkomplex nennen wir $B$. In $B$ haben wir den Unterkomplex $C\defeq\Gamma(\g(C_2,(0,1,*,\dots,*)),s_2|)$. Der Quotientenkomplex ist azyklisch. $C$ ist isomorph als bigraduierter Kettenkomplex zu $\Gamma(D_1,C_1,s_1)$, da $D_2$ eine Plus- und eine Minus-Kreuzung mehr hat als $D_1$.
\end{proof}
\begin{prop}[RM3-Invarinanz]\label{p3}
Seien $D_1$ und $D_2$ Verschlingungsdiagramme, die sich wie in Abbildung \ref{rm3} lokal unterscheiden. Dann gilt die Aussage aus Satz \ref{rminv}.
\end{prop}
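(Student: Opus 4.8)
The plan is to imitate the strategy of Propositions~\ref{p1} and~\ref{p2}: one realises $\Gamma(D_2,C_2,s_2)$ as a cube of resolutions over the three crossings lying in the Reidemeister-3 region of Figure~\ref{rm3}, splits off acyclic sub- and quotient complexes, and reduces step by step to a complex isomorphic to $\Gamma(D_1,C_1,s_1)$. In contrast to RM1 and RM2, the diagrams $D_1$ and $D_2$ carry here the same crossings with the same signs, so $n_+(D_1)=n_+(D_2)$ and $n_-(D_1)=n_-(D_2)$; it therefore suffices to produce, after the cancellations, an isomorphism of bigraded chain complexes, with no extra shift appearing.

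First I would single out the crossing $c$ across which the third strand is slid in Figure~\ref{rm3}, and choose a zero-smoothing $C_2$ of $D_2$ in which $c$ corresponds to the last arc in the ordering of arcs, all remaining arcs ordered and oriented so as to agree with a zero-smoothing $C_1$ of $D_1$. The edge assignment $s_2$ is built from $s_1$ exactly as in the proof of Proposition~\ref{p1} --- unchanged on the faces not involving the $c$-direction, with a sign flip in the layer where the $c$-coordinate equals $1$, and value $1$ on the new $*$-faces --- and one checks as there that $s_2$ has the same type (X or Y) as $s_1$; such assignments exist by Satz~\ref{exi}. With these choices $\Gamma(D_2,C_2,s_2)$ is the total complex, in the last coordinate, of a connecting differential
\[\Gamma\bigl(\g(C_2,(*,\dots,*,0)),s_2|\bigr)\longrightarrow\Gamma\bigl(\g(C_2,(*,\dots,*,1)),s_2|\bigr),\]
and the same description holds for $\Gamma(D_1,C_1,s_1)$.

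Next I would analyse the two layers. In one layer, deleting $c$ lets the third strand be isotoped past the (smoothed) crossing, so the two corresponding complexes are literally isomorphic on the $D_1$- and on the $D_2$-side. In the other layer, the two remaining crossings of the RM3 region form a Reidemeister-2 bigon, so the acyclic sub- and quotient complexes of the proof of Proposition~\ref{p2} apply and the homology of that layer is already computed by a subcomplex $\Gamma(E,u)$ with two fewer arcs --- the same $\Gamma(E,u)$ on both sides, since removing the bigon leaves planar-isotopic diagrams. Carrying out these cancellations on both sides, $\Gamma(D_1,C_1,s_1)$ and $\Gamma(D_2,C_2,s_2)$ both acquire the homology of the mapping cone of an \emph{induced} map between the same two smaller complexes; hence their homologies are isomorphic as bigraded abelian groups. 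The argument for type Y is verbatim the same, with X-type assignments replaced by Y-type ones.

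The step I expect to be the main obstacle is precisely this last identification: I must check that the acyclic cancellations performed in the Reidemeister-2 layer are compatible with the differential components connecting it to the other layer, so that the map induced on the cancelled complexes really is the \emph{same} on the two sides (after the canonical identification of the isotopic pieces). Concretely this is a Gauss-elimination bookkeeping combined with a careful propagation of the signs $s_2(\etaup)$ through each reduction, and --- depending on the over/under data and on the arc orientations in Figure~\ref{rm3} --- a small finite number of local cases has to be inspected, comparable to the split between cases 28/29 and 1--27 in the proof of Lemma~\ref{inv2}; each of them reduces, however, to a computation already carried out for Propositions~\ref{p1} and~\ref{p2}.
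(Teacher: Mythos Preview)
Your approach is genuinely different from the paper's, and the gap you flag is real and not, as you suggest, merely Gauss-elimination bookkeeping reducible to Propositions~\ref{p1} and~\ref{p2}.

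The paper does \emph{not} single out one crossing and reduce RM3 to RM2 via a mapping-cone argument. Instead it treats the three crossings of the RM3 region symmetrically on both sides: for each of $\Gamma(C_1,s_1)$ and $\Gamma(C_2,s_2)$ (with $s_1,s_2$ chosen \emph{independently}, not one built from the other) it writes out the full $2^3$-cube over these crossings, cancels an acyclic sub- and quotient piece as in RM1, and arrives at reduced complexes $P$ and $Q$ supported only on vertices with $\eps_1=1$ or $\eps_3=1$. The crucial observation is then purely geometric: the two $(k-1)$-dimensional faces $C_3\defeq\g(C_1,(1,*,\dots,*))=\g(C_2,(*,*,1,*,\dots,*))$ and $C_4\defeq\g(C_1,(*,*,1,*,\dots,*))=\g(C_2,(1,*,\dots,*))$ are literally the same configurations. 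The isomorphism $P\to Q$ is then written down explicitly vertex by vertex, with signs supplied by the $\etaup$-functions of Lemma~\ref{inv1} applied to the two induced edge assignments on $C_3$ and $C_4$; the required compatibilities (equations (\ref{g1})--(\ref{g8})) are read off from the type (K) of the relevant $2$-faces.

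Your sketch, by contrast, leaves exactly the hard step open. After cancelling the RM2 bigon in one layer, you must compare the \emph{induced} connecting maps on the two sides, and in odd Khovanov homology this is a genuine sign computation, not a formality inherited from RM1/RM2: the components you cancel interact with the connecting differential through $2$-faces whose type (A/K/X/Y) you have not determined, and a single wrong sign breaks the argument. Moreover, your construction of $s_2$ ``exactly as in the proof of Proposition~\ref{p1}'' does not make sense as stated: there $D_2$ has one more crossing than $D_1$, whereas here both diagrams have the same crossings but at different locations inside the RM3 disc, so there is no ``extension in the new direction'' to perform. If you want to pursue the mapping-cone route, you will need to carry out the induced-map comparison explicitly; the paper's symmetric reduction via $C_3$ and $C_4$ is precisely a way to organise that computation so that the signs can be controlled by Lemma~\ref{inv1}.
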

\begin{figure}[htbp]
\centering
\includegraphics{graph.7}
\caption{Reidemeister-Bewegung 3}
\label{rm3}
\end{figure}
\begin{proof}
$C_1$ und $C_2$ sind in Abbildung \ref{c1c2} dargestellt.
\begin{figure}[htbp]
\centering
\includegraphics{graph.8}
\caption{}
\label{c1c2}
\end{figure}
$\alpha_1$, $\alpha_2$, $\alpha_3$, sowie $\beta_1$, $\beta_2$, $\beta_3$ sollen in den Ordnungen der Bögen an den ersten drei Stellen stehen. Die übrigen Bögen seien in $C_1$ und $C_2$ gleich geordnet und gleich orientiert. $s_1$ und $s_2$ seien beliebig, so dass $s_1$ bezüglich $C_1$ und $s_2$ bezüglich $C_2$ Typ X haben. Für Typ Y funktioniert der Beweis analog. Sei $k$ die Dimension von $C_1$ und $C_2$. $\Gamma(C_1,s_1)$ ist in Abbildung \ref{gac1} dargestellt.
\begin{figure}[htbp]
\centering
\[\xymatrix{ & {\includegraphics{3bild.1}} \ar[dr] \ar[r] & {\includegraphics{3bild.2}} \ar[dr] & \\ {\includegraphics{3bild.3}} \ar[ur] \ar[dr] \ar[r] & {\includegraphics{3bild.4}} \ar[ur] \ar[dr] & {\includegraphics{3bild.5}} \ar[r] & {\includegraphics{3bild.6}} \\ & {\includegraphics{3bild.7}} \ar[r] \ar[ur] & {\includegraphics{3bild.8}} \ar[ur] & }\]
\caption{$\Gamma(C_1,s_1)$}
\label{gac1}
\end{figure}
Sei $A$ analog zur RM1-Invarianz als Untergruppe von $\Gamma(\g(C_1,(0,1,0,*,\dots,*)))$ definiert. Sei
\[R\defeq A\oplus\bigoplus_{\eps\in\sum(k,0)\textup{ mit }(0,0,0)\ne(\eps_1,\eps_2,\eps_3)\ne(0,1,0)}\Lambda\V(\g(C_1,\eps)).\]
$R$ ist Unterkomplex von $\Gamma(C_1,s_1)$ und der Quotientenkomplex ist azyklisch. Sei $\partial_R$ die Randabbildung von $R$. Dann ist $\{a+\partial_R(b)\ \ |\ \ a,b\in A\}$ azyklischer Unterkomplex von $R$. Der Quotientenkomplex sei mit $P$ bezeichnet. $\Gamma(C_2,s_2)$ ist in Abbildung \ref{gac2} dargestellt.
\begin{figure}[htbp]
\centering
\[\xymatrix{ & {\includegraphics{4bild.1}} \ar[dr] \ar[r] & {\includegraphics{4bild.2}} \ar[dr] & \\ {\includegraphics{4bild.3}} \ar[ur] \ar[dr] \ar[r] & {\includegraphics{4bild.4}} \ar[ur] \ar[dr] & {\includegraphics{4bild.5}} \ar[r] & {\includegraphics{4bild.6}} \\ & {\includegraphics{4bild.7}} \ar[r] \ar[ur] & {\includegraphics{4bild.8}} \ar[ur] & }\]
\caption{$\Gamma(C_2,s_2)$}
\label{gac2}
\end{figure}
Analog zu $R$ können wir einen Unterkomplex $S$ von $\Gamma(C_2,s_2)$ definieren. Analog zu $P$ können wir einen Quotientenkomplex $Q$ von $S$ definieren, dessen Homologie isomorph zur Homologie von $\Gamma(C_2,s_2)$ ist. Bleibt noch zu zeigen, dass die Komplexe $P$ und $Q$ isomorph sind. Es sei $C_3\defeq\g(C_1,(1,*,\dots,*))=\g(C_2,(*,*,1,*,\dots,*))$ und $C_4\defeq\g(C_1,(*,*,1,*,\dots,*))=\g(C_2,(1,*,\dots,*))$, siehe Abbildung \ref{c3c4}.
\begin{figure}[htbp]
\centering
\includegraphics{graph.9}
\caption{}
\label{c3c4}
\end{figure}
$s_1$ und $s_2$ induzieren Kantenzuordnungen $s_1^3$, $s_2^3$ vom Typ X bezüglich $C_3$ und $s_1^4$, $s_2^4$ vom Typ X bezüglich $C_4$. Der Beweis von Lemma \ref{inv1} zeigt: Es gibt $\etaup_3,\etaup_4:\sum(k-1,0)\to\{-1,1\}$, so dass für alle $\eps\in\sum(k-1,1)$ gilt:
\begin{align*}
\etaup_3(\eps^0)\etaup_3(\eps^1)&=s_1^3(\eps)s_2^3(\eps)\\
\etaup_4(\eps^0)\etaup_4(\eps^1)&=s_1^4(\eps)s_2^4(\eps)
\end{align*}
Für alle $\eps\in\sum(k-3,1)$ gilt:
\begin{equation}\label{g1}
s_1^3(1,1,\eps)=s_1^4(1,1,\eps)
\end{equation}
\begin{equation}\label{g2}
s_1^3(0,1,\eps)=s_1^4(1,0,\eps)
\end{equation}
\begin{equation}\label{g3}
s_2^3(1,1,\eps)=s_2^4(1,1,\eps)
\end{equation}
\begin{equation}\label{g4}
s_2^3(1,0,\eps)=s_2^4(0,1,\eps)
\end{equation}
Für alle $\eps\in\sum(k-3,0)$ gilt:
\begin{equation}\label{g5}
s_1^3(*,1,\eps)=s_1^4(1,*,\eps)
\end{equation}
\begin{equation}\label{g6}
s_2^3(1,*,\eps)=s_2^4(*,1,\eps)
\end{equation}
Aus Gleichung (\ref{g1}) und (\ref{g3}) folgt: Entweder gilt $\etaup_3(1,1,\eps)=\etaup_4(1,1,\eps)$ für alle $\eps\in\sum(k-3,0)$ oder für alle $\eps\in\sum(k-3,0)$ gilt $\etaup_3(1,1,\eps)=-\etaup_4(1,1,\eps)$. Im zweiten Fall können wir $\etaup_4$ mit $-1$ multiplizieren, so dass wir stets vom ersten Fall ausgehen können. Dann gilt für alle $\eps\in\sum(k-3,0)$:
\begin{equation}\label{g7}
\etaup_3(0,1,\eps)\etaup_4(1,0,\eps)=s_2^3(*,1,\eps)s_2^4(1,*,\eps)
=s_2(*,1,1,\eps)s_2(1,1,*,\eps)\textup{ (wegen Gleichung (\ref{g5})}
\end{equation}
\begin{equation}\label{g8}
\etaup_3(1,0,\eps)\etaup_4(0,1,\eps)=s_1^3(1,*,\eps)s_1^4(*,1,\eps)
=s_1(1,1,*,\eps)s_1(*,1,1,\eps)\textup{ (wegen Gleichung (\ref{g6})}
\end{equation}
Für $i=1,2$ ist ein Element aus $\Gamma(C_i)$ ein Paar $(\omega,\eps)$ mit $\eps\in\sum(k,0)$ und $\omega\in\Lambda\V(\g(C_i,\eps))$. Nun können wir einen Isomorphismus von Kettenkomplexen $P\to Q$ angeben, wir bilden für alle $\eps\in\sum(k-3,0)$ wie folgt ab:
\begin{align*}
[(\omega,(1,0,0,\eps))]&\mapsto\etaup_3(0,0,\eps)[(\omega,(0,0,1,\eps))]\\
[(\omega,(0,0,1,\eps))]&\mapsto\etaup_4(0,0,\eps)[(\omega,(1,0,0,\eps))]\\
[(\omega,(1,0,1,\eps))]&\mapsto\etaup_3(0,1,\eps)[(\omega,(0,1,1,\eps))]
=\etaup_4(1,0,\eps)[(\omega,(1,1,0,\eps))]\\
&\textup{(weil $\g(C_2,(*,1,*,\eps))$ vom Typ K ist und wegen Gleichung (\ref{g7}))}\\
[(\omega,(1,1,0,\eps))]&\mapsto\etaup_3(1,0,\eps)[(\omega,(1,0,1,\eps))]\\
[(\omega,(0,1,1,\eps))]&\mapsto\etaup_4(0,1,\eps)[(\omega,(1,0,1,\eps))]\\
&\textup{Die beiden letzten Abbildungsvorschriften passen zusammen,}\\
&\textup{weil $\g(C_1,(*,1,*,\eps))$ vom Typ K ist und wegen Gleichung (\ref{g8}).}\\
[(\omega,(1,1,1,\eps))]&\mapsto\etaup_3(1,1,\eps)[(\omega,(1,1,1,\eps))]
=\etaup_4(1,1,\eps)[(\omega,(1,1,1,\eps))]
\end{align*}
\end{proof}

\section{Spektralsequenzen}
In diesem Kapitel werden wir uns an dem Buch von Weibel orientieren (\cite{weibel}).\\
Sei $R$ ein kommutativer Ring mit $1$.
\begin{defi}
Eine Spektralsequenz vom Grad $k\in\mathbb{Z}$ besteht aus folgenden Daten:
\begin{enumerate}
\item 
einer Familie $E_{p,q}^r$ von $R$-Moduln definiert für $p,q\in\mathbb{Z}$ und $r\in\mathbb{N}_0$ (für ein festes r werden die $E_{p,q}^r$ der $E^r$-Term der Spektralsequenz genannt)
\item
Homomorphismen $d_{p,q}^r:E_{p,q}^r\to E_{p-r,q+r+k}^r$, die $d^r\circ d^r=0$ erfüllen 
\item 
Isomorphismen $f_{p,q}^r:H_{p,q}(E^r,d^r)\defeq\ker(d_{p,q}^r)/\textup{im}(d_{p+r,q-r-k}^r)\to E_{p,q}^{r+1}$
\end{enumerate}
\end{defi}
\begin{defi}
Sei $(E,d,f)$ eine Spektralsequenz vom Grad $k$. Dann sei ${\p_{p,q}^r:\ker(d_{p,q}^r)\to H_{p,q}(E^r,d^r)}$ die kanonische Projektion. Für $r\ge 1$ definiere
\begin{align*}
Z_{p,q}^{r,r-1}&\defeq\ker(d_{p,q}^{r-1})\subset E_{p,q}^{r-1}\textup{ und }\\
Z_{p,q}^{r,s}&\defeq(f_{p,q}^s\circ\p_{p,q}^s)^{-1}(Z_{p,q}^{r,s+1})\subset E_{p,q}^s
\end{align*}
für $0\le s\le r-2$. Es sei $Z_{p,q}^r\defeq Z_{p,q}^{r,0}\subset E_{p,q}^0$. Für $r=0$ sei $Z_{p,q}^0\defeq E_{p,q}^0$. Definiere für $r\ge 1$ weiterhin
\begin{align*}
B_{p,q}^{r,r-1}&\defeq\textup{im}(d_{p+r-1,q-r-k+1}^{r-1})\subset E_{p,q}^{r-1}\textup{ und }\\
B_{p,q}^{r,s}&\defeq(f_{p,q}^s\circ\p_{p,q}^s)^{-1}(B_{p,q}^{r,s+1})\subset E_{p,q}^s
\end{align*}
für $0\le s\le r-2$. Es sei $B_{p,q}^r\defeq B_{p,q}^{r,0}\subset E_{p,q}^0$. Für $r=0$ sei $B_{p,q}^0\defeq\{0\}$.
\end{defi}
Offensichtlich gilt
\[\{0\}=B_{p,q}^0\subset B_{p,q}^1\subset\cdots\subset B_{p,q}^r\subset B_{p,q}^{r+1}\subset\cdots\subset Z_{p,q}^{r+1}\subset Z_{p,q}^r\subset\cdots\subset Z_{p,q}^1\subset Z_{p,q}^0=E_{p,q}^0\]
und $E_{p,q}^r\cong Z_{p,q}^r/B_{p,q}^r$.
\begin{defi}
\begin{align*}
B_{p,q}^\infty&\defeq\bigcup_{r=0}^\infty B_{p,q}^r\\
Z_{p,q}^\infty&\defeq\bigcap_{r=0}^\infty Z_{p,q}^r\\
E_{p,q}^\infty&\defeq Z_{p,q}^\infty/B_{p,q}^\infty
\end{align*}
Die $E_{p,q}^\infty$ werden der $E^\infty$-Term der Spektralsequenz genannt.
\end{defi}
\begin{bemmn}
Sei $r_0\in\mathbb{N}_0$. Gilt für alle feste $p,q\in\mathbb{Z}$ und alle $r\ge r_0$ die Gleichung $d_{p,q}^r=0=d_{p+r,q-r-k}^r$, so ist $Z_{p,q}^{r_0}=Z_{p,q}^{r_0+1}=\cdots=Z_{p,q}^\infty$ und $B_{p,q}^{r_0}=B_{p,q}^{r_0+1}=\cdots=B_{p,q}^\infty$. Es folgt $E_{p,q}^\infty\cong E_{p,q}^r$ für $r\ge r_0$.
\end{bemmn}
Wir erhalten kurze exakte Sequenzen der Form
\[0\to Z_{p,q}^{r+1}/B_{p,q}^r\to Z_{p,q}^r/B_{p,q}^r\xrightarrow{\tilde{d}_{p,q}^r}B_{p-r,q+r+k}^{r+1}/B_{p-r,q+r+k}^r\to 0,\]
wobei $\tilde{d}_{p,q}^r$ von $d_{p,q}^r$ induziert ist. Dadurch erhalten wir Isomorphismen $Z_{p,q}^r/Z_{p,q}^{r+1}\cong B_{p-r,q+r+k}^{r+1}/B_{p-r,q+r+k}^r$.\\
Haben wir umgekehrt $R$-Moduln $B_{p,q}^r$ und $Z_{p,q}^r$ gegeben, die
\[\{0\}=B_{p,q}^0\subset B_{p,q}^1\subset\cdots\subset B_{p,q}^r\subset B_{p,q}^{r+1}\subset\cdots\subset Z_{p,q}^{r+1}\subset Z_{p,q}^r\subset\cdots\subset Z_{p,q}^1\subset Z_{p,q}^0\]
erfüllen und zusätzlich Isomorphismen $g_{p,q}^r:Z_{p,q}^r/Z_{p,q}^{r+1}\to B_{p-r,q+r+k}^{r+1}/B_{p-r,q+r+k}^r$ für ein $k\in\mathbb{Z}$, so erhalten wir eine Spektralsequenz vom Grad $k$ wie folgt: Es sei $E_{p,q}^r\defeq Z_{p,q}^r/B_{p,q}^r$. Durch $g_{p,q}^r$ erhalten wir kurze exakte Sequenzen der Form
\[0\to Z_{p,q}^{r+1}/B_{p,q}^r\to Z_{p,q}^r/B_{p,q}^r\to B_{p-r,q+r+k}^{r+1}/B_{p-r,q+r+k}^r\to 0,\]
wodurch $d_{p,q}^r$ definiert ist. Offensichtlich gilt $d^r\circ d^r=0$. Die Isomorphismen $f_{p,q}^r$ erhalten wir durch
\[\ker(d_{p,q}^r)/\textup{im}(d_{p+r,q-r-k}^r)
=(Z_{p,q}^{r+1}/B_{p,q}^r)/(B_{p,q}^{r+1}/B_{p,q}^r)\cong Z_{p,q}^{r+1}/B_{p,q}^{r+1}=E_{p,q}^{r+1}.\]
\begin{defi}\label{defmor}
Seien $(E,d,f)$ und $(\hat{E},\hat{d},\hat{f})$ Spektralsequenzen vom Grad $k$. Ein Morphismus zwischen diesen Spektralsequenzen besteht aus $R$-Modul-Homomorphismen $\varphi_{p,q}^r:E_{p,q}^r\to\hat{E}_{p,q}^r$ für $p,q\in\mathbb{Z}$ und $r\in\mathbb{N}_0$, die Folgendes erfüllen:
\begin{enumerate}
\item
$\hat{d}_{p,q}^r\circ\varphi_{p,q}^r=\varphi_{p-r,q+r+k}^r\circ d_{p,q}^r$
\item 
$\varphi^{r+1}$ ist die von $\varphi^r$ auf Homologie induzierte Abbildung.
\end{enumerate}
\end{defi}
Somit ist $\varphi$ schon durch $\varphi^0$ festgelegt. Diese Definition macht die Spektralsequenzen vom Grad $k$ zu einer Kategorie $\textup{Spek}(k)$. Ist $\varphi$ ein Morphismus in $\textup{Spek}(k)$ und ist $\varphi_{p,q}^0$ ein Isomorphismus von $R$-Moduln für alle $p,q\in\mathbb{Z}$, so ist $\varphi$ ein Isomorphismus. Für einen Morphismus $\varphi$ in $\textup{Spek}(k)$ gilt $\varphi_{p,q}^0(Z_{p,q}^r)\subset\hat{Z}_{p,q}^r$ und $\varphi_{p,q}^0(B_{p,q}^r)\subset\hat{B}_{p,q}^r$. Es folgt $\varphi_{p,q}^0(Z_{p,q}^\infty)\subset\hat{Z}_{p,q}^\infty$ und $\varphi_{p,q}^0(B_{p,q}^\infty)\subset\hat{B}_{p,q}^\infty$. Wir erhalten also $R$-Modul-Homomorphismen $\varphi_{p,q}^\infty:E_{p,q}^\infty\to\hat{E}_{p,q}^\infty$. Somit erhalten wir für $k\in\mathbb{Z}$ und $r\in\mathbb{N}_0\cup\{\infty\}$ den Funktor $\mathfrak{T}^r(k):\textup{Spek}(k)\to\textup{BG-$R$-Mod}$ mit Ziel in den bigraduierten $R$-Moduln, der einer Spektralsequenz vom Grad $k$ ihren $E^r$-Term zuordnet.
\begin{lemma}\label{spekiso}
Ist $\varphi$ ein Morphismus in $\textup{Spek}(k)$ und ist $\varphi_{p,q}^{r_0}$ ein Isomorphismus für ein $r_0\in\mathbb{N}_0$ und für alle $p,q\in\mathbb{Z}$ (dann ist auch $\varphi_{p,q}^r$ ein Isomorphismus für $r>r_0$), so sind die $\varphi_{p,q}^\infty$ Isomorphismen.
\end{lemma}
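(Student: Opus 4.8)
The plan is to dispatch the $E^r$-terms by a routine induction and then to obtain the $E^\infty$-statement by a filtration argument that reduces everything to the single level $r_0$. First I would prove the parenthetical claim: by property~1 of a morphism of spectral sequences (Definition~\ref{defmor}) the map $\varphi^r$ commutes with $d^r$ and $\hat d^r$, so if every $\varphi_{p,q}^r$ is an $R$-module isomorphism then $\varphi^r$ is an isomorphism of the bigraded complexes $(E^r,d^r)$ and $(\hat E^r,\hat d^r)$ and hence induces an isomorphism on homology; by property~2, $\varphi^{r+1}$ is precisely this induced map transported along $f^r$ and $\hat f^r$, so it is again an isomorphism. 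Induction starting at $r=r_0$ gives that $\varphi^r$ is an isomorphism for all $r\ge r_0$.

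Next I would record a filtration comparison. Recall that $\varphi^0$ satisfies $\varphi_{p,q}^0(Z_{p,q}^r)\subseteq\hat Z_{p,q}^r$ and $\varphi_{p,q}^0(B_{p,q}^r)\subseteq\hat B_{p,q}^r$, and that $\varphi^r$ is the map $Z_{p,q}^r/B_{p,q}^r\to\hat Z_{p,q}^r/\hat B_{p,q}^r$ induced by $\varphi^0$. Since inside $E^r$ one has $Z^{r+1}/B^r=\ker d^r$ and $B^{r+1}/B^r=\operatorname{im}d^r$ (with the appropriate bidegree shifts), and $\varphi^r$ is an isomorphism commuting with the differentials, $\varphi^r$ carries $\ker d^r$ onto $\ker\hat d^r$ and $\operatorname{im}d^r$ onto $\operatorname{im}\hat d^r$; unwinding this yields
\[\hat Z_{p,q}^{r+1}=\varphi_{p,q}^0(Z_{p,q}^{r+1})+\hat B_{p,q}^r,\qquad\hat B_{p,q}^{r+1}=\varphi_{p,q}^0(B_{p,q}^{r+1})+\hat B_{p,q}^r\qquad(r\ge r_0).\]
Combining this with the base case $\hat Z_{p,q}^{r_0}=\varphi_{p,q}^0(Z_{p,q}^{r_0})+\hat B_{p,q}^{r_0}$ (surjectivity of $\varphi^{r_0}$) and with the inclusions $B^r\subseteq B^{r+1}\subseteq Z^{r+1}$, an induction on $r$ gives $\hat Z_{p,q}^r=\varphi_{p,q}^0(Z_{p,q}^r)+\hat B_{p,q}^{r_0}$ and $\hat B_{p,q}^r=\varphi_{p,q}^0(B_{p,q}^r)+\hat B_{p,q}^{r_0}$ for every $r\ge r_0$.

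Finally I would treat $\varphi^\infty$. Injectivity is easy: if $z\in Z_{p,q}^\infty$ with $\varphi_{p,q}^0(z)\in\hat B_{p,q}^\infty=\bigcup_r\hat B_{p,q}^r$, then $\varphi_{p,q}^0(z)\in\hat B_{p,q}^r$ for some $r\ge r_0$, and since $z\in Z_{p,q}^r$ and $\varphi^r$ is injective, $z\in B_{p,q}^r\subseteq B_{p,q}^\infty$. For surjectivity, note first that $B_{p,q}^r\subseteq Z_{p,q}^\infty$ for every $r$ (from the inclusion chain among the $B$'s and $Z$'s), so in particular $\hat B_{p,q}^{r_0}\subseteq\hat Z_{p,q}^s$ for all $s$. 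Given $\hat z\in\hat Z_{p,q}^\infty$, write $\hat z=\varphi_{p,q}^0(z_0)+\hat b_0$ with $z_0\in Z_{p,q}^{r_0}$, $\hat b_0\in\hat B_{p,q}^{r_0}$ using the $r=r_0$ case above, and for each $s\ge r_0$ also write $\hat z=\varphi_{p,q}^0(z_s)+\hat b_s$ with $z_s\in Z_{p,q}^s$, $\hat b_s\in\hat B_{p,q}^{r_0}$. Then $\varphi_{p,q}^0(z_0-z_s)=\hat b_s-\hat b_0\in\hat B_{p,q}^{r_0}$ with $z_0-z_s\in Z_{p,q}^{r_0}$, so injectivity of $\varphi^{r_0}$ gives $z_0-z_s\in B_{p,q}^{r_0}\subseteq Z_{p,q}^s$, hence $z_0\in Z_{p,q}^s$; as this holds for all $s$, $z_0\in Z_{p,q}^\infty$ and $\varphi^\infty([z_0])=[\hat z]$.

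The hard part is exactly this surjectivity of $\varphi^\infty$: one cannot simply intersect the identities $\hat Z^r=\varphi^0(Z^r)+\hat B^{r_0}$ over all $r$, because intersection of submodules does not commute with forming sums. The whole point of the filtration comparison is to push the ``$\hat B$-part'' of all these identities down to the fixed module $\hat B^{r_0}$, so that the single injective map $\varphi^{r_0}$ is enough to show that one fixed lift $z_0$ of $\hat z$ already lies in every $Z^s$. Everything else (Stage~1, the injectivity of $\varphi^\infty$, the bookkeeping with bidegree shifts) is formal.
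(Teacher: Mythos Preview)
Your argument is correct, but it takes a genuinely different route from the paper's. The paper introduces, for each $s\ge 0$, auxiliary submodules $Z_{p,q}^{\infty,s}\defeq\bigcap_{r>s}Z_{p,q}^{r,s}$ and $B_{p,q}^{\infty,s}\defeq\bigcup_{r>s}B_{p,q}^{r,s}$ of $E_{p,q}^s$ and sets $E_{p,q}^{\infty,s}\defeq Z_{p,q}^{\infty,s}/B_{p,q}^{\infty,s}$; it then observes that the maps $f^s\circ\p^s$ give natural isomorphisms $E_{p,q}^{\infty,s}\cong E_{p,q}^{\infty,s+1}$ with $E_{p,q}^{\infty,0}=E_{p,q}^\infty$. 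Since $\varphi^{r_0}$ is an honest isomorphism of $E^{r_0}$ compatible with all higher differentials, it trivially carries $Z^{\infty,r_0}$ and $B^{\infty,r_0}$ onto their hatted counterparts and hence induces an isomorphism $E^{\infty,r_0}\cong\hat E^{\infty,r_0}$; a short downward induction along the tower $E^{\infty,s}\cong E^{\infty,s+1}$ then transports this to $s=0$.

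Your approach instead stays entirely at level~$0$, proving the explicit identities $\hat Z^r=\varphi^0(Z^r)+\hat B^{r_0}$ and $\hat B^r=\varphi^0(B^r)+\hat B^{r_0}$ and then handling injectivity and surjectivity of $\varphi^\infty$ directly. This is more elementary in that it avoids introducing the auxiliary objects $E^{\infty,s}$, and your observation that the ``defect'' can always be absorbed into the fixed module $\hat B^{r_0}$ is exactly the key point that makes the surjectivity go through. The paper's version is slicker and makes the later remark about $\textup{Spek}_l(k)$ immediate (the same tower argument works verbatim starting at level~$l$), whereas your argument would need to be rephrased with $\varphi^l$ playing the role of $\varphi^0$. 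Both are complete proofs.
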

\begin{proof}
Definiere
\begin{align*}
B_{p,q}^{\infty,s}&\defeq\bigcup_{r=s+1}^\infty B_{p,q}^{r,s}\subset E_{p,q}^s,\\
Z_{p,q}^{\infty,s}&\defeq\bigcap_{r=s+1}^\infty Z_{p,q}^{r,s}\subset E_{p,q}^s,\\
E_{p,q}^{\infty,s}&\defeq Z_{p,q}^{\infty,s}/B_{p,q}^{\infty,s}.
\end{align*}
Es gilt $B_{p,q}^{\infty,0}=B_{p,q}^\infty$, $Z_{p,q}^{\infty,0}=Z_{p,q}^\infty$ und somit $E_{p,q}^{\infty,0}=E_{p,q}^\infty$. Außerdem ist $B_{p,q}^{\infty,s}=(f_{p,q}^s\circ\p_{p,q}^s)^{-1}(B_{p,q}^{\infty,s+1})$ und $Z_{p,q}^{\infty,s}=(f_{p,q}^s\circ\p_{p,q}^s)^{-1}(Z_{p,q}^{\infty,s+1})$. Es folgt $E_{p,q}^{\infty,s}\cong E_{p,q}^{\infty,s+1}$. $\varphi_{p,q}^{r_0}$ induziert einen Isomorphismus $E_{p,q}^{\infty,r_0}\cong\hat{E}_{p,q}^{\infty,r_0}$. Zusammen mit der von $\varphi_{p,q}^{r_0-1}$ induzierten Abbildung ergibt sich folgendes kommutatives Diagramm:
\[\xymatrix{ {\hat{E}_{p,q}^{\infty,r_0-1}} \ar[r]^{\cong} & {\hat{E}_{p,q}^{\infty,r_0}} \\
{E_{p,q}^{\infty,r_0-1}} \ar[r]^{\cong} \ar[u] & {E_{p,q}^{\infty,r_0}} \ar[u]_{\cong} }\]
Somit ist die linke Abbildung auch ein Isomorphismus. Induktiv folgt, dass $\varphi_{p,q}^0$ einen Isomorphismus $E_{p,q}^{\infty,0}\cong\hat{E}_{p,q}^{\infty,0}$ induziert, also dass $\varphi_{p,q}^\infty$ ein Isomorphismus ist.
\end{proof}
Wir erhalten eine weitere Kategorie $\textup{BZ}(k)$ für $k\in\mathbb{Z}$ wie folgt: Ein Objekt besteht aus $R$-Moduln $B_{p,q}^r$ und $Z_{p,q}^r$ für $p,q\in\mathbb{Z}$ und $r\in\mathbb{N}_0$, die
\[\{0\}=B_{p,q}^0\subset B_{p,q}^1\subset\cdots\subset B_{p,q}^r\subset B_{p,q}^{r+1}\subset\cdots\subset Z_{p,q}^{r+1}\subset Z_{p,q}^r\subset\cdots\subset Z_{p,q}^1\subset Z_{p,q}^0\]
erfüllen und zusätzlich aus Isomorphismen $g_{p,q}^r:Z_{p,q}^r/Z_{p,q}^{r+1}\to B_{p-r,q+r+k}^{r+1}/B_{p-r,q+r+k}^r$. Ein Morphismus $(B,Z,g)\to(\hat{B},\hat{Z},\hat{g})$ besteht aus $R$-Modul-Homomorphismen $\psiup_{p,q}:Z_{p,q}^0\to\hat{Z}_{p,q}^0$, der folgendes erfüllt:
\begin{enumerate}
\item 
$\psiup_{p,q}(Z_{p,q}^r)\subset\hat{Z}_{p,q}^r$
\item 
$\psiup_{p,q}(B_{p,q}^r)\subset\hat{B}_{p,q}^r$
\item 
Die von $\psiup$ induzierten Abbildungen kommutieren mit $g$ und $\hat{g}$.
\end{enumerate}
Wir haben beschrieben wie wir einer Spektralsequenz vom Grad $k$ ein Objekt aus $\textup{BZ}(k)$ zuordnen und umgekehrt. Mit den kanonischen Abbildungen zwischen den Morphismenmengen erhalten wir Funktoren $\mathfrak{F}(k):\textup{Spek}(k)\to\textup{BZ}(k)$ und $\mathfrak{G}(k):\textup{BZ}(k)\to\textup{Spek}(k)$. Man kann leicht nachrechnen, dass gilt:
\begin{lemma}
$\mathfrak{G}(k)\circ\mathfrak{F}(k)$ ist natürlich isomorph zu $\textup{id}_{\textup{Spek}(k)}$ und $\mathfrak{F}(k)\circ\mathfrak{G}(k)=\textup{id}_{\textup{BZ}(k)}$. Somit sind $\textup{Spek}(k)$ und $\textup{BZ}(k)$ äquivalente Kategorien.
\end{lemma}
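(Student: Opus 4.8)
The plan is to establish the two displayed claims --- that $\mathfrak{F}(k)\circ\mathfrak{G}(k)$ is literally $\textup{id}_{\textup{BZ}(k)}$ and that $\mathfrak{G}(k)\circ\mathfrak{F}(k)$ carries a natural isomorphism to $\textup{id}_{\textup{Spek}(k)}$ --- after which the equivalence of categories is immediate from the definition of an equivalence. Both claims reduce to bookkeeping about the two constructions, so I would treat objects first and morphisms afterwards, using throughout that a morphism of spectral sequences is pinned down by its degree-$0$ component (the remark after Definition \ref{defmor}).

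For $\mathfrak{F}(k)\circ\mathfrak{G}(k)=\textup{id}$: fix $(B,Z,g)\in\textup{BZ}(k)$ and write $(E,d,f)\defeq\mathfrak{G}(k)(B,Z,g)$, so $E_{p,q}^r=Z_{p,q}^r/B_{p,q}^r$, the differential $d^r$ is read off the short exact sequence $0\to Z^{r+1}/B^r\to Z^r/B^r\to B_{p-r,q+r+k}^{r+1}/B_{p-r,q+r+k}^r\to 0$ built from $g^r$, and $f^r$ is the canonical identification $(Z^{r+1}/B^r)/(B^{r+1}/B^r)=Z^{r+1}/B^{r+1}$. Applying $\mathfrak{F}(k)$ produces, by the iterated-preimage recipe, submodules $Z_{p,q}^{r,s},B_{p,q}^{r,s}\subseteq E_{p,q}^s$. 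I would show by downward induction on $s$ (base case $s=r-1$, where $Z_{p,q}^{r,r-1}=\ker d_{p,q}^{r-1}=Z_{p,q}^r/B_{p,q}^{r-1}$ and $B_{p,q}^{r,r-1}=\textup{im}\,d_{p+r-1,q-r-k+1}^{r-1}=B_{p,q}^r/B_{p,q}^{r-1}$ straight from that short exact sequence) that
\[Z_{p,q}^{r,s}=Z_{p,q}^r/B_{p,q}^s,\qquad B_{p,q}^{r,s}=B_{p,q}^r/B_{p,q}^s\subseteq E_{p,q}^s=Z_{p,q}^s/B_{p,q}^s;\]
the inductive step only uses that $f^s\circ\p^s$ is here the canonical quotient map $Z^{s+1}/B^s\twoheadrightarrow Z^{s+1}/B^{s+1}$, so that taking preimages along it shifts the denominator from $B^{s+1}$ back to $B^s$. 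Putting $s=0$ and using $B_{p,q}^0=\{0\}$, $Z_{p,q}^0=E_{p,q}^0$ recovers exactly the original $Z_{p,q}^r$ and $B_{p,q}^r$, and the isomorphism that $\mathfrak{F}(k)$ extracts from the short exact sequence of $(E,d,f)$ --- which factors through $Z^r/Z^{r+1}$ --- is exactly the original $g_{p,q}^r$. On morphisms, $\mathfrak{G}(k)$ sends a $\textup{BZ}(k)$-morphism $\psi$ to the spectral-sequence morphism it induces on each $Z^r/B^r$, whose degree-$0$ component is $\psi$ itself (because $B^0=\{0\}$), while $\mathfrak{F}(k)$ sends a spectral-sequence morphism back to its degree-$0$ component; hence the composite is the identity on morphisms as well.

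For $\mathfrak{G}(k)\circ\mathfrak{F}(k)\cong\textup{id}$: fix a spectral sequence $(E,d,f)$. The excerpt already provides the canonical isomorphisms $\theta_{p,q}^r\colon Z_{p,q}^r/B_{p,q}^r\xrightarrow{\ \cong\ }E_{p,q}^r$; concretely they are the composite, over $s=0,\dots,r-2$, of the isomorphisms $Z^{r,s}/B^{r,s}\cong Z^{r,s+1}/B^{r,s+1}$ induced by $f^s\circ\p^s$ (using $\ker(f^s\circ\p^s)=B^{s+1,s}$ together with the defining preimage property of $Z^{r,s}$ and $B^{r,s}$), followed by $f^{r-1}\colon H_{p,q}(E^{r-1},d^{r-1})=Z^{r,r-1}/B^{r,r-1}\xrightarrow{\cong}E_{p,q}^r$. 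I would take $\Theta_{(E,d,f)}\defeq(\theta^r)_r$ as the candidate morphism $(\mathfrak{G}(k)\circ\mathfrak{F}(k))(E,d,f)\to(E,d,f)$. Its degree-$0$ component is the identification $E^0/\{0\}=E^0$, and under it the reconstructed $d^0$ coincides with the original $d^0$ (its defining sequence being $0\to\ker d^0\to E^0\to\textup{im}\,d^0\to 0$), so Condition~1 of Definition \ref{defmor} holds at level $0$; Condition~2 at every level and Condition~1 at higher levels --- i.e.\ that $\theta^{r+1}$ is the map induced on homology by $\theta^r$ and intertwines the reconstructed and the original $d^{r+1}$ --- follow by induction on $r$ from the way $\theta^{r+1}$ was assembled out of $\theta^r$ and $f^r\circ\p^r$. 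Since $\theta^0$ is an isomorphism, $\Theta_{(E,d,f)}$ is an isomorphism of spectral sequences by the remark after Definition \ref{defmor}. Naturality is then automatic: for a morphism $\varphi$ both $\Theta_{(\hat E,\hat d,\hat f)}\circ\bigl(\mathfrak{G}(k)\circ\mathfrak{F}(k)\bigr)(\varphi)$ and $\varphi\circ\Theta_{(E,d,f)}$ have degree-$0$ component $\varphi^0$ and therefore agree. Combining the two parts, $\mathfrak{F}(k)$ and $\mathfrak{G}(k)$ exhibit $\textup{Spek}(k)$ and $\textup{BZ}(k)$ as equivalent categories.

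I expect the difficulty to be entirely organizational rather than conceptual: one must keep two layers of subquotients apart --- the filtration $\{0\}=B^0\subseteq\cdots\subseteq Z^0=E^0$ inside $E^0$ versus the abstract terms $E^r$ --- carry the degree shifts $(p,q)\mapsto(p-r,q+r+k)$ correctly through every short exact sequence, and check at each reconstruction step that the preimages land where the filtration predicts. The downward induction establishing $Z_{p,q}^{r,s}=Z_{p,q}^r/B_{p,q}^s$ and $B_{p,q}^{r,s}=B_{p,q}^r/B_{p,q}^s$ is the linchpin of both parts, and a slip in the indexing there would propagate everywhere.
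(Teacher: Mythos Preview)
Your proposal is correct. The paper does not actually prove this lemma: it is introduced with ``Man kann leicht nachrechnen, dass gilt'' (one easily checks that), and no argument is given. Your sketch supplies precisely the routine verification the paper suppresses, and the downward induction showing $Z_{p,q}^{r,s}=Z_{p,q}^r/B_{p,q}^s$ and $B_{p,q}^{r,s}=B_{p,q}^r/B_{p,q}^s$ is the right way to unwind the preimage recursion; from there both identities on objects and morphisms fall out as you describe, using that a morphism in $\textup{Spek}(k)$ is determined by its degree-$0$ component.
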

\begin{kor}
Sei $\psiup$ ein Morphismus in der Kategorie $\textup{BZ}(k)$ und seien alle $\psiup_{p,q}$ Isomorphismen von $R$-Moduln, dann ist $\psiup$ ein Isomorphismus.
\end{kor}
\begin{kor}
Sei $\psiup\in\textup{Mor}_{\textup{BZ}(k)}((B,Z,g),(\hat{B},\hat{Z},\hat{g}))$ und sei die induzierte Abbildung $Z_{p,q}^{r_0}/B_{p,q}^{r_0}\to\hat{Z}_{p,q}^{r_0}/\hat{B}_{p,q}^{r_0}$ ein Isomorphismus für ein $r_0\in\mathbb{N}_0$ und für alle $p,q\in\mathbb{Z}$. Dann ist die induzierte Abbildung $Z_{p,q}^{r}/B_{p,q}^{r}\to\hat{Z}_{p,q}^{r}/\hat{B}_{p,q}^{r}$ ein Isomorphismus für alle $r>r_0$ und $p,q\in\mathbb{Z}$. Dies gilt auch für $r=\infty$.
\end{kor}
\begin{defi}
Wir definieren die Kategorie Filt der filtrierten graduierten $R$-Moduln wie folgt: Ein Objekt besteht aus $R$-Moduln $F_iA_j$ für $i,j\in\mathbb{Z}$, so dass $\cdots\subset F_iA_j\cdots\subset F_{i+1}A_j\cdots\subset F_{i+2}A_j\subset\cdots$. Es sei $A_j\defeq\bigcup_{i\in\mathbb{Z}}F_iA_j$, $A\defeq\bigoplus_{j\in\mathbb{Z}}A_j$ und $F_iA\defeq\bigoplus_{j\in\mathbb{Z}}F_iA_j$. Ein Morphismus $\alpha:(A,F)\to(\hat{A},\hat{F})$ besteht aus Homomorphismen $\alpha_j:A_j\to\hat{A}_j$ mit $\alpha_j(F_iA_j)\subset\hat{F}_i\hat{A}_j$.
\end{defi}
\begin{defi}
Die Kategorie $\textup{FKK}(k)$ der filtrierten Kettenkomplexe vom Grad $k\in\mathbb{Z}$ sei wie folgt definiert. Ein Objekt besteht aus $(A,F)\in\textup{Obj(Filt)}$ und Homomorphismen $d_j:A_j\to A_{j+k}$ mit $d_{j+k}\circ d_j=0$ und $d_j(F_iA_j)\subset F_iA_{j+k}$. Es sei $d\defeq\bigoplus_{j\in\mathbb{Z}}d_j:A\to A$. Ein Morphismus $\alpha:(A,F,d)\to(\hat{A},\hat{F},\hat{d})$ ist ein Morphismus in Filt, der zusätzlich noch $\hat{d}_j\circ\alpha_j=\alpha_{j+k}\circ d_j$ erfüllt.
\end{defi}
Nun konstruieren wir einen Funktor $\mathfrak{L}(k):\textup{FKK}(k)\to\textup{BZ}(k)$: Sei $(A,F,d)$ ein filtrierter Kettenkomplex vom Grad $k$. Dann sei $\etaup_{i,j}:F_iA_j\to F_iA_j/F_{i-1}A_j$ die kanonische Projektion und
\[K_{p,q}^r\defeq\{a\in F_pA_{p+q}\ \ |\ \ d_{p+q}(a)\in F_{p-r}A_{p+q+k}\}\]
für $i,j,p,q,r\in\mathbb{Z}$. Definiere für $r\ge 0$:
\begin{align*}
Z_{p,q}^r&\defeq\etaup_{p,p+q}(K_{p,q}^r)\\
B_{p,q}^r&\defeq\etaup_{p,p+q}(d_{p+q-k}(K_{p+r-1,q-r-k+1}^{r-1}))
\end{align*}
Offensichtlich gilt
\[\{0\}=B_{p,q}^0\subset B_{p,q}^1\subset\cdots\subset B_{p,q}^r\subset B_{p,q}^{r+1}\subset\cdots\subset Z_{p,q}^{r+1}\subset Z_{p,q}^r\subset\cdots\subset Z_{p,q}^1\subset Z_{p,q}^0.\]
Es gilt $K_{p,q}^r\cap F_{p-1}A_{p+q}=K_{p-1,q+1}^{r-1}$ und somit $Z_{p,q}^r\cong K_{p,q}^r/K_{p-1,q+1}^{r-1}$. Es folgt
\[Z_{p,q}^r/Z_{p,q}^{r+1}
\cong\frac{K_{p,q}^r/K_{p-1,q+1}^{r-1}}{K_{p,q}^{r+1}/K_{p-1,q+1}^{r}}
\cong\frac{K_{p,q}^r}{K_{p,q}^{r+1}+K_{p-1,q+1}^{r-1}}.\]
Weiterhin gilt $d_{p+q}(K_{p,q}^r)\cap F_{p-r-1}A_{p+q+k}=d_{p+q}(K_{p,q}^{r+1})$ und somit \[B_{p-r,q+r+k}^{r+1}=\etaup_{p-r,p+q+k}(d_{p+q}(K_{p,q}^r))\cong d_{p+q}(K_{p,q}^r)/d_{p+q}(K_{p,q}^{r+1}).\]
Es folgt
\[B_{p-r,q+r+k}^{r+1}/B_{p-r,q+r+k}^{r}
\cong\frac{d_{p+q}(K_{p,q}^r)/d_{p+q}(K_{p,q}^{r+1})}
{d_{p+q}(K_{p-1,q+1}^{r-1})/d_{p+q}(K_{p-1,q+1}^{r})}
\cong\frac{d_{p+q}(K_{p,q}^r)}{d_{p+q}(K_{p,q}^{r+1}+K_{p-1,q+1}^{r-1})}.\]
Wegen $\ker(d_{p+q})\cap K_{p,q}^r\subset K_{p,q}^{r+1}$ erhalten wir einen Isomorphismus $g_{p,q}^r:Z_{p,q}^r/Z_{p,q}^{r+1}\to B_{p-r,q+r+k}^{r+1}/B_{p-r,q+r+k}^r$. Somit haben wir einem Objekt aus $\textup{FKK}(k)$ ein Objekt aus $\textup{BZ}(k)$ zugeordnet. Es gilt $Z_{p,q}^r=F_pA_{p+q}/F_{p-1}A_{p+q}$. Somit induziert ein $\alpha\in\textup{Mor}_{\textup{FKK}(k)}((A,F,d),(\hat{A},\hat{F},\hat{d}))$ $R$-Modul-Homomorphismen $Z_{p,q}^0\to\hat{Z}_{p,q}^0$. Man prüft leicht nach, dass dies einen Morphismus $(B,Z,g)\to(\hat{B},\hat{Z},\hat{g})$ ergibt. Mit diesen Konstruktionen erhält man einen Funktor $\mathfrak{L}(k):\textup{FKK}(k)\to\textup{BZ}(k)$.
\begin{defi}\label{konvdef}
Wir sagen eine Spektralsequenz $(E,d,f)$ vom Grad $k$ konvergiert gegen ein $(A,F)\in\textup{Obj(Filt)}$, falls $E_{p,q}^\infty\cong F_pA_{p+q}/F_{p-1}A_{p+q}$ für alle $p,q$. Wir sagen ein $\varphi\in\textup{Mor}_{\textup{Spek}(k)}((E,d,f),(\hat{E},\hat{d},\hat{f}))$ konvergiert gegen ein $\alpha\in\textup{Mor}_\textup{Filt}((A,F),(\hat{A},\hat{F}))$, falls es Isomorphismen $E_{p,q}^\infty\cong F_pA_{p+q}/F_{p-1}A_{p+q}$ und $\hat{E}_{p,q}^\infty\cong\hat{F}_p\hat{A}_{p+q}/\hat{F}_{p-1}\hat{A}_{p+q}$ gibt, so dass folgendes Diagramm kommutiert
\[\xymatrix{ {E_{p,q}^\infty} \ar[r]^{\cong} \ar[d]_{\varphi_{p,q}^\infty} & {F_pA_{p+q}/F_{p-1}A_{p+q}} \ar[d] \\
{\hat{E}_{p,q}^\infty} \ar[r]^{\cong} & {\hat{F}_p\hat{A}_{p+q}/\hat{F}_{p-1}\hat{A}_{p+q},} }\]
wobei die rechte Abbildung von $\alpha$ induziert sei.
\end{defi}
\begin{lemma}\label{konvlem}
Sei $\varphi$ ein Morphismus in $\textup{Spek}(k)$ und seien alle $\varphi_{p,q}^\infty$ Isomorphismen von $R$-Moduln. Weiterhin konvergiere $\varphi$ gegen $\alpha\in\textup{Mor}_\textup{Filt}((A,F),(\hat{A},\hat{F}))$. Es gebe $s:\mathbb{Z}\to\mathbb{Z}$, so dass $F_{s(j)}A_j=\{0\}$ und $\hat{F}_{s(j)}\hat{A}_j=\{0\}$ für alle $j\in\mathbb{Z}$. Dann ist $\alpha$ ein Isomorphismus.
\end{lemma}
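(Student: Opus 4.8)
The plan is to prove that each component $\alpha_j\colon A_j\to\hat A_j$ is an isomorphism; since a morphism in $\textup{Filt}$ is invertible exactly when all of its components are, this suffices. The strategy is the classical comparison argument for convergent spectral sequences: pass to the associated graded, use that $\varphi^\infty$ is an isomorphism there, and then climb up the bounded‑below, exhaustive filtration by an inductive five–lemma argument.

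First I would extract what convergence gives us on the associated graded. Fix $j$ and write $q\defeq j-p$. By Definition \ref{konvdef} there are isomorphisms $E_{p,q}^\infty\cong F_pA_j/F_{p-1}A_j$ and $\hat E_{p,q}^\infty\cong\hat F_p\hat A_j/\hat F_{p-1}\hat A_j$ sitting in a commutative square whose two remaining sides are $\varphi_{p,q}^\infty$ and the map on quotients induced by $\alpha_j$. Since $\varphi_{p,q}^\infty$ is an isomorphism by hypothesis and the two horizontal maps are isomorphisms, the induced map
\[\overline{\alpha}_{p,j}\colon F_pA_j/F_{p-1}A_j\longrightarrow\hat F_p\hat A_j/\hat F_{p-1}\hat A_j\]
is an isomorphism for every $p\in\mathbb{Z}$.

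Next I would show by induction on $p$ that the restriction $\alpha_j\colon F_pA_j\to\hat F_p\hat A_j$ is an isomorphism for all $p$. For $p\le s(j)$ both sides vanish by the boundedness hypothesis, which both starts the induction and disposes of all sufficiently small $p$ at once. For the inductive step, $\alpha_j$ carries the short exact sequence $0\to F_{p-1}A_j\to F_pA_j\to F_pA_j/F_{p-1}A_j\to 0$ to the corresponding one for $\hat A$; the left vertical arrow is an isomorphism by the inductive hypothesis and the right one is $\overline{\alpha}_{p,j}$, an isomorphism by the previous step, so the short five lemma forces the middle arrow to be an isomorphism.

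Finally, since $A_j=\bigcup_p F_pA_j$ and $\hat A_j=\bigcup_p\hat F_p\hat A_j$ by the definition of $\textup{Filt}$, surjectivity of $\alpha_j$ follows because every element of $\hat A_j$ already lies in some $\hat F_p\hat A_j$, and injectivity follows because every element of $A_j$ already lies in some $F_pA_j$, where $\alpha_j$ is injective. Hence $\alpha_j$ is an isomorphism, and as $j$ was arbitrary, $\alpha$ is an isomorphism in $\textup{Filt}$. The only places needing care are the bookkeeping between the indices $p,q$ and the total degree $j=p+q$, and the observation that the hypothesis $F_{s(j)}A_j=\{0\}$ (and its hatted analogue) is exactly what makes the induction start; neither is a genuine obstacle.
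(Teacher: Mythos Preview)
Your proof is correct and follows essentially the same approach as the paper: both extract from the convergence hypothesis that the induced maps on the associated graded pieces are isomorphisms, then climb the bounded-below filtration via the five lemma. The paper inducts on the depth $t$ to show that $F_pA_{p+q}/F_{p-t}A_{p+q}\to\hat F_p\hat A_{p+q}/\hat F_{p-t}\hat A_{p+q}$ is an isomorphism for all $p,q,t$, whereas you induct directly on $p$ to show $F_pA_j\to\hat F_p\hat A_j$ is an isomorphism; these are equivalent parametrizations of the same argument.

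One small remark: your opening claim that ``a morphism in $\textup{Filt}$ is invertible exactly when all of its components are'' is not literally true, since the inverse must also respect the filtration. Your argument in fact proves the stronger statement that $\alpha_j$ restricts to an isomorphism $F_pA_j\to\hat F_p\hat A_j$ for every $p$, which is exactly what is needed; just make sure the write-up reflects that this is what you have shown.
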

\begin{proof}
Nach Definition von Konvergenz sind die von $\alpha$ induzierten Abbildungen $\beta_{p,q}^1:F_pA_{p+q}/F_{p-1}A_{p+q}
\to\hat{F}_p\hat{A}_{p+q}/\hat{F}_{p-1}\hat{A}_{p+q}$ Isomorphismen von $R$-Moduln. Wir zeigen nun induktiv, dass die von $\alpha$ induzierten $\beta_{p,q}^t:F_pA_{p+q}/F_{p-t}A_{p+q}
\to\hat{F}_p\hat{A}_{p+q}/\hat{F}_{p-t}\hat{A}_{p+q}$ füt $t\in\mathbb{N}$ Isomorphismen sind: Wir haben folgendes kommutatives Diagramm mit exakten Zeilen:
\[\xymatrix{ {0} \ar[r] & {F_{p-1}A_{p+q}/F_{p-1-t}A_{p+q}} \ar[r] \ar[d]^{\beta_{p-1,q+1}^t} & {F_pA_{p+q}/F_{p-(t+1)}A_{p+q}} \ar[r] \ar[d]^{\beta_{p,q}^{t+1}} & {F_pA_{p+q}/F_{p-1}A_{p+q}} \ar[r] \ar[d]^{\beta_{p,q}^1} & {0} \\
{0} \ar[r] & {\hat{F}_{p-1}\hat{A}_{p+q}/\hat{F}_{p-1-t}\hat{A}_{p+q}} \ar[r] & {\hat{F}_p\hat{A}_{p+q}/\hat{F}_{p-(t+1)}\hat{A}_{p+q}} \ar[r] & {\hat{F}_p\hat{A}_{p+q}/\hat{F}_{p-1}\hat{A}_{p+q}} \ar[r] & {0} }\]
Sind $\beta_{p-1,q+1}^t$ und $\beta_{p,q}^1$ Isomorphismen, so ist auch $\beta_{p,q}^{t+1}$ ein Isomorphismus. Somit sind alle $\beta_{p,q}^t$ Isomorphismen. Insbesondere sind auch die $\beta_{p,q}^t:F_pA_{p+q}\to\hat{F}_p\hat{A}_{p+q}$ für $t=\textup{max}(1,p-s(p+q))$ Isomorphismen. Damit ist $\alpha$ ein Isomorphismus.
\end{proof}
\begin{satz}\label{konvsatz}
Sei $\alpha\in\textup{Mor}_{\textup{FKK}(k)}((A,F,d),(\hat{A},\hat{F},\hat{d}))$ gegeben und sei weiterhin $s:\mathbb{Z}\to\mathbb{Z}$ gegeben, so dass $F_{s(j)}A_j=\{0\}$ und $\hat{F}_{s(j)}\hat{A}_j=\{0\}$ für alle $j\in\mathbb{Z}$. Dann konvergiert $\mathfrak{G}(k)\circ\mathfrak{L}(k)(\alpha)$ gegen den Morphismus $H(\alpha)$, der von $\alpha$ in Homologie induziert wird. Dabei erhalten wir eine Filtrierung $G$ auf Homologie wie folgt: Sei $p_j:\ker d_j\to H_j(A,d)$ die kanonische Projektion, dann sei $G_iH_j(A,d)\defeq p_j(F_iA_j\cap\ker d_j)$.
\end{satz}
\begin{proof}
Es gilt $K_{p,q}^r=F_pA_{p+q}\cap\ker d_{p+q}$ für $r\ge p-s(p+q+k)$. Definiere $K_{p,q}^\infty\defeq\bigcap_{r\in\mathbb{Z}}K_{p,q}^r=F_pA_{p+q}\cap\ker d_{p+q}$. Es gilt $Z_{p,q}^\infty=\etaup_{p,p+q}(K_{p,q}^\infty)$. Weiterhin gilt $\bigcup_{r\in\mathbb{Z}}d_{p+q-k}(K_{p+r-1,q-r-k+1}^{r-1})
=F_pA_{p+q}\cap\textup{im}(d_{p+q-k})$ und damit $B_{p,q}^\infty=\etaup_{p,p+q}(F_pA_{p+q}\cap\textup{im}(d_{p+q-k}))$. Außerdem ist $K_{p-1,q+1}^\infty=K_{p,q}^\infty\cap\ker\etaup_{p,p+q}$. Wir erhalten
\begin{align*}
G_pH_{p+q}(A,d)/G_{p-1}H_{p+q}(A,d)
&\cong\frac{K_{p,q}^\infty}{K_{p-1,q+1}^\infty+(F_pA_{p+q}\cap\textup{im}(d_{p+q-k}))}\\
&\cong\frac{\etaup_{p,p+q}(K_{p,q}^\infty)}
{\etaup_{p,p+q}(F_pA_{p+q}\cap\textup{im}(d_{p+q-k}))}
=Z_{p,q}^\infty/B_{p,q}^\infty=E_{p,q}^\infty.
\end{align*}
Entsprechend erhält man Isomorphismen $\hat{G}_pH_{p+q}(\hat{A},\hat{d})/\hat{G}_{p-1}H_{p+q}(\hat{A},\hat{d})
\cong\hat{E}_{p,q}^\infty$. Man prüft leicht nach, dass die von $\mathfrak{G}(k)\circ\mathfrak{L}(k)(\alpha)$ induzierte Abbildung $E_{p,q}^\infty\to\hat{E}_{p,q}^\infty$ unter den gegebenen Isomorphismen mit der von $H_{p+q}(\alpha)$ induzierten Abbildung übereinstimmt.
\end{proof}
\begin{bemmn}
Wir erhalten weitere Kategorien $\textup{Spek}_l(k)$ für $l\in\mathbb{N}_0$, $k\in\mathbb{Z}$, deren Objekte gleich den Objekten von $\textup{Spek}(k)$ sind. Ein Morphismus $\varphi:(E,d,f)\to(\hat{E},\hat{d},\hat{f})$ besteht aus $R$-Modul-Homomorphismen $\varphi_{p,q}^r:E_{p,q}^r\to\hat{E}_{p,q}^r$ für $p,q\in\mathbb{Z}$ und $r\in\mathbb{N}_{\ge l}$, die 1. und 2. aus Definition \ref{defmor} erfüllen. $\varphi$ induziert $R$-Modul-Homomorphismen $\varphi_{p,q}^\infty:E_{p,q}^\infty\to\hat{E}_{p,q}^\infty$, wie der Beweis von Lemma \ref{spekiso} zeigt. Ist $\varphi_{p,q}^{r_0}$ ein Isomorphismus für ein $r_0\ge l$ und für alle $p,q\in\mathbb{Z}$, so sind die $\varphi_{p,q}^\infty$ Isomorphismen. Man kann analog zu Definition \ref{konvdef} Konvergenz definieren und auch die Analogie zu Lemma \ref{konvlem} gilt. Einen Isomorphismus in $\textup{Spek}_l(k)$ nennen wir Isomorphismus ab dem $E^l$-Term.
\end{bemmn}
\begin{defi}
Sei die Katgorie $\textup{P}(k)$ für $k\in\mathbb{Z}$ wie folgt definiert: Ein Objekt besteht aus $R$-Moduln $X=\bigoplus_{i,j\in\mathbb{Z}}X_{i,j}$. Zusätzlich haben wir Homomorphismen $d_{i,j}:X_{i,j}\to X_{i+1,j+k}$ mit $d_{i+1,j+k}\circ d_{i,j}=0$. Ein Morphismus $\gammaup:(X,d)\to(\hat{X},\hat{d})$ besteht aus Homomorphismen $\gammaup_{i,j}:X_{i,j}\to\hat{X}_{i,j}$ mit $\hat{d}_{i,j}\circ\gammaup_{i,j}=\gammaup_{i+1,j+k}\circ d_{i,j}$.
\end{defi}
\begin{defi}
Die Kategorie $Q(k)$ für $k\in\mathbb{Z}$ sei wie folgt definiert: Ein Objekt besteht aus $R$-Moduln $X=\bigoplus_{i,j\in\mathbb{Z}}X_{i,j}$. Zusätzlich haben wir einen Homomorphismus $d:X\to X$ mit $d\circ d=0$. Wir verlangen $d(X_{i,j})\subset\bigoplus_{l>i}X_{l,j+k}$. Ein Morphismus $(X,d)\to(\hat{X},\hat{d})$ ist ein Homomorphismus $\gammaup:X\to\hat{X}$ mit $\hat{d}\circ\gammaup=\gammaup\circ d$. Wir verlangen $\gammaup(X_{i,j})\subset\bigoplus_{l\ge i}\hat{X}_{l,j}$.
\end{defi}
Wir erhalten einen Funktor $\mathfrak{M}(k):\textup{Q}(k)\to\textup{P}(k)$ wie folgt: Sei $(X,d)\in\textup{Ob}(\textup{Q}(k))$. Sei $\textup{pr}_{i,j}:X\to X_{i,j}$ die kanonische Projektion. Definiere $e_{i,j}\defeq\textup{pr}_{i+1,j+k}\circ d|_{X_{i,j}}:X_{i,j}\to X_{i+1,j+k}$. Dann gilt
\[e_{i+1,j+k}\circ e_{i,j}=\textup{pr}_{i+2,j+2+k}\circ d|_{X_{i+1,j+k}}\circ\textup{pr}_{i+1,j+k}\circ d|_{X_{i,j}}=\textup{pr}_{i+2,j+2+k}\circ d\circ d|_{X_{i,j}}=0.\]
Es sei $\mathfrak{M}(k)(X,d)\defeq(X,e)\in\textup{Ob}(\textup{P}(k))$. Sei nun $\gammaup\in\textup{Mor}_{\textup{Q}(k)}((X,d),(\hat{X},\hat{d}))$. Definiere $\delta_{i,j}\defeq\hat{\textup{pr}}_{i,j}\circ\gammaup|_{X_{i,j}}:
X_{i,j}\to\hat{X}_{i,j}$. Dann gilt
\begin{align*}
\hat{e}_{i,j}\circ\delta_{i,j}
&=\hat{\textup{pr}}_{i+1,j+k}\circ\hat{d}|_{\hat{X}_{i,j}}
\circ\hat{\textup{pr}}_{i,j}\circ\gammaup|_{X_{i,j}}
=\hat{\textup{pr}}_{i+1,j+k}\circ\hat{d}\circ\gammaup|_{X_{i,j}}
=\hat{\textup{pr}}_{i+1,j+k}\circ\gammaup\circ d|_{X_{i,j}}\\
&=\hat{\textup{pr}}_{i+1,j+k}\circ\gammaup|_{X_{i+1,j+k}}
\circ\textup{pr}_{i+1,j+k}\circ d|_{X_{i,j}}
=\delta_{i+1,j+k}\circ e_{i,j}.
\end{align*}
Es sei $\mathfrak{M}(k)(\gammaup)
\defeq\delta\in\textup{Mor}_{\textup{P}(k)}((X,e),(\hat{X},\hat{e}))$. Man sieht leicht, dass $\mathfrak{M}(k)$ ein Funktor ist.\\
Einen weiteren Funktor $\mathfrak{N}(k):\textup{Q}(k)\to\textup{FKK}(k)$ erhalten wir wie folgt: Sei $(X,d)\in\textup{Ob}(\textup{Q}(k))$. Definiere $F_iA_j\defeq\bigoplus_{l\ge -i}X_{l,j}$. Es gilt $A_j=\bigcup_{i\in\mathbb{Z}}F_iA_j=\bigoplus_{l\in\mathbb{Z}}X_{l,j}$ und $A=\bigoplus_{j\in\mathbb{Z}}A_j=X$. Die Abbildung $d:A\to A$ erfüllt $d\circ d=0$ und $d(F_iA_j)\subset F_{i-1}A_{j+k}$. Es sei $\mathfrak{N}(k)(X,d)\defeq(A,F,d)\in\textup{Ob}(\textup{FKK}(k))$. Sei nun $\gammaup\in\textup{Mor}_{\textup{Q}(k)}((X,d),(\hat{X},\hat{d}))$. Es gilt $\gammaup(F_iA_j)\subset\gammaup\hat{F}_i\hat{A}_j$ und $\hat{d}\circ\gammaup=\gammaup\circ d$. Es sei $\mathfrak{N}(k)(\gammaup)
\defeq\gammaup\in\textup{Mor}_{\textup{FKK}(k)}((A,F,d),(\hat{A},\hat{F},\hat{d}))$. Offensichtlich ist $\mathfrak{N}(k)$ ein Funktor.
\begin{defi}\label{chi}
Sei $\chiup:\textup{BG-$R$-Mod}\to\textup{BG-$R$-Mod}$ der Funktor, der der Familie $(A_{i,j})_{i,j\in\mathbb{Z}}$ die Familie $(A_{-i,i+j})_{i,j\in\mathbb{Z}}$ zuordnet.
\end{defi}
\begin{defi}
Sei $\mathfrak{H}(k):\textup{P}(k)\to\textup{BG-$R$-Mod}$ der Homologiefunktor.
\end{defi}
\begin{satz}\label{e2}
Die beiden Funktoren $\chiup\circ\mathfrak{H}(k)\circ\mathfrak{M}(k)$ und ${\mathfrak{T}^2(k)\circ\mathfrak{G}(k)\circ\mathfrak{L}(k)\circ\mathfrak{N}(k)}$ sind natürlich isomorph.
\end{satz}
\begin{proof}
Sei $(X,d)\in\textup{Ob}(\textup{Q}(k))$ und sei $(X,e)\defeq\mathfrak{M}(k)(X,d)$ und $(A,F,d)\defeq\mathfrak{N}(k)(X,d)$. Weiterhin sei $(B,Z,g)\defeq\mathfrak{L}(k)(A,F,d)$. Es gilt
\[Z_{p,q}^2/B_{p,q}^2=\frac{\{[a]\in F_pA_{p+q}/F_{p-1}A_{p+q}\ \ |\ \ d_{p+q}(a)\in F_{p-2}A_{p+q+k}\}}{\{[a]\in F_pA_{p+q}/F_{p-1}A_{p+q}\ \ |\ \ a\in d_{p+q-k}(F_{p+1}A_{p+q-k})\}}\cong\frac{\{x\in X_{-p,p+q}\ \ |\ \ d(x)\in\bigoplus_{l\ge-p+2}X_{l,p+q+k}\}}{\{x\in X_{-p,p+q}\ \ |\ \ x\in\textup{pr}_{-p,p+q}\circ d(X_{-p-1,p+q-k})\}}\]
und
\[(\mathfrak{H}(k)(X,e))_{p,q}=\frac{\{x\in X_{p,q}\ \ |\ \ e_{p,q}(x)=0\}}{\{x\in X_{p,q}\ \ |\ \ x\in e_{p-1,q-k}(X_{p-1,q-k})\}}=\frac{\{x\in X_{p,q}\ \ |\ \ \textup{pr}_{p+1,q+k}\circ d(x)=0\}}{\{x\in X_{p,q}\ \ |\ \ x\in\textup{pr}_{p,q}\circ d(X_{p-1,q-k})\}}.\]
Wir erhalten also Isomorphismen $(\mathfrak{H}(k)(X,e))_{-p,p+q}\cong Z_{p,q}^2/B_{p,q}^2$. Ist $\gammaup\in\textup{Mor}_{\textup{Q}(k)}((X,d),(\hat{X},\hat{d}))$, so sieht man leicht, dass $\chiup\circ\mathfrak{H}(k)\circ\mathfrak{M}(k)(\gammaup)$ und $\mathfrak{T}^2(k)\circ\mathfrak{G}(k)\circ\mathfrak{L}(k)
\circ\mathfrak{N}(k)(\gammaup)$ unter den gegebenen Isomorphismen übereinstimmen.
\end{proof}
\begin{defi}
Sei P-Quasi-Isomorphie $\sim_\textup{P}$ die Äquivalenzrelation auf $\textup{Ob}(\textup{Q}(k))$, die von Folgendem erzeugt wird: $(X,d)\sim_\textup{P}(\hat{X},\hat{d})$, falls es $\gammaup\in\textup{Mor}_{\textup{Q}(k)}((X,d),(\hat{X},\hat{d}))$ und $s:\mathbb{Z}\to\mathbb{Z}$ gibt, so dass $\mathfrak{H}(k)\circ\mathfrak{M}(k)(\gammaup)$ ein Isomorphismus ist und für alle $j\in\mathbb{Z}$ die Gleichung $\bigoplus_{l\ge-s(j)}X_{l,j}=0\bigoplus_{l\ge-s(j)}\hat{X}_{l,j}$ gilt.
\end{defi}
\begin{kor}\label{pqi}
Seien $(X,d)\sim_\textup{P}(\hat{X},\hat{d})$. Dann sind $\mathfrak{G}(k)\circ\mathfrak{L}(k)\circ\mathfrak{N}(k)(X,d)$ und $\mathfrak{G}(k)\circ\mathfrak{L}(k)\circ\mathfrak{N}(k)(\hat{X},\hat{d})$ isomorph ab dem $E^2$-Term. Weiterhin sind die Homologiegruppen $\textup{H}(\mathfrak{N}(k)(X,d))$ und $\textup{H}(\mathfrak{N}(k)(\hat{X},\hat{d}))$ isomorph als filtrierte graduierte $R$-Moduln.
\end{kor}
\begin{proof}
Satz \ref{e2}, Satz \ref{konvsatz}, Lemma \ref{konvlem}
\end{proof}
\begin{lemma}
Sei $R=\mathbb{Z}$ und sei $A$ abelsche Gruppe. Dann folgt aus $(X,d)\sim_\textup{P}(\hat{X},\hat{d})$, dass $(X,d)\otimes_\mathbb{Z}A\sim_\textup{P}(\hat{X},\hat{d})\otimes_\mathbb{Z}A$.
\end{lemma}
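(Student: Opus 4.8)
The plan is to check that $\otimes_{\mathbb{Z}}A$ defines an endofunctor of $\textup{Q}(k)$ and that it carries the generating relation of $\sim_\textup{P}$ to itself. First, for $(X,d)\in\textup{Ob}(\textup{Q}(k))$ one puts $(X,d)\otimes_{\mathbb{Z}}A\defeq(X\otimes_{\mathbb{Z}}A,\,d\otimes\textup{id}_A)$; since $\otimes_{\mathbb{Z}}A$ commutes with direct sums, $(X\otimes_{\mathbb{Z}}A)_{i,j}=X_{i,j}\otimes_{\mathbb{Z}}A$, the support condition $d(X_{i,j})\subset\bigoplus_{l>i}X_{l,j+k}$ is inherited, and $(d\otimes\textup{id}_A)^2=(d\circ d)\otimes\textup{id}_A=0$, so this is again an object of $\textup{Q}(k)$; a morphism $\gammaup$ goes to $\gammaup\otimes\textup{id}_A$, which still satisfies the support and commutation conditions. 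Since $\sim_\textup{P}$ is the equivalence relation generated by a directed relation and a functor sends zig-zag chains to zig-zag chains, it is enough to treat one generating step: given $\gammaup\in\textup{Mor}_{\textup{Q}(k)}((X,d),(\hat{X},\hat{d}))$ and $s:\mathbb{Z}\to\mathbb{Z}$ with $\mathfrak{H}(k)\circ\mathfrak{M}(k)(\gammaup)$ an isomorphism and $\bigoplus_{l\ge-s(j)}X_{l,j}=0=\bigoplus_{l\ge-s(j)}\hat{X}_{l,j}$ for all $j$, I would show that $\gammaup\otimes\textup{id}_A$ is again such a step. The boundedness condition transfers verbatim with the same $s$, since $X_{l,j}=0$ forces $X_{l,j}\otimes_{\mathbb{Z}}A=0$.

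Next I would note that $\mathfrak{M}(k)$ is compatible with $\otimes_{\mathbb{Z}}A$. The projection $\textup{pr}_{i+1,j+k}$ on $X\otimes_{\mathbb{Z}}A$ equals $\textup{pr}_{i+1,j+k}\otimes\textup{id}_A$, so the leading term of $d\otimes\textup{id}_A$ is $e\otimes\textup{id}_A$, where $e$ is the leading term of $d$; hence $\mathfrak{M}(k)((X,d)\otimes_{\mathbb{Z}}A)=\mathfrak{M}(k)(X,d)\otimes_{\mathbb{Z}}A$, and likewise $\mathfrak{M}(k)(\gammaup\otimes\textup{id}_A)=\mathfrak{M}(k)(\gammaup)\otimes\textup{id}_A$. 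Writing $\delta\defeq\mathfrak{M}(k)(\gammaup)$, the hypothesis says that $\delta$ is a quasi-isomorphism in $\textup{P}(k)$ (i.e.\ $\mathfrak{H}(k)(\delta)$ is an isomorphism), and all that remains is to see that $\delta\otimes\textup{id}_A$ is again a quasi-isomorphism in $\textup{P}(k)$.

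For this last point I would decompose an object $(C,e)$ of $\textup{P}(k)$ into the ordinary chain complexes along its antidiagonals --- the complex through $C_{i_0,j_0}$ has $C_{i_0+n,\,j_0+nk}$ in position $n$, with the differential induced by $e$ --- so that $\mathfrak{H}(k)$ is the direct sum of the homologies of these complexes and $\otimes_{\mathbb{Z}}A$ acts on each separately. It then suffices to show that a quasi-isomorphism of (ordinary) chain complexes of \emph{free} abelian groups remains a quasi-isomorphism after $\otimes_{\mathbb{Z}}A$; this follows from the naturality of the universal coefficient sequence $0\to H_n(-)\otimes_{\mathbb{Z}}A\to H_n(-\otimes_{\mathbb{Z}}A)\to\textup{Tor}(H_{n-1}(-),A)\to 0$ together with the five lemma. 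I expect this reduction to be the main obstacle: it is the only step that goes beyond bookkeeping with $\mathfrak{M}(k)$, $\mathfrak{N}(k)$, $\mathfrak{L}(k)$, $\mathfrak{G}(k)$, and it is the point at which one must use that the underlying modules $X_{i,j}$, $\hat{X}_{i,j}$ are free abelian groups, as is the case for the complexes to which the lemma is applied, the relevant modules there being direct summands of exterior algebras on free abelian groups of finite rank.
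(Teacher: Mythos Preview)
Your proposal is correct and follows the same route as the paper: the paper's proof consists of the single line ``Universelles Koeffizienten-Theorem'', and you have spelled out precisely how this is meant, including the bookkeeping with $\mathfrak{M}(k)$ and the reduction to ordinary chain complexes along antidiagonals. Your observation that the argument tacitly requires the $X_{i,j}$ to be free abelian groups is accurate and worth keeping; the lemma as stated does not include this hypothesis, but it holds in all the applications in the paper, where the modules are summands of exterior algebras on finitely generated free abelian groups.
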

\begin{proof}
Universelles Koeffizienten-Theorem
\end{proof}

\section{Eine Spektralsequenz in ungerader Khovanov-Homologie}
\begin{defi}
Für $n\le k\in\mathbb{N}_0$ sei ein Kantenzug der Länge $n$ ein $n$-Tupel $\theta=({}_1\theta,\dots,{}_n\theta)\in(\sum(k,1))^n$, so dass ${}_i\theta^1={}_{i+1}\theta^0$ für $i=1,\dots,n-1$. Wir sagen, $\theta$ führt von ${}_1\theta^0$ nach ${}_n\theta^1$.
\end{defi}
\begin{defi}
Für eine Kantenzuordnung $s$ und einen Kantenzug $\theta$ der Länge $n$ sei $s(\theta)\defeq\prod_{i=1}^ns({}_i\theta)$.
\end{defi}
\begin{defi}
Für eine (orientierte) $k$-dimensionale Konfiguration $C$ und $\eps\in\sum(k,0)$ sei $\textup{sp}(C,\eps)\defeq\frac{|\gl|-|C|+|\eps|}{2}$.
\end{defi}
\begin{lemma}\label{splem}
Sei $C$ eine (orientierte) $k$-dimensionale Konfiguration, $\eps\in\sum(k,0)$ und $\theta$ ein Kantenzug von $(0,\dots,0)$ nach $\eps$. Dann gilt $\textup{sp}(C,\eps)=\sum_{i\in\{1,\dots,|\eps|\}\textup{ und $G(C,{}_i\theta)$ ist Spalt}}1$.
\end{lemma}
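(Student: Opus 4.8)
Der Plan ist, die Behauptung auf eine Teleskopsumme entlang des Kantenzugs $\theta$ zur\"uckzuf\"uhren. Zuerst stelle ich fest, dass $\theta$ die L\"ange $|\eps|$ hat: Jede Kante ${}_i\theta\in\sum(k,1)$ hat ihr einziges $*$ an genau der Stelle, an der sich ${}_i\theta^0$ und ${}_i\theta^1$ unterscheiden (dort $0$ bzw.\ $1$), also ist $|{}_i\theta^1|=|{}_i\theta^0|+1$. Wegen ${}_1\theta^0=(0,\dots,0)$, ${}_i\theta^1={}_{i+1}\theta^0$ und ${}_n\theta^1=\eps$ muss daher $n=|\eps|$ gelten.

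Der entscheidende lokale Schritt ist: F\"ur jedes $i\in\{1,\dots,|\eps|\}$ ist $\g(C,{}_i\theta)$ eine eindimensionale Konfiguration, also -- bis auf die Ordnung auf den Kanten -- eine der beiden M\"oglichkeiten aus Abbildung \ref{eindim}, mithin eine Fusion oder eine Spaltung. Nach der Konstruktion von $\partial_{\g(C,{}_i\theta)}$ sind die Startkreise von $\g(C,{}_i\theta)$ genau die Kreise von $\g(C,{}_i\theta^0)$ und die Endkreise genau die Kreise von $\g(C,{}_i\theta^1)$. Liest man die beiden F\"alle ab (der aktive Teil einer Fusion hat zwei Start- und einen Endkreis, der einer Spaltung einen Start- und zwei Endkreise, und die passiven Kreise stimmen beidseitig \"uberein), so erh\"alt man
\[|\g(C,{}_i\theta^1)|-|\g(C,{}_i\theta^0)|=\begin{cases}-1&\text{falls }\g(C,{}_i\theta)\text{ eine Fusion ist,}\\+1&\text{falls }\g(C,{}_i\theta)\text{ eine Spaltung ist.}\end{cases}\]

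Danach teleskopiere ich. Da $\g(C,(0,\dots,0))$ aus $C$ durch Entfernen aller B\"ogen entsteht, hat es dieselben Kreise wie $C$, also $|\g(C,(0,\dots,0))|=|C|$. Es sei $m$ die Anzahl der $i\in\{1,\dots,|\eps|\}$ mit $\g(C,{}_i\theta)$ Spaltung; das ist genau die rechte Seite der behaupteten Gleichung, und die \"ubrigen $|\eps|-m$ Indizes liefern eine Fusion. Summiert man die lokalen Differenzen entlang $\theta$ auf (mit ${}_1\theta^0=(0,\dots,0)$, ${}_i\theta^1={}_{i+1}\theta^0$ und ${}_{|\eps|}\theta^1=\eps$), so ergibt sich $|\g(C,\eps)|-|C|=m-(|\eps|-m)=2m-|\eps|$, also $m=\frac{|\g(C,\eps)|-|C|+|\eps|}{2}=\textup{sp}(C,\eps)$. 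Damit ist die Behauptung gezeigt; f\"ur nichtorientierte Konfigurationen funktioniert das Argument w\"ortlich genauso.

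Ein echtes Hindernis erwarte ich nicht. Die einzige Stelle, die etwas Sorgfalt verlangt, ist der lokale Schritt, also der Nachweis, dass eine Fusion die Kreisanzahl um genau $1$ senkt und eine Spaltung sie um genau $1$ erh\"oht (unter korrekter Ber\"ucksichtigung der passiven Kreise) und dass dabei die Identifikationen $\V(\g(C,\eps))=\V(\g(C,\eps^0))$ und $\V((\g(C,\eps))^*)=\V(\g(C,\eps^1))$ aus der Konstruktion von $\partial_{\g(C,\eps)}$ richtig verwendet werden. Alles \"Ubrige ist eine direkte Rechnung.
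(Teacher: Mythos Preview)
Your proof is correct and follows essentially the same approach as the paper, which simply writes ``Induktion \"uber $|\eps|$''; your telescoping argument along the edge-path $\theta$ is precisely what that induction unfolds to. The paper gives no further detail, so your write-up is in fact more explicit than the original.
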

\begin{proof}
Induktion über $|\eps|$.
\end{proof}
\begin{defi}
Sei $C$ eine (orientierte) $k$-dimensionale Konfiguration und $\theta$ ein Kantenzug der Länge $n$, dann sei $\textup{sp}(C,\theta)\defeq\prod_{i=1}^{n-1}(-1)^{\textup{sp}(C,{}_i\theta^1)}
=(-1)^{\sum_{i=1}^{n-1}\textup{sp}(C,{}_i\theta^1)}$.
\end{defi}
Mit Lemma \ref{splem} sieht man
\[\sum_{i=1}^{n-1}\textup{sp}(C,{}_i\theta^1)\equiv\sum_{i\in\{1,\dots,n-1\}\textup{, $i$ ist ungerade und $G(C,{}_{n-i}\theta)$ ist Spalt}}1+(n+1)\textup{sp}(C,{}_1\theta^0)\qquad(\textup{mod }2).\]
Sei $C$ eine (orientierte) $k$-dimensionale Konfiguration. Dann erhalten wir zueinander inverse Bijektionen der Menge der Ordnungen der Bögen von $C$ und der Menge der Kantenzüge der Länge $k$ wie folgt. Wir benutzen dabei, dass durch $C$ bereits eine feste Ordnung $\gammaup_1,\dots,\gammaup_k$ der Bögen vorgegeben ist. Dann entsprechen die Ordnungen der Bögen den Permutationen der Menge $\{1,\dots,k\}$. Für $\sigma\in\textup{S}_n$ erhalten wir nämlich die Ordnung $\gammaup_{\sigma(1)},\dots,\gammaup_{\sigma(k)}$. Wir ordnen $\sigma$ nun einen Kantenzug $\psi(\sigma)=({}_1\psi(\sigma),\dots,{}_k\psi(\sigma))$ zu. Für $i,j\in\{1,\dots,k\}$ sei
\[{}_i\psi(\sigma)_j\defeq\begin{cases}
*\qquad&\textup{falls $\sigma(i)=j$,}\\
1\qquad&\textup{falls $\sigma^{-1}(j)<i$,}\\
0\qquad&\textup{falls $\sigma^{-1}(j)>i$.}
\end{cases}\]
Für $\eps\in\sum(k,1)$ sei $\overline{\eps}\in\{1,\dots,k\}$ die Stelle an der $*$ in $\eps$ steht. Wir ordnen dem Kantenzug $\theta=({}_1\theta,\dots,{}_k\theta)$ dann $\phi(\theta)\in\textup{S}_n$ mit $\phi(\theta)(i)\defeq{}_i\overline{\theta}$ zu. Die Abbildungen $\psi$ und $\phi$ sind invers zueinander.\\
\begin{defi}
Für einen Bogen $\gammaup$ in einer orientierten Konfiguration bezeichnen wir mit $\gammaup^0$ und $\gammaup^1$ den Anfangs- und Endpunkt von $\gammaup$.
\end{defi}
\begin{defi}
Wir nennen eine aktive orientierte Konfiguration $C$ vom Typ
\begin{description}
\item[-]
$\textup{A}_k$ für $k\in\mathbb{N}$, falls $C$ genau zwei Kreise $x_1$ und $x_2$ und genau $k$ Bögen hat, wobei alle Bögen von $x_1$ nach $x_2$ zeigen.
\item[-]
$\textup{B}_k$ für $k\in\mathbb{N}$, falls $C$ genau $k$ Kreise $x_0,\dots,x_{k-1}$ und genau $k$ Bögen $\gammaup_0,\dots,\gammaup_{k-1}$ hat, wobei $\gammaup_i$ von $x_{(i-1)\textup{mod }k}$ nach $x_i$ zeigt.
\item[-]
$\textup{C}_{p,q}$ für $p\le q\in\mathbb{N}$, falls $C$ genau einen Kreis $x$ hat und man auf $x$ eine Orietierung als Uhrzeigersinn wählen kann, wodurch man auch Inneres und Äußeres von $x$ festlegt, so dass Folgendes gilt. $C$ hat genau $p$ Bögen $\gammaup_1,\dots,\gammaup_{p}$ im Innern von $x$ und genau $q$ Bögen $\delta_1,\dots,\delta_{q}$ im Äußeren von $x$. Wenn man bei $\gammaup_1^0$ beginnt und $x$ im Uhrzeigersinn entlang läuft, erreicht man die Anfangs- und Endpunkte der Bögen in der Reihenfolge $\gammaup_1^0,\dots,\gammaup_p^0,\delta_1^0,\dots,\delta_q^0,\gammaup_p^1,\dots
\gammaup_1^1,\delta_q^1,\dots\delta_1^1$.
\item[-]
$\textup{D}_{p,q}$ für $p\le q\in\mathbb{N}$, falls Folgendes gilt. Es gibt genau einen Kreis $z$ von $C$, auf dem zwei Anfangs- und zwei Endpunkte von Bögen liegen. Man kann auf $z$ eine Orientierung als Uhrzeigersinn wählen, so dass innerhalb von $z$ die Kreise $x_1,\dots,x_{p-1}$ und außerhalb von $z$ die Kreise $y_1,\dots,y_{q-1}$ liegen. Die Bögen von $C$ seien $\gammaup_1,\dots,\gammaup_{p},\delta_1,\dots,\delta_{q}$ und $\gammaup_i$ zeige von $x_{i-1}$ nach $x_i$ für $i=2,\dots,p-1$, $\gammaup_1$ zeige von $z$ nach $x_1$, $\gammaup_p$ zeige von $x_{p-1}$ nach $z$. $\delta_i$ zeige von $y_{i-1}$ nach $y_i$ für $i=2,\dots,q-1$, $\delta_1$ zeige von $z$ nach $y_1$, $\delta_q$ zeige von $y_{q-1}$ nach $z$. Wenn man $z$ ab $\gammaup_1^0$ entlang läuft erreicht man im Uhrzeigersinn $\gammaup_1^0,\delta_1^0,\gammaup_p^1,\delta_q^1$.
\item[-]
$\textup{F}_{p,q}$ für $p,q\in\mathbb{N}_0$ mit $p+q\ge 1$, falls Folgendes gilt. Es gibt genau einen Kreis $y$ von $C$, so dass alle Bögen, die auf $y$ starten, auch auf $y$ enden. Diese Bögen seien $\delta_1,\dots,\delta_{q}$. Wählt man auf $y$ eine Orientierung als Uhrzeigersinn, so gilt für alle $\delta_i$: Liegt $\delta_i$ im Innern von $y$, so gilt: Läuft man $y$ ab $\delta_i^0$ im Uhrzeigersinn entlang, so ist der erste Start- oder Endpunkt eines Bogens, den man erreicht, $\delta_i^1$. Liegt $\delta_i$ im Äußeren von $y$, so gilt das gleiche, wobei im Uhrzeigersinn durch gegen den Uhrzeigersinn ersetzt sei. Außer $y$ habe $C$ noch die Kreise $x_1,\dots,x_p$ und außer den $\delta_i$ noch die Bögen $\gammaup_1,\dots,\gammaup_{p}$, wobei $\gammaup_i$ von $x_i$ nach $y$ zeige.
\item[-]
$\textup{G}_{p,q}$ für $p,q\in\mathbb{N}_0$ mit $p+q\ge 1$, falls $r(C)$ vom Typ $\textup{F}_{p,q}$ ist.
\end{description}
\end{defi}
Beispiele für die Typen sind in Abbildung \ref{typ} dargestellt.
\begin{figure}[htbp]
\centering
\includegraphics[scale=0.4]{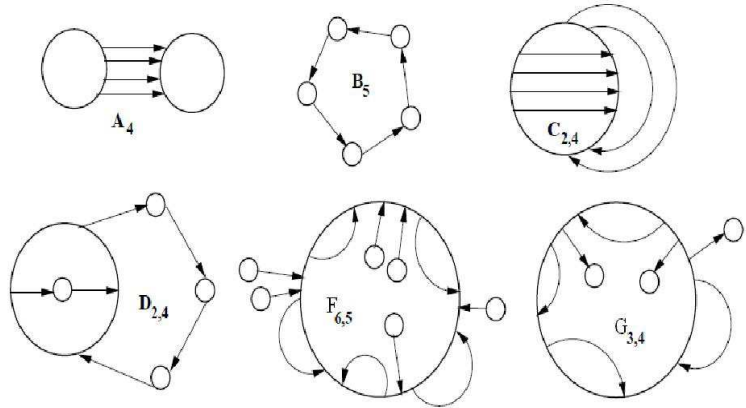}
\caption{Beispiele für die Typen (Abbildung aus \cite{szabo})}
\label{typ}
\end{figure}
Sei
\[\textup{T}\defeq\{\textup{A}_k,\ \textup{B}_k,\ \textup{C}_{p,q},\ \textup{D}_{p,q},\ \textup{F}_{r,s},\ \textup{G}_{r,s}\ \ |\ \ k,p,q\in\mathbb{N}\ ,p\le q,\ r,s\in\mathbb{N}_0,\ r+s\ge 1\}.\]
Ist $C$ nicht zusammenhängend, so gibt es kein $\tau\in\textup{T}$, so dass $C$ vom Typ $\tau$ ist.\\
Ist $C$ mindestens dreidimensional, so gibt es höchstens ein $\tau\in\textup{T}$, so dass $C$ vom Typ $\tau$ ist.\\
Ist $C$ zweidimensional, so gibt es die Fälle aus den Abbildungen \ref{zweidim1} und \ref{zweidim2}. Fall 28 ist vom Typ $\textup{A}_2$, Fall 29 ist vom Typ $\textup{B}_2$, Fall 45 ist vom Typ $\textup{C}_{1,1}$ und vom Typ $\textup{D}_{1,1}$. 17 und 20 sind vom Typ $\textup{F}_{2,0}$, 18 und 21 sind vom Typ $\textup{G}_{2,0}$, 32 und 37 sind vom Typ $\textup{F}_{1,1}$, 31 und 34 sind vom Typ $\textup{G}_{1,1}$, 39 und 43 sind vom Typ $\textup{F}_{0,2}$, 40 und 41 sind vom Typ $\textup{G}_{0,2}$. Alle anderen Fälle sind vom keinem Typ aus T.\\
Ist $C$ eindimensional, so gibt es zwei Möglichkeiten. Ist $C$ Fusion, so ist es vom Typ $\textup{A}_1$, $\textup{F}_{1,0}$ und $\textup{G}_{1,0}$. Ist $C$ Spaltung, so ist es vom Typ $\textup{B}_1$, $\textup{F}_{0,1}$ und $\textup{G}_{0,1}$.\\
Für jedes $\tau\in\textup{T}$ gibt es ein $C$ vom Typ $\tau$. Für $\tau=\textup{A}_k,\ \textup{B}_k,\ \textup{C}_{p,q},\ \textup{D}_{p,q}$ gibt es genau ein $C$ (bis auf die Ordnungen auf den Bögen und Kanten) vom Typ $\tau$.\\
$C$ ist genau dann vom Typ $\textup{A}_k,\ \textup{C}_{p,q},\ \textup{F}_{p,q}$, wenn $m(C^*)$ vom Typ $\textup{B}_k,\ \textup{D}_{p,q},\ \textup{G}_{q,p}$ ist. Definiere für $\tau\in\textup{T}$ also
\[m(\tau^*)\defeq\begin{cases}
\textup{B}_k\qquad&\textup{für $\tau=\textup{A}_k$,}\\
\textup{A}_k\qquad&\textup{für $\tau=\textup{B}_k$,}\\
\textup{D}_{p,q}\qquad&\textup{für $\tau=\textup{C}_{p,q}$,}\\
\textup{C}_{p,q}\qquad&\textup{für $\tau=\textup{D}_{p,q}$,}\\
\textup{G}_{q,p}\qquad&\textup{für $\tau=\textup{F}_{p,q}$,}\\
\textup{F}_{q,p}\qquad&\textup{für $\tau=\textup{G}_{p,q}$.}
\end{cases}\]
Eine orientierte Konfiguration sei vom Typ $\tau\in\textup{T}$, falls ihr aktiver Teil vom Typ $\tau$ ist.\\
Sei $C$ eine aktive orientierte $k$-dimensionale Konfiguration vom Typ $\tau\in\textup{T}$ und $s$ eine Kantenzuordnung vom Typ Y bezüglich $C$. Dann möchten wir den $\mathbb{Z}$-Modul-Homomorphismus $d_{C,\tau,s}:\Lambda\V(C)\to\Lambda\V(C^*)$ definieren. Auf $\Lambda\V(C)$ gibt es, bis auf Vorzeichen, eine kanonische Basis. Genau eines dieser Basiselemente soll nicht trivial abgebildet werden. Wir werden im Folgenden beschreiben, welches Element das ist und wie es abgebildet wird. Dazu wählen wir einen zulässigen Kantenzug $\theta$. Für $\tau\ne\textup{D}_{p,q}$ ist jeder Kantenzug der Länge $k$ (mit Kanten in $\sum(k,1)$) zulässig, für $\tau=\textup{D}_{p,q}$ werden wir später beschreiben, welche Kantenzüge der Länge $k$ zulässig sind. Dann definieren wir $x_{C,\tau,\theta}\in\Lambda\V(C)$ und $y_{C,\tau}\in\Lambda\V(C^*)$. Es sei
\[d_{C,\tau,s}(x_{C,\tau,\theta})=s(\theta)\textup{sp}(C,\theta)y_{C,\tau}.\]
Wir werden zeigen, dass dies unabhängig von der Wahl von $\theta$ ist.\\
Sei $\tau=\textup{A}_k$. Dann seien
\begin{align*}
x_{C,\textup{A}_k,\theta}&=1,\\
y_{C,\textup{A}_k}&=(-1)^{k+1}.
\end{align*}
Wir zeigen nun, dass $d_{C,\tau,s}$ unabhängig von $\theta$ ist. Sei also $\etaup$ ein weiterer Kantenzug der Länge $k$. Wir müssen nur den Fall $\phi(\etaup)=\phi(\theta)\circ{(i\ i+1)}$ für $i\in\{1,\dots,k-1\}$ betrachten, da die $\textup{S}_n$ von solchen Transpositionen erzeugt wird. In diesem Fall unterscheiden sich $({}_1\etaup,\dots,{}_k\etaup)$ und $({}_1\theta,\dots,{}_k\theta)$ nur in der $i$-ten und $(i+1)$-ten Kante. Sei
\[\eps_j\defeq\begin{cases}
*\qquad&\textup{für $j={}_i\overline{\theta}={}_{i+1}\overline{\etaup}$ oder $j={}_{i+1}\overline{\theta}={}_{i}\overline{\etaup}$,}\\
{}_i\etaup_j={}_{i+1}\etaup_j={}_i\theta_j={}_{i+1}\theta_j\qquad&\textup{sonst.}
\end{cases}\]
Dann ist $s(\theta)s(\etaup)=1$, falls $\gl$ vom Typ A oder Y ist und $s(\theta)s(\etaup)=-1$, falls $\gl$ vom Typ K oder X ist, da $s$ eine Kantenzuordnung vom Typ Y ist. Weil $C$ vom Typ $\textup{A}_k$ ist, ist $\gl$ Fall 22, 28 oder 38 aus den Abbildungen \ref{zweidim1} und \ref{zweidim2}. Es folgt $s(\theta)=s(\etaup)$ und $\textup{sp}(C,\theta)=\textup{sp}(C,\etaup)$.\\
Sei $\tau=\textup{B}_k$. Wir haben durch $C$ eine Ordnung $\gammaup_1,\dots,\gammaup_k$ der Bögen gegeben. Die Kreise von $C$ seien so mit $x_1,\dots,x_k$ bezeichnet, dass $\gammaup_i^1$ auf $x_i$ liegt. $C^*$ ist vom Typ $\textup{A}_k$, die Kreise von $C^*$ seien so mit $y_1$, $y_2$ bezeichnet, dass alle Bögen von $y_1$ nach $y_2$ zeigen. Weiterhin sei $\theta$ ein Kantenzug der Länge $k$. Dann seien
\begin{align*}
x_{C,\textup{B}_k,\theta}&=x_{\phi(\theta)(1)}\wedge\cdots\wedge x_{\phi(\theta)(k)},\\
y_{C,\textup{B}_k}&=y_1\wedge y_2.
\end{align*}
Verhalte sich $\etaup$ zu $\theta$ wie für $\tau=\textup{A}_k$ beschrieben und sei $\eps$ auf die gleiche Weise definiert. Es gilt $x_{C,\textup{B}_k,\theta}=-x_{C,\textup{B}_k,\etaup}$. $\gl$ ist Fall 1, 16 oder 29 aus den Abbildungen \ref{zweidim1} und \ref{zweidim2}. Es folgt $s(\theta)=-s(\etaup)$ und $\textup{sp}(C,\theta)=\textup{sp}(C,\etaup)$.\\
Sei $\tau=\textup{C}_{p,q}$. Dann seien
\begin{align*}
x_{C,\textup{C}_{p,q},\theta}&=1,\\
y_{C,\textup{C}_{p,q}}&=(-1)^k.
\end{align*}
Seien $\etaup$ und $\eps$ wie beschrieben. $\gl$ ist Fall 7,22,23,24,28,31,32,38,41,42,43 oder 45. In den Fällen 7, 31, 32 ist $s(\theta)=-s(\etaup)$ und $\textup{sp}(C,\theta)=-\textup{sp}(C,\etaup)$. In den Fällen 22,23,24,28,38,41,42,43,45 ist $s(\theta)=s(\etaup)$ und $\textup{sp}(C,\theta)=\textup{sp}(C,\etaup)$.\\
Sei $\tau=\textup{D}_{p,q}$ und sei $x$ der Kreis von $C$, auf dem zwei Anfangs- und zwei Endpunkte von Bögen liegen. Wir haben durch $C$ eine Ordnung $\gammaup_1,\dots,\gammaup_k$ der Bögen gegeben. Seien $\gammaup_a$, $\gammaup_b$ die beiden Bögen, die auf $x$ zeigen. Für $i\in\{1,\dots,k\}\setminus\{a,b\}$ sei $x_i$ der Kreis, auf den $\gammaup_i$ zeigt. Der Kreis von $C^*$ sei mit $y$ bezeichnet. Ein Kantenzug der Länge $k$ (mit Kanten in $\sum(k,1)$) sei zulässig, falls $\{\phi(\theta)(k-1),\ \phi(\theta)(k)\}=\{a,\ b\}$. Dann sei
\begin{align*}
x_{C,\textup{D}_{p,q},\theta}&=x_{\phi(\theta)(1)}\wedge\cdots\wedge x_{\phi(\theta)(k-2)}\wedge x,\\
y_{C,\textup{D}_{p,q}}&=y.
\end{align*}
Sei zunächst $\etaup$ Kantenzug der Länge $k$, so dass $\phi(\etaup)=\phi(\theta)\circ{(k-1\ k)}$. Sei $\eps$ wie beschrieben. Dann ist $\gl$ Fall 45. Es folgt $s(\theta)=s(\etaup)$ und $\textup{sp}(C,\theta)=\textup{sp}(C,\etaup)$. Sei nun $\etaup$ Kantenzug der Länge $k$, so dass $\phi(\etaup)=\phi(\theta)\circ{(i\ i+1)}$ für $i\in\{1,\dots,k-3\}$. Dann ist $x_{C,\textup{D}_{p,q},\theta}=-x_{C,\textup{D}_{p,q},\etaup}$. Sei $\eps$ wie beschrieben. $\gl$ ist Fall 1,2,16 oder 21. Es folgt $s(\theta)=-s(\etaup)$ und $\textup{sp}(C,\theta)=\textup{sp}(C,\etaup)$.\\
Sei $\tau=\textup{F}_{p,q}$. Wir haben durch $C$ eine Ordnung $\gamma_1,\dots,\gammaup_k$ der Bögen gegeben. Für $i=1,\dots,k$ sei $x_i\defeq 1\in\Lambda\V(C)$, falls $\gammaup_i^0$ und $\gammaup_i^1$ auf dem gleichen Kreis liegen. Andernfalls sei $x_i$ der Kreis, auf dem $\gammaup_i^0$ liegt. In $C^*$ gibt es genau einen Kreis $y$, so dass alle Bögen, die auf $y$ enden, auch auf $y$ starten. Dann sei
\begin{align*}
x_{C,\textup{F}_{p,q},\theta}&=x_{\phi(\theta)(1)}\wedge\cdots\wedge x_{\phi(\theta)(k)},\\
y_{C,\textup{F}_{p,q}}&=y.
\end{align*}
Sei $\etaup$ Kantenzug der Länge $k$, so dass $\phi(\etaup)=\phi(\theta)\circ{(i\ i+1)}$ für $i\in\{1,\dots,k-1\}$. Sei $\eps$ wie beschrieben. Betrachte zunächst den Fall, dass $x_{\phi(\theta)(i)}=1$ oder $x_{\phi(\theta)(i+1)}=1$. Dann gilt $x_{C,\textup{F}_{p,q},\theta}=x_{C,\textup{F}_{p,q},\etaup}$. $\gl$ ist Fall 32, 37, 39 oder 43. In den Fällen 32, 37 ist $s(\theta)=-s(\etaup)$ und $\textup{sp}(C,\theta)=-\textup{sp}(C,\etaup)$. In den Fällen 39, 43 ist $s(\theta)=s(\etaup)$ und $\textup{sp}(C,\theta)=\textup{sp}(C,\etaup)$. Betrachte nun den Fall, dass $x_{\phi(\theta)(i)}\ne 1\ne x_{\phi(\theta)(i+1)}$. Dann gilt $x_{C,\textup{F}_{p,q},\theta}=-x_{C,\textup{F}_{p,q},\etaup}$. $\gl$ ist Fall 17 oder 20. Es folgt $s(\theta)=-s(\etaup)$ und $\textup{sp}(C,\theta)=\textup{sp}(C,\etaup)$.\\
Sei $\tau=\textup{G}_{p,q}$. Wir haben durch $C$ eine Ordnung $\gamma_1,\dots,\gammaup_k$ der Bögen gegeben. Für $i=1,\dots,k$ sei $x_i\defeq 1\in\Lambda\V(C)$, falls $\gammaup_i^0$ und $\gammaup_i^1$ auf dem gleichen Kreis liegen. Andernfalls sei $x_i$ der Kreis, auf dem $\gammaup_i^1$ liegt. In $C^*$ gibt es genau einen Kreis $y$, so dass alle Bögen, die auf $y$ starten, auch auf $y$ enden. Dann sei
\begin{align*}
x_{C,\textup{G}_{p,q},\theta}&=x_{\phi(\theta)(1)}\wedge\cdots\wedge x_{\phi(\theta)(k)},\\
y_{C,\textup{G}_{p,q}}&=(-1)^{p+1}y.
\end{align*}
Dass $d_{C,\textup{G}_{p,q},s}$ unabhängig von $\theta$ ist, zeigt man analog zum Fall $\tau=\textup{F}_{p,q}$.\\
Für $x\in\Lambda^m\V(C)$ schreiben wir $\textup{gr}(x)=m$. Dann gilt
\begin{equation}\label{grgl}
\textup{gr}(y_{C,\tau})-\textup{gr}(x_{C,\tau,\theta})=\frac{|C^*|-|C|-k}{2}+1
=\textup{sp}(C,(1,\dots,1))-k+1.
\end{equation}
Als Nächstes möchten wir $d_{C,\tau,s}$ auch für den Fall definieren, dass $C$ nicht unbedingt aktiv ist. Für ein beliebiges $n\in\mathbb{N}_0$ seien $z_1,\dots,z_n$ beliebige passive Kreise von $C$ und $\omega\defeq z_1\wedge\cdots\wedge z_n$. Weiterhin sei $\theta$ ein für $\textup{akt}(C)$ zulässiger Kantenzug, wir nennen $\theta$ dann zulässig für $C$. Nun sei
\[d_{C,\tau,s}(x_{\textup{akt}(C),\tau,\theta}\wedge\omega)
=s(\theta)\textup{sp}(C,\theta)y_{\textup{akt}(C),\tau}\wedge\omega.\]
Alle kanonischen Basiselemente von $\Lambda\V(C)$, die sich nicht bis auf ein Vorzeichen in der Form $x_{\textup{akt}(C),\tau,\theta}\wedge\omega$, mit einem $\omega$ wie beschrieben, darstellen lassen, werden trivial abgebildet.\\
Es sei
\[d_{C,s}\defeq\sum_{\tau\in\textup{T},\textup{ $C$ ist vom Typ $\tau$}}d_{C,\tau,s}.\]
Ist $C$ eindimensional, so ist $d_{C,s}$ gleich dem ungeraden Khovanov-Differential $\partial(C,s)=s(*)\partial_C$.\\
Eine Kantenzuordnung $s:\sum(k,1)\to\{-1,1\}$ induziert für $\eps\in\sum(k,n)$, $1\le n\le k$, eine Kantenzuordnung $s|_\eps:\sum(n,1)\to\{-1,1\}$ wie folgt. Seien $i_1<i_2<\cdots<i_n$ so, dass $\eps_{i_j}=*$ für $j=1,\dots,n$. Für $\lambda\in\sum(n,1)$ sei $\mu\in\sum(k,1)$ mit $\mu_l\defeq\eps_l$ falls $\eps_l\ne*$ und $\mu_{i_j}\defeq\lambda_j$ für $j=1,\dots,n$. Dann sei $s|_\eps(\lambda)\defeq s(\mu)$.\\
Sei $C$ eine $k$-dimensionale orientierte Konfiguration, $s$ eine Kantenzuordnung vom Typ Y bezüglich $C$ und $1\le n\le k$. Wir erhalten für alle $\eps\in\sum(k,n)$ die Abbildung
\[d_{\g(C,\eps),s|_\eps}:\Lambda\underbrace{\V(\gl)}_{=\V(\g(C,\eps^0))}\to
\Lambda\underbrace{\V((\gl)^*)}_{=\V(\g(C,\eps^1))}.\]
Es sei
\[d_n(C,s)\defeq\bigoplus_{\eps\in\sum(k,n)}
(-1)^{|\eps^0|+(n+1)\textup{sp}(C,\eps^0)}
d_{\g(C,\eps),s|_\eps}:\Gamma(C)\to\Gamma(C)\]
und $d(C,s)\defeq\sum_{1\le n\le k}d_n(C,s)$. Dann erhöht $d_n(C,s)$ den $h$-Grad um $n$ und senkt den $\delta$-Grad um $2$.
\begin{satz}\label{ddn}
Es gilt $d(C,s)\circ d(C,s)=0$.
\end{satz}
Diesen Satz werden wir später beweisen. Wir erhalten also
\[\Omega(C,s)\defeq(\Gamma(C),d(C,s))\in\textup{Ob}(\textup{Q}(-2)).\]
Es gilt $\mathfrak{M}(-2)(\Omega(C,s))\cong\Gamma(C,s)=(\Gamma(C),\partial(C,s))$. Weiterhin ist $\Omega(C,s)\otimes_\mathbb{Z}\mathbb{Z}/2\mathbb{Z}$ isomorph zu dem von Szabo konstruierten Komplex (\cite{szabo}).
\begin{satz}\label{spekinv}
Seien $C$ und $D$ orientierte Konfigurationen, so dass $\overline{C}=\overline{D}$. Seien $s$ und $t$ Kantenzuordnungen, so dass $s$ bezüglich $C$ und $t$ bezüglich $D$ den Typ Y hat. Dann sind $\Omega(C,s)$ und $\Omega(D,t)$ isomorph.
\end{satz}
Diesen Satz werden wir später beweisen. Sei $D$ nun ein Verschlingungsdiagramm und $C$ eine orientierte Konfiguration, so dass $\overline{C}$ eine Nullglättung von $D$ ist. Weiterhin sei $s$ eine Kantenzuordnung vom Typ Y bezüglich $C$. Dann sei $\Omega(D,C,s)\in\textup{Ob}(\textup{Q}(-2))$ wie $\Omega(C,s)$ nur die Bigraduierung unterscheide sich, wie bei ungerader Khovanov-Homologie beschrieben.
\begin{satz}\label{spekrminv}
Seien $D_1$ und $D_2$ Verschlingungsdiagramme, die durch endlich viele Reidemeister-Bewegungen auseinander hervorgehen. Dann gibt es orientierte Konfigurationen $C_1$, $C_2$, so dass $\overline{C_1}$, $\overline{C_2}$ Nullglättungen von $D_1$, $D_2$ sind und Kantenzuordnungen $s_1$, $s_2$ vom Typ Y bezüglich $C_1$, $C_2$, so dass $\Omega(D_1,C_1,s_1)\sim_\textup{P}\Omega(D_2,C_2,s_2)$.
\end{satz}
Diesen Satz werden wir später beweisen.\\
Nach Korollar \ref{pqi} ordnen wir einer Verschlingung $L$ eine filtrierte graduierte abelsche Gruppe $\hat{H}(L)$ zu (bis auf Isomorphie). Weiterhin ordnen wir $L$ eine Spektralsequenz $\hat{E}(L)$ vom Grad $-2$ zu (bis auf Isomorphie ab dem $E^2$-Term). $\hat{E}^2(L)$ ist die ungerade Khovanov-Homologie von $L$. Dabei ist die Graduierung mittels $\chiup$ aus Definition \ref{chi} geändert. $\hat{E}(L)$ konvergiert gegen $\hat{H}(L)$.
\begin{lemma}
Sei $L$ eine Verschlingung, deren ungerade Khovanov-Homologie gleich der des Unknotens ist. Dann ist $\hat{H}(L)=\hat{H}(\textup{Unknoten})$.
\end{lemma}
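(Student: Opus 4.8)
Der Plan ist zu zeigen, dass unter der Voraussetzung die Spektralsequenz $\hat{E}(L)$ bereits am $E^2$-Term entartet; dann ist $\hat{H}(L)$ vollst\"andig durch $\hat{E}^2(L)$ bestimmt, ebenso $\hat{H}(\textup{Unknoten})$ durch $\hat{E}^2(\textup{Unknoten})$, und da die beiden $E^2$-Terme nach Voraussetzung \"ubereinstimmen, folgt die Behauptung.

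Zuerst halte ich fest: Die ungerade Khovanov-Homologie des Unknotens ist im homologischen $h$-Grad $0$ konzentriert, denn ein kreuzungsfreies Diagramm des Unknotens liefert als ungeraden Khovanov-Kettenkomplex gerade $\Lambda\V(K)$ f\"ur einen einzelnen Kreis $K$, mit trivialem Differential und in $h$-Grad $0$. Nach Voraussetzung ist also auch die ungerade Khovanov-Homologie von $L$ im $h$-Grad $0$ konzentriert. Da $\hat{E}^2(L)$ nach Satz \ref{e2} (bis auf die Umgraduierung mittels $\chiup$ aus Definition \ref{chi}) gerade diese Homologie ist und $\chiup$ den Grad $h=0$ auf die Spalte $p=0$ abbildet, ist $\hat{E}^2_{p,q}(L)=0$ f\"ur alle $p\ne 0$; dasselbe gilt f\"ur den Unknoten.

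Da $\hat{E}(L)$ eine Spektralsequenz vom Grad $-2$ ist, verl\"auft ihr Differential $d_{p,q}^r$ von der Spalte $p$ in die Spalte $p-r$. Ist der $E^2$-Term in der einen Spalte $p=0$ konzentriert, so hat jedes $d^r$ f\"ur $r\ge 2$ entweder Quelle oder Ziel in einer Spalte $\ne 0$, ist also null; induktiv folgt $\hat{E}^r(L)\cong\hat{E}^2(L)$ f\"ur alle $r\ge 2$ und $\hat{E}^\infty(L)\cong\hat{E}^2(L)$, weiterhin in $p=0$ konzentriert. Wegen der Konvergenz von $\hat{E}(L)$ gegen $\hat{H}(L)$ gibt es Isomorphismen $\hat{E}_{p,q}^\infty(L)\cong F_p\hat{H}(L)_{p+q}/F_{p-1}\hat{H}(L)_{p+q}$. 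Die assoziierte Graduierung verschwindet also au\ss{}erhalb von $p=0$; da die Filtrierung aus Satz \ref{konvsatz} aussch\"opfend und --- wegen endlich vieler Kreuzungen im Diagramm --- nach unten beschr\"ankt ist, folgt induktiv $F_p\hat{H}(L)_j=0$ f\"ur $p<0$ und $F_p\hat{H}(L)_j=\hat{H}(L)_j$ f\"ur $p\ge 0$, also $\hat{H}(L)_j\cong\hat{E}_{0,j}^\infty(L)$ mit trivialer Filtrierung. Mit der analogen Aussage f\"ur den Unknoten und $\hat{E}^2(L)\cong\hat{E}^2(\textup{Unknoten})$ erh\"alt man schlie\ss{}lich die Kette $\hat{H}(L)\cong\hat{E}^\infty(L)\cong\hat{E}^2(L)\cong\hat{E}^2(\textup{Unknoten})\cong\hat{E}^\infty(\textup{Unknoten})\cong\hat{H}(\textup{Unknoten})$ von Isomorphismen filtrierter graduierter abelscher Gruppen.

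Der eigentliche Arbeitsaufwand liegt nicht im Entartungsschluss, der Standard ist, sondern in der Buchhaltung \"uber die Graduierungen: man muss pr\"ufen, dass unter der $\chiup$-Umgraduierung der homologische Khovanov-Grad tats\"achlich dem Spaltenindex $p$ der Spektralsequenz entspricht, und dass die von Satz \ref{konvsatz} gelieferte Filtrierung auf $\hat{H}$ nach unten beschr\"ankt ist, so dass der abschlie\ss{}ende Induktionsschluss (vgl.\ auch Lemma \ref{konvlem}) greift.
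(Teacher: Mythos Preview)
Dein Argument ist korrekt und entspricht genau dem, was der einzeilige Beweis des Papers --- \glqq Mittels $\hat{E}(L)$\grqq\ --- andeutet: das Paper verl\"asst sich darauf, dass der Leser den Standardschluss selbst ausf\"uhrt, und du hast ihn sauber ausgeschrieben. Auch die Buchhaltung mit $\chiup$ (homologischer Grad $h$ entspricht der Spalte $p=-h$, also $h=0\leftrightarrow p=0$) und die Beschr\"anktheit der Filtrierung hast du richtig identifiziert.
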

\begin{proof}
Mittels $\hat{E}(L)$.
\end{proof}
Die Verschlingungen, die durch $\hat{L}$ vom Unknoten unterschieden werden können, können also auch durch ungerade Khovanov-Homologie vom Unknoten unterschieden werden.\\
Sei $C$ eine orientierte Konfiguration und $s$ eine Kantenzuordnung vom Typ X bezüglich $C$. Dann ist $s$ vom Typ Y bezüglich $r(m(C))$. Es sei $\Omega '(C,s)\defeq\Omega(r(m(C)),s)$. Damit ist $\mathfrak{M}(-2)(\Omega '(C,s))\cong\Gamma(C,s)$ und $\Omega '(C,s)\otimes_\mathbb{Z}\mathbb{Z}/2\mathbb{Z}$ ist isomorph zu dem von Szabo beschriebenen Komplex mit Randabbildung $d'$ (\cite{szabo}, Anfang von Kapitel 8).\\
Analog zum Beweis von Korollar \ref{xy} kann man aus Satz \ref{spekrminv} folgern, dass $\Omega(C,s)\sim_\textup{P}\Omega '(C,t)$ für eine orientierte Konfiguration $C$ und Kantenzuordnungen $s$, $t$, so dass $s$ Typ Y und $t$ Typ X bezüglich $C$ hat. Somit sind die von Szabo beschriebenen $\hat{H}'(L)$ und $\hat{H}(L)$ (\cite{szabo}, Anfang von Kapitel 8), sowie die beiden zugehörigen Spektralsequenzen ab dem $E^2$-Term, isomorph.

\section{Abhängigkeit von der Orientierung}
Sei $C$ eine orientierte Konfiguration. Wir haben eine Ordnung auf den Kreisen $x_1,\dots,x_n$ gegeben. Für $1\le i_1<\cdots<i_m\le n$ nennen wir $x_{i_1}\wedge\cdots\wedge x_{i_m}\in\Lambda\V(C)$ ein Monom vom Grad $m$, das durch $x_{i_1},\dots,x_{i_m}$ teilbar ist. Die Monome bilden eine Basis der freien abelschen Gruppe $\Lambda\V(C)$. Wir haben eine Ordnung auf den Kanten $e_1,\dots,e_l$ von $C$ gegeben. Diese induziert auf kanonische Weise eine Ordnung $f_1,\dots,f_l$ der Kanten von $C^*$. Für $i\in\{1,\dots,l\}$ sei $x_C(i)$ der Kreis von $C$, in dem $e_i$ liegt. Sei $s$ eine Kantenzuordnung vom Typ Y bezüglich $C$, dann sieht man leicht, dass für $i\in\{1,\dots,l\}$ Folgendes gilt:
\begin{lemma}[Filtrierungsregel]
Ist $C$ vom Typ $\tau\in\textup{T}$ und sind $\alpha\in\Lambda\V(C),\ \beta\in\Lambda\V(C^*)$ Monome, so dass $\alpha$ durch $x_C(i)$ teilbar ist und der Koeffizient von $d_{C,\tau,s}(\alpha)$ bei $\beta$ nicht Null ist, dann ist $\beta$ durch $x_{C^*}(i)$ teilbar.
\end{lemma}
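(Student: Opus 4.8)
The plan is to use that $d_{C,\tau,s}$ is essentially a single arrow on canonical basis elements: it sends $x_{\textup{akt}(C),\tau,\theta}\wedge\omega$ to $\pm\,y_{\textup{akt}(C),\tau}\wedge\omega$ (with $\omega$ running over wedge products of passive circles) and kills every other canonical basis monomial. Hence, if the coefficient of $d_{C,\tau,s}(\alpha)$ at $\beta$ is nonzero, then up to sign $\alpha=x_{\textup{akt}(C),\tau,\theta}\wedge\omega$ and $\beta=y_{\textup{akt}(C),\tau}\wedge\omega$ for a suitable admissible edge-path $\theta$ and a fixed $\omega$, and the claim reduces to the purely combinatorial statement: for every edge $e_i$, if the circle $x_C(i)$ occurs in the monomial $x_{\textup{akt}(C),\tau,\theta}\wedge\omega$, then $x_{C^*}(i)$ occurs in $y_{\textup{akt}(C),\tau}\wedge\omega$. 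First I would dispose of the passive edges: a passive circle of $C$ is a passive circle of $C^*$, and the canonical correspondence of edges matches it with itself; so if $e_i$ lies in a passive circle, both sides of the implication say ``this circle occurs in $\omega$'', and they hold simultaneously. The same remark shows $x_C(i)$ is active iff $x_{C^*}(i)$ is, so from now on I may assume $e_i$ lies in an active circle, in which case divisibility of $\alpha$ by $x_C(i)$ is equivalent to $x_C(i)$ occurring in $x_{\textup{akt}(C),\tau,\theta}$, and what must be shown is that $x_{C^*}(i)$ occurs in $y_{\textup{akt}(C),\tau}$.

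Next I would run through $\textup{T}$ using the explicit formulas for $x_{\textup{akt}(C),\tau,\theta}$ and $y_{\textup{akt}(C),\tau}$. For $\tau=\textup{A}_k$ and $\tau=\textup{C}_{p,q}$ the element $x_{\textup{akt}(C),\tau,\theta}$ equals the scalar $1$, so it contains no circle and the hypothesis is vacuous for active $e_i$. For $\tau=\textup{B}_k$ the configuration $C^*$ is of type $\textup{A}_k$ with its two circles $y_1,y_2$, and $y_{\textup{akt}(C),\textup{B}_k}=y_1\wedge y_2$; for $\tau=\textup{D}_{p,q}$ the configuration $C^*$ has a single circle $y$ and $y_{\textup{akt}(C),\textup{D}_{p,q}}=y$. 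In both cases $y_{\textup{akt}(C),\tau}$ is, up to sign, the wedge of all active circles of $C^*$, hence divisible by $x_{C^*}(i)$ for every active edge $e_i$; so the conclusion holds unconditionally.

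The only real case is $\tau=\textup{F}_{p,q}$. Here, reading off the definition, $x_{\textup{akt}(C),\textup{F}_{p,q},\theta}$ equals, up to sign, $x_1\wedge\cdots\wedge x_p$ --- the wedge of the $p$ outer circles; the arcs $\delta_1,\dots,\delta_q$ contribute the factor $1$, and in particular the distinguished circle $y$ does not occur --- whereas $y_{\textup{akt}(C),\textup{F}_{p,q}}=y_{C^*}$ is the distinguished circle of $C^*$, i.e. the unique circle of $C^*$ such that every arc of $C^*$ ending on it also starts on it. Thus the hypothesis forces $x_C(i)=x_j$ for some $j\le p$ (in particular $e_i$ does not lie in $y$), and it remains to show that then $x_{C^*}(i)=y_{C^*}$, i.e. that the edge of $C^*$ corresponding to $e_i$ lies in $y_{C^*}$. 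The decisive geometric point is that in a type-$\textup{F}_{p,q}$ configuration the circle $x_j$ carries exactly one arc endpoint, $\gamma_j^0$, and is joined to the rest of $C$ only by $\gamma_j$, which runs to $y$; hence in $C^*$ the rotation of $\gamma_j$ performs the band move along $\gamma_j$ and fuses the descendant of $x_j$ with the descendant of $y$. Doing this for every $\gamma_l$, and noting that each $\delta_m$ --- which joins adjacent points of $y$ in the nesting pattern prescribed by the definition of $\textup{F}_{p,q}$ --- only splits off an auxiliary circle without separating any $x_l$ from the core, one sees that $C^*$ has a single circle containing the descendants of all of $x_1,\dots,x_p$ and part of $y$, that this circle satisfies the defining incoming-equals-outgoing property, and that such a circle is unique; so it is $y_{C^*}$, and the edge of $C^*$ corresponding to $e_i\subseteq x_j$ lies in it.

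Finally $\tau=\textup{G}_{p,q}$ reduces to $\textup{F}_{p,q}$: if $C$ is of type $\textup{G}_{p,q}$, then $r(C)$ is of type $\textup{F}_{p,q}$, the configurations $r(C)$ and $C$ have the same circles and edges, and $r(C)^*=r(C^*)$ has the same circles and edges as $C^*$; comparing the formulas shows that $d_{r(C),\textup{F}_{p,q},s}$ and $d_{C,\textup{G}_{p,q},s}$ send the same monomial to the same one up to sign, and since $r$ interchanges ``starts on'' and ``ends on'' the distinguished circle of $r(C)^*$ is exactly the circle $y$ occurring in $y_{C,\textup{G}_{p,q}}$. So the filtration rule for $(C,\textup{G}_{p,q})$ is literally the filtration rule for $(r(C),\textup{F}_{p,q})$. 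The main obstacle is thus the $\textup{F}_{p,q}$ step: all other types drop out instantly from the formulas, but there one has to carry out the honest topological bookkeeping --- tracking the circle $x_j$ under the simultaneous rotation of all arcs and identifying the resulting circle of $C^*$ with $y_{C^*}$.
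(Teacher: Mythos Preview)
Your proof is correct. The paper does not give an explicit proof of the Filtrierungsregel at all---it simply asserts ``dann sieht man leicht, dass \ldots'' (``one easily sees that \ldots'')---so your case-by-case verification via the explicit formulas for $x_{\textup{akt}(C),\tau,\theta}$ and $y_{\textup{akt}(C),\tau}$ is precisely the intended routine check, just spelled out in full.
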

Für ein Monom $\alpha\in\Lambda\V(C)$ vom Grad $m$ sei $\alpha^*\in\Lambda\V(m(C))$ das eindeutig bestimmte Monom vom Grad $n-m$, so dass $\alpha^*$ genau durch die Kreise von $m(C)$ teilbar ist, durch deren Entsprechung in $C$ das Monom $\alpha$ nicht teilbar ist. Sei $t$ eine Kantenzuordnung vom Typ Y bezüglich $m(C^*)$. Man prüft leicht nach, dass dann gilt:
\begin{lemma}[Dualitätsregel]
Sei $C$ vom Typ $\tau\in\textup{T}$ und seien $\alpha\in\Lambda\V(C),\ \beta\in\Lambda\V(C^*)$ Monome. Dann ist der Koeffizient von $d_{C,\tau,s}(\alpha)$ bei $\beta$ bis auf ein Vorzeichen gleich dem Koeffizienten von $d_{m(C^*),m(\tau^*),t}(\beta^*)$ bei $\alpha^*$.
\end{lemma}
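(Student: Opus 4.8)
The plan is to exploit that $C\mapsto m(C^*)$ and $\tau\mapsto m(\tau^*)$ are involutions. Indeed $(m(C^*))^*=m(C)$ and $m(m(C))=C$, so iterating the first operation returns $C$, and the explicit table for $m(\tau^*)$ shows the second is an involution on $\textup{T}$ (for instance $\textup{F}_{p,q}\mapsto\textup{G}_{q,p}\mapsto\textup{F}_{p,q}$). In particular $d_{m(C^*),m(\tau^*),t}$ runs from $\Lambda\V(m(C^*))=\Lambda\V(C^*)$ to $\Lambda\V((m(C^*))^*)=\Lambda\V(m(C))=\Lambda\V(C)$, i.e. opposite to $d_{C,\tau,s}\colon\Lambda\V(C)\to\Lambda\V(C^*)$. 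Moreover, since $m(C)$ has literally the same circles as $C$, the operation $\alpha\mapsto\alpha^*$ is nothing but passage to the complementary monomial, so $(\alpha^*)^*=\pm\alpha$ and $\alpha\mapsto\alpha^*$ is a sign-ambiguous bijection of the monomial bases of $\Lambda\V(C)$ and $\Lambda\V(m(C))$ (and likewise for $C^*$). Finally, $d_{C,\tau,s}$ sends each monomial $x_{\textup{akt}(C),\tau,\theta}\wedge\omega$ to $\pm\,y_{\textup{akt}(C),\tau}\wedge\omega$ and annihilates every other monomial, so all its matrix entries in the monomial basis lie in $\{-1,0,1\}$, and the same is true for $d_{m(C^*),m(\tau^*),t}$. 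Hence "equal up to a sign'' simply means: the coefficient of $\beta$ in $d_{C,\tau,s}(\alpha)$ vanishes if and only if the coefficient of $\alpha^*$ in $d_{m(C^*),m(\tau^*),t}(\beta^*)$ vanishes, and it is this equivalence of vanishing patterns that I would establish.

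First I would dispose of the passive circles. Under the identifications just noted, together with the equality of the passive circles of $C$ and of $C^*$, the configurations $C$, $C^*$, $m(C)$ and $m(C^*)$ all have the same passive circles, and $\textup{akt}(m(C^*))=m((\textup{akt}(C))^*)$ is active of type $m(\tau^*)$ because $\textup{akt}(C)$ is active of type $\tau$. Writing a monomial $\alpha$ of $C$ as $\pm\,\alpha_{\textup{a}}\wedge\alpha_{\textup{p}}$ with $\alpha_{\textup{a}}$ in the active and $\alpha_{\textup{p}}$ in the passive circles, one has $\alpha^*=\pm\,(\alpha_{\textup{a}})^*\wedge(\alpha_{\textup{p}})^*$, the two stars being complements inside the active resp.\ the passive circles. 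Since $d_{C,\tau,s}$ is supported on the monomials $x_{\textup{akt}(C),\tau,\theta}\wedge\omega$, the coefficient of $\beta$ in $d_{C,\tau,s}(\alpha)$ is nonzero exactly when $\alpha_{\textup{a}}=\pm\,x_{\textup{akt}(C),\tau,\theta}$, $\beta_{\textup{a}}=\pm\,y_{\textup{akt}(C),\tau}$ and $\alpha_{\textup{p}}=\pm\,\beta_{\textup{p}}$; the passive condition is visibly symmetric under $\alpha\mapsto\alpha^*$, $\beta\mapsto\beta^*$. This reduces the statement to the case that $C$ is active.

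For active $C$ of type $\tau$ the whole statement then follows from the single identity
\[ (x_{C,\tau,\theta})^* = \pm\, y_{m(C^*),m(\tau^*)} \]
(both sides being monomials up to sign in $\Lambda\V(m(C))=\Lambda\V((m(C^*))^*)$; here I use the $\theta$-independence of $d_{C,\tau,s}$, which guarantees that $x_{C,\tau,\theta}$ and $y_{C,\tau}$ are well-defined monomials up to sign, even for $\tau=\textup{D}_{p,q}$ where only some $\theta$ are admissible). Applying $*$ to this identity and using $(\gamma^*)^*=\pm\gamma$ yields the companion identity $(y_{C,\tau})^*=\pm\,x_{m(C^*),m(\tau^*),\theta'}$ read for the involute; the two identities, used for $C$ and for $m(C^*)$, translate the non-vanishing of the coefficient of $\beta$ in $d_{C,\tau,s}(\alpha)$ into that of the coefficient of $\alpha^*$ in $d_{m(C^*),m(\tau^*),t}(\beta^*)$, using once more that each of the two differentials meets a single monomial with coefficient $\pm1$.

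The last step — verifying the displayed identity — is where the real work lies, and I would carry it out case by case over the six families in $\textup{T}$. For $\tau=\textup{A}_k$: $x_{C,\textup{A}_k,\theta}=1$, so its complement is the product $x_1\wedge x_2$ of all circles of $m(C)$, while $m(C^*)$ has type $\textup{B}_k$ and $y_{m(C^*),\textup{B}_k}=\pm\,(y_1\wedge y_2)$ is the product of the two circles of $(m(C^*))^*=m(C)$ — these agree. For $\tau=\textup{C}_{p,q}$: $x=1$, its complement is the unique circle of $m(C)$, and $m(C^*)$ has type $\textup{D}_{p,q}$ with $y$ that same circle. For $\tau=\textup{F}_{p,q}$: $x_{C,\textup{F}_{p,q},\theta}$ is the product of all circles except the distinguished one $y$, hence its complement is $\pm\,y$, and one reads off from the definitions that $m(C^*)$ has type $\textup{G}_{q,p}$ with distinguished circle of $(m(C^*))^*=m(C)$ again $y$. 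The types $\textup{B}_k$, $\textup{D}_{p,q}$, $\textup{G}_{p,q}$ are treated in the same manner (equivalently, they follow from the companion identity). I expect the genuine obstacle to be the $\textup{F}/\textup{G}$ case: one must check that passing from $C$ to $m(C^*)$ and on to $(m(C^*))^*=m(C)$ swaps "inside'' with "outside'' and the roles of "arcs starting on'' and "arcs ending on'' a given circle in exactly the way that converts the defining data of $\textup{F}_{p,q}$ into that of $\textup{G}_{q,p}$ — which is precisely how $m(\tau^*)$ was set up. Here the Filtrierungsregel can shorten the bookkeeping, since it already constrains which circles of $C^*$ the image of a monomial is divisible by.
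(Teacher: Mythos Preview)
Your proposal is correct and is in fact considerably more detailed than what the paper offers: the paper's entire ``proof'' is the phrase \emph{Man pr\"uft leicht nach} (``one easily checks''), leaving the verification to the reader. Your reduction --- first to active $C$ by splitting off the passive factor, then to the single identity $(x_{C,\tau,\theta})^*=\pm\,y_{m(C^*),m(\tau^*)}$, then a type-by-type check --- is exactly how one would carry out that verification cleanly, and your case analyses for $\textup{A}_k/\textup{B}_k$, $\textup{C}_{p,q}/\textup{D}_{p,q}$ are right on the nose.

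One small remark on the $\textup{F}/\textup{G}$ case, which you flag as the delicate one: the check there is indeed mechanical once you note that in $C^*$ (hence in $m(C^*)$) the roles of the $\gamma$-arcs and $\delta$-arcs of an $\textup{F}_{p,q}$ configuration interchange, so that the $p$ ``external'' circles of $C$ become exactly the circles through which $y_{m(C^*),\textup{G}_{q,p}}$ is \emph{not} divisible, matching your complement computation $(x_{C,\textup{F}_{p,q},\theta})^*=\pm y$. No Filtrierungsregel is needed here; the bookkeeping is just tracking which circle carries the ``many-arc'' role before and after the $90^\circ$ rotation.
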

Die Filtrierungsregel gilt auch für $d_{C,s}=\sum d_{C,\tau,s}$. Die Dualitätsregel gilt auch für $d_{C,s}$ und $d_{m(C^*),t}$.\\
Für eine aktive orientierte eindimensionale Konfiguration $C$ möchten wir einen $\mathbb{Z}$-Modul-Homomorphismus $H_C:\Lambda\V(C)\to\Lambda\V(C^*)$ definieren. Für den 1. Fall aus Abbildung \ref{eindim} sei $H_C(1)=H_C(x_1)=H_C(x_2)=0$ und $H_C(x_1\wedge x_2)=y$. Für den 2. Fall sei $H_C(1)=1$ und $H_C(y)=0$. Ist $C$ nicht unbedingt aktiv, so sei $H_C$ analog zu $\partial_C$ definiert. $H_C$ erfüllt Filtrierungs- und Dualitätsregel.\\
Sei $C$ nun eine orientierte $k$-dimensionale Konfiguration und $s$ eine Kantenzuordnung vom Typ Y bezüglich $C$. Für $i=1,\dots,k$ sei $H_i(C,s):\Gamma(C)\to\Gamma(C)$ definiert als
\[\bigoplus_{\eps\in\sum(k,1)\textup{ mit }\eps_i=*}s(\eps)(-1)^{\textup{sp}(C,\eps^0)}H_{\gl}.\]
Es gilt $H_i(C,s)\circ H_i(C,s)=0$. Weiterhin erhöht $H_i(C,s)$ den $h$-Grad um $1$ und lässt den $\delta$-Grad unverändert.
\begin{satz}\label{dhs}
Seien $C$, $D$ orientierte $k$-dimensionale Konfigurationen, so dass $\overline{C}=\overline{D}$ und die Orientierung der Bögen sich nur beim $i$-ten Bogen unterscheide, für $i\in\{1,\dots,k\}$. Dann gibt es für jede Kantenzuordnung $s$ vom Typ Y bezüglich $C$ eine Kantenzuordnung $t$ vom Typ Y bezüglich $D$, so dass
\[d(C,s)-d(D,t)=d(C,s)\circ H_i(C,s)-H_i(C,s)\circ d(C,s).\]
\end{satz}
\begin{proof}
Ohne Einschränkungen sei $i=1$.\\
Sei $\kappa:\sum(k,1)\to\{1,-1\}$ definiert durch
\[\kappa(\eps)=\begin{cases}
-1\qquad&\textup{falls $\eps_1=*$ und $\gl$ eine Spaltung ist,}\\
1\qquad&\textup{sonst.}
\end{cases}\]
Der Beweis von Lemma \ref{inv2} zeigt, dass $t\defeq\kappa s$ eine Kantenzuordnung vom Typ Y bezüglich $D$ ist. Weiterhin ist $d_1(C,s)=d_1(D,t)$. Sei nun $k\ge 2$ und $a=(0,*,\dots,*),\ b=(1,*,\dots,*)\in\sum(k,k-1)$, sowie  $c=(*,0,\dots,0),\ d=(*,1,\dots,1)\in\sum(k,1)$. Zu zeigen ist:
\begin{equation}\label{dhgl}
d_{C,s}-d_{D,t}=(-1)^{1+k\textup{sp}(C,b^0)}s(c)d_{\g(C,b),s|_b}\circ H_{\g(C,c)}-(-1)^{\textup{sp}(C,d^0)}s(d)H_{\g(C,d)}\circ d_{\g(C,a),s|_a}
\end{equation}
Wir müssen nur den Fall betrachten, dass $C$ aktiv ist. Sei $C$ zunächst nicht zusammenhängend. Dann müssen wir nur den Fall betrachten, dass $C$ aus genau zwei Zusammenhangskomponenten besteht, wobei in der einen Zusammenhangskomponente der erste Bogen und sonst kein weiterer Bogen liegt. Sei $\theta$ ein für $\textup{akt}(\g(C,a))=\textup{akt}(\g(C,b))$ zulässiger Kantenzug. Sei zunächst $\textup{akt}(\g(C,c))=\textup{akt}(\g(C,d))$ die linke Seite aus Abbildung \ref{eindim}. Dann ist $\textup{sp}(C,b^0)=0$. Weiterhin gilt $s(d)s|_a(\theta)=(-1)^{k-1}s(c)s|_b(\theta)$, weil $\gl$ vom Typ K ist, für alle $\eps\in\sum(k,2)$ mit $\eps_1=*$. Ist $\g(C,a)$ vom Typ $\tau\in\textup{T}$, so ist
\begin{align*}
&d_{\g(C,b),\tau,s|_b}\circ H_{\g(C,c)}(x_1\wedge x_2\wedge x_{\g(C,b),\tau,\theta})\\
=&d_{\g(C,b),\tau,s|_b}(y\wedge x_{\g(C,b),\tau,\theta})\\
=&(-1)^{\textup{gr}(x_{\g(C,b),\tau,\theta})}s|_b(\theta)
\textup{sp}(\g(C,b),\theta)y_{\g(C,b),\tau}\wedge y
\end{align*}
und
\begin{align*}
&H_{\g(C,d)}\circ d_{\g(C,a),\tau,s|_a}(x_1\wedge x_2\wedge x_{\g(C,b),\tau,\theta})\\
=&s|_a(\theta)\textup{sp}(\g(C,a),\theta)H_{\g(C,d)}(y_{\g(C,a),\tau}\wedge x_1\wedge x_2)\\
=&s|_a(\theta)\textup{sp}(\g(C,b),\theta)y\wedge y_{\g(C,b),\tau}\\
=&(-1)^{\textup{gr}(y_{\g(C,b),\tau})}s|_a(\theta)\textup{sp}(\g(C,b),\theta) y_{\g(C,b),\tau}\wedge y.
\end{align*}
Wegen
\[\textup{gr}(y_{\g(C,b),\tau})-\textup{gr}(x_{\g(C,b),\tau,\theta})
=\textup{sp}(\g(C,b),(1,\dots,1))-(k-1)+1=\textup{sp}(C,d^0)-k+2\]
ist die rechte Seite von Gleichung (\ref{dhgl}) trivial.\\
Sei nun $\textup{akt}(\g(C,c))=\textup{akt}(\g(C,d))$ die rechte Seite aus Abbildung \ref{eindim}. Dann ist $\textup{sp}(C,b^0)=1$. Weiterhin ist $s(d)s|_a(\theta)=(-1)^{k-1-\textup{sp}(C,d^0)}s(c)s|_b(\theta)$, weil Folgendes gilt: Seien $\eps\in\sum(k,2),\ \etaup\in\sum(k,1)$ gleich, bis auf die erste Komponente, es sei nämlich $\eps_1=*$ und $\etaup_1=0$. Dann ist $\gl$ vom Typ K, falls $\gle$ Fusion ist und vom Typ A, falls $\gle$ Spaltung ist. Also ist die rechte Seite von Gleichung (\ref{dhgl}) trivial.\\
Sei $C$ nun zusammenhängend. Sei $C$ zuerst Fall 28 aus Abbildung \ref{zweidim2}, siehe Abbildung \ref{f28}.
\begin{figure}[htbp]
\centering
\includegraphics{graph.10}
\caption{}
\label{f28}
\end{figure}
Weil $C$ vom Typ A und $D$ vom Typ K ist, gilt
\[s(0,*)s(*,1)=s(*,0)s(1,*)=t(*,0)t(1,*)=-t(0,*)t(*,1).\]
Weiterhin ist $\textup{sp}(C,(1,0))=\textup{sp}(C,(0,1))=0$. Es ist $d_{C,s}(1)=-s(0,*)s(*,1)$ und $d_{D,t}(x_1\wedge x_2)=t(*,0)t(1,*)y_2\wedge y_1$. Weiterhin gilt $d_{\g(C,(1,*)),s|_{(1,*)}}\circ H_{\g(C,(*,0))}(x_1\wedge x_2)=s(1,*)y_2\wedge y_1$ und $H_{\g(C,(*,1))}\circ d_{\g(C,(0,*)),s|_{(0,*)}}(1)=s(0,*)$. Somit ist Gleichung (\ref{dhgl}) erfüllt.\\
Ist $C$ Fall 29 aus Abbildung \ref{zweidim2}, so sind die Rollen von $C$ und $D$ und damit auch von $s$ und $t$ aus Fall 28 vertauscht und man erhält auf beiden Seiten von Gleichung (\ref{dhgl}) das Negative aus Fall 28.\\
Sei $C$ nun Fall 45, siehe Abbildung \ref{f45}.
\begin{figure}[htbp]
\centering
\includegraphics{graph.11}
\caption{}
\label{f45}
\end{figure}
Weil $C$ vom Typ Y ist, gilt $s(0,*)s(*,1)=s(*,0)s(1,*)$. Weiterhin ist $\textup{sp}(C,(1,0))=\textup{sp}(C,(0,1))=1$. Es ist $d_{C,s}(1)=-s(0,*)s(*,1)$ und $d_{C,s}(x)=-s(0,*)s(*,1)y$ und $d_{D,t}=0$. Weiterhin gilt $d_{\g(C,(1,*)),s|_{(1,*)}}\circ H_{\g(C,(*,0))}(1)=s(1,*)$ und $H_{\g(C,(*,1))}\circ d_{\g(C,(0,*)),s|_{(0,*)}}(x)=-s(0,*)y$. Somit ist Gleichung (\ref{dhgl}) erfüllt.\\
Ist $C$ Fall 44, so sind die Rollen von $C$ und $D$ und damit auch von $s$ und $t$ aus Fall 45 vertauscht und man erhält auf beiden Seiten von Gleichung (\ref{dhgl}) das Negative aus Fall 45.\\
Unser $C$ ist eine zusammenhängende, orientierte, mindestens zweidimensionale Konfiguration, es gilt also: $C$ ist einer der Fälle 28,29,44,45 aus Abbildung \ref{zweidim2} oder
\[u_C\defeq\#\{\tau\in\textup{T}\ \ |\ \ \textup{$C$ oder $D$ ist vom Typ $\tau$}\}\le 1.\]
Sei $u_C=1$. Ohne Einschränkungen gehen wir davon aus, dass es $\tau\in\textup{T}$ gibt, so dass $C$ vom Typ $\tau$ ist.\\
Sei $\tau=\textup{A}_k$, $k\ge 3$. Dann ist $\g(C,b)$ von keinem Typ aus T. Weiterhin ist $\g(C,a)$ vom Typ $\textup{A}_{k-1}$ und $\g(C,d)$ ist Spaltung. Also ist Gleichung (\ref{dhgl}) erfüllt.\\
Sei $\tau=\textup{B}_k$, $k\ge 3$. Dann ist $\g(C,a)$ von keinem Typ aus T. Weiterhin ist $\g(C,b)$ vom Typ $\textup{B}_{k-1}$ und $\g(C,c)$ ist Fusion. Es folgt $\textup{sp}(C,b^0)=0$. Wähle eine Ordnung auf den Bögen von $C$, die mit dem ersten Bogen (aus der gegebenen Ordnung der Bögen) beginnt und bei der der Bogen an zweiter Stelle steht, dessen Endpunkt auf dem Kreis liegt, auf dem der Anfangspunkt des ersten Bogens liegt. Dies liefert zulässige Kantenzüge $\theta,\ \etaup$ für $C,\ \g(C,b)$. Es gilt $s(\theta)=s(c)s|_b(\etaup)$ und $\textup{sp}(C,\theta)=\textup{sp}(\g(C,b),\etaup)$. Außerdem ist $H_{\g(C,c)}(x_{C,\textup{B}_k,\theta})=-x_{\g(C,b),\textup{B}_{k-1},\etaup}$ und $y_{\g(C,b),\textup{B}_{k-1}}=y_{C,\textup{B}_k}$. Also ist Gleichung (\ref{dhgl}) erfüllt.\\
Sei $\tau=\textup{C}_{p,q}$, $p+q\ge 3$. Die $S^2$ ohne den Kreis $x$ von $C$ besteht aus zwei Zusammenhangskomponenten. Betrachte zuerst den Fall, dass der erste Bogen von $C$ der einzige Bogen in seiner Zusammenhangskomponente von $S^2\setminus x$ ist. Dann ist $\g(C,a)$ von keinem Typ aus T. Weiterhin ist $\g(C,b)$ vom Typ $\textup{A}_{k-1}$ und $\g(C,c)$ ist Spaltung. Also ist Gleichung (\ref{dhgl}) erfüllt.\\
Betrachte nun den Fall, dass in der Zusammenhangskomponente von $S^2\setminus x$, in der der erste Bogen liegt, noch mindestens ein weiterer Bogen liegt. Dann ist $\g(C,b)$ von keinem Typ aus T oder $\g(C,b)$ ist vom Typ $\textup{F}_{1,1}$ oder vom Typ $\textup{G}_{1,1}$. Weil $\g(C,c)$ Spaltung ist, ist $d_{\g(C,b),s|_b}\circ H_{\g(C,c)}=0$. Weiterhin ist $\g(C,a)$ vom Typ $\textup{C}_{p-1,q}$ oder $\textup{C}_{p,q-1}$ und $\g(C,d)$ ist Spaltung. Also ist Gleichung (\ref{dhgl}) erfüllt.\\
Sei $\tau=\textup{D}_{p,q}$, $p+q\ge 3$. Sei $x$ der Kreis von $C$, auf dem zwei Angangs- und zwei Endpunkte von Bögen liegen. Betrachte zuerst den Fall, dass der erste Bogen von $C$ der einzige Bogen in seiner Zusammenhangskomponente von $S^2\setminus x$ ist. Dann ist $\g(C,b)$ von keinem Typ aus T. Weiterhin ist $\g(C,a)$ vom Typ $\textup{B}_{k-1}$ und $\g(C,d)$ ist Fusion. Ein zulässiger Kantenzug $\theta$ für $C$, so dass $\phi(\theta)(k)$ der erste Bogen ist, liefert einen zulässigen Kantenzug $\etaup$ für $\g(C,a)$. Es gilt $s(\theta)=s|_a(\etaup)s(d)$ und $\textup{sp}(C,\theta)=(-1)^{\textup{sp}(C,d^0)}\textup{sp}(\g(C,a),\etaup)$. Außerdem ist $x_{C,\textup{D}_{p,q},\theta}=x_{\g(C,a),\textup{B}_{k-1},\etaup}$ und $H_{\g(C,d)}(y_{\g(C,a),\textup{B}_{k-1}})=-y_{C,\textup{D}_{p,q}}$. Also ist Gleichung (\ref{dhgl}) erfüllt.\\
Betrachte nun den Fall, dass in der Zusammenhangskomponente von $S^2\setminus x$, in der der erste Bogen liegt, noch mindestens ein weiterer Bogen liegt. Dann ist $\g(C,a)$ von keinem Typ aus T oder $\g(C,a)$ ist vom Typ $\textup{F}_{1,1}$ oder vom Typ $\textup{G}_{1,1}$. Weil $\g(C,d)$ Fusion ist, ist $H_{\g(C,d)}\circ d_{\g(C,a),s|_a}=0$. Weiterhin ist $\g(C,b)$ vom Typ $\textup{D}_{p-1,q}$ oder $\textup{D}_{p,q-1}$ und $\g(C,c)$ ist Fusion. Es folgt $\textup{sp}(C,b^0)=0$. Liege zunächst der Endpunkt des ersten Bogens nicht auf $x$. Dann sei $\theta$ ein zulässiger Kantenzug für $C$, so dass $\phi(\theta)(1)$ der erste Bogen ist. $\theta$ liefert einen zulässigen Kantenzug $\etaup$ für $\g(C,b)$. Es gilt $s(\theta)=s(c)s|_b(\etaup)$ und $\textup{sp}(C,\theta)=\textup{sp}(\g(C,b),\etaup)$. Außerdem ist $H_{\g(C,c)}(x_{C,\textup{D}_{p,q},\theta})=-x_{\g(C,b),\textup{D}_{p-1,q},\etaup}$ und $y_{\g(C,b),\textup{D}_{p-1,q}}=y_{C,\textup{D}_{p,q}}$, wobei für $\textup{D}_{p-1,q}$ auch $\textup{D}_{p,q-1}$ stehen kann. Also ist Gleichung (\ref{dhgl}) erfüllt.\\
Liege nun der Endpunkt des ersten Bogens auf $x$. Dann sei $\theta$ ein zulässiger Kantenzug für $C$, so dass $\phi(\theta)(k-1)$ der erste Bogen ist und $\phi(\theta)(k-2)$ der Bogen ist, dessen Endpunkt auf dem Kreis liegt, auf dem der Anfangspunkt des ersten Bogens liegt. $\theta$ liefert einen zulässigen Kantenzug $\etaup$ für $\g(C,b)$. Es gilt $s(\theta)=(-1)^{k-2}s(c)s|_b(\etaup)$, weil $\gl$ vom Typ K ist, für alle $\eps\in\sum(k,2)$ mit $|\eps^1|<k$. Weiterhin gilt $\textup{sp}(C,\theta)=\textup{sp}(\g(C,b),\etaup)$. Außerdem ist $H_{\g(C,c)}(x_{C,\textup{D}_{p,q},\theta})
=(-1)^{k-3}x_{\g(C,b),\textup{D}_{p-1,q},\etaup}$ und $y_{\g(C,b),\textup{D}_{p-1,q}}=y_{C,\textup{D}_{p,q}}$, wobei für $\textup{D}_{p-1,q}$ auch $\textup{D}_{p,q-1}$ stehen kann. Also ist Gleichung (\ref{dhgl}) erfüllt.\\
Sei $\tau=\textup{F}_{0,2}$, siehe Abbildung \ref{f0,2}.
\begin{figure}[htbp]
\centering
\includegraphics{graph.12}
\caption{}
\label{f0,2}
\end{figure}
Weil $C$ vom Typ A ist, gilt $s(0,*)s(*,1)=s(*,0)s(1,*)$. Weiterhin ist $\textup{sp}(C,(1,0))=\textup{sp}(C,(0,1))=1$. Es ist $d_{C,s}(1)=-s(0,*)s(*,1)y_2$. Weiterhin gilt $d_{\g(C,(1,*)),s|_{(1,*)}}\circ H_{\g(C,(*,0))}(1)=s(1,*)(y_2-y_1)$ und $H_{\g(C,(*,1))}\circ d_{\g(C,(0,*)),s|_{(0,*)}}(1)=-s(0,*)y_1$. Somit ist Gleichung (\ref{dhgl}) erfüllt.\\
Sei $\tau=\textup{F}_{2,0}$, siehe Abbildung \ref{f2,0}.
\begin{figure}[htbp]
\centering
\includegraphics{graph.13}
\caption{}
\label{f2,0}
\end{figure}
Weil $C$ vom Typ K ist, gilt $s(0,*)s(*,1)=-s(*,0)s(1,*)$. Weiterhin ist $\textup{sp}(C,(1,0))=\textup{sp}(C,(0,1))=0$. Es ist $d_{C,s}(x_1\wedge x_3)=s(0,*)s(*,1)y$. Weiterhin gilt $d_{\g(C,(1,*)),s|_{(1,*)}}\circ H_{\g(C,(*,0))}(x_3\wedge x_2)=s(1,*)y$ und $H_{\g(C,(*,1))}\circ d_{\g(C,(0,*)),s|_{(0,*)}}(x_1\wedge x_3)=-s(0,*)y=H_{\g(C,(*,1))}\circ d_{\g(C,(0,*)),s|_{(0,*)}}(x_2\wedge x_3)$. Somit ist Gleichung (\ref{dhgl}) erfüllt.\\
Sei $\tau=\textup{F}_{1,1}$,so dass der Anfangs- und Endpunkt des ersten Bogens von $C$ auf dem gleichen Kreis liegen, siehe Abbildung \ref{f1,1a}.
\begin{figure}[htbp]
\centering
\includegraphics{graph.14}
\caption{}
\label{f1,1a}
\end{figure}
Weil $C$ vom Typ K ist, gilt $s(0,*)s(*,1)=-s(*,0)s(1,*)$. Weiterhin ist $\textup{sp}(C,(1,0))=1$ und $\textup{sp}(C,(0,1))=0$. Es ist $d_{C,s}(x_2)=s(0,*)s(*,1)y_1$. Weiterhin gilt $d_{\g(C,(1,*)),s|_{(1,*)}}\circ H_{\g(C,(*,0))}(1)=s(1,*)$ und $d_{\g(C,(1,*)),s|_{(1,*)}}\circ H_{\g(C,(*,0))}(x_2)=s(1,*)y_1$ und $H_{\g(C,(*,1))}\circ d_{\g(C,(0,*)),s|_{(0,*)}}(1)=s(0,*)$. Somit ist Gleichung (\ref{dhgl}) erfüllt.\\
Sei nun $\tau=\textup{F}_{1,1}$,so dass der Anfangs- und Endpunkt des ersten Bogens von $C$ auf unterschiedlichen Kreisen liegen, siehe Abbildung \ref{f1,1b}.
\begin{figure}[htbp]
\centering
\includegraphics{graph.15}
\caption{}
\label{f1,1b}
\end{figure}
Es ist $\textup{sp}(C,(1,0))=0$ und $\textup{sp}(C,(0,1))=1$ und $d_{C,s}(x_2)=s(*,0)s(1,*)y_1$. Weiterhin gilt $d_{\g(C,(1,*)),s|_{(1,*)}}\circ H_{\g(C,(*,0))}(x_2\wedge x_1)=s(1,*)y_1\wedge y_2$ und $H_{\g(C,(*,1))}\circ d_{\g(C,(0,*)),s|_{(0,*)}}(x_2)=-s(0,*)y_1$ und $H_{\g(C,(*,1))}\circ d_{\g(C,(0,*)),s|_{(0,*)}}(x_1\wedge x_2)=s(0,*)y_1\wedge y_2$. Somit ist Gleichung (\ref{dhgl}) erfüllt.\\
Sei nun $\tau=\textup{F}_{p,q}$, $p+q\ge 3$. Betrachte zuerst den Fall, dass Anfangs- und Endpunkt des ersten Bogens von $C$ auf dem gleichen Kreis liegen. Dann sind $\g(C,a)$ und $\g(C,b)$ vom Typ $\textup{F}_{p,q-1}$. Weiterhin sind $\g(C,c)$ und $\g(C,d)$ Spaltungen. Es folgt $H_{\g(C,d)}\circ d_{\g(C,a),s|_a}=0$ und $\textup{sp}(C,b^0)=1$. Ein zulässiger Kantenzug $\theta$ für $C$, so dass $\phi(\theta)(1)$ der erste Bogen ist, liefert einen zulässigen Kantenzug $\etaup$ für $\g(C,b)$. Es gilt $s(\theta)=s(c)s|_b(\etaup)$ und $\textup{sp}(C,\theta)=(-1)^{1+(k-2)}\textup{sp}(\g(C,b),\etaup)$.  Außerdem ist $H_{\g(C,c)}(x_{C,\textup{F}_{p,q},\theta})
=x_{\g(C,b),\textup{F}_{p,q-1},\etaup}$ und $y_{\g(C,b),\textup{F}_{p,q-1}}=y_{C,\textup{F}_{p,q}}$. Also ist Gleichung (\ref{dhgl}) erfüllt.\\
Betrachte nun den Fall, dass Anfangs- und Endpunkt des ersten Bogens von $C$ auf unterschiedlichen Kreisen liegen. Dann sind $\g(C,a)$ und $\g(C,b)$ vom Typ $\textup{F}_{p-1,q}$. Weiterhin sind $\g(C,c)$ und $\g(C,d)$ Fusionen. Es folgt $d_{\g(C,b),s|_b}\circ H_{\g(C,c)}=0$ und $\textup{sp}(C,d^0)=q$. Ein zulässiger Kantenzug $\theta$ für $C$, so dass $\phi(\theta)(k)$ der erste Bogen ist, liefert einen zulässigen Kantenzug $\etaup$ für $\g(C,a)$. Es gilt $s(\theta)=s|_a(\etaup)s(d)$ und $\textup{sp}(C,\theta)=(-1)^q\textup{sp}(\g(C,a),\etaup)$. Sei $x$ der Kreis von $C$, auf dem der Anfangspunkt des ersten Bogens liegt. Dann gilt $x_{C,\textup{F}_{p,q},\theta}=x_{\g(C,a),\textup{F}_{p-1,q},\etaup}\wedge x$ und $H_{\g(C,d)}(y_{\g(C,a),\textup{F}_{p-1,q}}\wedge x)=-y_{C,\textup{F}_{p,q}}$. Also ist Gleichung (\ref{dhgl}) erfüllt.\\
Dass für $\tau=\textup{G}_{p,q}$, $p+q\ge 2$, Gleichung (\ref{dhgl}) gilt, zeigt man analog zu $\tau=\textup{F}_{p,q}$.\\
Sei nun $u_C=0$. Wir müssen zeigen, dass dann die rechte Seite von Gleichung (\ref{dhgl}) trivial ist. Ergibt sich für den ersten Bogen von $C$ eine Situation wie in Abbildung \ref{sit}, so ist $\textup{akt}(\g(C,a))=\textup{akt}(\g(C,b))$.
\begin{figure}[htbp]
\centering
\includegraphics{graph.16}
\caption{}
\label{sit}
\end{figure}
Wir prüfen nun nach, dass dann
\begin{equation}\label{dhtgl}
(-1)^{1+k\textup{sp}(C,b^0)}s(c)d_{\g(C,b),\tau,s|_b}\circ H_{\g(C,c)}=(-1)^{\textup{sp}(C,d^0)}s(d)H_{\g(C,d)}\circ d_{\g(C,a),\tau,s|_a}
\end{equation}
ist, falls $\g(C,a)$ vom Typ $\tau\in\textup{T}$ ist. In Situation 1 ist $\textup{sp}(C,b^0)=0$, in Situation 2 ist $\textup{sp}(C,b^0)=1$. Sei $\theta$ ein zulässiger Kantenzug für $\g(C,a)$. In Situation 1 ist $s(c)s|_b(\theta)=(-1)^{k-1}s(d)s|_a(\theta)$, weil $\gl$ vom Typ K ist für alle $\eps\in\sum(k,2)$ mit $\eps_1=*$. In Situation 2 ist $s(c)s|_b(\theta)=(-1)^{k-1-\textup{sp}(C,d^0)}s(d)s|_a(\theta)$, weil für alle $\eps\in\sum(k,2)$ mit $\eps_1=*$ und $\mu\in\sum(k,1)$ mit
\[\mu_i=\begin{cases}
0\qquad&\textup{für }i=1,\\
\eps_i\qquad&\textup{sonst},
\end{cases}\]
gilt: Ist $\g(C,\mu)$ Spaltung, so ist $\gl$ vom Typ A. Ist $\g(C,\mu)$ Fusion, so ist $\gl$ vom Typ K. Man sieht leicht, dass in Situation 2 Gleichung (\ref{dhtgl}) gilt. In Situation gilt Gleichung (\ref{dhtgl}) wegen Gleichung (\ref{grgl}).\\
Wir werden nun zeigen: Ist $u_C=0$ und ergibt sich für den ersten Bogen von $C$ keine der beiden Situationen aus Abbildung \ref{sit}, so ist $H_{\g(C,d)}\circ d_{\g(C,a),\tau,s|_a}=0$, falls $\g(C,a)$ vom Typ $\tau$ ist.\\
Ist $\g(C,b)$ vom Typ $\sigma$, so folgt für eine Kantenzuordnung $v$ vom Typ Y bezüglich $m(C^*)$ aus $H_{\g(m(C^*),d)}\circ d_{\g(m(C^*),a),m(\sigma^*),v|_a}=0$ und der Dualitätsregel, dass $d_{\g(C,b),\sigma,s|_b}\circ H_{\g(C,c)}=0$ ist.\\
Sei der erste Bogen von $C$ mit $\gammaup$ bezeichnet.\\
Sei $\tau=\textup{A}_k$. Dann verbindet $\gammaup$ zwei unterschiedliche Kanten aus einem der beiden Kreise von $\g(C,a)$. Dann ist aber $\g(C,d)$ Fusion.\\
Sei $\tau=\textup{B}_k$. Dann verbindet $\gammaup$ zwei unterschiedliche Kreise von $\g(C,a)$. Dann ist aber $\g(C,d)$ Spaltung.\\
Sei $\tau=\textup{C}_{p,q}$. Dann verbindet $\gammaup$ zwei unterschiedliche Kanten des Kreises von $\g(C,a)$, wobei für mindestens eine dieser Kanten gilt: Die beiden Bögen, die diese Kante begrenzen, liegen auf der anderen Seite des Kreises wie $\gammaup$. Dann ist aber $\g(C,d)$ Fusion.\\
Sei $\tau=\textup{D}_{p,q}$. Dann ist $\g(C,d)$ Spaltung.\\
Sei $\tau=\textup{F}_{p,q}$. Dann gibt es zwei Möglichkeiten:
\begin{enumerate}
\item 
Für mindestens eine der beiden Kanten, die $\gammaup$ verbindet, gilt: Diese Kante wird von Anfangs- und Endpunkt des gleichen Bogens begrenzt. Dann ist $\g(C,d)$ Fusion, aber durch mindestens einen der beiden Kreise aus $\textup{akt}(\g(C,d))$ ist $y_{\g(C,a),\textup{F}_{p,q}}$ nicht teilbar. (Teilbarkeit sei wie für Monome definiert.)
\item 
Andernfalls ist $\g(C,d)$ Spaltung, aber $y_{\g(C,a),\textup{F}_{p,q}}$ ist durch den aktiven Kreis von $\g(C,d)$ teilbar.
\end{enumerate}
Den Fall $\tau=\textup{G}_{p,q}$ behandelt man analog zu $\tau=\textup{F}_{p,q}$.
\end{proof}

\section{$d\circ d=0$}
Um Satz \ref{ddn} zu beweisen, reicht es folgendes zu zeigen:
\begin{satz}\label{sddn}
Sei $C$ eine $k$-dimensionale orientierte Konfiguration und $s$ eine Kantenzuordnung vom Typ Y bezüglich $C$. Dann gilt
\[\sum_{i=1}^{k-1}d_{k-i}(C,s)\circ d_i(C,s)=0.\]
\end{satz}
Für $k=2$ gilt: $d_1(C,s)\circ d_1(C,s)=-\partial(C,s)\circ\partial(C,s)=0$.
\begin{lemma}\label{invdd}
Seien $C$, $D$ $k$-dimensionale orientierte Konfigurationen mit $\overline{C}=\overline{D}$. Wenn Satz \ref{sddn} für alle $(k-1)$-dimensionalen Konfigurationen gilt, dann gibt es für jede Kantenzuordnung $s$ vom Typ Y bezüglich $C$ eine Kantenzuordnung $t$ vom Typ Y bezüglich $D$, so dass
\[\sum_{i=1}^{k-1}d_{k-i}(C,s)\circ d_i(C,s)=\sum_{i=1}^{k-1}d_{k-i}(D,t)\circ d_i(D,t).\]
\end{lemma}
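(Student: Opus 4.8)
The plan is to reduce to the case where $C$ and $D$ differ in the orientation of a single arc and then to run a short computation with the operators $d(C,s)$ and $H_1(C,s)$, using Satz \ref{dhs} and the hypothesis in the shape ``Satz \ref{sddn} holds on every codimension-$1$ face''. Since $\overline{C}=\overline{D}$, the configuration $D$ arises from $C$ by reversing the orientations of a set of arcs; reversing them one at a time produces a chain $C=C_0,C_1,\dots,C_m=D$ whose consecutive members differ in exactly one arc. If the lemma is known for each single step --- i.e.\ if from $s_r$ (type Y on $C_r$) one can produce $s_{r+1}$ (type Y on $C_{r+1}$) with $\sum_{i=1}^{k-1}d_{k-i}(C_r,s_r)\circ d_i(C_r,s_r)=\sum_{i=1}^{k-1}d_{k-i}(C_{r+1},s_{r+1})\circ d_i(C_{r+1},s_{r+1})$ --- then, this being an equality of operators and hence transitive, the general case follows with $t\defeq s_m$. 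So I may assume $C$ and $D$ differ only in the first arc.

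Now apply Satz \ref{dhs} with $i=1$: there is a type-Y edge assignment $t$ for $D$ with $d(D,t)=d(C,s)-d(C,s)\circ H_1(C,s)+H_1(C,s)\circ d(C,s)$. Abbreviating $\delta\defeq d(C,s)$, $\delta'\defeq d(D,t)$, $h\defeq H_1(C,s)$, so that $\delta'=\delta-\delta h+h\delta$, and expanding the square while using $h^2=0$ (which is $H_1(C,s)\circ H_1(C,s)=0$), one gets the exact identity
\[(\delta')^2-\delta^2=h\delta^2-\delta^2 h-h\delta^2 h+\delta h\delta h+h\delta h\delta.\]
The key point is a monotonicity in the first cube coordinate: every summand $d_{\g(C,\eps),s|_\eps}$ of $d(C,s)$ runs from the vertex summand $\eps^0$ to the vertex summand $\eps^1$, whose first coordinates are related by $0\to0$, $0\to1$ or $1\to1$, so $d(C,s)$ --- and hence any product of copies of it --- never lowers the first coordinate, whereas $H_1(C,s)$ vanishes on summands with first coordinate $1$ and sends those with first coordinate $0$ to first coordinate $1$. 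Therefore any operator word in which $H_1(C,s)$ occurs twice, separated only by copies of $d(C,s)$, is zero; this kills $h\delta^2 h$, $\delta h\delta h$ and $h\delta h\delta$, leaving $(\delta')^2-\delta^2=h\delta^2-\delta^2 h$.

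It remains to take the part that raises the $h$-degree by $k$ --- which for $d(D,t)\circ d(D,t)$, resp.\ $d(C,s)\circ d(C,s)$, is exactly $\sum_{i=1}^{k-1}d_{k-i}(D,t)\circ d_i(D,t)$, resp.\ $\sum_{i=1}^{k-1}d_{k-i}(C,s)\circ d_i(C,s)$. Since $\Gamma(C)=\Gamma(D)$ is concentrated in $h$-degrees $0,\dots,k$ and $h$ shifts the $h$-degree by exactly $1$, this part of $h\delta^2-\delta^2 h$ equals $h\circ(\delta^2)_{k-1}-(\delta^2)_{k-1}\circ h$, where $(\delta^2)_{k-1}=\sum_{i+j=k-1}d_i(C,s)\circ d_j(C,s)$ is the $h$-degree-$(k-1)$-raising part of $d(C,s)\circ d(C,s)$. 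This operator splits as a direct sum over the faces $\g(C,\eps)$ with $\eps\in\sum(k,k-1)$; on the $\eps$-summand it is, up to a sign, $\sum_{i+j=k-1}d_i(\g(C,\eps),s|_\eps)\circ d_j(\g(C,\eps),s|_\eps)$, which vanishes by Satz \ref{sddn} for the $(k-1)$-dimensional configuration $\g(C,\eps)$ --- precisely the hypothesis. Hence $(\delta^2)_{k-1}=0$, the $h$-degree-$k$-raising parts of $\delta^2$ and $(\delta')^2$ coincide, and the asserted equality follows.

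The main obstacle I expect is the bookkeeping in this last step: checking that the $h$-degree-$(k-1)$-raising part of $d(C,s)\circ d(C,s)$ genuinely decomposes, face by face, as $\pm$ the corresponding two-fold composition of the lower-dimensional differentials (so that the $(-1)^{|\eps^0|}$- and $\textup{sp}$-twists in the definition of $d_n(C,s)$ cancel as they should), together with a clean formulation of the first-coordinate monotonicity argument. Everything else is formal.
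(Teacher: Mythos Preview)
Your proof is correct and follows essentially the same route as the paper's: reduce to a single arc flip, apply Satz~\ref{dhs}, expand the square, kill all double-$H$ words by the first-coordinate monotonicity, and eliminate the remaining correction $h\delta^2-\delta^2 h$ via the $(k-1)$-dimensional hypothesis. Your ungraded bookkeeping is in fact a bit cleaner than the paper's graded expansion --- the terms $\delta h\delta$ and $-\delta h\delta$ cancel on the spot, whereas in the graded version they appear as $\sum_i d_{k-i}H_j d_{i-1}$ and $-\sum_i d_{k-i-1}H_j d_i$ and one has to observe the telescoping separately. Your caveat about the face-by-face signs is well placed but harmless: a direct check shows that on each $(k-1)$-face $\eps$ the global twist $(-1)^{|\eps^0|+(n+1)\textup{sp}(C,\eps^0)}$ contributes an $i$-independent factor, so $(\delta^2)_{k-1}$ really is, up to a single sign per face, the lower-dimensional sum $\sum_{i=1}^{k-2}d_{k-1-i}(\g(C,\eps),s|_\eps)\circ d_i(\g(C,\eps),s|_\eps)$.
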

\begin{proof}
Es genügt den Fall zu betrachten, dass sich die Orientierung der Bögen nur beim $j$-ten Bogen unterscheidet, für $j\in\{1,\dots,k\}$. Nach Satz \ref{dhs} gibt es eine Kantenzuordnung $t$ vom Typ Y bezüglich $D$, so dass
\[d(C,s)-d(D,t)=d(C,s)\circ H_j(C,s)-H_j(C,s)\circ d(C,s).\]
Dann gilt:
\begin{align*}
&\sum_{i=1}^{k-1}d_{k-i}(D,t)\circ d_i(D,t)\\
=&\sum_{i=1}^{k-1}(d_{k-i}(C,s)+H_j(C,s)\circ d_{k-i-1}(C,s)-d_{k-i-1}(C,s)\circ H_j(C,s))\\
&\circ(d_i(C,s)+H_j(C,s)\circ d_{i-1}(C,s)-d_{i-1}(C,s)\circ H_j(C,s))\\
=&\sum_{i=1}^{k-1}(d_{k-i}(C,s)\circ d_i(C,s)\\
&-d_{k-i}(C,s)\circ d_{i-1}(C,s)\circ H_j(C,s)\\
&+H_j(C,s)\circ d_{k-i-1}(C,s)\circ d_i(C,s)\\
&+\underbrace{H_j(C,s)\circ d_{k-i-1}(C,s)\circ H_j(C,s)}_{=0}\circ d_{i-1}(C,s)\\
&-\underbrace{H_j(C,s)\circ d_{k-i-1}(C,s)\circ d_{i-1}(C,s)\circ H_j(C,s)}_{=0}\\
&-d_{k-i-1}(C,s)\circ\underbrace{H_j(C,s)\circ H_j(C,s)}_{=0}\circ d_{i-1}(C,s)\\
&+d_{k-i-1}(C,s)\circ\underbrace{H_j(C,s)\circ d_{i-1}(C,s)+ H_j(C,s)}_{=0})\\
=&\sum_{i=1}^{k-1}d_{k-i}(C,s)\circ d_i(C,s)\\
&-\underbrace{\left(\sum_{i=1}^{k-1}d_{k-i}(C,s)\circ d_{i-1}(C,s)\right)}_{=0}\circ H_j(C,s)\\
&+H_j(C,s)\circ\underbrace{\sum_{i=1}^{k-1}d_{k-i-1}(C,s)\circ d_{i}(C,s)}_{=0}
\end{align*}
\end{proof}
\begin{satz}\label{exddn}
Sei $D$ eine $k$-dimensionale Konfiguration und seien $\alpha\in\Lambda\V(D)$, $\beta\in\Lambda\V(D^*)$ Monome. Dann gibt es eine orientierte Konfiguration $C$, so dass $\overline{C}=D$ und so dass für alle Kantenzuordnungen $s$ vom Typ Y bezüglich $C$ der Koeffizient von
\[\sum_{i=1}^{k-1}d_{k-i}(C,s)\circ d_i(C,s)(\alpha)\]
bei $\beta$ trivial ist.
\end{satz}
Aus Lemma \ref{invdd} und Satz \ref{exddn} folgt mittels Induktion Satz \ref{sddn}.
\begin{proof}[Beweis von Satz \ref{exddn}]
Wir brauchen nur den Fall zu betrachten, dass $D$ aktiv und $k\ge 3$ ist. Sei $D$ zunächst nicht zusammenhängend. Dann brauchen wir nur den Fall zu betrachten, dass $D$ aus genau zwei Zusammenhangskomponenten besteht und für ein $1\le n\le k-1$ die ersten $n$ Bögen von $D$ in der einen Zusammenhangskomponente und die restlichen $k-n$ Bögen in der anderen Zusammenhangskomponente liegen. Seien $a,b\in\sum(k,k-n)$ mit
\[a_j=\begin{cases}
0\qquad&\textup{für}j\le n\\
*\qquad&\textup{für}j>n
\end{cases}\]
und
\[b_j=\begin{cases}
1\qquad&\textup{für}j\le n\\
*\qquad&\textup{für}j>n.
\end{cases}\]
Seien $c,d\in\sum(k,n)$ mit
\[c_j=\begin{cases}
*\qquad&\textup{für}j\le n\\
0\qquad&\textup{für}j>n
\end{cases}\]
und
\[d_j=\begin{cases}
*\qquad&\textup{für}j\le n\\
1\qquad&\textup{für}j>n.
\end{cases}\]
Zu zeigen ist, dass dann für beliebige $C,s$ gilt:
\begin{equation}\label{ddgl}
(-1)^{n+(k-n+1)\textup{sp}(C,b^0)}d_{\g(C,b),s|_b}\circ d_{\g(C,c),s|_c}
+(-1)^{n-k+(n+1)\textup{sp}(C,d^0)}d_{\g(C,d),s|_d}\circ d_{\g(C,a),s|_a}=0
\end{equation}
Es ist $\textup{akt}(\g(C,a))=\textup{akt}(\g(C,b))$ und $\textup{akt}(\g(C,c))=\textup{akt}(\g(C,d))$. Seien $\theta,\ \etaup$ zulässige Kantenzüge für $\g(C,a),\ \g(C,c)$. Dann ist
\[s|_b(\theta)s|_c(\etaup)
=(-1)^{n(k-n)-\textup{sp}(C,b^0)\textup{sp}(C,d^0)}s|_d(\etaup)s|_a(\theta).\]
Seien $\g(C,a),\ \g(C,c)$ vom Typ $\tau,\sigma\in\textup{T}$. Dann ist
\begin{align*}
&d_{\g(C,b),\tau,s|_b}\circ d_{\g(C,c),\sigma,s|_c}(x_{\g(C,c),\sigma,\etaup}\wedge x_{\g(C,b),\tau,\theta})\\
=&s|_c(\etaup)\textup{sp}(\g(C,c),\etaup)d_{\g(C,b),\tau,s|_b}
(y_{\g(C,c),\sigma}\wedge x_{\g(C,b),\tau,\theta})\\
=&(-1)^{\textup{gr}(y_{\g(C,c),\sigma})\textup{gr}(x_{\g(C,b),\tau,\theta})}
s|_b(\theta)s|_c(\etaup)\textup{sp}(\g(C,b),\theta)\textup{sp}(\g(C,c),\etaup)
y_{\g(C,b),\tau}\wedge y_{\g(C,c),\sigma}
\end{align*}
und
\begin{align*}
&d_{\g(C,d),\sigma,s|_d}\circ d_{\g(C,a),\tau,s|_a}(x_{\g(C,c),\sigma,\etaup}\wedge x_{\g(C,b),\tau,\theta})\\
=&(-1)^{\textup{gr}(x_{\g(C,c),\sigma,\etaup})\textup{gr}(x_{\g(C,b),\tau,\theta})}
s|_a(\theta)\textup{sp}(\g(C,a),\theta)d_{\g(C,d),\sigma,s|_d}(y_{\g(C,a),\tau}\wedge x_{\g(C,d),\sigma,\etaup})\\
=&(-1)^{\textup{gr}(x_{\g(C,c),\sigma,\etaup})(\textup{gr}(x_{\g(C,b),\tau,\theta})
+\textup{gr}(y_{\g(C,b),\tau}))}
s|_d(\etaup)s|_a(\theta)\textup{sp}(\g(C,d),\etaup)\textup{sp}(\g(C,a),\theta)
y_{\g(C,d),\sigma}\wedge y_{\g(C,a),\tau}\\
=&(-1)^{(\textup{gr}(x_{\g(C,c),\sigma,\etaup})+\textup{gr}(y_{\g(C,c),\sigma}))
(\textup{gr}(x_{\g(C,b),\tau,\theta})+\textup{gr}(y_{\g(C,b),\tau}))
+n(k-n)-\textup{sp}(C,b^0)\textup{sp}(C,d^0)}
d_{\g(C,b),\tau,s|_b}\circ d_{\g(C,c),\sigma,s|_c}(x_{\g(C,c),\sigma,\etaup}\wedge x_{\g(C,b),\tau,\theta}).
\end{align*}
Wegen Gleichung (\ref{grgl}) gilt:
\begin{align*}
&(\textup{gr}(x_{\g(C,c),\sigma,\etaup})+\textup{gr}(y_{\g(C,c),\sigma}))
(\textup{gr}(x_{\g(C,b),\tau,\theta})+\textup{gr}(y_{\g(C,b),\tau}))\\
\equiv&(\textup{sp}(C,b^0)-n+1)(\textup{sp}(C,d^0)-(k-n)+1)\qquad(\textup{mod }2)
\end{align*}
Es folgt Gleichung (\ref{ddgl}). Sei $D$ nun zusammenhängend.
\begin{lemma}
Gehen $D_1,\ D_2$ durch Rotation auseinander hervor (siehe Abbildung \ref{rot}), dann gilt Satz \ref{exddn} für $(D_1,\alpha,\beta)$ genau dann, wenn er für $(D_2,\alpha,\beta)$ gilt.
\end{lemma}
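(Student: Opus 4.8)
Der Plan ist, die Behauptung durch Reduktion auf einen einzigen lokalen Rotationszug zu beweisen und dann für einander entsprechende Daten auf $D_1$ und $D_2$ die beiden Doppelkompositionen $\sum_{i=1}^{k-1}d_{k-i}\circ d_i$ bis auf eine Diagonalautomorphie mit Werten in $\{-1,1\}$ zu identifizieren. Da die Rotationsrelation laut Definition von den lokalen Veränderungen aus Abbildung \ref{rot} erzeugt wird und da bei festen $\alpha,\beta$ die Gültigkeit von Satz \ref{exddn} für $(D_1,\alpha,\beta)$ bzw.\ $(D_2,\alpha,\beta)$ offensichtlich eine Äquivalenzrelation liefert, genügt es wegen der Transitivität, den Fall eines einzigen solchen Zuges zu behandeln; ohne Einschränkung betreffe dieser den ersten Bogen $\gamma$. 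Der Zug ist in einer kleinen Umgebung von $\gamma$ lokalisiert und ändert keine der vollständig aufgelösten Konfigurationen $\g(\cdot,\eps)$ mit $\eps\in\sum(k,0)$, da dort $\gamma$ entfernt ist. Insbesondere sind $D=\g(\cdot,(0,\dots,0))$ und $D^*=\g(\cdot,(1,\dots,1))$ unverändert, sodass $\alpha\in\Lambda\V(D)$ und $\beta\in\Lambda\V(D^*)$ für beide Konfigurationen dieselbe Bedeutung haben und $\Gamma(C_1)=\Gamma(C_2)$ als bigraduierte Gruppen kanonisch identifiziert sind.

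Als Nächstes würde ich die Entsprechung einrichten: Zu einer orientierten Konfiguration $C_1$ mit $\overline{C_1}=D_1$ sei $C_2$ diejenige orientierte Konfiguration mit $\overline{C_2}=D_2$, die auf allen Bögen außer $\gamma$ dieselbe Orientierung trägt wie $C_1$ und auf $\gamma$ die unter dem Zug mitgeführte Orientierung. Mit dem im Beweis von Lemma \ref{gersum} benutzten Prinzip (Rotation ist bei geeigneter Wahl der Orientierung des umgelagerten Bogens mit den Typen der $2$-Seiten verträglich) und der Konstruktion aus dem Beweis von Lemma \ref{inv2} erhält man vermöge $s'=\kappa s$ eine Bijektion zwischen den Kantenzuordnungen vom Typ Y bezüglich $C_1$ und denen vom Typ Y bezüglich $C_2$; dabei nimmt $\kappa:\sum(k,1)\to\{-1,1\}$ genau die durch die Umlagerung von $\gamma$ entstehende Vorzeichendifferenz der beteiligten eindimensionalen Randabbildungen auf. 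Für jede Seite $\eps$, in der $\gamma$ bereits aufgelöst ist, stimmen $\g(C_1,\eps)$ und $\g(C_2,\eps)$ überein, also auch $d_{\g(C_1,\eps),s|_\eps}$ und $d_{\g(C_2,\eps),s'|_\eps}$ bis auf die von $\kappa$ erfassten Vorzeichen; die Faktoren $\textup{sp}(C,\cdot)$ hängen ohnehin nur von $\overline{C}$ ab und sind für $C_1$ und $C_2$ gleich. Für die endlich vielen Seiten $\eps$ mit $\eps_1=*$ bleibt eine lokale Rechnung. Das Ziel ist, hieraus ein Vorzeichentupel $\nu:\sum(k,0)\to\{-1,1\}$ zu gewinnen, sodass unter der Konjugation mit der von $\nu$ auf $\Gamma(C_1)=\Gamma(C_2)$ gegebenen Diagonalautomorphie für alle $i$ gilt $d_i(C_2,s')=\nu\,d_i(C_1,s)\,\nu^{-1}$ (zur Not genügt bereits die entsprechende Aussage für das Gesamtdifferential $d=\sum_i d_i$, da sich dann $d^2$ mitkonjugiert).

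Ist das erreicht, so folgt die Behauptung unmittelbar: Die Konjugation mit $\nu$ überführt $\sum_{i=1}^{k-1}d_{k-i}(C_1,s)\circ d_i(C_1,s)$ in $\sum_{i=1}^{k-1}d_{k-i}(C_2,s')\circ d_i(C_2,s')$ und ändert den Koeffizienten bei dem Monom $\beta$ (ausgewertet auf $\alpha$) nur um ein globales Vorzeichen; da $s\mapsto s'$ bijektiv auf den Kantenzuordnungen vom Typ Y operiert, ist $C_1$ daher genau dann ein Zeuge für Satz \ref{exddn} zu $(D_1,\alpha,\beta)$, wenn $C_2$ einer zu $(D_2,\alpha,\beta)$ ist. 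Als Haupthindernis erwarte ich die angekündigte lokale Rechnung für die Seiten mit $\eps_1=*$, also den Nachweis, dass der Rotationszug die ausgezeichneten Basiselemente $x_{\g(C,\eps),\tau,\theta}$ und $y_{\g(C,\eps),\tau}$ bis auf Vorzeichen in die entsprechenden Elemente für $C_2$ überführt und die dabei auftretenden $s$- und $\textup{sp}$-Vorzeichen zusammenpassen. Diese Fallunterscheidung nach den Typen $\tau\in\textup{T}$ (bzw.\ nach den in Abbildung \ref{zweidim2} aufgelisteten zweidimensionalen Lokalbildern) verläuft ihrem Charakter nach wie die Rechnungen im Beweis von Satz \ref{dhs} und beim Nachweis der $\theta$-Unabhängigkeit von $d_{C,\tau,s}$, sodass ich darin keine prinzipiellen Schwierigkeiten sehe.
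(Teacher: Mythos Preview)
Your strategy is workable but considerably more elaborate than what the paper does. The paper's proof is essentially one line: given any oriented $C_1$ with $\overline{C_1}=D_1$, there is a specific oriented lift $C_2$ of $D_2$ (obtained by carrying the orientation through the local move as in Abbildung~\ref{rotor}) such that the \emph{same} edge assignment $s$ is of type~Y with respect to $C_2$ and $d_i(C_1,s)=d_i(C_2,s)$ holds literally for every $i=1,\dots,k$. No sign correction $\kappa$, no diagonal conjugation by $\nu$ is needed; the coefficients of $\sum_i d_{k-i}\circ d_i(\alpha)$ at $\beta$ are not just equal up to a global sign but identically equal. The point you are missing is that the oriented rotation move preserves not only the K/A/X/Y types of the $2$-faces (the principle from the proof of Lemma~\ref{gersum} that you cite) but in fact the full $\textup{T}$-type of every face $\g(C,\eps)$ with $\eps_1=*$, together with the distinguished elements $x_{\cdot,\tau,\theta}$ and $y_{\cdot,\tau}$; the local maps $d_{\g(C_1,\eps),s|_\eps}$ and $d_{\g(C_2,\eps),s|_\eps}$ therefore agree on the nose. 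In your framework this forces $\kappa\equiv 1$ and $\nu=\textup{id}$, so the ``Haupthindernis'' you anticipate simply does not arise. Your detour via an analogue of Lemma~\ref{inv2} is misplaced here: unlike reversing the orientation of an arc, rotation with the orientation carried along changes no $\partial_{\g(C,\eps)}$ and hence no $d_{\g(C,\eps),\tau,s|_\eps}$.
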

\begin{proof}
Sei $C_1$ orientierte Konfiguration mit $\overline{C_1}=D_1$ und $s$ Kantenzuordnung vom Typ bezüglich $C_1$. Dann sei $C_2$ orientierte Konfiguration mit $\overline{C_2}=D_2$, die sich wie in Abbildung \ref{rotor} aus $C_1$ ergibt.
\begin{figure}[htbp]
\centering
\includegraphics{graph.17}
\caption{}
\label{rotor}
\end{figure}
Dann ist $s$ Kantenzuordnung vom Typ Y bezüglich $C_2$ und $d_i(C_1,s)=d_i(C_2,s)$ für $i=1,\dots,k$.
\end{proof}
Für ein Monom $\alpha\in\Lambda\V(D)$ sagen wir Satz \ref{exddn} gilt für $(D,\alpha)$, falls Satz \ref{exddn} für alle Monome $\beta\in\Lambda\V(D^*)$ für $(D,\alpha,\beta)$ gilt.\\
Für ein Monom $\beta\in\Lambda\V(D^*)$ sagen wir Satz \ref{exddn} gilt für $(D,\beta)$, falls Satz \ref{exddn} für alle Monome $\alpha\in\Lambda\V(D)$ für $(D,\alpha,\beta)$ gilt.\\
\begin{lemma}\label{gr1}
Enthalte $D$ einen Kreis $x_1$, der nur von einem Bogen $\gammaup$ getroffen wird. Wir nennen $x_1$ dann einen Grad 1 Kreis von $D$. Sei $x_2$ der andere Kreis, der von $\gammaup$ getroffen wird und $\alpha\in\Lambda\V(D)$ Monom.
\begin{enumerate}
\item 
Ist $\alpha$ nicht durch $x_1$ teilbar, dann gilt Satz \ref{exddn} für $(D,\alpha)$.
\item 
Ist $\alpha$ durch $x_2$ teilbar, dann gilt Satz \ref{exddn} für $(D,\alpha)$.
\end{enumerate}
\end{lemma}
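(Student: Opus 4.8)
Here is a plan for proving Lemma~\ref{gr1}.

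The plan is to pick a convenient orientation and ordering of $D$ and then analyse the double composition summand by summand. Choose the orientation $C$ of $D$ so that $\gamma$ is the first arc in the ordering and is oriented \emph{out of} $x_1$, i.e.\ $\gamma^0\in x_1$; since in the proof of Satz~\ref{exddn} we may assume $D$ connected, we have $x_1\ne x_2$, so this is a genuine choice. Fix a type-Y edge assignment $s$ for $C$; we must show that the $\beta$-coefficient of $\Sigma(\alpha)\defeq\sum_{i=1}^{k-1}d_{k-i}(C,s)\circ d_i(C,s)(\alpha)$ is zero. Expanding the $d_n(C,s)$ and keeping only the maps $\Lambda\V(\g(C,(0,\dots,0)))=\Lambda\V(D)\to\Lambda\V(D^*)=\Lambda\V(\g(C,(1,\dots,1)))$ one obtains
\[\Sigma(\alpha)=\sum_{\emptyset\ne S\subsetneq\{1,\dots,k\}}c_S\;d_{\g(C,\nu_S),\,s|_{\nu_S}}\circ d_{\g(C,\mu_S),\,s|_{\mu_S}}(\alpha),\]
where $\mu_S\in\sum(k,|S|)$ has $*$ on $S$ and $0$ elsewhere, $\nu_S\in\sum(k,k-|S|)$ has $1$ on $S$ and $*$ elsewhere, and $c_S=(-1)^{|S|+(k-|S|+1)\textup{sp}(C,\mu_S^1)}$ is the product of the two prefactors from the definition of $d_n(C,s)$.

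Next I would show that almost every summand is zero. If $1\notin S$ then $x_1$ meets no arc of $\g(C,\mu_S)$, hence is a passive circle of $\g(C,\mu_S)$ and of $(\g(C,\mu_S))^*$; since for any type-$\tau$ configuration $E$ the element $x_{E,\tau,\theta}$ involves only active circles, $d_{\g(C,\mu_S),s|_{\mu_S}}$ sends a monomial not divisible by $x_1$ to a combination of such. In $\g(C,\nu_S)$ the arc $\gamma$ is present, $x_1$ is a degree-$1$ circle and the arc at it points outwards, so if $\g(C,\nu_S)$ has a type in $\textup{T}$ at all it is $\textup{A}_1$ (and then necessarily $S=\{2,\dots,k\}$) or some $\textup{F}_{p,q}$ with $p\ge1$ — never a $\textup{G}$-type, since the leaf arc points out — and for $\textup{F}_{p,q}$ the element $x_{\g(C,\nu_S),\textup{F}_{p,q},\theta}$ is divisible by $x_1$. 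Symmetrically, if $1\in S$ then $\g(C,\mu_S)$ has $\gamma$ present with $x_1$ a degree-$1$ outward leaf, so a nontrivial type is $\textup{A}_1$ ($S=\{1\}$) or $\textup{F}_{p,q}$ ($p\ge1$), and $x_{\g(C,\mu_S),\textup{F}_{p,q},\theta}$ is again divisible by $x_1$. Hence, once $\alpha$ is not divisible by $x_1$, only the summands for $S=\{1\}$ and $S=\{2,\dots,k\}$ can survive.

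For part~1 these two summands cancel. For $S=\{1\}$, $\g(C,\mu_{\{1\}})$ is the one-dimensional fusion at $\gamma$, so $d_{\g(C,\mu_{\{1\}}),s|_{\mu_{\{1\}}}}$ is (up to the scalar $s|(*)$) the collapse $f$ identifying $x_1$ and $x_2$ with their merge, and $\g(C,\nu_{\{1\}})=\g(C,(1,*,\dots,*))$ contributes its corner differential, which I call $D_1$. For $S=\{2,\dots,k\}$, $\g(C,\mu_{\{2,\dots,k\}})=\g(C,(0,*,\dots,*))$ has $x_1$ passive and contributes a corner differential $D_0$ treating $x_1$ as a spectator, while $\g(C,\nu_{\{2,\dots,k\}})=\g(C,(*,1,\dots,1))$ is again the fusion at $\gamma$, a collapse $f'$. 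The key point is that $\g(C,(1,*,\dots,*))$ is obtained from $\g(C,(0,*,\dots,*))$ by absorbing the passive circle $x_1$ into $x_2$ through $\gamma$; this preserves the active type and, for every $\tau\in\textup{T}$, carries the distinguished elements $x_{E,\tau,\theta}$, $y_{E,\tau}$ to those of the absorbed configuration, and — because a passive circle rides along the Szabó differential — it yields $f'\circ D_0=\pm\,D_1\circ f$ with a sign that makes $c_{\{1\}}$ and $c_{\{2,\dots,k\}}$ cancel, so $\Sigma(\alpha)=0$. For part~2, when $\alpha$ is divisible by $x_2$ we may assume $\alpha$ is also divisible by $x_1$ (otherwise part~1 applies); then in each remaining summand the relevant configuration has $x_1$ as a degree-$1$ circle and $x_2$ as the circle carrying $\gamma$'s other endpoint, which is the central circle of the $\textup{F}_{p,q}$-structure (or one of the two fusion circles). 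Since $x_{E,\textup{F}_{p,q},\theta}$ is divisible by $x_1$ but not by that central circle, and the Filtrierungsregel propagates divisibility by $x_2$ through $D_0$ to divisibility by that circle, every summand is annihilated and again $\Sigma(\alpha)=0$.

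The main obstacle is the intertwining identity $f'\circ D_0=\pm\,D_1\circ f$ together with the exact value of its sign: one has to check, type by type through $\textup{T}$, that absorbing a passive circle into the partner $x_2$ of the leaf commutes with the type-$\tau$ part of the differential — tracking its effect on $x_{E,\tau,\theta}$, on $y_{E,\tau}$, on $\textup{sp}$ and on the induced edge assignments $s|$ — and that the accumulated sign matches the prefactors $(-1)^{|\eps^0|+(n+1)\textup{sp}(C,\eps^0)}$. This is the same bookkeeping as in the proof of RM1-invariance (Proposition~\ref{p1}), where the analogous statement reads $\partial_{A\to B}\circ\partial_A=-\partial_B\circ\partial_{A\to B}$. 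The remaining case, $\alpha$ divisible by $x_1$ but not by $x_2$, lies outside this lemma and would be treated separately.
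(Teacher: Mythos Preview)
Your plan is essentially the same as the paper's: reduce to the two extremal summands $S=\{1\}$ and $S=\{2,\dots,k\}$ and show they cancel for part~1; reduce to $\alpha$ divisible by both $x_1,x_2$ and kill every summand separately for part~2. Part~2 is fine and matches the paper (the paper phrases the $\eps_1=0$ case as ``Filtrierungsregel'' and the $\eps_1=*$ case as $d_{\gl,s|_\eps}(\alpha)=0$, exactly your argument).

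Where you overshoot is the ``main obstacle'' in part~1. There is no need to check the intertwining $f'\circ D_0=\pm D_1\circ f$ type by type through $\textup{T}$. The point is that $\textup{akt}(\g(C,a))=\textup{akt}(\g(C,b))$ literally as configurations (absorbing the passive $x_1$ does not change the active part), so for \emph{every} $\tau$ the elements $x_{\cdot,\tau,\theta}$, $y_{\cdot,\tau}$ and all $\textup{sp}$-values coincide on the nose. The two terms therefore differ only by the edge-assignment factors, and the whole cancellation reduces to the single identity
\[
s|_b(\theta)\,s(c)=(-1)^{k-1}\,s(d)\,s|_a(\theta).
\]
This holds because each of the $k-1$ two-faces $\eps$ with $\eps_1=*$ one crosses when commuting $\gamma$ past the other arcs has $\g(C,\eps)$ of type~K: in that $2$-dimensional configuration $x_1$ is still a degree-$1$ circle with the fusion arc $\gamma$, and a quick inspection of the $2$-dimensional list (disconnected with a fusion component; $F_{2,0}$, $G_{2,0}$; $F_{1,1}$, $G_{1,1}$) shows all such configurations are type~K, hence each contributes $p(\eps,s)=-1$ for a type-Y assignment. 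So the ``bookkeeping'' is one line, not a case analysis over $\textup{T}$. (For the same reason your orientation choice $\gamma^0\in x_1$ is harmless but unnecessary: both $F_{p,q}$ and $G_{p,q}$ have $x_{\cdot,\tau,\theta}$ divisible by every leaf circle, so the reduction step already works for arbitrary orientation.) The analogy with Proposition~\ref{p1} is apt in spirit, but note that there only $d_1$ is involved; here it is precisely the equality of active parts that makes the higher $d_n$ behave the same way.
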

\begin{proof}
Ohne Einschränkung sei $\gammaup$ der erste Bogen von $D$. Seien $C,\ s$ wie in Satz \ref{exddn}.
\begin{enumerate}
\item 
Seien $a=(0,*,\dots,*),\ b=(1,*,\dots,*)\in\sum(k,k-1)$, sowie $c=(*,0,\dots,0),\ d=(*,1,\dots,1)\in\sum(k,1)$. Dann gilt:
\[\sum_{i=1}^{k-1}d_{k-i}(C,s)\circ d_i(C,s)(\alpha)=-d_{\g(C,b),s|_b}\circ d_{\g(C,c),s|_c}(\alpha)+(-1)^{k-1}d_{\g(C,d),s|_d}\circ d_{\g(C,a),s|_a}(\alpha)\]
Es ist $\textup{akt}(\g(C,a))=\textup{akt}(\g(C,b))$ und $\textup{akt}(\g(C,c))=\textup{akt}(\g(C,d))$. Sei $\theta$ ein zulässiger Kantenzug für $\g(C,a)$. Dann ist $s|_b(\theta)s(c)=(-1)^{k-1}s(d)s|_a(\theta)$. Somit ist $\sum_{i=1}^{k-1}d_{k-i}(C,s)\circ d_i(C,s)(\alpha)=0$.
\item 
Wegen 1. genügt es den Fall zu betrachten, dass $\alpha$ durch $x_1$ und $x_2$ teilbar ist. Für $\eps\in\sum(k,i)$ mit $i\in\{1,\dots,k-1\}$ und $\eps^0=(0,\dots,0)$ gilt: Ist $\eps_1=*$, so ist $d_{\gl,s|_\eps}(\alpha)=0$. Sei $\etaup\in\sum(k,k-i)$ mit $\etaup^0=\eps^1$. Ist $\eps_1\ne*$, so ist $d_{\gle,s|_\etaup}\circ d_{\gl,s|_\eps}(\alpha)=0$ wegen der Filtrierungsregel.
\end{enumerate}
\end{proof}
\begin{lemma}\label{dugr1}
Enthalte $D^*$ einen Kreis $x_1$, der nur von einem Bogen $\gammaup$ getroffen wird. Sei $x_2$ der andere Kreis, der von $\gammaup$ getroffen wird und $\beta\in\Lambda\V(D^*)$ Monom.
\begin{enumerate}
\item 
Ist $\beta$ durch $x_1$ teilbar, dann gilt Satz \ref{exddn} für $(D,\beta)$.
\item 
Ist $\beta$ nicht durch $x_2$ teilbar, dann gilt Satz \ref{exddn} für $(D,\beta)$.
\end{enumerate}
\end{lemma}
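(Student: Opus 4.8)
Der Plan ist, die beiden Aussagen aus dem bereits bewiesenen Lemma \ref{gr1} zu gewinnen, indem man es auf $m(D^*)$ statt auf $D$ anwendet und beide Seiten mit der Dualit\"atsregel verkn\"upft. Zuerst w\"urde ich festhalten, dass $x_1$ auch in $m(D^*)$ ein Grad 1 Kreis ist, denn $*$ und $m$ \"andern die Inzidenzen zwischen B\"ogen und Kreisen nicht; der einzige $x_1$ treffende Bogen ist wieder $\gammaup$, der zweite von $\gammaup$ getroffene Kreis wieder $x_2$. Ist $C$ eine orientierte Konfiguration mit $\overline{C}=D$, so ist $m(C^*)$ eine mit $\overline{m(C^*)}=m(D^*)$, und es gilt $(m(C^*))^*=m(C)$. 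Au\ss{}erdem w\"urde ich bemerken: F\"ur ein Monom $\beta\in\Lambda\V(D^*)$ ist das Komplementmonom $\beta^*\in\Lambda\V(m(D^*))$ genau dann nicht durch $x_1$ teilbar, wenn $\beta$ durch $x_1$ teilbar ist, und genau dann durch $x_2$ teilbar, wenn $\beta$ nicht durch $x_2$ teilbar ist.

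Der Kern ist eine Dualit\"atsaussage f\"ur Satz \ref{exddn} selbst: F\"ur Monome $\alpha\in\Lambda\V(D)$, $\beta\in\Lambda\V(D^*)$ gilt Satz \ref{exddn} f\"ur $(D,\alpha,\beta)$ genau dann, wenn er f\"ur $(m(D^*),\beta^*,\alpha^*)$ gilt. Zum Beweis w\"urde ich zu einer Orientierung $C$ von $D$ und einer Kantenzuordnung $s$ vom Typ Y bez\"uglich $C$ die Konfiguration $\hat C\defeq m(C^*)$ betrachten. Analog zu den Rechnungen im Beweis von Korollar \ref{xy} und zu den dort benutzten Typ-Entsprechungen zwischen $C$ und $m(C^*)$ zeigt man, dass der $k$-W\"urfel von $\hat C$ der von $C$ mit vertauschten Ecken $0\leftrightarrow1$ ist, wobei eine Kante von $\hat C$ die Konfiguration $m(C_0^*)$ tr\"agt, wenn die entsprechende Kante von $C$ die Konfiguration $C_0$ tr\"agt. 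Die $0^k\to1^k$-Komponente von $\sum_{i=1}^{k-1}d_{k-i}(C,s)\circ d_i(C,s)$ ist dann eine Summe von Termen, indiziert durch die Zwischenecken $w$ mit $1\le|w|\le k-1$, wobei der zu $w$ geh\"orige Term ein Vielfaches des Koeffizienten von $d_{\g(C,\mu_w),s|_{\mu_w}}\circ d_{\g(C,\eps_w),s|_{\eps_w}}(\alpha)$ bei $\beta$ ist, mit den Seiten $\eps_w$ von $0^k$ nach $w$ und $\mu_w$ von $w$ nach $1^k$. Auf jeden der beiden Faktoren l\"asst sich die Dualit\"atsregel (in der Fassung f\"ur $d_{C,s}$) anwenden, da $\g(C,\eps_w)$ und $\g(C,\mu_w)$ entweder von einem Typ aus $\textup{T}$ oder von keinem Typ sind; dabei geht der Term in den durch $\bar w$ indizierten Term der $0^k\to1^k$-Komponente von $\sum_{i=1}^{k-1}d_{k-i}(\hat C,t)\circ d_i(\hat C,t)(\beta^*)$, Koeffizient bei $\alpha^*$, \"uber, und die Kompositionsreihenfolge dreht sich um (aus $d_{k-i}\circ d_i$ wird $d_i\circ d_{k-i}$). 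Daraus w\"urde ich folgern: Der betreffende Koeffizient verschwindet f\"ur alle Kantenzuordnungen $s$ vom Typ Y bez\"uglich $C$ genau dann, wenn der entsprechende Koeffizient f\"ur $(\hat C,\beta^*,\alpha^*)$ f\"ur alle Kantenzuordnungen vom Typ Y bez\"uglich $\hat C$ verschwindet --- das ist die behauptete Dualit\"at.

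Den Abschluss w\"urde ich so f\"uhren: Ist $\beta$ durch $x_1$ teilbar, so ist $\beta^*$ nicht durch $x_1$ teilbar; nach dem ersten Teil von Lemma \ref{gr1}, angewandt auf $m(D^*)$, gilt Satz \ref{exddn} f\"ur $(m(D^*),\beta^*)$, also f\"ur $(m(D^*),\beta^*,\alpha^*)$ f\"ur jedes Monom $\alpha\in\Lambda\V(D)$, und mit der Dualit\"at folgt Satz \ref{exddn} f\"ur $(D,\alpha,\beta)$ f\"ur jedes solche $\alpha$, mithin f\"ur $(D,\beta)$. Ist $\beta$ nicht durch $x_2$ teilbar, so ist $\beta^*$ durch $x_2$ teilbar; nach dem zweiten Teil von Lemma \ref{gr1} gilt Satz \ref{exddn} f\"ur $(m(D^*),\beta^*)$, und wie eben folgt Satz \ref{exddn} f\"ur $(D,\beta)$.

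Die Hauptschwierigkeit sehe ich im Vorzeichenhaushalt innerhalb der Dualit\"atsaussage: Man muss nachrechnen, dass der beim Dualisieren eines einzelnen Summanden auftretende Vorzeichenfaktor --- zusammengesetzt aus den beiden Vorzeichen der Dualit\"atsregel und den Vorfaktoren $(-1)^{|w|+(k-|w|+1)\textup{sp}(C,w)}$ bzw. $(-1)^{|\bar w|+(|w|+1)\textup{sp}(\hat C,\bar w)}$ aus den $d_n$ --- nicht von der Zwischenecke $w$ und nicht von $\alpha,\beta$ abh\"angt, sondern nur von $s$ und $t$, so dass er sich aus der Summe herausziehen l\"asst; daf\"ur w\"urde ich Lemma \ref{splem} und Gleichung (\ref{grgl}) benutzen. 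Die restlichen Punkte (Korrespondenz der W\"urfel und Bijektion der Kantenzuordnungen vom Typ Y) sind im Wesentlichen Buchhaltung, analog zum Beweis von Korollar \ref{xy}.
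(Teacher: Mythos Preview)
Your overall strategy --- reduce both parts to Lemma \ref{gr1} via a duality principle for Satz \ref{exddn} --- is correct in spirit, and the paper uses exactly this for part 2 (``folgt aus dem Beweis von 2.\ aus Lemma \ref{gr1} und der Dualit\"atsregel''). For part 1, however, the paper does \emph{not} invoke your general duality; it redoes the two-term cancellation directly on $D$. With $a,b,c,d$ as in Lemma \ref{gr1}, the only contributions to the coefficient at $\beta$ again come from $d_{\g(C,b),s|_b}\circ d_{\g(C,c),s|_c}$ and $d_{\g(C,d),s|_d}\circ d_{\g(C,a),s|_a}$; but since $x_1$ is now a Grad-1-Kreis in $D^*$ rather than in $D$, $\g(C,c)$ and $\g(C,d)$ are \emph{Spaltungen} (not Fusionen as in Lemma \ref{gr1}), so $\textup{sp}(C,(1,0,\dots,0))=1$ and the sign relation becomes $s|_b(\theta)s(c)=(-1)^{k-1-\textup{sp}(C,d^0)}s(d)s|_a(\theta)$. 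The extra $(-1)^{\textup{sp}(C,d^0)}$ is then absorbed using Gleichung (\ref{grgl}). This direct computation is, in effect, the sign verification you postpone --- carried out only for the specific $1+(k-1)$ split that is actually needed here, and it works for \emph{any} orientation $C$.

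The gap in your proposal is precisely what you flag as the Hauptschwierigkeit: the Dualit\"atsregel in the paper is stated only ``bis auf ein Vorzeichen'', with no explicit formula, so your claim that the combined sign is independent of the intermediate vertex $w$ is unproved. Establishing this in the generality you state would require making the sign in the Dualit\"atsregel explicit (it depends on the type $\tau$ and on the gradings) and then checking cancellation against the prefactors $(-1)^{|w|+(k-|w|+1)\textup{sp}(C,w)}$ and their counterparts for $m(C^*)$ --- more bookkeeping than the paper's direct argument. Note also that the paper's part 2 avoids the sign question entirely: it first reduces via part 1 to the case where $\beta$ is divisible by neither $x_1$ nor $x_2$, and then uses that in the proof of Lemma \ref{gr1} Teil 2 each single term $d_{\g(C,\etaup)}\circ d_{\g(C,\eps)}(\alpha)$ vanishes, so termwise duality suffices and no uniform sign is needed. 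Finally, your appeal to Korollar \ref{xy} for the cube correspondence is misplaced --- that corollary goes through Reidemeister invariance, not a direct identification of the hypercubes of $C$ and $m(C^*)$.
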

\begin{proof}
Ohne Einschränkungen sei $\gammaup$ der erste Bogen von $D^*$. Seien $C,\ s,\ \alpha$ wie in Satz \ref{exd}.
\begin{enumerate}
\item 
Seien $a,\ b,\ c,\ d$ wie im Beweis von Lemma \ref{gr1}. Dann ist der Koeffizient bei $\beta$ von $\sum_{i=1}^{k-1}d_{k-i}(C,s)\circ d_i(C,s)(\alpha)$ gleich dem Koeffizienten bei $\beta$ von
\[(-1)^{1+k}d_{\g(C,b),s|_b}\circ d_{\g(C,c),s|_c}(\alpha)+(-1)^{k-1}d_{\g(C,d),s|_d}\circ d_{\g(C,a),s|_a}(\alpha).\]
Es ist $\textup{akt}(\g(C,a))=\textup{akt}(\g(C,b))$ und $\textup{akt}(\g(C,c))=\textup{akt}(\g(C,d))$. Sei $\theta$ ein zulässiger Kantenzug für $\g(C,a)$. Dann ist $s|_b(\theta)s(c)=(-1)^{k-1-\textup{sp}(C,d^0)}s(d)s|_a(\theta)$. Mit Gleichung (\ref{grgl}) folgt, dass der Koeffizient bei $\beta$ von $\sum_{i=1}^{k-1}d_{k-i}(C,s)\circ d_i(C,s)(\alpha)$ trivial ist.
\item 
Wegen 1. genügt es den Fall zu betrachten, dass $\beta$ nicht durch $x_1$ und nicht durch $x_2$ teilbar ist. Die Aussage folgt aus dem Beweis von 2. aus Lemma \ref{gr1} und der Dualitätsregel.
\end{enumerate}
\end{proof}
\begin{lemma}\label{gr2}
Enthalte $D$ einen Kreis $x_1$, der von genau zwei Bögen $\gammaup_1,\ \gammaup_2$ getroffen wird. Weiterhin sollen $\gammaup_1,\ \gammaup_2$ jeweils nur einen Randpunkt auf $x_1$ haben. Seien $x,\ x'$ die anderen Kreise, die von $\gammaup_1,\ \gammaup_2$ getroffen werden und $\alpha\in\Lambda\V(D)$ ein Monom.
\begin{enumerate}
\item 
Ist $\alpha$ nicht durch $x_1$ teilbar, dann gilt Satz \ref{exddn} für $(D,\alpha)$.
\item 
Ist $\alpha$ durch $x_1$, $x$ und $x'$ teilbar, dann gilt Satz \ref{exddn} für $(D,\alpha)$.
\end{enumerate}
\end{lemma}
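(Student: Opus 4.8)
The plan is to follow the pattern established for the degree\-$1$ case in Lemma \ref{gr1}, the new feature being that $x_1$ now has ``degree~$2$'', its two incidences being split between the distinct arcs $\gammaup_1,\gammaup_2$. Without loss of generality I take $\gammaup_1,\gammaup_2$ to be the first two arcs of $D$. For an orientation $C$ with $\overline C=D$ and an edge assignment $s$ of type Y with respect to $C$ I would expand, exactly as in the proof of Lemma \ref{gr1},
\[
\sum_{i=1}^{k-1}d_{k-i}(C,s)\circ d_i(C,s)
=\sum_{\emptyset\ne S\subsetneq\{1,\dots,k\}}(-1)^{e(S)}\,
d_{\g(C,\mu_S),s|_{\mu_S}}\circ d_{\g(C,\eps_S),s|_{\eps_S}},
\]
where $\eps_S\in\sum(k,|S|)$ has a $*$ exactly in the positions of $S$ and $0$'s elsewhere, $\mu_S\in\sum(k,k-|S|)$ has a $*$ exactly in the positions of $S^{c}$ and $1$'s elsewhere, and $e(S)$ is the same sign that appears there. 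In both parts the goal is to show that every summand either annihilates $\alpha$ by itself or cancels against exactly one partner summand, the sign cancellations being forced --- as in Lemma \ref{gr1} and the proof of Satz \ref{dhs} --- by $s$ being of type Y together with the types of the relevant two\-dimensional faces (all of type K).

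For part~1, with $\alpha$ not divisible by $x_1$, I would split the $S$ according to $S\cap\{1,2\}$. If $S\cap\{1,2\}=\emptyset$, then $x_1$ is a passive circle of $\g(C,\eps_S)$, so $d_{\g(C,\eps_S),s|_{\eps_S}}(\alpha)$ is again not divisible by $x_1$; the component of $\g(C,\mu_S)$ carrying $x_1$ contains precisely $\gammaup_1,\gammaup_2$ and whatever other arcs of $S^{c}$ lie in it, and a short inspection of the classification of the types in $\textup{T}$ shows that the only such components whose distinguished basis element is not divisible by $x_1$ are of the shape $\textup{A}_2$ or the $\textup{F}_{2,0}/\textup{G}_{2,0}$ picture; for those the summands pair up and cancel via the symmetry interchanging $\gammaup_1$ and $\gammaup_2$, which is the bookkeeping of Lemma \ref{gr1}(1). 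If $S$ meets $\{1,2\}$ in a single element, say $1\in S$, $2\notin S$, then inside $\g(C,\eps_S)$ the arc $\gammaup_1$ either merges $x_1$ with the other circle it meets --- in which case non\-divisibility by $x_1$ and the definition of $\partial_C$ in the fusion case force $d_{\g(C,\eps_S),s|_{\eps_S}}(\alpha)$ to land in the part not involving that circle, after which the Filtrierungsregel kills the composite --- or it splits, in which case $x_1$ has become a degree\-$1$ circle of $\g(C,\eps_S)$ and Lemma \ref{gr1}(1) applies verbatim. Finally, if $\{1,2\}\subseteq S$, then $x_1$ carries exactly $\gammaup_1,\gammaup_2$ inside $\g(C,\eps_S)$, and by the classification such a connected piece is of a type in $\textup{T}$ only for $|S|=2$; the case $|S|=2$ is the two\-step resolution handled once more by Lemma \ref{gr1}(1). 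Thus all but the two ``extreme'' terms drop out, and those cancel as in Lemma \ref{gr1}.

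For part~2, with $\alpha$ divisible by $x_1$, $x$ and $x'$, the point is that $\gammaup_1$ joins $x_1$ to $x$ and $\gammaup_2$ joins $x_1$ to $x'$ in $D$, so whenever a resolution along $\gammaup_1$ (resp.\ $\gammaup_2$) is still a fusion it annihilates $x_1\wedge x\wedge\omega$ (resp.\ $x_1\wedge x'\wedge\omega$) by the fusion formula for $\partial_C$, while once $x_1$ has been merged with $x$ or $x'$ the Filtrierungsregel together with part~1 finishes the composite. Splitting again on $S\cap\{1,2\}$: for $S$ meeting $\{1,2\}$ one checks by this argument that either $d_{\g(C,\eps_S),s|_{\eps_S}}(\alpha)=0$ or the composite with $d_{\g(C,\mu_S),s|_{\mu_S}}$ vanishes; for $S$ missing $\{1,2\}$, $x_1$ is passive in $\g(C,\eps_S)$ and, transporting the second differential to $m(D^{*})$ via the Dualit\"atsregel, one is reduced to part~1 for a sub\-configuration, which vanishes. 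Hence every summand is zero on $\alpha$.

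The conceptual structure is thus dictated by Lemma \ref{gr1}; the main obstacle I expect is the sign bookkeeping, i.e.\ checking for each of the few surviving pairs $S\leftrightarrow S'$ that the exponents $|\eps^0|+(n+1)\textup{sp}(C,\eps^0)$ from the definition of $d_n(C,s)$, the factors $\textup{sp}(C,\theta)$, and the type\-Y relations on $s$ combine to an exact cancellation. This goes together with the auxiliary classification statements used above --- that a connected configuration whose distinguished circle carries exactly two arcs, each with one endpoint on it, lies in $\textup{T}$ only in the two\-dimensional cases, and that among those the nontrivial basis element avoids the circle except in the $\textup{A}_2$ and $\textup{F}_{2,0}/\textup{G}_{2,0}$ pictures. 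Both are finite verifications of the same kind as those carried out for Lemma \ref{gr1}, Lemma \ref{dugr1} and Satz \ref{dhs}.
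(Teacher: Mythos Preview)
Your proposal misses the central idea of the paper's proof: the \emph{choice of orientation} on $\gammaup_1,\gammaup_2$, which is different in the two parts and is precisely what the existential quantifier in Satz~\ref{exddn} gives you. You treat $C$ as arbitrary, and then try to argue cancellation by vague symmetry considerations. That does not work.

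Concretely, in Part~1 the paper orients $C$ so that $\gammaup_1^1$ and $\gammaup_2^0$ lie on $x_1$ (one arc in, one arc out). With this choice, whenever both $\gammaup_1$ and $\gammaup_2$ are present in a sub\-configuration with $x_1$ active, that sub\-configuration is of \emph{no} type in $\textup{T}$ compatible with an input not divisible by $x_1$: types $\textup{B},\textup{D}$ need the input divisible by all circles; type $\textup{F}$ forces divisibility by the start circle of $\gammaup_2$, namely $x_1$; type $\textup{G}$ forces divisibility by the end circle of $\gammaup_1$, again $x_1$; types $\textup{A},\textup{C}$ are ruled out because $x_1$ would have one arc in and one out. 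This kills every term except the two with $\eps=a=(*,0,\dots,0)$ and $\eps=b=(0,*,0,\dots,0)$, and \emph{those two} cancel against each other (not, as you write, via some $\gammaup_1\leftrightarrow\gammaup_2$ symmetry within a single summand). Your claim that for $S\cap\{1,2\}=\emptyset$ the only relevant types are $\textup{A}_2$ and $\textup{F}_{2,0}/\textup{G}_{2,0}$ is false for a generic orientation (e.g.\ with both arcs out of $x_1$, type $\textup{F}_{p,q}$ with $x_1$ the special circle and $p+q=k-|S|$ can occur for any $k-|S|$), and there is no pairing available to cancel such a term.

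In Part~2 the paper orients $C$ so that $\gammaup_1^1$ and $\gammaup_2^1$ both lie on $x_1$ (both arcs in). Now any sub\-configuration containing $\gammaup_1$ or $\gammaup_2$ with $x_1$ active has $x_1$ with only incoming incidences, which rules out $\textup{B},\textup{D}$ structurally; and types $\textup{A},\textup{C},\textup{F},\textup{G}$ require an input \emph{not} divisible by some active circle adjacent to $x_1$, contradicting the hypothesis that $\alpha$ is divisible by $x_1,x,x'$. Hence every single term vanishes on $\alpha$. Your reduction via the Dualit\"atsregel to Part~1 for a sub\-configuration is not what the paper does and is not justified as stated.
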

\begin{proof}
Ohne Einschränkung seien $\gammaup_1$ und $\gammaup_2$ der erste und zweite Bogen von $D$.
\begin{enumerate}
\item 
Sei $C$ so, dass $\gammaup_1^1$ und $\gammaup_2^0$ auf $x_1$ liegen. Sei $s$ Kantenzuordnung vom Typ Y bezüglich C. Seien $a=(*,0,\dots,0),\ b=(0,*,0,\dots,0)\in\sum(k,1)$, sowie $c=(1,*,\dots,*),\ d=(*,1,*,\dots,*)\in\sum(k,k-1)$. Dann gilt:
\[\sum_{i=1}^{k-1}d_{k-i}(C,s)\circ d_i(C,s)(\alpha)=-d_{\g(C,c),s|_c}\circ d_{\g(C,a),s|_a}(\alpha)-d_{\g(C,d),s|_d}\circ d_{\g(C,b),s|_b}(\alpha)\]
Liegen $x$ und $x'$ in der gleichen Zusammenhangskomponente von $S^2\setminus x_1$, dann ist $\g(C,c)=\g(C,d)$. Andernfalls gibt es 3 Möglichkeiten:
Entweder $\g(C,c)$ und $\g(C,d)$ sind beide von keinem Typ $\tau\in\textup{T}$ oder es gibt $p,\ q$, so dass $\g(C,c)$ und $\g(C,d)$ beide vom Typ $\textup{F}_{p,q}$ sind oder es gibt $p,\ q$, so dass $\g(C,c)$ und $\g(C,d)$ beide vom Typ $\textup{G}_{p,q}$ sind. Sei $\theta$ ein zulässiger Kantenzug für $\g(C,c)$. Dann ist $s|_c(\theta)s(a)=-s|_d(\theta)s(b)$ .Somit ist $\sum_{i=1}^{k-1}d_{k-i}(C,s)\circ d_i(C,s)(\alpha)=0$.
\item 
Sei $C$ so, dass $\gammaup_1^1$ und $\gammaup_2^1$ auf $x_1$ liegen. Sei $s$ Kantenzuordnung vom Typ Y bezüglich $C$. Sei $\eps\in\sum(k,i)$, mit $i\in\{1,\dots,k-1\}$ und $\eps^0=(0,\dots,0)$. Gilt $\eps_1=*$ oder $\eps_2=*$, dann folgt aus $d_{\gl,s|_\eps}(\alpha)\ne 0$ und der Tatsache, dass $\alpha$ durch $x_1,\ x,\ x'$ teilbar ist, dass $\gl$ vom Typ $\textup{B}_i$ oder $\textup{D}_{p,q}$ ist. Widerspruch. Gilt $(\eps_1,\eps_2)=(0,0)$, so sei $\etaup\in\sum(k,k-i)$ mit $\etaup^0=\eps^1$. Aus $d_{\gle,s|_\etaup}\circ d_{\gl,s|_\eps}(\alpha)\ne 0$, der Tatsache, dass $\alpha$ durch $x_1,\ x,\ x'$ teilbar ist und der Filtrierungsregel folgt, dass $\gle$ vom Typ $\textup{B}_{k-i}$ oder $\textup{D}_{p,q}$ ist. Widerspruch.
\end{enumerate}
\end{proof}
\begin{lemma}\label{dugr2}
Ersetze in Lemma \ref{gr2} $D$ durch $D^*$. Sei $\beta\in\Lambda\V(D^*)$ Monom.
\begin{enumerate}
\item 
Ist $\beta$ durch $x_1$ teilbar, dann gilt Satz \ref{exddn} für $(D,\beta)$.
\item 
Ist $\beta$ nicht durch $x_1$ und nicht durch $x$ und nicht durch $x'$ teilbar, dann gilt Satz \ref{exddn} für $(D,\beta)$.
\end{enumerate}
\end{lemma}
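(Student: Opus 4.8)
The plan is to mirror the proof of Lemma \ref{dugr1}: this lemma stands to Lemma \ref{gr2} exactly as Lemma \ref{dugr1} stands to Lemma \ref{gr1}, so part~1 will be a direct computation and part~2 will follow from the argument of Lemma \ref{gr2}(2) together with the duality rule. Throughout, let $C$ be an oriented $k$-dimensional configuration with $\overline{C}=D$ and $s$ a type-Y edge assignment; the duality rule supplies a type-Y edge assignment $t$ with respect to $m(C^*)$, and since $m$ leaves circles and arcs unchanged, $\overline{m(C^*)}=m(D^*)$ again carries the circle $x_1$ met by exactly the two arcs $\gamma_1,\gamma_2$ (each with one endpoint on $x_1$), with the other circles $x,x'$. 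Recall also that for a monomial $\beta\in\Lambda\V(D^*)$ the monomial $\beta^*\in\Lambda\V(m(D^*))$ is divisible by a circle precisely when $\beta$ fails to be divisible by the corresponding circle of $D^*$.

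\emph{Part 1.} Suppose $\beta$ is divisible by $x_1$ (so $\beta^*$ is not). Choosing $C$ so that, under the canonical correspondence of arcs, $\gamma_1,\gamma_2$ are the first two arcs and are positioned relative to $x_1$ as in the proof of Lemma \ref{gr2}(1), one reads off the $\beta$-coefficient of $\sum_{i=1}^{k-1}d_{k-i}(C,s)\circ d_i(C,s)(\alpha)$. Using the divisibility of $\beta$ by $x_1$ together with the structure of the differentials (only the canonical basis element is sent nontrivially, plus the filtration rule), all contributions except the two composites $d_{\g(C,c),s|_c}\circ d_{\g(C,a),s|_a}$ and $d_{\g(C,d),s|_d}\circ d_{\g(C,b),s|_b}$ drop out, where $a,b\in\sum(k,1)$ and $c,d\in\sum(k,k-1)$ are as in that proof; these two sit over $\textup{akt}(\g(C,a))=\textup{akt}(\g(C,b))$, resp.\ $\textup{akt}(\g(C,c))=\textup{akt}(\g(C,d))$, so picking an admissible edge path $\theta$, invoking the edge-sign relation between $s|_c(\theta)s(a)$ and $s|_d(\theta)s(b)$ and equation (\ref{grgl}), the two contributions cancel; hence the $\beta$-coefficient is trivial. (Alternatively, part~1 follows by applying Lemma \ref{gr2}(1) to the configuration $m(C^*)$ with the monomial $\beta^*$ and transporting the result back along the duality rule.)

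\emph{Part 2.} Suppose $\beta$ is divisible by none of $x_1,x,x'$; then $\beta^*$ is divisible by all of $x_1,x,x'$, which is the hypothesis on the monomial in Lemma \ref{gr2}(2). Choosing the orientation of $C$ so that $m(C^*)$ is oriented as in the proof of that part and running its argument — a type analysis supplemented by the filtration rule — one obtains that every single map $d_{\g(m(C^*),\eps'),\,t|_{\eps'}}$ with a $*$ among the first two entries of $\eps'$, and every two-fold composite $d_{\g(m(C^*),\etaup'),\,t|_{\etaup'}}\circ d_{\g(m(C^*),\eps'),\,t|_{\eps'}}$ with $\eps'_1=\eps'_2=0$, annihilates $\beta^*$, because otherwise $\g(m(C^*),\eps')$ resp.\ $\g(m(C^*),\etaup')$ would be of type $\textup{B}$ or $\textup{D}$. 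Applying the duality rule factor by factor (which pairs $d_{\g(C,\eps),s|_\eps}$ with $d_{\g(m(C^*),\bar\eps),\,t|_{\bar\eps}}$ for $\bar\eps$ the reflection $0\leftrightarrow1$ of $\eps$, turns a type-$\textup{A}$ or $\textup{C}$ face into a type-$\textup{B}$ or $\textup{D}$ one, and reverses the order of composition) these vanishings translate into: the $\beta$-coefficient of $\sum_{i=1}^{k-1}d_{k-i}(C,s)\circ d_i(C,s)(\alpha)$ is trivial for every monomial $\alpha\in\Lambda\V(D)$, which is Satz \ref{exddn} for $(D,\beta)$.

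The conceptual content is entirely in Lemma \ref{gr2}; what remains is bookkeeping. The hard part is to be sure, in part~1, exactly which faces survive after the first two smoothings and that the two surviving composites cancel with the correct overall sign — the $\textup{sp}$-terms, the prefactors $(-1)^{|\eps^0|+(n+1)\textup{sp}(C,\eps^0)}$ in the definition of $d_n$, and equation (\ref{grgl}) all enter — and, in part~2, to check that the case distinction of Lemma \ref{gr2}(2) really exhausts all faces $\eps$ with $\eps^0=(0,\dots,0)$ and all relevant composites on the $D^*$-side, and that the duality rule genuinely matches up the types and the coefficients in each case; neither of these should cause real difficulty once the signs are tracked consistently.
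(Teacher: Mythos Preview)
Your treatment of part 2 is fine and is exactly what the paper does: invoke the proof of Lemma~\ref{gr2}(2) for $m(C^*)$ together with the duality rule, using that every individual composite there already annihilates the monomial, so the ``up to sign'' in the duality rule is harmless.

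Part 1, however, has a genuine gap. The two surviving composites are not the ones you name. With $x_1$ a circle of $D^*$ (not of $D$), the relevant $1$-edges are the \emph{final} ones $a=(*,1,\dots,1)$ and $b=(1,*,1,\dots,1)$, and the $(k-1)$-faces are the initial ones $c=(0,*,\dots,*)$ and $d=(*,0,*,\dots,*)$; the surviving composites are $d_{\g(C,a),s|_a}\circ d_{\g(C,c),s|_c}$ and $d_{\g(C,b),s|_b}\circ d_{\g(C,d),s|_d}$, i.e.\ of shape $d_1\circ d_{k-1}$, not $d_{k-1}\circ d_1$. Taking $a,b,c,d$ literally ``as in that proof'' gives the wrong faces and the wrong order of composition, and your claim $\textup{akt}(\g(C,a))=\textup{akt}(\g(C,b))$ is not what holds here. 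Moreover, the filtration rule goes the wrong way to kill composites from divisibility of $\beta$; it is the duality rule (applied to the vanishing composites in the proof of Lemma~\ref{gr2}(1)) that does this reduction.

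Your alternative --- apply Lemma~\ref{gr2}(1) to $m(C^*)$ and transport back --- also does not close the argument: the duality rule is only ``bis auf ein Vorzeichen'', so while it transports the vanishing of all but two composites, it does \emph{not} transport the cancellation of the remaining two nonzero terms. The paper therefore proceeds in two steps: first, duality plus the proof of Lemma~\ref{gr2}(1) reduce the $\beta$-coefficient to that of $(-1)^{k-1}d_{\g(C,a),s|_a}\circ d_{\g(C,c),s|_c}(\alpha)+(-1)^{k-1}d_{\g(C,b),s|_b}\circ d_{\g(C,d),s|_d}(\alpha)$; second, a direct computation shows these cancel, using the sign relation $s(a)s|_c(\theta)=s(b)s|_d(\theta)$, the canonical isomorphism $f:\Lambda\V(\g(C,a^0))\to\Lambda\V(\g(C,b^0))$ satisfying $s|_c(\theta)\,f\circ d_{\g(C,c),s|_c}=s|_d(\theta)\,d_{\g(C,d),s|_d}$, and the observation that for any monomial $\lambda$ the $\beta$-coefficient of $s(a)d_{\g(C,a),s|_a}(\lambda)$ equals the negative of that of $s(b)d_{\g(C,b),s|_b}(f(\lambda))$. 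This last step, comparing the two final $1$-edges via $f$, is the missing ingredient in your sketch.
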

\begin{proof}
Ohne Einschränkung seien $\gammaup_1$ und $\gammaup_2$ der erste und zweite Bogen von $D^*$.
\begin{enumerate}
\item 
Sei $C$ so, dass $\gammaup_1^1$ und $\gammaup_2^0$ auf $x_1$ liegen. Sei $s$ Kantenzuordnung vom Typ Y bezüglich $C$ und $\alpha\in\Lambda\V(D)$ Monom. Seien $a=(*,1,\dots,1),\ b=(1,*,1,\dots,1)\in\sum(k,1)$, sowie $c=(0,*,\dots,*),\ d=(*,0,*,\dots,*)\in\sum(k,k-1)$. Wegen des Beweises von 1. aus Lemma \ref{gr2} und der Dualitätsregel ist der Koeffizient bei $\beta$ von $\sum_{i=1}^{k-1}d_{k-i}(C,s)\circ d_i(C,s)(\alpha)$ gleich dem Koeffizienten bei $\beta$ von $(-1)^{k-1}d_{\g(C,a),s|_a}\circ d_{\g(C,c),s|_c}(\alpha)+(-1)^{k-1}d_{\g(C,b),s|_b}\circ d_{\g(C,d),s|_d}(\alpha)$. Die Konfigurationen $m(\g(C,c)^*)$ und $m(\g(C,d)^*)$ wurden im Beweis des 1. Teils von Lemma \ref{gr2} besprochen. Sei $\theta$ ein zulässiger Kantenzug für $\g(C,c)$. Dann ist $s(a)s|_c(\theta)=s(b)s|_d(\theta)$. Es gibt einen kanonischen Isomorphismus $f:\Lambda\V(\g(C,a^0))\to\Lambda\V(\g(C,b^0))$. Es ist $s|_c(\theta)f\circ d_{\g(C,c),s|_c}=s|_d(\theta)d_{\g(C,d),s|_d}$. Sei $\lambda\in\Lambda\V(\g(C,a^0))$ Monom. Dann ist der Koeffizient bei $\beta$ von $s(a)d_{\g(C,a),s|_a}(\lambda)$ gleich dem Koeffizienten bei $\beta$ von $-s(b)d_{\g(C,b),s|_b}\circ f(\lambda)$.
\item 
Die Aussage folgt aus dem Beweis von 2. aus Lemma \ref{gr2} und der Dualitätsregel.
\end{enumerate}
\end{proof}
\begin{defi}
Für zwei Konfigurationen $D$, $D'$ sagen wir, diese gehen durch Tausch auseinander hervor, falls wir $D'$ durch endlich viele lokale Veränderungen wie in Abbildung \ref{tr} aus $D$ erhalten können.
\end{defi}
\begin{figure}[htbp]
\centering
\includegraphics{1bild.10}
\caption{Tausch}
\label{tr}
\end{figure}
\begin{lemma}\label{trlem}
Gehen $D$, $D'$ durch Tausch auseinander hervor, dann gilt Satz \ref{exddn} für $D$ genau dann, wenn er für $D'$ gilt.
\end{lemma}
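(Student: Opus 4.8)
Der Plan ist, wie bei den analogen Aussagen für Rotation (Lemma vor dem Beweis von Satz \ref{exddn}) und für die Grad-1- bzw. Grad-2-Lemmata vorzugehen: Ich betrachte zwei Konfigurationen $D$, $D'$, die sich in einer kleinen Umgebung wie in Abbildung \ref{tr} unterscheiden, wähle eine orientierte Konfiguration $C$ mit $\overline{C}=D$ und eine Kantenzuordnung $s$ vom Typ Y bezüglich $C$, und konstruiere daraus eine orientierte Konfiguration $C'$ mit $\overline{C'}=D'$ zusammen mit einer Kantenzuordnung $s'$ vom Typ Y bezüglich $C'$. Ohne Einschränkung betreffe der Tausch genau zwei aufeinanderfolgende Bögen, etwa den ersten und zweiten Bogen, deren Reihenfolge bzw. deren lokale Verknüpfung vertauscht wird. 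Für $C'$ wähle ich die Orientierungen der Bögen so, dass sie mit denen von $C$ übereinstimmen, und nehme dieselbe Ordnung auf Bögen und Kanten, soweit durch den Tausch nicht erzwungenermaßen abgeändert.

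Der entscheidende Schritt ist der Nachweis, dass sich die relevanten Daten unter dem Tausch nur kontrolliert ändern. Ich erwarte, dass für die beim Tausch \emph{nicht} beteiligten Bögen die Glättungen $\g(C,\eps)$ und $\g(C',\eps)$ übereinstimmen und die Typen (A, K, X, Y bzw. die Typen aus $\textup{T}$) der an den $2$-dimensionalen Seiten auftretenden Konfigurationen erhalten bleiben; für die beiden beteiligten Bögen muss man die endlich vielen lokalen Bilder aus Abbildung \ref{tr} durchgehen und verifizieren, dass der Tausch die Typen entweder unverändert lässt oder nur innerhalb derselben Äquivalenzklasse (etwa A$\leftrightarrow$A, K$\leftrightarrow$K) permutiert. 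Dadurch lässt sich $s'$ durch Multiplikation von $s$ mit einer geeigneten Kantenfunktion $\lambda:\sum(k,1)\to\{-1,1\}$ gewinnen, genau wie im Beweis von Lemma \ref{inv2}: Man zeigt, dass die Differenz der beiden zugehörigen $2$-Koketten $\phi,\phi'\in C_\textup{cell}^2(Q;F)$ der Korand einer expliziten $1$-Kokette ist. Anschließend überprüft man, dass $d_i(C,s)$ und $d_i(C',s')$ für alle $i=1,\dots,k$ bis auf globale Vorzeichen auf den einzelnen Summanden übereinstimmen — hier gehen die Definitionen der $x_{C,\tau,\theta}$, $y_{C,\tau}$ sowie die Faktoren $s(\theta)$ und $\textup{sp}(C,\theta)$ ein, und man benutzt, dass $\textup{sp}$ nur von $\overline{C}$ und der kombinatorischen Struktur der Glättungen abhängt, die der Tausch lokal kontrolliert verändert.

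Hat man dies etabliert, so folgt, dass der Koeffizient von $\sum_{i=1}^{k-1}d_{k-i}(C',s')\circ d_i(C',s')(\alpha)$ bei $\beta$ (bis auf ein festes Vorzeichen, das nur von $\alpha,\beta$ und dem Tausch abhängt) gleich dem Koeffizienten von $\sum_{i=1}^{k-1}d_{k-i}(C,s)\circ d_i(C,s)(\alpha)$ bei $\beta$ ist; da das Lemma nur die Trivialität dieses Koeffizienten behauptet, überträgt sich die Aussage von $(D,\alpha,\beta)$ auf $(D',\alpha,\beta)$ und umgekehrt. Der Hauptaufwand — und die eigentliche Hürde — liegt im Durchgehen der Fallunterscheidung für die beim Tausch beteiligten Bögen: Man muss sicherstellen, dass in \emph{jedem} lokalen Bild aus Abbildung \ref{tr} sowohl die Typen der angrenzenden $2$-Seiten als auch die durch den Tausch betroffenen Glättungen so kompatibel bleiben, dass die Korand-Darstellung von $\phi/\phi'$ und die Übereinstimmung der $d_i$ bis auf Vorzeichen tatsächlich gilt. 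Sobald alle Einzelfälle abgehakt sind, ist der Beweis rein formal.
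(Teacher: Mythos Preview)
Your proposal treats Tausch as a direct analogue of Rotation, but this analogy fails at a basic structural level. Rotation preserves the set of circles of $D$, so $\Lambda\V(D)$ and $\Lambda\V(D')$ are canonically identified and one can hope to match $d_i(C,s)$ with $d_i(C',s')$ up to signs. Tausch does \emph{not} preserve the circle structure: it involves a single arc $\gammaup$ (not two), and the local move changes the number of starting circles by one --- in the paper's notation $D$ has the circles $x_D(1)$, $x_D(2)$ and the remaining ones, while $D'$ has only $x_{D'}(1)$ together with the same remaining ones. Consequently $\Lambda\V(D)$ and $\Lambda\V(D')$ are not isomorphic as graded groups, and there is no chance that $d_i(C,s)$ and $d_i(C',s')$ agree ``up to global signs on the individual summands''. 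Moreover, the types in $\textup{T}$ are not preserved: when $\gammaup$ is among the arcs of $\g(C,a)$ and $\g(C,a)$ is of type $\textup{F}_{p,q}$, then $\g(C',a)$ is of type $\textup{F}_{p-1,q+1}$, so your expectation that types ``stay the same or permute within the same equivalence class'' is incorrect.

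The paper's argument is genuinely different from what you outline. It first uses Lemma~\ref{gr1} (for $D$) and Lemma~\ref{dugr1} (for $D'$), together with the Filtrierungsregel, to reduce to a single divisibility pattern for $\alpha$, $\beta$, $\alpha'$, $\beta'$; in particular one may assume $\alpha=x_D(1)\wedge\omega$ with $\omega$ not divisible by $x_D(1)$ or $x_D(2)$, and $\alpha'=\omega$ (a monomial of one lower degree). One then chooses specific orientations on the arc in $C$ and $C'$, defines $s'(\eps)=(-1)^{\textup{sp}(C,\eps^0)}s(\eps)$ when $\eps_1=*$ and $s'(\eps)=s(\eps)$ otherwise, and checks by a two-case analysis ($a_1=*$ versus $a_1=0$) that the coefficient of $d_{\g(C,b),s|_b}\circ d_{\g(C,a),s|_a}(x_D(1)\wedge\omega)$ at $x_{D^*}(1)\wedge\rho$ equals, up to a fixed sign $(-1)^k$, the coefficient of $d_{\g(C',b),s'|_b}\circ d_{\g(C',a),s'|_a}(\omega)$ at $x_{(D')^*}(2)\wedge\rho$. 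The degree shift and the type change $\textup{F}_{p,q}\leftrightarrow\textup{F}_{p-1,q+1}$ are essential features of this correspondence, not obstacles to be explained away.
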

\begin{proof}
Sei $D$ wie die linke Seite von Abbildung \ref{tr} und $D'$ wie die rechte Seite. Seien die Bezeichnungen der Bögen und Kanten wie in Abbildung \ref{trbez}.
\begin{figure}[htbp]
\centering
\includegraphics{graph.18}
\caption{}
\label{trbez}
\end{figure}
In der Ordnung der Bögen sollen $\gammaup$ und $\gammaup '$ an erster Stelle stehen. In der Ordnung der Kanten sollen $e_1/e_2$ und $e'_1/e'_2$ an erster/zweiter Stelle stehen. Seien $\alpha\in\Lambda\V(D)$, $\beta\in\Lambda\V(D^*)$, $\alpha '\in\Lambda\V(D')$, $\beta '\in\Lambda\V((D')^*)$ Monome. Nach Lemma \ref{gr1} genügt es den Fall zu betrachten, dass $\alpha$ durch $x_D(1)$, aber nicht durch $x_D(2)$ teilbar ist. Nach der Filtrierungsregel genügt es den Fall zu betrachten, dass $\beta$ durch $x_{D^*}(1)$ teilbar ist. Nach Lemma \ref{dugr1} genügt es den Fall zu betrachten, dass $\beta '$ durch $x_{(D')^*}(2)$, aber nicht durch $x_{(D')^*}(1)$ teilbar ist. Nach der Filtrierungsregel genügt es, den Fall zu betrachten, dass $\alpha '$ nicht durch $x_{D'}(1)$ teilbar ist. Seien $C$, $C'$ wie in Abbildung \ref{tror}.
\begin{figure}[htbp]
\centering
\includegraphics{graph.19}
\caption{}
\label{tror}
\end{figure}
Bei den anderen Bögen sollen sich $C$ und $C'$ nicht unterscheiden. Sei $s$ eine Kantenzuordnung vom Typ Y bezüglich $C$. Für $\eps\in\sum(k,1)$ sei
\[s'(\eps)=\begin{cases}
s(\eps)\qquad&\textup{falls }\eps_1\ne*,\\
(-1)^{\textup{sp}(C,\eps^0)}s(\eps)\qquad&\textup{falls }\eps_1=*.
\end{cases}\]
Man prüft leicht nach, dass $s'$ Kantenzuordnung vom Typ Y bezüglich $C'$ ist. Seien $a\in\sum(k,i)$ mit $i\in\{1,\dots,k-1\}$ und $b\in\sum(k,k-i)$ mit $a^1=b^0$. Sei weiterhin $\omega\in\Lambda\V(D)$, $\rho\in\Lambda\V(D^*)$ Monome, so dass $\omega$ nicht durch $x_D(1)$ oder $x_D(2)$ und $\rho$ nicht durch $x_{D^*}(1)$ teilbar ist. Wir werden nun zeigen, dass der Koeffizient bei $x_{D^*}(1)\wedge\rho$ von
\[(-1)^{(k-i+1)\textup{sp}(C,b^0)}d_{\g(C,b),s|_b}\circ d_{\g(C,a),s|_a}(x_D(1)\wedge\omega)\]
gleich dem Koeffizienten bei $x_{(D')^*}(2)\wedge\rho$ von
\[(-1)^k(-1)^{(k-i+1)\textup{sp}(C',b^0)}d_{\g(C',b),s'|_b}\circ d_{\g(C',a),s'|_a}(\omega)\]
ist.\\
Sei zunächst $a_1=*$. Dann ist $\textup{akt}(\g(C,b))=\textup{akt}(\g(C',b))$. Es gibt zwei Möglichkeiten: Entweder sind sowohl $\g(C,a)$, als auch $\g(C',a)$ von keinem Typ aus T oder es gibt $p\ge 1$, $q\ge 0$, so dass $\g(C,a)$ vom Typ $\textup{F}_{p,q}$ und $\g(C',a)$ vom Typ $\textup{F}_{p-1,q+1}$ ist. Sei $\theta$ ein zulässiger Kantenzug für $\g(C,a)$, so dass $\phi(\theta)(1)=1$. Dann ist $s|_a(\theta)=s'|_a(\theta)$ und $\textup{sp}(\g(C,a),\theta)=(-1)^{i-1}\textup{sp}(\g(C',a),\theta)$ und $\textup{sp}(C,b^0)+1=\textup{sp}(C',b^0)$. Es folgt die behauptete Gleichheit der Koeffizienten.\\
Sei nun $a_1=0$. Dann ist $\textup{akt}(\g(C,a))=\textup{akt}(\g(C',a))$. Für $\g(C,b)$ und $\g(C',b)$ gibt es die zwei beschriebenen Möglichkeiten. Sei $\theta$ ein zulässiger Kantenzug für $\g(C,b)$, so dass $\phi(\theta)(1)=1$. Dann ist $s|_b(\theta)=(-1)^{\textup{sp}(C,b^0)}s'|_b(\theta)$ und $\textup{sp}(\g(C,b),\theta)=(-1)^{k-i-1}\textup{sp}(\g(C',b),\theta)$ und $\textup{sp}(C,b^0)=\textup{sp}(C',b^0)$. Mit Gleichung (\ref{grgl}) folgt die behauptete Gleichheit der Koeffizienten.\\
Kehrt man bei $C$ die Orientierung von $\gammaup$ und bei $C'$ die Orientierung von $\gammaup '$ um, so erhält man wieder die Gleichheit der Koeffizienten, allerdings nun mit dem Vorzeichen $(-1)^{k+1}$ statt $(-1)^k$, da wir $\textup{F}_{p,q}$ und $\textup{F}_{p-1,q+1}$ durch $\textup{G}_{p,q}$ und $\textup{G}_{p-1,q+1}$ ersetzen müssen.
\end{proof}
\begin{lemma}\label{alle}
Hat $D$ mindestens zwei Kreise und ist das Monom $\alpha\in\Lambda\V(D)$ durch alle Kreise von $D$ teilbar, dann gilt Satz \ref{exddn} für $(D,\alpha)$.
\end{lemma}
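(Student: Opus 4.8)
Der Plan ist, die vier bereits bewiesenen Kreis-Reduktionslemmata \ref{gr1}, \ref{dugr1}, \ref{gr2} und \ref{dugr2} zusammen mit der Filtrierungsregel zu benutzen und die wenigen übrig bleibenden, stark zusammenhängenden Konfigurationen mit dem Tausch (Lemma \ref{trlem}) zu behandeln; insgesamt ergibt sich eine Induktion über die Anzahl der Kreise von $D$. Zunächst ist der nicht zusammenhängende Fall schon durch den entsprechenden Teil des Beweises von Satz \ref{exddn} erledigt, also sei $D$ zusammenhängend. Da $\alpha$ das Monom $x_1\wedge\cdots\wedge x_n$ und somit durch jeden Kreis von $D$ teilbar ist, ist $\sum_{i=1}^{k-1}d_{k-i}(C,s)\circ d_i(C,s)(\alpha)$ ein Vielfaches des durch alle Kreise von $C^*$ teilbaren Monoms: Man wendet die Filtrierungsregel summandenweise auf $d_{\g(C,\eps),s|_\eps}$ in beiden Faktoren von $d_{k-i}(C,s)\circ d_i(C,s)$ an und benutzt, dass jeder Kreis einer Konfiguration mindestens eine Kante enthält. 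Für jedes andere $\beta$ ist der gesuchte Koeffizient daher bei jeder Orientierung $C$ automatisch trivial, und es genügt, $\beta$ gleich diesem Monom zu betrachten.

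Die eigentliche Fallunterscheidung läuft über das Gewicht (die Anzahl der auf ihm liegenden Randpunkte von Bögen) eines geeigneten Kreises. Hat $D$ einen Kreis vom Gewicht $1$, so wird er von genau einem Bogen $\gamma$ getroffen, dessen anderer Randpunkt auf einem von ihm verschiedenen Kreis $x_2$ liegt (ein Bogen mit beiden Randpunkten auf demselben Kreis würde diesen als eigene Zusammenhangskomponente abspalten, im Widerspruch zum Zusammenhang von $D$); da $\alpha$ durch $x_2$ teilbar ist, folgt die Behauptung aus Lemma \ref{gr1}, Teil 2. Hat $D$ einen Kreis vom Gewicht $2$, so wird er von zwei verschiedenen Bögen mit je einem Randpunkt getroffen, und die Behauptung folgt aus Lemma \ref{gr2}, Teil 2, da $\alpha$ durch alle drei beteiligten Kreise teilbar ist. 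Hat stattdessen $D^*$ einen Kreis vom Gewicht $1$ beziehungsweise einen Kreis vom Gewicht $2$, der von zwei verschiedenen Bögen mit je einem Randpunkt getroffen wird, so ist Lemma \ref{dugr1}, Teil 1, beziehungsweise Lemma \ref{dugr2}, Teil 1, anwendbar: Nach dem ersten Absatz ist nur $\beta$ gleich dem durch alle Kreise von $C^*$ teilbaren Monom zu prüfen, und dieses Monom ist insbesondere durch den in diesen Lemmata auftretenden Kreis $x_1$ teilbar.

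Es bleibt der Fall, dass $D$ zusammenhängend ist und weder $D$ noch $D^*$ einen Kreis der eben beschriebenen Art besitzt; dann hat, da $D$ zusammenhängend ist, jeder Kreis von $D$ Gewicht $\ge 3$, also $D$ mindestens drei Bögen. Hier benutze ich den Tausch: Die Behauptung ist, dass eine solche Konfiguration stets eine lokale Änderung wie in Abbildung \ref{tr} zulässt (lokal von der Gestalt $\textup{F}_{p,q}\to\textup{F}_{p-1,q+1}$ oder der $\textup{G}$-Variante), die die Anzahl der Kreise echt verkleinert und wieder eine zusammenhängende Konfiguration $D'$ liefert; da nach Lemma \ref{trlem} Satz \ref{exddn} für $D$ genau dann gilt, wenn er für $D'$ gilt, greift dann die Induktionsvoraussetzung, wobei man $\alpha$ auf das durch alle Kreise von $D'$ teilbare Monom überträgt. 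Der Hauptaufwand steckt genau in dieser letzten Behauptung, nämlich zu zeigen, dass keine zusammenhängende Konfiguration, bei der weder $D$ noch $D^*$ einen Kreis vom Gewicht $\le 2$ von passender Form hat, unter dem Tausch starr ist, also dass es stets eine Stelle gibt, an der Abbildung \ref{tr} angewandt werden kann und dabei die Kreiszahl sinkt; alternativ listet man die endlich vielen nicht so reduzierbaren Minimalkonfigurationen auf und verifiziert Satz \ref{exddn} für sie direkt mit Gleichung (\ref{grgl}) und der Dualitätsregel, wie in den Beweisen der Lemmata \ref{gr1}, \ref{dugr1}, \ref{gr2}, \ref{dugr2}.
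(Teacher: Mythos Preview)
Dein Ansatz hat eine echte L\"ucke in Schritt~4. Der Tausch aus Abbildung~\ref{tr} setzt, wie aus dem Beweis von Lemma~\ref{trlem} hervorgeht, einen Grad-1-Kreis in $D$ voraus (die lokale \"Anderung $\textup{F}_{p,q}\to\textup{F}_{p-1,q+1}$ entfernt ja gerade einen nur von einem Bogen getroffenen Kreis). Du hast aber in den vorherigen F\"allen bereits alle Konfigurationen mit Grad-1-Kreisen abgehandelt; im verbleibenden Fall, in dem jeder Kreis von $D$ Gewicht $\ge 3$ hat, ist der Tausch also gar nicht anwendbar. Deine Behauptung, eine solche Konfiguration lasse stets einen kreiszahlverkleinernden Tausch zu, ist damit nicht nur unbewiesen, sondern im Rahmen der im Papier definierten Operation schlicht falsch. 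Auch die angedeutete Alternative (``endlich viele Minimalkonfigurationen auflisten'') f\"uhrst du nicht aus; es ist nicht einmal klar, dass diese Liste endlich w\"are.

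Das Papier geht einen viel direkteren Weg, der deine ganze Fallunterscheidung \"uberfl\"ussig macht: W\"ahle einen beliebigen Kreis $x_1$ von $D$ und orientiere alle B\"ogen, die $x_1$ mit einem anderen Kreis verbinden, so, dass sie von $x_1$ wegzeigen. Weil $\alpha$ durch alle Kreise teilbar ist, kann $d_{\g(C,\eps),s|_\eps}(\alpha)\ne 0$ nur f\"ur Typen $\textup{B}_i$ oder $\textup{D}_{p,q}$ gelten; in beiden Typen hat jeder aktive Kreis mindestens einen eingehenden Bogen. Da $x_1$ nach Wahl der Orientierung nur ausgehende B\"ogen hat, muss $x_1$ in $\g(C,\eps)$ passiv sein, also $\eps_j=0$ f\"ur alle B\"ogen an $x_1$. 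Dann sind im zweiten Faktor $\g(C,\etaup)$ zwangsl\"aufig alle diese B\"ogen vorhanden ($\etaup_j=*$), $x_1$ ist aktiv mit ausschlie{\ss}lich ausgehenden B\"ogen, und das gleiche Argument (via Filtrierungsregel ist auch das Zwischenbild durch alle Kreise teilbar) liefert einen Widerspruch. Das ist der ganze Beweis; deine korrekte, aber umst\"andliche Reduktion auf ein einziges $\beta$ und die Lemmata~\ref{gr1}--\ref{dugr2} werden dabei gar nicht ben\"otigt.
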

\begin{proof}
Sei $x_1$ der erste Kreis von $D$. Sei $n$ die Anzahl der Bögen, die genau einen Randpunkt auf $x_1$ haben. Es gilt $n\ge 1$. Ohne Einschränkung stehen diese $n$ Bögen in der Ordnung der Bögen an den Stellen $1$ bis $n$. Sei $C$ so, dass alle diese Bögen ihren Anfangspunkt auf $x_1$ haben. Sei $s$ Kantenzuordnung vom Typ Y bezüglich $C$. Sei $\eps\in\sum(k,i)$ für $i\in\{1,\dots,k-1\}$ mit $\eps^0=(0,\dots,0)$. Aus $d_{\g(C,\eps),s|_\eps}(\alpha)\ne 0$ folgt, dass $\gl$ vom Typ $\textup{B}_i$ oder $\textup{D}_{p,q}$ ist. Also ist $\eps_j=0$ für $j\in\{1,\dots,n\}$. Sei $\etaup\in\sum(k,k-i)$ mit $\etaup^0=\eps^1$. Aus $d_{\g(C,\etaup),s|_\etaup}\circ d_{\g(C,\eps),s|_\eps}(\alpha)\ne 0$ und der Filtrierungsregel folgt, dass $\gle$ vom Typ $\textup{B}_{k-i}$ oder $\textup{D}_{p,q}$ ist. Dies ist aber ein Widerspruch zu $\eps_j=*$ für $j\in\{1,\dots,n\}$.
\end{proof}
\begin{kor}\label{eins}
Hat $D^*$ mindestens zwei Kreise, dann gilt Satz \ref{exddn} für $(D,1\in\Lambda\V(D^*))$.
\end{kor}
\begin{lemma}\label{mlem}
Enthalte $D$ mindestens eine der lokalen Konfigurationen $M_1,\ M_2,\dots,M_9$ aus Abbildung \ref{mabb}. Dann gilt Satz \ref{exddn} für $D$.
\end{lemma}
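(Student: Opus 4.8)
Lemma \ref{mlem} asks us to verify that whenever $D$ contains one of nine specified local configurations $M_1,\dots,M_9$, then Satz \ref{exddn} holds for $D$.

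Let me think about what's going on. We have Satz \ref{exddn}: for a $k$-dimensional configuration $D$ and monomials $\alpha \in \Lambda\V(D)$, $\beta \in \Lambda\V(D^*)$, there exists an orientation $C$ of $D$ such that for all type-Y edge assignments $s$, the coefficient of $\sum_i d_{k-i}(C,s) \circ d_i(C,s)(\alpha)$ at $\beta$ vanishes. We've built up a toolkit: Lemma \ref{gr1}/\ref{dugr1} (degree-1 circles), Lemma \ref{gr2}/\ref{dugr2} (circles hit by exactly two arcs each with one endpoint), Lemma \ref{trlem} (swap moves don't affect validity), the rotation lemma, Lemma \ref{alle}/\ref{eins} (monomials divisible by all circles). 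The $M_i$ configurations in Abbildung \ref{mabb} are presumably small local pieces — the point of Lemma \ref{mlem} is that each $M_i$ forces a structural feature that one of these earlier lemmas can exploit, after possibly applying swap/rotation moves.

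So the proof would be a case analysis over $i = 1,\dots,9$. For each $M_i$ I would proceed as follows. First, examine the local configuration $M_i$ and identify which structural hypothesis it guarantees: e.g. a grade-1 circle (a circle touched by exactly one arc), or a circle touched by exactly two arcs each having only one endpoint on it, or the presence of a pair of arcs in the configuration related by a swap move that simplifies the diagram. Then, if $M_i$ directly exhibits (say) a grade-1 circle $x_1$ with the other arc-endpoint on $x_2$, I'd invoke Lemma \ref{gr1} together with its dual Lemma \ref{dugr1}: for a fixed monomial $\alpha$, either $\alpha$ is not divisible by $x_1$ (case 1 of \ref{gr1} applies), or it is — and then we reduce via the filtration rule / case 2; on the target side we split on whether $\beta$ is divisible by $x_1$ or $x_2$ and apply \ref{dugr1}. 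Because $(D,\alpha,\beta)$ is covered by the union of the ``$\alpha$-side'' cases and the ``$\beta$-side'' cases, every pair $(\alpha,\beta)$ falls into at least one case where Satz \ref{exddn} is known. For the $M_i$ that instead exhibit a degree-2 circle of the special shape, I'd do the analogous argument with Lemma \ref{gr2}/\ref{dugr2}. For those $M_i$ where no such circle is literally visible, I expect one first applies a swap move (Lemma \ref{trlem}) or a rotation (the rotation lemma) to transform $D$ into a configuration $D'$ that does contain the relevant feature, using that Satz \ref{exddn} for $D$ is equivalent to Satz \ref{exddn} for $D'$.

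The key steps in order: (1) fix $M_i$ and draw/describe the local picture; (2) if needed, apply swap/rotation moves to bring $D$ into a normalized form; (3) locate the grade-1 or special-grade-2 circle, or a configuration covered by Lemma \ref{alle}/\ref{eins}; (4) for an arbitrary monomial pair $(\alpha,\beta)$, run the divisibility case split and cite the matching part of Lemma \ref{gr1}, \ref{dugr1}, \ref{gr2}, \ref{dugr2}, \ref{alle}, or \ref{eins}; (5) check these cases exhaust all $(\alpha,\beta)$. The main obstacle I anticipate is step (2)–(3) for the ``harder'' $M_i$: it may not be immediately obvious which sequence of swaps/rotations produces a diagram with a usable grade-1 or grade-2 circle, and one has to be careful that the local modification stays local (doesn't interact with the rest of $D$). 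Also, the bookkeeping in step (5) — confirming that the $\alpha$-side and $\beta$-side cases together cover everything, especially when both $\alpha$ and $\beta$ are ``generic'' (not divisible by the distinguished circles on either side) — may require an extra appeal to the filtration rule or to Lemma \ref{alle} applied after a move. Since each $M_i$ is a small fixed picture, none of this should require genuinely new ideas beyond the lemmas already in hand; it is a finite verification.
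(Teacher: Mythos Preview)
Your overall plan --- case-by-case analysis, using rotation and swap (Lemma \ref{trlem}) to cut the nine cases down to a few representatives, then applying Lemmas \ref{gr1}, \ref{dugr1}, \ref{gr2}, \ref{dugr2} via a divisibility case split --- is exactly the shape of the paper's argument. The reductions $M_2,M_3\to M_1$, $M_5\to M_4$, $M_7\to M_6$ by rotation/swap are what the paper does, and for $M_4$ and $M_6$ your proposed combination of the grade-1/grade-2 lemmas (on the $\alpha$-side and the $\beta$-side respectively) is precisely the paper's proof.

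There is, however, a genuine gap in your plan for $M_1$ and $M_8$ (and hence for $M_2,M_3,M_9$, which reduce to these). Your step (4) assumes that after the divisibility split, \emph{every} residual case is covered by one of Lemmas \ref{gr1}--\ref{dugr2}, \ref{alle}, \ref{eins}, possibly via duality. That is not true. For $M_1$, Lemma \ref{gr1} only reduces you to the case where $\alpha$ is divisible by both $x_1$ and $x_2$; for $M_8$, Lemma \ref{gr2} reduces you to $\alpha$ divisible by $x_1$ and $x_2$. In these residual cases the dual lemmas \ref{dugr1}/\ref{dugr2} do \emph{not} apply, because the distinguished circles $x_1,x_2$ are circles of $D$, not of $D^*$, and the local configuration gives no usable grade-1 or grade-2 circle on the $D^*$ side. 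Nor do Lemma \ref{alle} or Korollar \ref{eins} apply in general.

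What the paper does for these residual cases is a direct computation: one \emph{chooses} a specific orientation $C$ of the arcs in the local picture (this is where the existential quantifier in Satz \ref{exddn} is used), and then checks by hand, running over all splittings $(\eps,\etaup)$ with $\eps^1=\etaup^0$, that either $\g(C,\eps)$ or $\g(C,\etaup)$ is of no type in $\textup{T}$, or that the filtration rule forces $d_{\g(C,\etaup),s|_\etaup}\circ d_{\g(C,\eps),s|_\eps}(\alpha)=0$. This is short (a few lines per configuration), but it is a separate verification not subsumed by the lemmas you listed. You should add this as an explicit step in your plan.
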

\begin{figure}[htbp]
\centering
\includegraphics[scale=0.4]{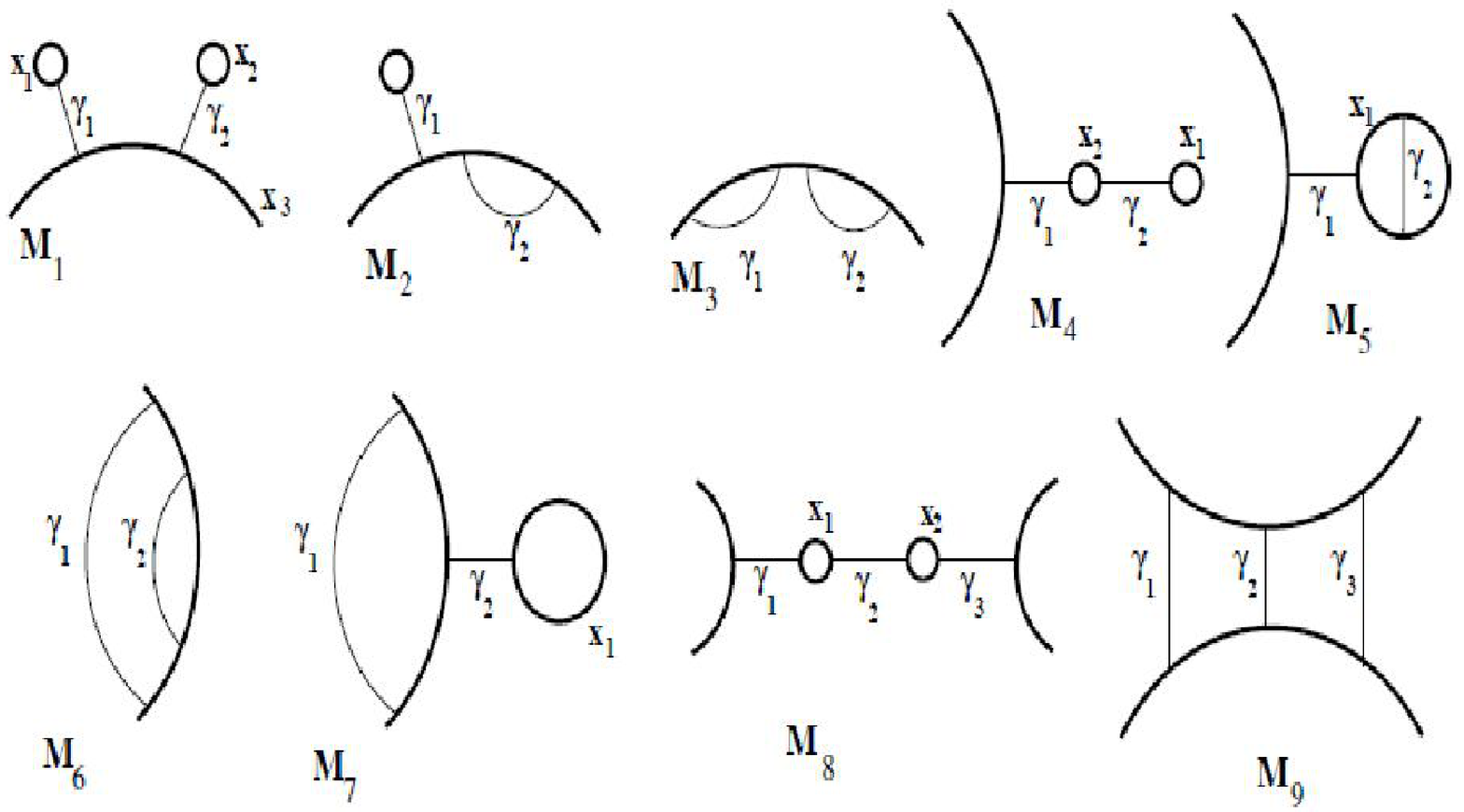}
\caption{aus \cite{szabo}}
\label{mabb}
\end{figure}
\begin{proof}
Durch Rotation und Tausch gehen $M_2$ und $M_3$ aus $M_1$, $M_5$ aus $M_4$, sowie $M_7$ aus $M_6$ hervor.\\
$M_1$: Nach Lemma \ref{gr1} reicht es den Fall zu betrachten, dass $\alpha$ durch $x_1$ und $x_2$ teilbar ist. Sei $C$ so, dass $\gammaup_1^0$ auf $x_1$ und $\gammaup_2^1$ auf $x_2$ liegt. Sei $s$ Kantenzuordnung vom Typ Y bezüglich $C$. Ohne Einschränkungen seien $\gammaup_1$ und $\gammaup_2$ der erste und zweite Bogen von $D$. Seien $\eps\in\sum(k,i)$ für $i\in\{1,\dots,k-1\}$ und $\etaup\in\sum(k,k-i)$ mit $\eps^1=\etaup^0$. Ist $\eps_1=\eps_2=*$, so ist $\gl$ von keinem Typ aus T. Ist $\etaup_1=\etaup_2=*$, so ist $\gle$ von keinem Typ aus T. Ist $(\eps_1,\eps_2)=(*,0)$ oder$(\eps_1,\eps_2)=(0,*)$, so ist wegen der Filtrierungsregel $d_{\g(C,\etaup),s|_\etaup}\circ d_{\g(C,\eps),s|_\eps}(\alpha)=0$.\\
$M_4$: Ist $\alpha$ durch $x_2$ teilbar, so kann man den 2. Teil von Lemma \ref{gr1} anwenden. Ist $\alpha$ nicht durch $x_2$ teilbar, so kann man den 1. Teil von Lemma \ref{gr2} anwenden.\\
$M_6$: 2. Teil von Lemma \ref{dugr1} und 1. Teil von Lemma \ref{dugr2}.\\
$M_8$: Nach Lemma \ref{gr2} reicht es den Fall zu betrachten, dass $\alpha$ durch $x_1$ und $x_2$ teilbar ist. Sei $C$ so, dass $\gammaup_1^1$ und $\gammaup_2^1$ auf $x_1$ und $\gammaup_3^0$ auf $x_2$ liegen. Sei $s$ Kantenzuordnung vom Typ Y bezüglich $C$. Ohne Einschränkungen seien $\gammaup_1$, $\gammaup_2$ und $\gammaup_3$ der erste, zweite und dritte Bogen von $D$. Seien $\eps\in\sum(k,i)$ für $i\in\{1,\dots,k-1\}$ und $\etaup\in\sum(k,k-i)$ mit $\eps^1=\etaup^0$. Sind mindestens 2 der ersten 3 Komponenten von $\eps$ ein $*$, dann ist $d_{\g(C,\eps),s|_\eps}(\alpha)=0$. Ist $(\eps_1,\eps_2,\eps_3)=(0,*,0)$, so ist $d_{\g(C,\eps),s|_\eps}(\alpha)=0$. Ist $(\eps_1,\eps_2,\eps_3)=(*,0,0)$ oder $(\eps_1,\eps_2,\eps_3)=(0,0,*)$, so ist $d_{\g(C,\etaup),s|_\etaup}\circ d_{\g(C,\eps),s|_\eps}(\alpha)=0$ wegen der Filtrierungsregel. Ist $\eps_1=\eps_2=\eps_3=0$, so ist $\g(C,\etaup)$ von keinem Typ aus T.\\
$M_9$: Folgt mittels Lemma \ref{dugr2} und der Dualitätsregel aus $M_8$.
\end{proof}
\begin{lemma}
Ist $D$ dreidimensional, so gilt Satz \ref{exddn} für $D$.
\end{lemma}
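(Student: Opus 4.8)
The plan is to reduce the statement to a finite inspection of the connected three-dimensional configurations. An inactive $D$ is handled by carrying passive circles along, and a non-connected $D$ has already been reduced, in the preceding part of the proof of Satz \ref{exddn}, to the two-factor identity (\ref{ddgl}); so I may assume $D$ is connected, active, and has exactly three arcs. Since the unlabelled lemma above shows that rotations preserve the validity of Satz \ref{exddn}, and Lemma \ref{trlem} shows that trades do as well, it is enough to check one representative of each class of connected three-dimensional configurations modulo rotation and trade, and the list modulo rotation alone is precisely the one already displayed in Abbildung \ref{dreidim}.

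First I would go through that list and, for every configuration in which some circle meets the arcs in a sufficiently spread-out incidence pattern, exhibit — after a suitable sequence of rotations and trades — one of the local pictures $M_1,\dots,M_9$ of Abbildung \ref{mabb} inside it, and conclude by Lemma \ref{mlem}. I expect this to dispose of all but a small number of highly symmetric configurations, namely those in which the three arcs cluster on one or two circles.

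For the remaining configurations I would argue by hand. When $D$ has a circle met by a single arc, Lemma \ref{gr1} settles all monomials $\alpha$ except those divisible by that circle but not by the circle at the opposite end of the arc; applying Lemma \ref{dugr1} to $D^*$ settles the dual range of $\beta$'s, and the Filtrierungsregel forces the two leftover divisibility conditions to be incompatible, so the surviving pairs $(\alpha,\beta)$ are vacuous. The grade-two circles are treated the same way with Lemma \ref{gr2}/\ref{dugr2}. The genuinely degenerate cases — a single circle in $D$ and/or in $D^*$ — are handled by Korollar \ref{eins} for $\alpha=1$, by Lemma \ref{alle} (together with the Dualit\"atsregel for its $\beta$-version) whenever $\alpha$, respectively $1$, is divisible by all circles, and by a direct computation of the coefficient of $d_2\circ d_1+d_1\circ d_2$ at $\beta$ in the one or two monomial pairs that then still survive, using that $d_1$ is just the odd Khovanov differential.

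The hard part is the bookkeeping: each auxiliary lemma covers only a ``half'' of the monomial pairs, so for every individual configuration one must verify that the applicable half-spaces, together with the pairs ruled out by the Filtrierungsregel and the Dualit\"atsregel, exhaust all of $\Lambda\V(D)\times\Lambda\V(D^*)$ — and carry out the handful of explicit coefficient computations that the most symmetric configurations still require.
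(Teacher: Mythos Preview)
Your plan is essentially the paper's own proof: reduce to the connected case, run through the rotation list of Abbildung~\ref{dreidim}, kill most entries via the $M_i$-configurations of Lemma~\ref{mlem}, and finish the rest with the degree-1/degree-2 circle lemmas plus a few direct coefficient checks. One caution: your expectation that for the degree-1-circle cases ``the Filtrierungsregel forces the two leftover divisibility conditions to be incompatible, so the surviving pairs $(\alpha,\beta)$ are vacuous'' is too optimistic --- in the paper's cases 9, 10, 19, 20 (where $D^*$ or $D$ has a single circle, so Lemma~\ref{dugr1}/Korollar~\ref{eins} do not apply) the reduction via Lemma~\ref{gr1} leaves one $\alpha$ on which $d_2\circ d_1+d_1\circ d_2$ has exactly two genuinely non-zero summands, and one must verify the sign cancellation explicitly; your ``direct computation'' fallback is what actually carries these cases, not vacuity.
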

\begin{proof}
In Abbildung \ref{dreidim} ist aufgelistet, welche Möglichkeiten es für eine zusammenhängende, nichtorientierte, dreidimensionale Konfiguration modulo Rotation gibt. Durch Lemma \ref{mlem} werden die Fälle 1, 2, 3, 5, 6, 7, 11, 13, 14, 15, 16 und 17 abgedeckt. Sei $C$ wie in Abbildung \ref{ordreidim}.
\begin{figure}[htbp]
\centering
\includegraphics{graph.20}\qquad\qquad
\includegraphics{graph.38}\\\ \\\ \\\ \\
\includegraphics{graph.39}\qquad\qquad
\includegraphics{graph.40}\\\ \\\ \\\ \\
\includegraphics{graph.41}\qquad\qquad
\includegraphics{graph.42}
\caption{}
\label{ordreidim}
\end{figure}
Sei $s$ Kantenzuordnung vom Typ Y bezüglich $C$. Ohne Einschränkungen seien $\gammaup_1$, $\gammaup_2$ und $\gammaup_3$ der erste, zweite und dritte Bogen von $D$. Seien $\eps\in\sum(3,i)$ für $i\in\{1,2\}$ und $\etaup\in\sum(3,3-i)$ mit $\eps^1=\etaup^0$.\\
Fall 4: Nach Lemma \ref{gr1} und Lemma \ref{gr2} reicht es $\alpha=x_1\wedge x_3$ zu betrachten. Dann ist aber stets $d_{\g(C,\etaup),s|_\etaup}\circ d_{\g(C,\eps),s|_\eps}(\alpha)=0$.\\
Fall 12: Folgt mit Lemma \ref{dugr1} und Lemma \ref{dugr2} und der Dualitätsregel aus Fall 4.\\
Fall 8: Nach Lemma \ref{gr2} reicht es $\alpha=x_1$ zu betrachten. Dann ist aber stets $d_{\g(C,\etaup),s|_\etaup}\circ d_{\g(C,\eps),s|_\eps}(\alpha)=0$.\\
Fall 18: Folgt mit Lemma \ref{dugr2} und der Dualitätsregel aus Fall 8.\\
Fall 9: Nach Lemma \ref{gr1} reicht es $\alpha=x_1$ zu betrachten. Sei $y$ der Kreis von $C^*$. Es gilt:
\[d_2(C,s)\circ d_1(C,s)+d_1(C,s)\circ d_2(C,s)
=d_{\g(C,(1,*,*)),s|_{(1,*,*)}}\circ d_{\g(C,(*,0,0)),s|_{(*,0,0)}}
+d_{\g(C,(*,1,1)),s|_{(*,1,1)}}\circ d_{\g(C,(0,*,*)),s|_{(0,*,*)}}\]
In Abbildung \ref{f9} sind $\g(C,(1,*,*))$ und $\g(C,(*,1,1))$ dargestellt.
\begin{figure}[htbp]
\centering
\includegraphics{graph.21}
\caption{}
\label{f9}
\end{figure}
Es ist $d_{\g(C,(*,0,0)),s|_{(*,0,0)}}(x_1)=s(*,0,0)(z_3-z_2)\wedge z_1$ und $d_{\g(C,(1,*,*)),s|_{(1,*,*)}}((z_3-z_2)\wedge z_1)=s(1,*,0)s(1,1,*)y$. Weiterhin ist $d_{\g(C,(0,*,*)),s|_{(0,*,*)}}(x_1)=-s(0,*,0)s(0,1,*)w_1$ und $d_{\g(C,(*,1,1)),s|_{(*,1,1)}}(w_1)=s(*,1,1)y$. Außerdem gilt $s(*,0,0)s(1,*,0)s(1,1,*)=s(0,*,0)s(0,1,*)s(*,1,1)$.\\
Fall 10: Nach Lemma \ref{gr1} reicht es $\alpha=x_1$ zu betrachten. Sei $y$ der Kreis von $C^*$. Es gilt:
\[d_2(C,s)\circ d_1(C,s)+d_1(C,s)\circ d_2(C,s)
=d_{\g(C,(1,*,1)),s|_{(1,*,1)}}\circ d_{\g(C,(*,0,*)),s|_{(*,0,*)}}
+d_{\g(C,(*,1,1)),s|_{(*,1,1)}}\circ d_{\g(C,(0,*,*)),s|_{(0,*,*)}}\]
In Abbildung \ref{f10} sind $\g(C,(1,*,1))$ und $\g(C,(*,1,1))$ dargestellt.
\begin{figure}[htbp]
\centering
\includegraphics{graph.22}
\caption{}
\label{f10}
\end{figure}
Es ist $d_{\g(C,(*,0,*)),s|_{(*,0,*)}}(x_1)=s(0,0,*)s(*,0,1)z_1$ und $d_{\g(C,(1,*,1)),s|_{(1,*,1)}}(z_1)=s(1,*,1)y$. Weiterhin ist $d_{\g(C,(0,*,*)),s|_{(0,*,*)}}(x_1)=s(0,0,*)s(0,*,1)w_1$ und $d_{\g(C,(*,1,1)),s|_{(*,1,1)}}(w_1)=s(*,1,1)y$. Außerdem gilt $s(*,0,1)s(1,*,1)=-s(0,*,1)s(*,1,1)$.\\
Fall 19: Es ist $m(C^*)$ gleich dem $C$ aus Fall 10, dies liefert uns Bezeichnungen für die Kreise von $C^*$. Es gilt:
\[d_2(C,s)\circ d_1(C,s)+d_1(C,s)\circ d_2(C,s)
=d_{\g(C,(*,1,*)),s|_{(*,1,*)}}\circ d_{\g(C,(0,*,0)),s|_{(0,*,0)}}
+d_{\g(C,(1,*,*)),s|_{(1,*,*)}}\circ d_{\g(C,(*,0,0)),s|_{(*,0,0)}}\]
In Abbildung \ref{f19} sind $\g(C,(*,1,*))$ und $\g(C,(1,*,*))$ dargestellt.
\begin{figure}[htbp]
\centering
\includegraphics{graph.23}
\caption{}
\label{f19}
\end{figure}
Es ist $d_2(C,s)\circ d_1(C,s)(y)=0$.
Es ist $d_{\g(C,(0,*,0)),s|_{(0,*,0)}}(1)=s(0,*,0)(z_1-z_2)$ und $d_{\g(C,(*,1,*)),s|_{(*,1,*)}}(z_1-z_2)=-s(*,1,0)s(1,1,*)x_2$. Weiterhin ist $d_{\g(C,(*,0,0)),s|_{(*,0,0)}}(1)=s(*,0,0)(w_1-w_2)$ und $d_{\g(C,(1,*,*)),s|_{(1,*,*)}}(w_1-w_2)=-s(1,*,0)s(1,1,*)x_2$. Außerdem gilt $s(0,*,0)s(*,1,0)=-s(*,0,0)s(1,*,0)$.\\
Fall 20: Es ist $m(C^*)$ gleich dem $C$ aus Fall 9, dies liefert uns Bezeichnungen für die Kreise von $C^*$. Es gilt:
\[d_2(C,s)\circ d_1(C,s)+d_1(C,s)\circ d_2(C,s)
=d_{\g(C,(*,1,1)),s|_{(*,1,1)}}\circ d_{\g(C,(0,*,*)),s|_{(0,*,*)}}
+d_{\g(C,(1,*,*)),s|_{(1,*,*)}}\circ d_{\g(C,(*,0,0)),s|_{(*,0,0)}}\]
In Abbildung \ref{f20} sind $\g(C,(*,1,1))$ und $\g(C,(1,*,*))$ dargestellt.
\begin{figure}[htbp]
\centering
\includegraphics{graph.24}
\caption{}
\label{f20}
\end{figure}
Es ist $d_2(C,s)\circ d_1(C,s)(y)=0=d_1(C,s)\circ d_2(C,s)(y)$.
Es ist $d_{\g(C,(0,*,*)),s|_{(0,*,*)}}(1)=s(0,*,0)s(0,1,*)z_2$ und $d_{\g(C,(*,1,1)),s|_{(*,1,1)}}(z_2)=s(*,1,1)x_2$. Weiterhin ist $d_{\g(C,(*,0,0)),s|_{(*,0,0)}}(1)=s(*,0,0)(w_1-w_2)$ und $d_{\g(C,(1,*,*)),s|_{(1,*,*)}}(w_1-w_2)=-s(1,*,0)s(1,1,*)x_2$. Außerdem gilt $s(0,*,0)s(0,1,*)s(*,1,1)=s(*,0,0)s(1,*,0)s(1,1,*)$.
\end{proof}
Sei ab jetzt $k\ge 4$.
\begin{lemma}\label{1km1}
Seien $D$, $\alpha$, $\beta$ wie in Satz \ref{exddn}. Es gebe eine orientierte Konfiguration $B$, so dass $\overline{B}=D$ und eine Kantenzuordnung $t$ vom Typ Y bezüglich $B$, so dass der Koeffizient von $d_1(B,t)\circ d_{k-1}(B,t)(\alpha)$ bei $\beta$ nicht trivial ist. Dann gilt Satz \ref{exddn} für $(D,\alpha,\beta)$.
\end{lemma}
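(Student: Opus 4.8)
The plan is to read off from the hypothesis enough rigidity about $(D,\alpha,\beta)$ that one can exhibit a good orientation by hand and then argue along the lines of the proofs of Lemmas~\ref{gr1} and~\ref{gr2}. As in the earlier reductions for Satz~\ref{exddn} we may assume $D$ active and connected, and (reordering the arcs, which does not affect the validity of Satz~\ref{exddn} for $(D,\alpha,\beta)$) we are free to renumber a distinguished arc. First I would unwind the hypothesis: since the $\beta$-coefficient of $d_1(B,t)\circ d_{k-1}(B,t)(\alpha)$ is nonzero, there is an arc of $B$ --- call it $\gamma$, renumbered to be the $k$-th one --- such that, writing $C'$ for the $(k-1)$-dimensional configuration obtained from $B$ by deleting $\gamma$, the composition of the odd Khovanov differential at $\gamma$ with the $(k-1)$-fold jump differential of $C'$ sends $\alpha$ to an element whose $\beta$-coefficient is nonzero. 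In particular $\textup{akt}(C')$ is of some $\tau\in\textup{T}$, hence connected, and as $k\ge 4$ this $\tau$ is unique; $\alpha$ agrees up to sign with $x_{\textup{akt}(C'),\tau,\theta}\wedge\omega$ for an admissible $\theta$ and a product $\omega$ of passive circles; and the odd Khovanov differential at $\gamma$ sends $y_{\textup{akt}(C'),\tau}\wedge\omega$ to a combination of monomials in which $\beta$ occurs with nonzero coefficient. Feeding each of these two factors into the Filtrierungsregel then fixes precisely which of the circles $x_B(i)$ divide $\alpha$ and which of the circles $x_{B^*}(i)$ divide $\beta$.

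By Lemma~\ref{invdd} --- available here, since we argue by induction on $k$ and may assume Satz~\ref{sddn} in dimension $k-1$ --- it suffices to verify Satz~\ref{exddn} for one orientation $C$ of $D$, which I take to be $B$ with $\gamma$ listed last. For a type-$Y$ edge assignment $s$ with respect to $C$ I would expand $\sum_{i=1}^{k-1}d_{k-i}(C,s)\circ d_i(C,s)(\alpha)$ as a sum of two-step compositions indexed by the partitions of the $k$ arcs into an inner and an outer jump, and split these according to whether $\gamma$ is resolved during the inner or during the outer jump. For every composition other than the two extreme ones --- (a) inner jump $=$ the $(k-1)$-fold jump of $C'$, outer step $=$ the odd Khovanov differential at $\gamma$, and (b) inner step $=$ the odd Khovanov differential at $\gamma$, outer jump $=$ the $(k-1)$-fold jump of $C'$ --- I would argue, exactly as in the proofs of Lemmas~\ref{gr1}, \ref{gr2} and~\ref{mlem}, that either the inner or the outer smoothing is of no type in $\textup{T}$, or else the composition annihilates $\alpha$, using the Filtrierungsregel together with the divisibility data of the first step and the explicit table of which two-dimensional configurations carry which type.

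It then remains to see that the $\beta$-coefficients of (a) and (b) cancel. On monomials of the form fixed in the first step this is a sign computation of the kind already performed in equations~(\ref{grgl}), (\ref{dhgl}) and~(\ref{ddgl}): performing the $(k-1)$-fold jump of $C'$ and then the odd Khovanov differential at $\gamma$ yields the same monomials as performing these two steps in the opposite order, up to the product of the $s$-values on the two-dimensional faces of the cube adjacent to $\gamma$, the $\textup{sp}$-weights, and a $\Lambda$-degree sign governed by~(\ref{grgl}); since $s$ is of type $Y$ this product equals $-1$, so (a) and (b) cancel. Hence $\sum_{i=1}^{k-1}d_{k-i}(C,s)\circ d_i(C,s)(\alpha)$ has trivial $\beta$-coefficient for this $C$ and every type-$Y$ $s$, and Satz~\ref{exddn} holds for $(D,\alpha,\beta)$. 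The main obstacle is the vanishing of the non-extreme compositions: this is precisely where the hypothesis enters, for without the divisibility constraints it supplies, compositions running through configurations of type $\textup{F}_{p,q}$, $\textup{G}_{p,q}$, $\textup{C}_{p,q}$ or $\textup{D}_{p,q}$ could a priori survive, so the bulk of the argument is a case analysis over the six families in $\textup{T}$ according to the type $\tau$ of $C'$, matching up the canonical form of $\alpha$, the Filtrierungsregel, and the admissible types of the intermediate two-dimensional faces.
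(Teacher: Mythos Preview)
Your argument breaks down at the cancellation of the two ``extreme'' compositions. You write that both (a) and (b) involve ``the $(k-1)$-fold jump of $C'$'', but this is only true for (a): in (b) the outer jump is $d_{\g(C,(*,\dots,*,1)),s|}$, i.e.\ the $(k-1)$-fold map on the configuration obtained by \emph{$1$-resolving} the arc $\gamma$, whereas $C'=\g(C,(*,\dots,*,0))$ is the $0$-resolution. These two $(k-1)$-dimensional configurations need not have the same active part, need not be of the same type, and in fact $\g(C,(*,\dots,*,1))$ need not be of any type in $\textup{T}$ at all. For instance, when $\tau=\textup{B}_{k-1}$ and $\gamma$ joins two of the $k-1$ circles of $C'$, the $1$-resolution at $\gamma$ merges those two circles and the resulting configuration with $k-2$ circles and $k-1$ arcs is typically of no type in $\textup{T}$; then (b) vanishes while (a) is, by hypothesis, nonzero, and your cancellation never takes place. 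The sign computation you invoke (as in equation~(\ref{ddgl})) works precisely because there one has $\textup{akt}(\g(C,a))=\textup{akt}(\g(C,b))$ and $\textup{akt}(\g(C,c))=\textup{akt}(\g(C,d))$, which is exactly what fails here for a connected $D$.

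The paper's proof is organised quite differently. It does set up the case distinction over $\tau$ and over how the remaining arc $\delta$ meets the active circles of $(\g(D,\eps))^*$, but it never tries to cancel (a) against (b) for the given orientation $B$. Instead, in each subcase it recognises the situation as one already covered by an earlier vanishing criterion --- Lemma~\ref{alle}, Korollar~\ref{eins}, the two parts of Lemmas~\ref{gr1} and~\ref{gr2}, or the presence of one of the local configurations $M_1,\dots,M_9$ from Lemma~\ref{mlem} --- each of which supplies its own, generally different, choice of orientation $C$ for which every term of the sum already vanishes on the relevant monomial. The subcases not caught this way (for example $\tau=\textup{C}_{p,q}$ in case (i) with $k=4$, case (iii) for $\textup{C}_{1,2}$, and $\tau=\textup{F}_{p,q}$ or $\textup{G}_{p,q}$) are handled by enumerating the finitely many possible $D$ up to rotation and Tausch and checking them directly. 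The fix is therefore not to sharpen the sign argument but to drop the fixed choice $C=B$ and, for each $(\tau,\text{case})$, locate the applicable earlier lemma or carry out the residual finite check.
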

\begin{proof}
Es gibt $\eps\in\sum(k,k-1)$ und $\etaup\in\sum(k,1)$ mit $\eps^1=\etaup^0$ und ein Monom $z\in\Lambda\V(\g(D,\eps^1))$, so dass der Koeffizient bei $z$ von $d_{\g(B,\eps),t|_\eps}(\alpha)$, sowie der Koeffizient bei $\beta$ von $d_{\g(B,\etaup),t|_\etaup}(z)$ nicht trivial ist. Der Bogen von $\g(D,\etaup)$ sei mit $\delta$ bezeichnet. Es gibt 3 Fälle:
\begin{description}
\item[(i)]
$\delta$ verbindet zwei aktive Kreise von $(\g(D,\eps))^*$.
\item[(ii)]
$\delta$ verbindet einen aktiven Kreis von $(\g(D,\eps))^*$ mit einem passiven Kreis.
\item[(iii)]
$\delta$ hat beide Randpunkte auf dem gleichen aktiven Kreis von $(\g(D,\eps))^*$.
\end{description}
$\g(B,\eps)$ ist von genau einem Typ $\tau\in\textup{T}$. Unterscheide nun die möglichen Fälle.\\
Sei $\tau=\textup{A}_{k-1}$. Im Fall (i) ist $\g(D,\eps)$ aktiv. Es folgt $\alpha=1$, $z=1$, $\beta=1$. Nun können wir Korollar \ref{eins} anwenden. In den Fällen (ii) und (iii) enthält $D$ die $M_9$ Konfiguration aus Abbildung \ref{mabb}.\\
Sei $\tau=\textup{B}_{k-1}$. Im Fall (ii) können wir den 2. Teil von Lemma \ref{gr1} anwenden. In den Fällen (i) und (iii) können wir Lemma \ref{alle} anwenden.\\
Sei $\tau=\textup{C}_{p,q}$. Im Fall (i) müssen wir nur $\alpha=1$, $\beta=1$ betrachten. $\g(D,\eps^1)$ hat $k-2$ Kreise, somit hat $D^*$ $k-3$ Kreise. Für $k\ge 5$ können wir Korollar \ref{eins} anwenden. Für $k=4$ gibt es für $D$ die zwei Möglichkeiten aus Abbildung \ref{ci}.
\begin{figure}[htbp]
\centering
\includegraphics[scale=2]{1bild.11}
\caption{}
\label{ci}
\end{figure}
Sei $C$ wie in Abbildung \ref{cior} und sei $s$ eine Kantenzuordnung vom Typ Y bezüglich $C$.
\begin{figure}[htbp]
\centering
\includegraphics[scale=2]{1bild.12}
\caption{}
\label{cior}
\end{figure}
Sei $a\in\sum(4,3)$ mit $a^0=(0,0,0,0)$. Dann ist $\g(C,a)$ von keinem Typ aus T. $d_3(C,s)\circ d_1(C,s)(1)$ hat bei $1$ den Koeffizienten $0$, wegen der Filtrierungsregel. Für die rechte Seite von Abbildung \ref{cior} gilt: Ist $a\in\sum(4,2)$ mit $a^0=(0,0,0,0)$, so ist $\g(C,a)$ von keinem Typ aus T. Für die linke Seite von Abbildung \ref{cior} gilt: Es gibt genau ein $a\in\sum(4,2)$ mit $a^0=(0,0,0,0)$, so dass $\g(C,a)$ von einem Typ aus T ist. Der Typ ist $\textup{F}_{0,2}$. Somit ist der Koeffizient von $d_2(C,s)\circ d_2(C,s)(1)$ bei $1$ trivial, wegen der Filtrierungsregel.\\
Betrachte nun Fall (ii). Für $q\ge 5$ oder $p\ge 3$ enthält $D$ die Konfiguration $M_9$. Modulo Rotation können alle Möglichkeiten für $D$, die nicht $M_9$ enthalten, aus der linken Konfiguration in Abbildung \ref{cii} durch Entfernen von Bögen gewonnen werden.
\begin{figure}[htbp]
\centering
\includegraphics[scale=0.4]{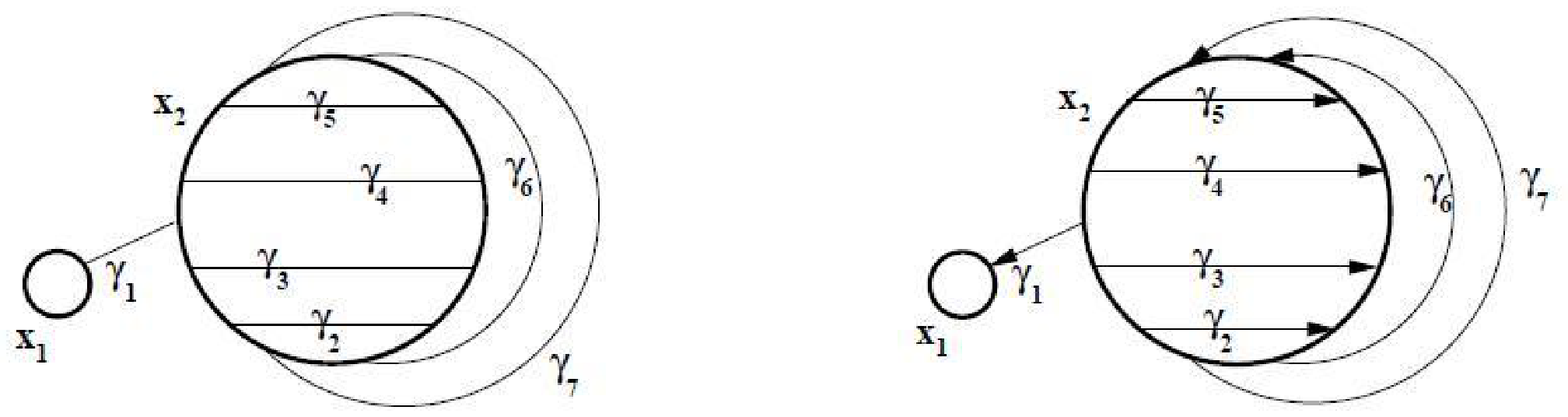}
\caption{aus \cite{szabo}}
\label{cii}
\end{figure}
Wir können sogar verlangen, dass höchstens zwei der vier Bögen aus dem Innern des großen Kreises entfernt werden. Eine orientierte Konfiguration $C$ mit $\overline{C}=D$ ergibt sich aus der rechten Seite von Abbildung \ref{cii}.
Sei $a\in\sum(k,i)$ mit $i\in\{1,\dots,k-1\}$ und $a^0=(0,\dots,0)$ und so dass $\g(C,a)$ von einem Typ aus T ist. Dann ist $i=1$ oder $a$ entspricht den Bögen $\gammaup_1$, $\gammaup_2$ oder $a$ entspricht den Bögen $\gammaup_1$, $\gammaup_3$. Sei $b\in\sum(k,k-i)$ mit $b^0=a^1$. Dann ist $\g(C,b)$ von keinem Typ aus T.\\
Im Fall (iii) lässt sich $D$ per Tausch in Fall (ii) überführen oder $D$ enthält die Konfiguration $M_9$ oder $D$ ist wie in Abbildung \ref{ciiid}.
\begin{figure}[htbp]
\centering
\includegraphics{1bild.14}
\caption{}
\label{ciiid}
\end{figure}
Dann ist $D^*$ wie in Abbildung \ref{ciiids}.
\begin{figure}[htbp]
\centering
\includegraphics{graph.26}
\caption{}
\label{ciiids}
\end{figure}
Nach Lemma \ref{dugr2} genügt es $\beta=y_1$ zu betrachten. Nach der Filtrierungsregel genügt es $\alpha=1$ zu betrachten. Sei $C$ wie in Abbildung \ref{ciiior}.
\begin{figure}[htbp]
\centering
\includegraphics{1bild.15}
\caption{}
\label{ciiior}
\end{figure}
Sei $s$ eine Kantenzuordnung vom Typ Y bezüglich $C$. Sei $a\in\sum(4,i)$ mit $i\in\{2,3\}$ und $a^0=(0,0,0,0)$. Dann ist $\g(C,a)$ von keinem Typ aus T. $d_3(C,s)\circ d_1(C,s)(1)$ hat bei $y_1$ den Koeffizienten $0$, wegen der Filtrierungsregel.\\
Sei $\tau=\textup{D}_{p,q}$. Dann ist Fall (i) nicht möglich. Im Fall (ii) können wir den 2. Teil von Lemma \ref{gr1} anwenden. Im Fall (iii) können wir Lemma \ref{alle} anwenden.\\
Sei $\tau=\textup{F}_{p,q}$ oder $\tau=\textup{G}_{p,q}$. $D$ enthalte, auch nach Rotation, keine der lokalen Konfigurationen aus Abbildung \ref{mabb}. Dann gibt es für $D$ bis auf Rotation und Tausch folgende Möglichkeiten. $D$ ist eine der vier Konfigurationen aus Abbildung \ref{fg} (wobei wir die Orientierung der Bögen vergessen) oder $D$ ist die 3. Konfiguration aus Abbildung \ref{fg}, wobei $\gammaup_2$ und $x_2$ entfernt seien oder $D$ ist die 4. Konfiguration, wobei $\gammaup_2$ und $x_2$ entfernt seien oder $D$ ist die 4. Konfiguration, wobei $\gammaup_3$ und $x_3$ entfernt seien.
\begin{figure}[htbp]
\centering
\includegraphics[scale=0.4]{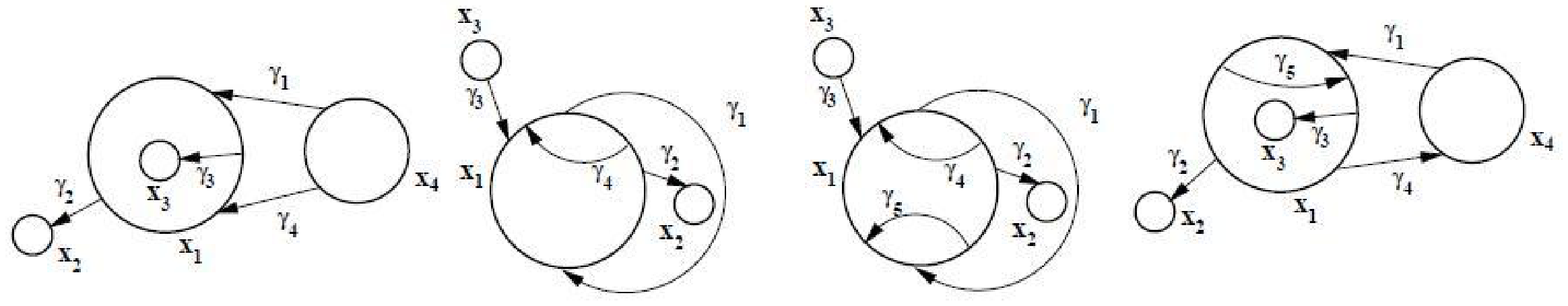}
\caption{aus \cite{szabo}}
\label{fg}
\end{figure}
Nach Lemma \ref{gr1} und Lemma \ref{gr2} müssen wir nur den Fall betrachten, dass $\alpha$ nicht durch $x_1$, aber durch alle anderen Kreise von $D$ teilbar ist. Sei $C$ wie in Abbildung \ref{fg}.
Sei $s$ Kantenzuordnung vom Typ Y bezüglich $C$. Sei $a\in\sum(k,i)$ für $i\in\{1,\dots,k-1\}$ und $b\in\sum(k,k-i)$ mit $a^1=b^0$. Man prüft leicht nach, dass $d_{\g(C,b),s|_b}\circ d_{\g(C,a),s|_a}(\alpha)$ stets trivial ist.
\end{proof}
\begin{kor}
Seien $D$, $\alpha$, $\beta$ wie in Satz \ref{exddn}. Es gebe eine orientierte Konfiguration $B$, so dass $\overline{B}=D$ und eine Kantenzuordnung $t$ vom Typ Y bezüglich $B$, so dass der Koeffizient von $d_{k-1}(B,t)\circ d_{1}(B,t)(\alpha)$ bei $\beta$ nicht trivial ist. Dann gilt Satz \ref{exddn} für $(D,\alpha,\beta)$.
\end{kor}
\begin{proof}
Beweis von Lemma \ref{1km1} und Dualitätsregel.
\end{proof}
\begin{lemma}
Seien $D$, $\alpha$, $\beta$ wie in Satz \ref{exddn}. Es gebe eine orientierte Konfiguration $B$, so dass $\overline{B}=D$ und eine Kantenzuordnung $t$ vom Typ Y bezüglich $B$ und $n\in\{2,\dots,k-2\}$, so dass der Koeffizient von $d_{k-n}(B,t)\circ d_{n}(B,t)(\alpha)$ bei $\beta$ nicht trivial ist. Dann gilt Satz \ref{exddn} für $(D,\alpha,\beta)$.
\end{lemma}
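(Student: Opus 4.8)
Der Plan ist, analog zum Beweis von Lemma \ref{1km1} und dem darauf folgenden Korollar vorzugehen. Aus der Voraussetzung w\"ahle ich $\eps\in\sum(k,n)$ mit $\eps^0=(0,\dots,0)$, dazu $\etaup\in\sum(k,k-n)$ mit $\etaup^0=\eps^1$ und ein Monom $z\in\Lambda\V(\g(B,\eps^1))$, so dass der Koeffizient von $d_{\g(B,\eps),t|_\eps}(\alpha)$ bei $z$ und der Koeffizient von $d_{\g(B,\etaup),t|_\etaup}(z)$ bei $\beta$ jeweils nicht trivial ist; das ist m\"oglich, weil $d_{k-n}(B,t)\circ d_n(B,t)(\alpha)$ einen nichttrivialen $\beta$-Koeffizienten hat und $\alpha$ im $(0,\dots,0)$-Summanden von $\Gamma(B)$ liegt. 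Weil beide Abbildungen nichttrivial wirken, ist $\g(B,\eps)$ von einem Typ $\tau\in\textup{T}$ und $\g(B,\etaup)$ von einem Typ $\sigma\in\textup{T}$; wegen $2\le n\le k-2$ sind beide mindestens zweidimensional.

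Der Kern ist die folgende Beobachtung: Mindestens einer der $k-n$ B\"ogen von $\g(B,\etaup)$ trifft in $D$ einen Kreis, der auch von einem der $n$ B\"ogen von $\g(B,\eps)$ getroffen wird. Denn jede Konfiguration eines Typs aus $\textup{T}$ ist zusammenh\"angend, also ist der aktive Teil von $\g(B,\eps)$, seine aktiven Kreise zusammen mit seinen $n$ B\"ogen, ein zusammenh\"angender Teil von $D$, der disjunkt zu den passiven Kreisen von $\g(B,\eps)$ ist; tr\"afe kein Bogen von $\g(B,\etaup)$ einen aktiven Kreis von $\g(B,\eps)$, so w\"are dieser aktive Teil eine ganze Zusammenhangskomponente von $D$, im Widerspruch dazu, dass $D$ zusammenh\"angend ist und $k>n$ B\"ogen hat. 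Verfolgt man eine solche Inzidenz nach $D$ zur\"uck, so enth\"alt $D$ die aktive Konfiguration vom Typ $\tau$ von $\g(B,\eps)$ mit einem weiteren, an einem ihrer Kreise ansetzenden Bogen. Je nach $\tau$ und je nachdem, welchen Kreis dieser Bogen trifft, ist diese lokale Situation entweder eine der verbotenen Konfigurationen $M_1,\dots,M_9$ aus Abbildung \ref{mabb}, und dann liefert Lemma \ref{mlem} Satz \ref{exddn} f\"ur $D$, oder sie liefert einen Grad-1-Kreis oder einen Kreis wie in Lemma \ref{gr2} von $D$ bzw.\ (unter Verwendung des Zwischenmonoms $z$ und der Dualit\"atsregel) von $D^*$, und zwar in einer passenden Teilbarkeitslage bez\"uglich $\alpha$ bzw.\ $\beta$, in der eines von Lemma \ref{gr1}, Lemma \ref{dugr1}, Lemma \ref{gr2}, Lemma \ref{dugr2}, Lemma \ref{alle} oder Korollar \ref{eins} anwendbar ist. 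Etwa ist f\"ur $\tau=\textup{A}_n$ der einzige aktive Kreis von $(\g(B,\eps))^*$ der verschmolzene Kreis $y$, und ein Bogen von $\g(B,\etaup)$, der ihn trifft, liefert ein $M_8$- oder $M_9$-Muster in $D$; f\"ur $\tau=\textup{B}_n$ und $\tau=\textup{D}_{p,q}$ ist $\alpha$ durch alle aktiven Kreise teilbar, so dass Lemma \ref{alle} greift, falls $D$ keinen Grad-1-Kreis hat, und sonst Lemma \ref{gr1}; und f\"ur $\tau=\textup{C}_{p,q},\textup{F}_{p,q},\textup{G}_{p,q}$ argumentiert man ebenso, wobei sich die verbleibenden Konfigurationen $D$ modulo Rotation und Tausch auf wenige kleine Muster reduzieren, die man direkt nachpr\"uft, genau wie bei der Behandlung dieser Typen f\"ur $k=4$ im Beweis von Lemma \ref{1km1}.

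Der Hauptaufwand liegt in der Buchhaltung dieser Fallunterscheidung. Heikel sind die F\"alle $n=2$ und $k-n=2$, in denen die betreffende zweidimensionale H\"alfte keinen eindeutigen Typ hat (vgl.\ die F\"alle $28,29,44,45$ aus den Abbildungen \ref{zweidim1} und \ref{zweidim2}), sowie die vollst\"andige Aufz\"ahlung der endlich vielen kleinen Konfigurationen $D$, die weder von den verbotenen Mustern noch von den Teilbarkeitslemmata erfasst werden und von Hand nachzurechnen sind; diese direkten Rechnungen sind Routine, m\"ussen aber l\"uckenlos gef\"uhrt werden, damit keine Konfiguration \"ubersehen wird.
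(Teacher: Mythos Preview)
Your overall strategy --- choose $\eps,\etaup,z$ and types $\tau,\sigma\in\textup{T}$, then reduce via the $M_i$-patterns, the Grad-$1$/Grad-$2$ lemmas and their duals, Lemma~\ref{alle}, and a finite list of residual configurations --- is exactly the paper's strategy. But what you have written is an outline, not a proof, and the part you defer (``Buchhaltung\dots m\"ussen aber l\"uckenlos gef\"uhrt werden'') is precisely the content of the lemma.

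Two concrete problems. First, your sample case $\tau=\textup{A}_n$ is wrong: for a configuration of type $\textup{A}_n$ one has $m(C^*)$ of type $\textup{B}_n$, so $(\g(B,\eps))^*$ has $n$ active circles, not a single ``verschmolzener Kreis $y$''. The paper's argument for $Q\in\{\textup{A}_n\}\times\textup{T}_3$ uses instead that $z$ is divisible by none of these $n$ circles, whence only one of them is active in $\g(D,\etaup)$; this forces $M_9$ for $n\ge 3$ and leaves a single explicit $k=4$ configuration for $n=2$. Second, organising only by $\tau$ misses the structure that makes the case analysis finite. The paper partitions $\textup{T}=T_1\cup T_2\cup T_3$ with $T_1=\{\textup{A}_m,\textup{C}_{p,q}\}$, $T_2=\{\textup{B}_m,\textup{D}_{p,q}\}$, $T_3=\{\textup{F}_{p,q},\textup{G}_{p,q}\}$, observes that connectedness of $D$ rules out $T_1\times T_2$ and $T_2\times T_1$, and then treats the nine remaining blocks, using duality to pair them off. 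Several blocks are far from routine: $T_2\times T_2$ reduces via Lemma~\ref{alle} to the single-circle situation $Q=(\textup{D}_{1,1},\textup{D}_{1,1})$ with five explicit $k=4$ configurations; $T_3\times\{\textup{D}_{p,q}\}$ requires introducing the ``Kern von $D$'' and listing and checking eight kernel shapes plus one extra, with several explicit families of orientations; $T_3\times T_3$ hinges on the invariant $r\in\{1,2\}$ counting circles of $\g(D,\eps^1)$ active on both sides, with $r=1$ forced into $M_1$--$M_7$ and $r=2$ reduced to four explicit families. None of this is visible in your outline, and your $\textup{A}_n$ example shows that the individual cases cannot be dispatched by the kind of one-line argument you suggest.
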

\begin{proof}
Es gibt $\eps\in\sum(k,n)$ und $\etaup\in\sum(k,k-n)$ mit $\eps^1=\etaup^0$ und ein Monom $z\in\Lambda\V(\g(D,\eps^1))$ und $Q=(\tau,\sigma)\in\textup{T}^2$, so dass der Koeffizient bei $z$ von $d_{\g(B,\eps),\tau,t|_\eps}(\alpha)$, sowie der Koeffizient bei $\beta$ von $d_{\g(B,\etaup),\sigma,t|_\etaup}(z)$ nicht trivial ist. Seien
\begin{align*}
T_1&\defeq\{\textup{A}_m,\ \textup{C}_{p,q}\ \ |\ \ m,p,q\in\mathbb{N},\ p\le q\}\subset\textup{T},\\
T_2&\defeq\{\textup{B}_m,\ \textup{D}_{p,q}\ \ |\ \ m,p,q\in\mathbb{N},\ p\le q\}\subset\textup{T}.
\end{align*}
Da $D$ zusammenhängend ist, gilt $Q\notin\textup{T}_1\times\textup{T}_2$ und $Q\notin\textup{T}_2\times\textup{T}_1$. Im Fall $Q\in\textup{T}_2\times\textup{T}_2$ ist $\alpha$ durch alle Kreise von $D$ teilbar. Nach Lemma \ref{alle} brauchen wir nur den Fall zu betrachten, dass $D$ genau einen Kreis hat. Also ist $Q=(\textup{D}_{1,1},\textup{D}_{1,1})$ und es gibt für $D$ die fünf Möglichkeiten aus Abbildung \ref{t2t2} (wobei wir die Orientierung der Bögen vergessen).
\begin{figure}[htbp]
\centering
\includegraphics[scale=0.4]{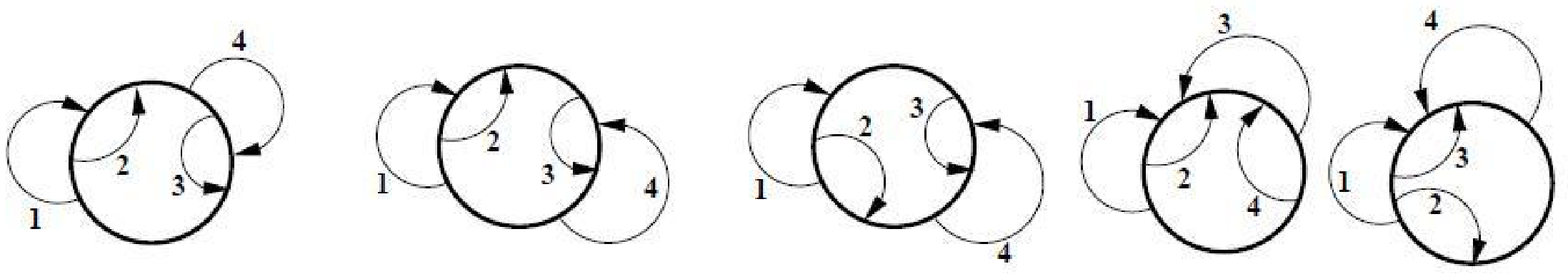}
\caption{aus \cite{szabo}}
\label{t2t2}
\end{figure}
Sei $C$ wie in Abbildung \ref{t2t2}.
Sei $a\in\sum(4,i)$ mit $i\in\{2,3\}$ und $a^0=(0,0,0,0)$. Dann ist $\g(C,a)$ von keinem Typ aus T oder vom Typ $\textup{F}_{0,2}$ oder vom Typ $\textup{G}_{0,2}$. Sei $b\in\sum(4,3)$ mit $b^1=(1,1,1,1)$. Dann ist $\g(C,b)$ von keinem Typ aus T. Aus der Behandlung des Falles $Q\in\textup{T}_2\times\textup{T}_2$ folgt der Fall $Q\in\textup{T}_1\times\textup{T}_1$. Sei
\[T_3\defeq\{\textup{F}_{p,q},\ \textup{G}_{p,q}\ \ |\ \ p,q\in\mathbb{N}_0,\ p+q\ge 1\}\subset\textup{T}.\]
Sei $Q\in\textup{T}_3\times\{\textup{A}_{k-n}\}$. Dann ist $z$ durch den Kreis $x$ teilbar, der von mindestens zwei Bögen aus $(\g(D,\eps))^*$ getroffen wird. Also ist $x$ passiver Kreis von $\g(D,\etaup)$. Ist $k-n\ge 3$, so enthält $D$ die Konfiguration $M_9$. Sei nun $k-n=2$. Sei zunächst einer der beiden aktiven Kreise von $\g(D,\etaup)$ passiv in $(\g(D,\eps))^*$. Dieser Kreis sei mit $y$ bezeichnet. Nach Lemma \ref{gr2} genügt es, den Fall zu betrachten, dass $\alpha$ durch $y$ teilbar ist. Dann ist aber auch $z$ durch $y$ teilbar. Dies ist ein Widerspruch zu $d_{\g(B,\etaup),\textup{A}_2,t|_\etaup}(z)\ne 0$. Seien nun beide aktiven Kreise von $\g(D,\etaup)$ aktiv in $(\g(D,\eps))^*$. $D$ enthalte, auch nach Rotation, keine der lokalen Konfigurationen aus Abbildung \ref{mabb}. Dann gibt es für $D$ bis auf Rotation und Tausch folgende Möglichkeiten. $D$ ist die Konfiguration aus Abbildung \ref{t3a} (wobei wir die Orientierung der Bögen vergessen) oder $D$ ist diese Konfiguration, wobei der 2. Bogen und $x_2$ entfernt seien, oder $D$ ist diese Konfiguration, wobei der 2. Bogen, der 3. Bogen, $x_2$ und $x_3$ entfernt seien.
\begin{figure}[htbp]
\centering
\includegraphics[scale=0.4]{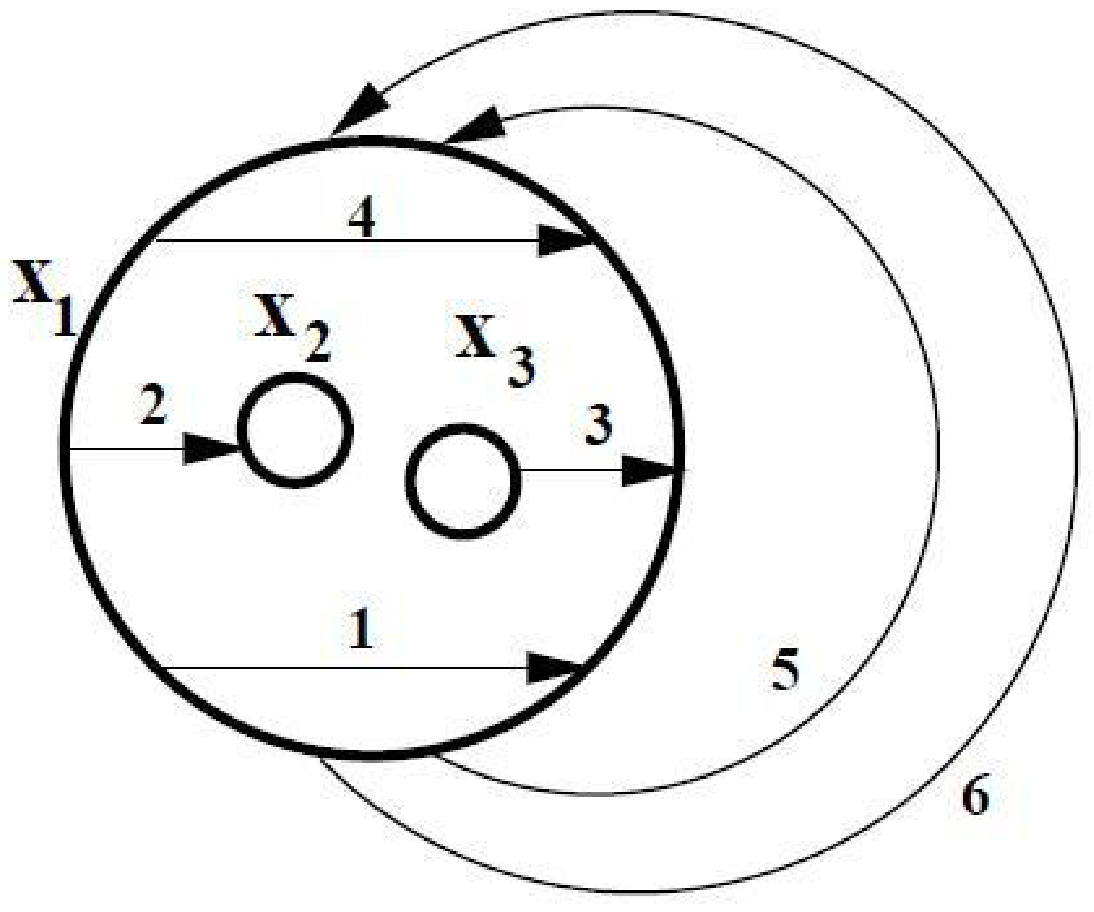}
\caption{aus \cite{szabo}}
\label{t3a}
\end{figure}
Sei $C$ wie in Abbildung \ref{t3a}.
Sei $a\in\sum(k,i)$ mit $i\in\{1,\dots,k-1\}$ und $a^0=(0,\dots,0)$ und so dass $\g(C,a)$ von einem Typ aus T ist. Dann ist $i=1$ oder $a$ entspricht den Bögen 1,2 oder $a$ entspricht den Bögen 3,4. Sei $b\in\sum(k,k-i)$ mit $b^0=a^1$. Dann ist $\g(C,b)$ von keinem Typ aus T. Es folgt der Fall $Q\in\{\textup{B}_{n}\}\times\textup{T}_3$.\\
Sei $Q\in\{\textup{A}_{n}\}\times\textup{T}_3$. Dann ist $z$ durch keinen der aktiven Kreise von $(\g(D,\eps))^*$ teilbar. Also ist genau einer dieser Kreise auch aktiver Kreis von $\g(D,\etaup)$. Für $n\ge 3$ enthält $D$ die Konfiguration $M_9$. Sei nun $n=2$. $D$ enthalte, auch nach Rotation, keine der lokalen Konfigurationen aus Abbildung \ref{mabb}. Dann ist $D$ bis auf Rotation und Tausch die Konfiguration aus Abbildung \ref{at3} (wobei wir die Orientierung der Bögen vergessen).
\begin{figure}[htbp]
\centering
\includegraphics[scale=0.4]{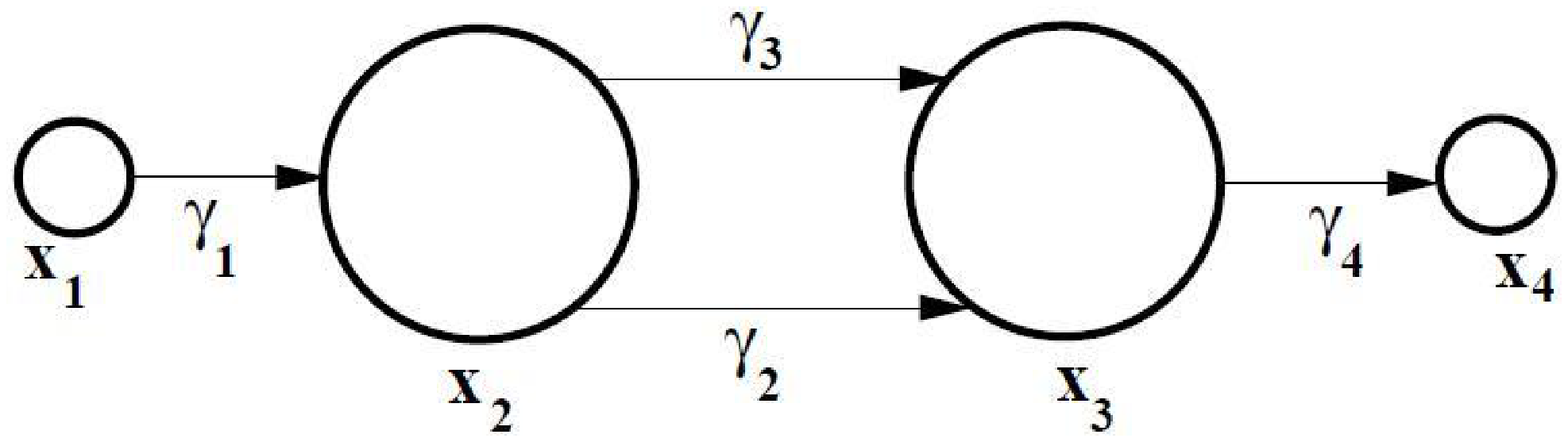}
\caption{aus \cite{szabo}}
\label{at3}
\end{figure}
Sei $C$ wie in Abbildung \ref{at3}.
Sei $a\in\sum(4,i)$ mit $i\in\{1,2,3\}$ und $a^0=(0,0,0,0)$ und so dass $\g(C,a)$ von einem Typ aus T ist. Dann ist $i=1$ oder $a$ entspricht den Bögen $\gammaup_2$, $\gammaup_3$. Sei $b\in\sum(4,4-i)$ mit $b^0=a^1$. Dann ist $\g(C,b)$ von keinem Typ aus T. Es folgt der Fall $Q\in\textup{T}_3\times\{\textup{B}_{k-n}\}$.\\
Sei $Q\in\textup{T}_3\times\{\textup{C}_{p,q}\ \ |\ \ p\le q\in\mathbb{N}\}$. Dann wird der aktive Kreis von $\g(D,\etaup)$ von genau einem Bogen aus $(\g(D,\eps))^*$ getroffen. Also gilt: Ist $n\ge 3$ so enthält $D$ nach Rotation eine der Konfigurationen $M_1$, $M_2$, $M_3$. Ist $n=2$ so enthält $D$ nach Rotation eine der Konfigurationen $M_6$, $M_7$. Es folgt der Fall $Q\in\{\textup{D}_{p,q}\ \ |\ \ p\le q\in\mathbb{N}\}\times\textup{T}_3$.\\
Sei $Q\in\textup{T}_3\times\{\textup{D}_{p,q}\}$ für $p\le q\in\mathbb{N}$. Dann ist $\alpha$ durch alle Kreise von $D$ teilbar, außer durch den Kreis $x_1$, der von mindestens zwei Bögen aus $\g(D,\eps)$ getroffen wird. Ein aktiver Kreis von $\g(D,\etaup)$ ist der Kreis, der von mindestens zwei Bögen aus $(\g(D,\eps))^*$ getroffen wird. Alle anderen aktiven Kreise von $\g(D,\etaup)$ sind passiv in $(\g(D,\eps))^*$. Enthält $D$ nicht die Konfiguration $M_8$, so bleiben  mit dem 2. Teil von Lemma \ref{gr2} für $(p,q)$ noch die Möglichkeiten $(1,1)$, $(1,2)$, $(1,3)$, $(1,4)$, $(2,2)$ zu betrachten. Genauer gilt: Man behalte von $D$ nur die Bögen, die $\etaup$ entsprechen, sowie die Bögen, die einen Randpunkt auf $x_1$ und den anderen Randpunkt auf einem anderen Kreis, der nicht nur von diesem Bogen getroffen wird, hat. Den aktiven Teil dieser Konfiguration nennen wir den Kern von $D$. Dann müssen wir für den Kern von $D$ nur die Konfigurationen aus Abbildung \ref{kern1} und \ref{kern2} betrachten.
\begin{figure}[htbp]
\centering
\includegraphics[scale=0.4]{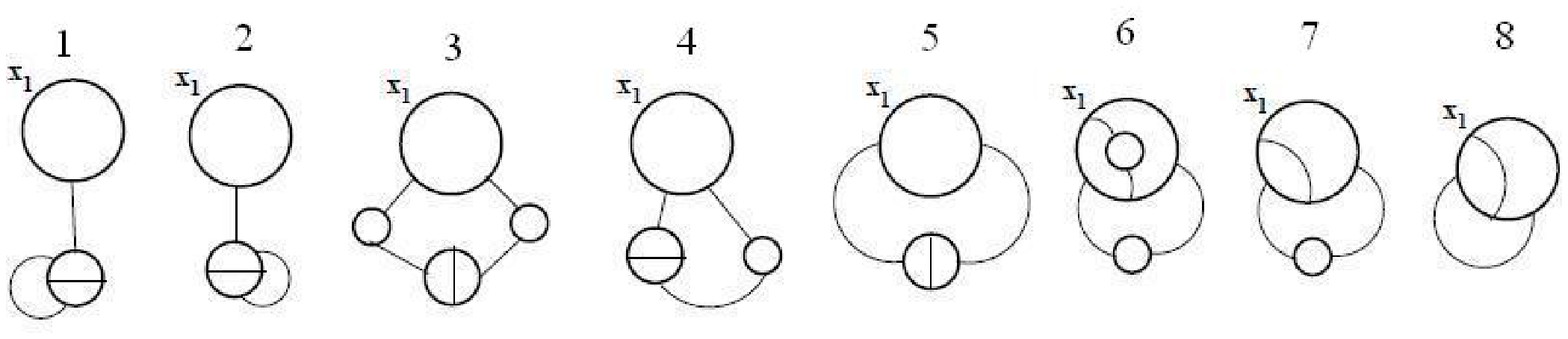}
\caption{Kern von $D$ (Abbildung aus \cite{szabo})}
\label{kern1}
\end{figure}
\begin{figure}[htbp]
\centering
\includegraphics[scale=0.5]{graph.27}
\caption{Kern von $D$}
\label{kern2}
\end{figure}
In den ersten 2 Fällen aus Abbildung \ref{kern1} und im Fall aus Abbildung \ref{kern2} enthält $D$ nach Rotation eine der Konfigurationen $M_1$-$M_5$. Betrachte nun die Fälle 3-8. $D$ enthalte, auch nach Rotation, keine der lokalen Konfigurationen aus Abbildung \ref{mabb}. Dann ist $D$ bis auf Rotation und Tausch wie in Abbildung \ref{t3d1} und \ref{t3d2} (wobei wir die Orientierung der Bögen vergessen), Grad 1 Kreise und die zugehörigen Bögen können entfernt werden, es müssen aber stets mindestens 4 Bögen übrig bleiben.
\begin{figure}[htbp]
\centering
\includegraphics[scale=0.4]{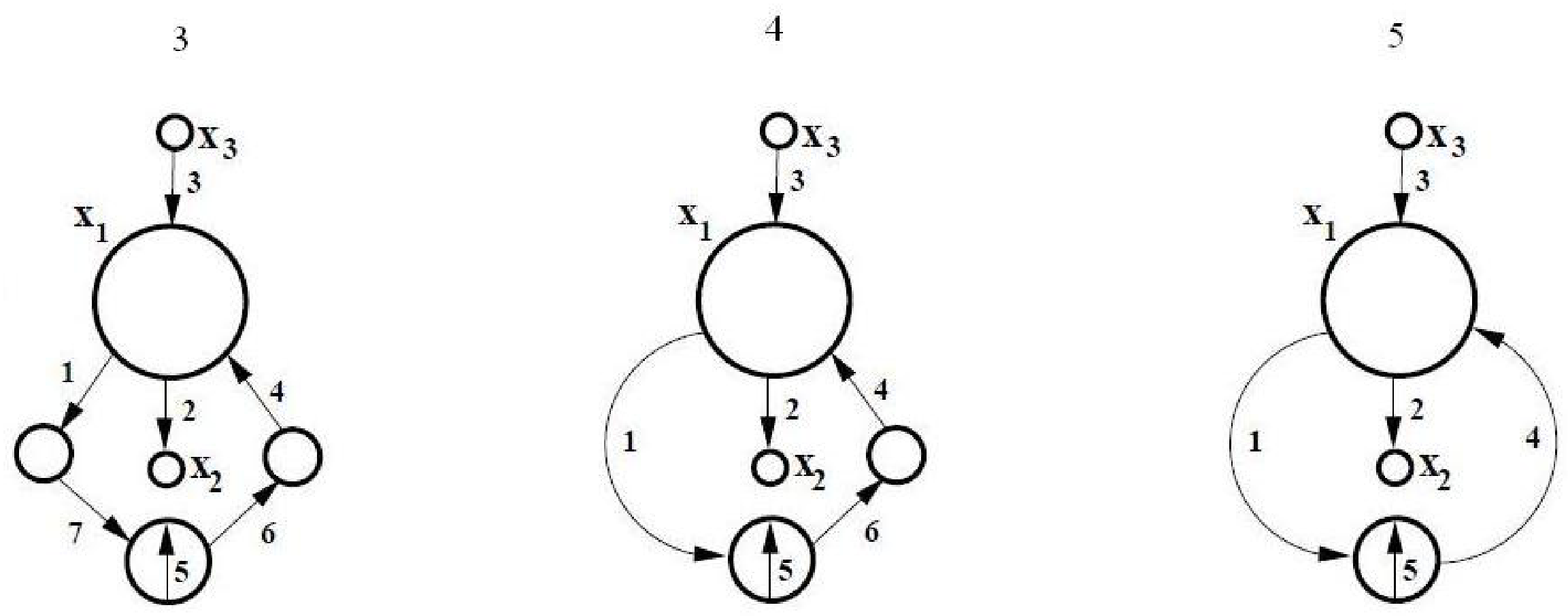}
\caption{aus \cite{szabo}}
\label{t3d1}
\end{figure}
\begin{figure}[htbp]
\centering
\includegraphics[scale=0.4]{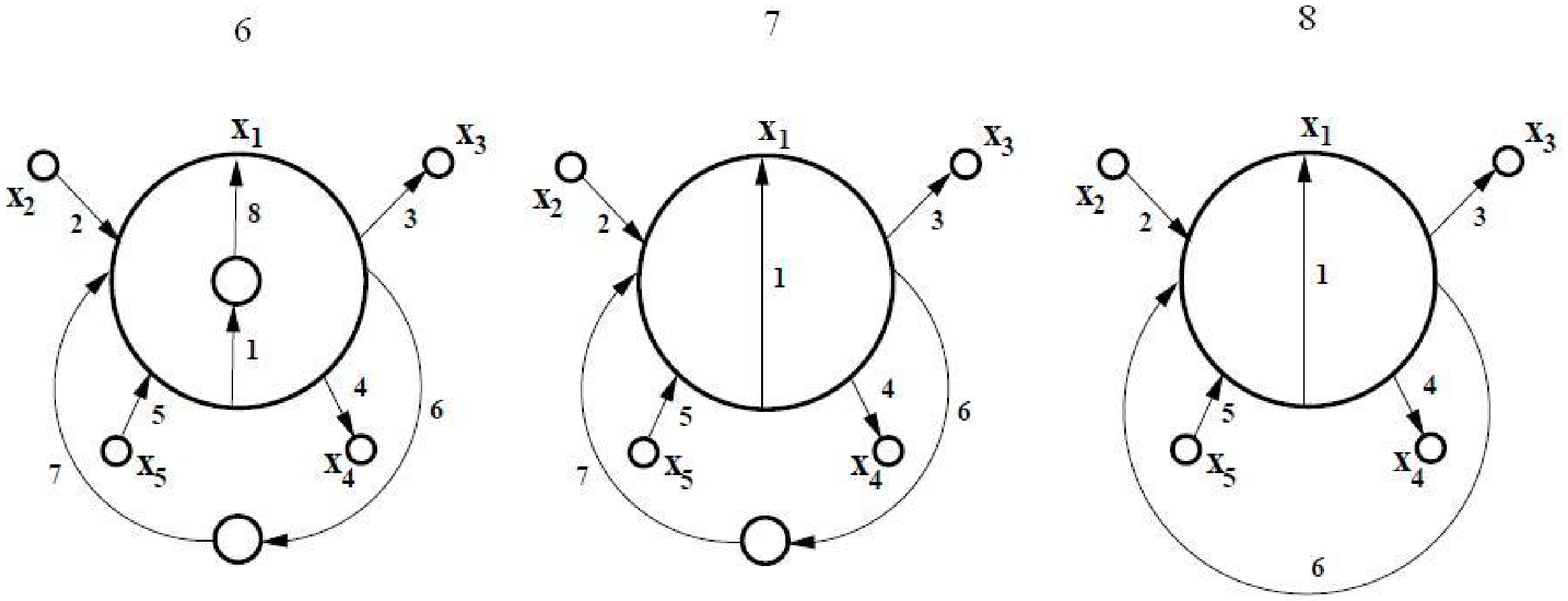}
\caption{aus \cite{szabo}}
\label{t3d2}
\end{figure}
Nach dem Beweis von Lemma \ref{trlem} genügt es den Fall zu betrachten, dass $\alpha$ durch alle Kreise von $D$ teilbar ist, außer durch $x_1$. Sei $C$ wie in den Abbildung \ref{t3d1} und \ref{t3d2}.
Sei $s$ Kantenzuordnung vom Typ Y bezüglich $C$. Sei $a\in\sum(k,i)$ mit $i\in\{1,\dots,k-1\}$ und $b\in\sum(k,k-i)$ mit $b^0=a^1$. In den Fällen 3, 4, 5 gilt: Aus $d_{\g(C,a),s|_a}(\alpha)\ne 0$ folgt $i=1$ oder $a$ entspricht $\gammaup_1$, $\gammaup_2$ oder $a$ entspricht $\gammaup_3$, $\gammaup_4$. Dann ist aber $\g(C,b)$ von keinem Typ aus T.\\
Für 6 gilt: Aus $d_{\g(C,a),s|_a}(\alpha)\ne 0$ folgt $i=1$ oder $a$ entspricht einer Teilmenge von $\{\gammaup_1,\ \gammaup_3,\ \gammaup_4,\ \gammaup_6\}$ oder einer Teilmenge von $\{\gammaup_2,\ \gammaup_5,\ \gammaup_7,\ \gammaup_8\}$. Dann ist aber $\g(C,b)$ von keinem Typ aus T.\\
Für 7 gilt: Aus $d_{\g(C,a),s|_a}(\alpha)\ne 0$ folgt $i=1$ oder $a$ entspricht einer Teilmenge von $\{\gammaup_3,\ \gammaup_4,\ \gammaup_6\}$ oder einer Teilmenge von $\{\gammaup_2,\ \gammaup_5,\ \gammaup_7\}$. Dann ist aber $\g(C,b)$ von keinem Typ aus T.\\
Für 8 gilt: Aus $d_{\g(C,a),s|_a}(\alpha)\ne 0$ folgt $i=1$ oder $a$ entspricht $\gammaup_2$, $\gammaup_5$ oder $\gammaup_3$, $\gammaup_4$ oder $\gammaup_5$, $\gammaup_6$ oder $\gammaup_3$, $\gammaup_6$. Aus $d_{\g(C,b),s|_b}\circ d_{\g(C,a),s|_a}(\alpha)\ne 0$ folgt, dass es für $a$ und $b$ folgende Möglichkeiten gibt:
\begin{description}
\item[-]
$a$ entspricht $\gammaup_1$ und $b$ entspricht $\gammaup_6$, $\gammaup_3$, $\gammaup_4$.
\item[-]
$a$ entspricht $\gammaup_1$ und $b$ entspricht $\gammaup_6$, $\gammaup_2$, $\gammaup_5$.
\item[-]
$a$ entspricht $\gammaup_5$, $\gammaup_6$ und $b$ entspricht $\gammaup_1$, $\gammaup_2$.
\item[-]
$a$ entspricht $\gammaup_3$, $\gammaup_6$ und $b$ entspricht $\gammaup_1$, $\gammaup_4$.
\end{description}
Es bleiben also für $D$ noch die beiden Konfigurationen aus Abbildung \ref{t3d91} zu betrachten.
\begin{figure}[htbp]
\centering
\includegraphics{1bild.16}\qquad\qquad\qquad\qquad
\includegraphics{1bild.17}
\caption{}
\label{t3d91}
\end{figure}
Die beiden sind aber gleich und gehen durch Rotation in die Konfiguration aus Abbildung \ref{t3d92} über, welche bereits behandelt wurde.
\begin{figure}[htbp]
\centering
\includegraphics{1bild.18}
\caption{}
\label{t3d92}
\end{figure}
Es folgt der Fall $Q\in\{\textup{C}_{p,q}\ \ |\ \ p\le q\in\mathbb{N}\}\times\textup{T}_3$.\\
Sei $Q\in\textup{T}_3\times\textup{T}_3$. Sei $r$ die Anzahl der Kreise von $\g(D,\eps^1)$ die sowohl in $(\g(D,\eps))^*$, als auch in $\g(D,\etaup)$ aktiv ist. Da $D$ zusammenhängend ist, ist $r\ge 1$. Es gibt genau einen aktiven Kreis von $(\g(D,\eps))^*$ durch den $z$ teilbar ist. Es gibt genau einen aktiven Kreis von $\g(D,\etaup)$ durch den $z$ nicht teilbar ist. Es folgt $r\le 2$. Ist $r=1$, so enthält $D$ nach Rotation eine der Konfigurationen $M_1$-$M_7$. Sei nun $r=2$. Sei $x_1$ der Kreis von $D$, der von mindestens zwei Bögen aus $\g(D,\eps)$ getroffen wird. Dann müssen wir nur den Fall betrachten, dass $\alpha$ durch alle Kreise von $D$ außer durch $x_1$ teilbar ist. $D$ enthalte, auch nach Rotation, keine der lokalen Konfigurationen aus Abbildung \ref{mabb}. Dann ist $D$ bis auf Rotation und Tausch wie in Abbildung \ref{t3t3}, wobei Grad 1 Kreise und die zugehörigen Bögen entfernt werden können, es müssen aber stets mindestens 4 Bögen übrig bleiben.
\begin{figure}[htbp]
\centering
\includegraphics{1bild.19}\qquad\qquad\qquad\qquad
\includegraphics{1bild.20}\\\ \\\ \\\ \\
\includegraphics{1bild.21}\qquad\qquad\qquad\qquad
\includegraphics{1bild.22}
\caption{}
\label{t3t3}
\end{figure}
Nach dem Beweis von Lemma \ref{trlem} genügt es den Fall zu betrachten, dass $\alpha$ durch alle Kreise von $D$ teilbar ist, außer durch $x_1$. Die beiden oberen Fälle wurden bereits behandelt. Für den unteren linken Fall ist $m(D^*)$ bis auf Rotation und Tausch gleich dem $D$ aus dem oberen rechten Fall. Für den uteren rechten Fall sei $C$ wie in Abbildung \ref{t3t3or}.
\begin{figure}[htbp]
\centering
\includegraphics{1bild.23}
\caption{}
\label{t3t3or}
\end{figure}
Dies lässt sich analog zum Fall 7 aus Abbildung \ref{t3d2} behandeln.
\end{proof}
Damit ist Satz \ref{exddn} bewiesen.
\end{proof}

\section{Invarianz der Spektralsequenz}
\begin{proof}[Beweis von Satz \ref{spekinv}]
Es genügt, den Fall zu betrachten, dass sich $C$ und $D$ nur in der Orientierung des $i$-ten Bogens unterscheiden. Es seien $f,g:\Gamma(C)\to\Gamma(C)$ definiert durch $f\defeq\textup{id}_{\Gamma(C)}+H_i(C,s)$ und $g\defeq\textup{id}_{\Gamma(C)}-H_i(C,s)$. Dann gilt $f\circ g=\textup{id}_{\Gamma(C)}=g\circ f$. Nach Satz \ref{dhs} gibt es eine Kantenzuordnung $u$ vom Typ Y bezüglich $D$, so dass
\[d(C,s)-d(D,u)=d(C,s)\circ H_i(C,s)-H_i(C,s)\circ d(C,s).\]
Es folgt:
\begin{align*}
&d(D,u)\circ f(x)\\
=&(d(C,s)-d(C,s)\circ H_i(C,s)+H_i(C,s)\circ d(C,s))(x+H_i(C,s)(x))\\
=&d(C,s)(x)+H_i(C,s)\circ d(C,s)(x)-d(C,s)\circ\underbrace{H_i(C,s)\circ H_i(C,s)}_{=0}(x)+\underbrace{H_i(C,s)\circ d(C,s)\circ H_i(C,s)}_{=0}(x)\\
=&f\circ d(C,s)(x)
\end{align*}
Also sind $\Omega(C,s)$ und $\Omega(D,u)$ isomorph. Der Beweis von Lemma \ref{inv1} liefert einen Isomorphismus zwischen $\Omega(D,u)$ und $\Omega(D,t)$.
\end{proof}
\begin{proof}[Beweis von Satz \ref{spekrminv}]
Die Aussage folgt aus den Beweisen von Proposition \ref{p1}, Proposition \ref{p2} und Proposition \ref{p3}. Für Reidemeister-Bewegung 3 beachte: Sei $C$ eine orientierte mindestens zweidimensionale Konfiguration, die eine lokale Konfiguration wie in Abbildung \ref{spekrm3} enthält.
\begin{figure}[htbp]
\centering
\includegraphics{1bild.24}\qquad\qquad
\includegraphics{1bild.25}\qquad\qquad
\includegraphics{1bild.26}
\caption{}
\label{spekrm3}
\end{figure}
Dann gilt für jede Kantenzuordnung $s$ vom Typ Y bezüglich $C$ und für jedes Monom $\alpha\in\Lambda\V(C)$: Ist $\alpha$ nicht durch $x$ teilbar, so ist $d_{C,s}(\alpha)=0$.
\end{proof}

\newpage

\bibliographystyle{unsrt}
\bibliography{biblio}

\end{document}